\tikzset{font=\footnotesize}
\tikzset{->-/.style={decoration={
  markings,
  mark=at position #1 with {\arrow{>}}},postaction={decorate}}}
\tikzstyle{none}=[inner sep=0pt]
\tikzstyle{circ}=[circle,fill=black,draw,inner sep=3pt]
\newtheorem{theorem}{Theorem}[section]
\newtheorem{proposition}[theorem]{Proposition}
\newtheorem{lemma}[theorem]{Lemma}   
\newtheorem{corollary}[theorem]{Corollary}  
\theoremstyle{definition}
\newtheorem{definition}[theorem]{Definition}
\newcommand{\ten}{\otimes}
\newcommand{\maps}{\colon}
\newcommand{\colim}{\mathrm{colim}}
\newcommand{\inta}{\raisebox{.3\depth}{$\smallint\hspace{-.02in}$}}
\newcommand{\Rel}{\mathsf{Rel}}
\newcommand{\Cospan}{\mathsf{Cospan}}
\newcommand{\FinGraph}{\mathsf{FinGraph}}
\newcommand{\LagRel}{\mathsf{LagRel}}
\newcommand{\lCsp}{\mathbb{C}\mathbf{sp}}
\newcommand{\A}{\mathsf{A}}
\newcommand{\N}{\mathbb{N}}
\newcommand{\R}{\mathbb{R}}
\newcommand{\fs}{\ensuremath{\mathfrak{s}}}
\newcommand*\pgfdeclareanchoralias[3]{%
  \expandafter\def\csname pgf@anchor@#1@#3\expandafter\endcsname
     \expandafter{\csname pgf@anchor@#1@#2\endcsname}}
\definecolor{lblue}{rgb}{0,250,255}
\tikzstyle{species}=[circle,fill=yellow,draw=black,scale=1.15]
\tikzstyle{transition}=[rectangle,fill=lblue,draw=black,scale=1.15]
\tikzstyle{inarrow}=[->, >=stealth, shorten >=.03cm,line width=1]
\tikzstyle{empty}=[circle,fill=none, draw=none]
\tikzstyle{inputdot}=[circle,fill=purple,draw=purple, scale=.25]
\tikzstyle{inputarrow}=[->,draw=purple, shorten >=.05cm]
\tikzstyle{simple}=[-,draw=purple,line width=1.000]
\tikzstyle{none}=[inner sep=0pt]
\tikzset{every picture/.style={line width=0.5pt}}
\newcommand{\define}[1]{{\bf \boldmath #1}}
\newcommand\SWarrow{\mathrel{\rotatebox[origin=c]{-135}{$\Rightarrow$}}}
\newcommand\SEarrow{\mathrel{\rotatebox[origin=c]{-45}{$\Rightarrow$}}}
\newcommand\NWarrow{\mathrel{\rotatebox[origin=c]{135}{$\Rightarrow$}}}
\newcommand\NEarrow{\mathrel{\rotatebox[origin=c]{45}{$\Rightarrow$}}}
\newcommand\vertsimeq{\mathrel{\rotatebox[origin=c]{-90}{$\simeq$}}}
\tikzset{
    overdraw/.style={preaction={draw,white,line width=#1}},
    overdraw/.default=5pt
}
\tikzstyle{main node} =[circle,fill=white!20,draw,font=\sffamily\Large\bfseries]
\newcommand{\lC}{\mathbb{C}}
\newcommand{\lD}{\mathbb{D}}
\newcommand{\lX}{\mathbb{X}}
\newcommand{\Mark}{\mathsf{Mark}}
\newcommand{\FinSet}{\mathsf{FinSet}}
\newcommand{\Dynam}{\mathsf{Dynam}}
\newcommand{\SemiAlgRel}{\mathsf{SemiAlgRel}}
\newcommand{\LinRel}{\mathsf{LinRel}}
\newcommand{\Petri}{\mathsf{Petri}}
\newcommand{\X}{\mathsf{X}}
\newcommand{\SemiAlg}{\mathsf{SemiAlg}}
\newcommand{\MMark}{\mathbb{M}\mathbf{ark}}
\newcommand{\LLinRel}{\mathbb{L}\mathbf{inRel}}
\begin{document}
\title{Open Systems: A Double Categorical Perspective
}

\author{Kenny Allen Courser}

\degreemonth{March}
\degreeyear{2020}
\degree{Doctor of Philosophy}
\chair{Professor John Baez}
\othermembers{Professor Jacob Greenstein \newline Professor David Weisbart}
\numberofmembers{3}

\field{Mathematics}
\campus{Riverside}

\maketitle
\copyrightpage{}

\degreesemester{Winter}

\begin{frontmatter}

{\ssp
\begin{acknowledgements}
First, I would like to thank my advisor John Baez for taking me on to be his student. Without his patience and humor, this thesis would not exist. I would like to thank Mike Shulman, Christina Vasilakopoulou, Daniel Cicala, Blake Pollard and David Weisbart for their help and guidance over the past several years. I would like to thank the other members of our category theory research group during my time here at University of California, Riverside---Brandon Coya, Jason Erbele, Joe Moeller, Jade Master and Christian Williams---for helpful discussions and feedback. I would like to thank my grandparents Valerie and Lup\'e for their love and support and for allowing me to focus on school. I would like to thank my little brother, Christian, my little sister, Catherine, my older sister, Emily, my older brother, Andrew, and my dad, Frank, for all the memories created while growing up, and of course, Buster. I would like to thank my cohort---Adam and Bryansito $><$, Eddie, Josh, Kevin, Mikahl, Ryan, Tim and James---for their companionship while traversing the gauntlet of grad school. I would also like to thank my hometown friends---Jimmy, Richie, Tj, Dombot, Kurt, Daniel and Austin---for all the memories in middle school, high school and early college. I would like to thank the community over at the nLab; whenever I needed to look up a particular concept or idea, the nLab was one of the first places that I would look. And lastly, I would like to thank my mom, Jodi, for her love and support and for bringing me into this world.
\end{acknowledgements}
 }
\begin{dedication}

\vspace*{\fill}

\begin{center}
To my mother, Jodi.
\end{center}

\vspace*{\fill}

\end{dedication}

{\ssp
\abstract
Fong developed `decorated cospans' to model various kinds of open systems: that is, systems with inputs and outputs. In this framework, open systems
are seen as the morphisms of a category and can be composed as such, allowing larger open systems to be built up from smaller ones. Much work has already been done in this direction, but there is a problem: the notion of isomorphism between decorated cospans is often too restrictive. Here we introduce and compare two ways around this problem: structured cospans, and a new version of decorated cospans. Structured cospans are very simple: given a functor $L \maps \A \to \X$, a `structured cospan' is a diagram in $\X$ of the form $L(a) \rightarrow x \leftarrow L(b)$. If $\A$ and $\X$ have finite colimits and $L$ is a left adjoint, there is a symmetric monoidal category whose objects are those of $\A$ and whose morphisms are isomorphism classes of structured cospans.   However, this category arises from a more fundamental structure: a symmetric monoidal double category. Under certain conditions this symmetric monoidal double category is equivalent to one built using our new version of decorated cospans. We apply these ideas to symmetric monoidal double categories of open electrical circuits, open Markov processes and open Petri nets.
\endabstract
}
\tableofcontents

\end{frontmatter}

{\ssp
\chapter{Introduction} 

This is a thesis about compositional frameworks for describing `open networks', which are networks with prescribed `inputs' and `outputs'.   One well-known type of network is a `Petri net'. Petri nets are important in computer science, chemistry and other subjects. For example, the chemical reaction that takes two atoms of hydrogen and one atom of oxygen and produces a molecule of water can be represented by this very simple Petri net:
\[
\scalebox{0.8}{

}
\]
The compositional nature of these open Petri nets, and of open networks in general, is suggestive of an underlying categorical structure.  Moreover, the ability to tensor these open networks naturally leads to a symmetric monoidal structure on these categories. In this thesis we study two frameworks for constructing and working with symmetric monoidal categories whose morphisms are open networks. The first, `decorated cospans', was introduced by Brendan Fong \cite{BF}. The second, `structured cospans', is new. Here we study both frameworks using symmetric monoidal \emph{double} categories, which have 2-morphisms that describe maps \emph{between} open networks. 

The outline of the thesis is as follows. In Chapter \ref{Chapter2}, we present Fong's decorated cospans and give some examples in which they have been applied: graphs, electrical circuits, Markov processes and Petri nets. In Chapter \ref{Chapter3}, we introduce the framework of structured cospans. In Chapter \ref{Chapter4}, we revisit decorated cospans but at the level of double categories. In Chapter \ref{Chapter5}, we explore some of the similarities between double categories and bicategories, and in Chapter \ref{Chapter6}, we give an application of double categories to Markov processes and `coarse-grainings' and show that coarse-graining is compatible with black-boxing. This last application is constructed using neither structured cospans nor decorated cospans due to the complexity of its 2-morphisms, but is nevertheless a great example of how the rich structure of double categories and their appropriate maps can be used to model complicated open dynamical systems.

The first piece of work that this thesis is built upon, \textsl{A bicategory of decorated cospans} \cite{Cour}, was an initial attempt at categorifying Fong's theory of `decorated cospans', which we introduce in Chapter \ref{Chapter2}.  Following a suggestion of Mike Shulman \cite{Shul}, this attempt made extensive use of double categories, and it was here that the current author's journey into double categories began. Over the course of this journey, John Baez noticed a flaw with the decorated cospans framework, which we explain in Section \ref{Graphs} and also at the beginning of Chapter \ref{Chapter3}. Thus, Baez conceived another framework which simultaneously corrected this flaw and was more convenient to use: `structured cospans'.  This is the main content of Chapter \ref{Chapter3}. This new framework also employs double categories, and several applications which were previously illustrated using decorated cospans were explored using structured cospans in a recent paper with Baez, \textsl{Structured cospans} \cite{BC2}. Then, following along on this double categorical campaign, a more direct fix to decorated cospans was introduced by Baez, Vasilakopoulou and the author in \textsl{Structured versus decorated cospans} \cite{BCV}.  This material constitutes Chapter \ref{Chapter4}: the main result is that the new improved decorated cospans are equivalent to structured cospans under certain mild conditions. Tangential to all of this, Baez and the current author wrote \textsl{Coarse-graining open Markov processes} \cite{BC}.   While this work also makes use of double categories, it uses neither decorated nor structured cospans, due to some more sophisticated structure that is necessary. This material makes up Chapter \ref{Chapter6}.

$\textnormal{ }$
}

{\ssp
\chapter{Decorated cospan categories}\label{Chapter2}
This chapter is devoted to Fong's theory of decorated cospans and a few of its applications. Fong's theory of decorated cospans is well-suited to describing open networks: that is, networks with prescribed inputs and outputs.   We can build larger networks from smaller ones by attaching the inputs of one to the outputs of another. This suggests that we should treat open networks as morphisms in a category. In addition to composing open networks, we can also put them side by side in parallel, giving a monoidal category. Fong's Theorem on decorated cospans  provides a framework that captures all of this structure and more. Fong's decorated cospan categories can then serve as syntax categories for functors that describe the behavior of open networks, such as the `black-box' functors studied by Baez, Fong, Master and Pollard \cite{BC,BF,BFP,BM,BP}.

In Section \ref{FT}, we present Fong's Theorem. For definitions of the terms used in this theorem, see Appendix \ref{Appendix}. In Section \ref{FongApps}, we present some previously studied applications of decorated cospans which will later be revisited in subsequent chapters from the perspective of other compositional frameworks. These examples include open graphs, open electrical circuits, open Markov processes and open Petri nets.

\section{Fong's Theorem}\label{FT}
\begin{definition}
A \define{cospan} in any category $\mathsf{C}$ is a diagram of the form
\[
\begin{tikzpicture}[scale=1.5]
\node (A) at (0,0) {$b$};
\node (B) at (-1,-1) {$a_1$};
\node (C) at (1,-1) {$a_2$};
\path[->,font=\scriptsize,>=angle 90]
(B) edge node [above] {$i$}(A)
(C) edge node [above] {$o$}(A);
\end{tikzpicture}
\]
In other words, a cospan is an ordered pair of morphisms $i$ and $o$ in $\mathsf{C}$ whose target coincide.
\end{definition}

A result of Fong \cite{Fong} which has been fundamental in the inspiration of a large portion of this thesis is the following.

\begin{theorem}[Fong]\label{Fong}
Let $\mathsf{C}$ be a category with finite colimts and $F \colon (\mathsf{C},+,0) \to (\mathsf{Set},\times,1)$ a symmetric lax monoidal functor. Then there exists a symmetric monoidal category $F\mathsf{Cospan}$ which has:
\begin{enumerate}
\item{objects as those of $\mathsf{C}$ and}
\item{morphisms as isomorphism classes of $F$-decorated cospans in $\mathsf{C}$, which are pairs:
\[

\]
$$1 \xrightarrow{\lambda^{-1}} 1 \times 1 \xrightarrow{d \times d'} F(b) \times F(b') \xrightarrow{\phi_{b,b'}} F(b+b') \xrightarrow{F(j)} F(b+_{a_2}b')$$
where $\psi$ is the natural map into a coproduct, $j$ is the natural map from a coproduct into a pushout, and $\phi_{b,b'} \colon F(b) \times F(b') \to F(b+b')$ is the natural transformation coming from the structure of the symmetric lax monoidal functor $F \colon (\mathsf{C},+,0) \to (\mathsf{Set},\times,1)$.

The tensor product of two objects $a_1$ and $a_2$ is given by their binary coproduct $a_1+a_2$ in $\mathsf{C}$.

The tensor product of two $F$-decorated cospans is given pointwise:
\[
\begin{tikzpicture}[scale=1.5]
\node (A) at (0,0) {$b$};
\node (B) at (-1,-1) {$a_1$};
\node (C) at (1,-1) {$a_2$};
\node (E) at (2,-1) {$a_1'$};
\node (F) at (3,0) {$b'$};
\node (G) at (4,-1) {$a_2'$};
\node (H) at (3,-1.5) {$d' \in F(b')$};
\node (D) at (0,-1.5) {$d \in F(b)$};
\node (I) at (1.5,-0.5) {$\otimes$};
\node (J) at (4.5,-0.5) {$=$};
\node (K) at (5,-1) {$a_1+a_1'$};
\node (L) at (6,0) {$b+b'$};
\node (M) at (7,-1) {$a_2+a_2'$};
\node (N) at (6,-1.5) {$d+d' \in F(b+b')$};
\path[->,font=\scriptsize,>=angle 90]
(B) edge node [left] {$i$}(A)
(C) edge node [right] {$o$}(A)
(E) edge node [left] {$i'$}(F)
(G) edge node [right] {$o'$}(F)
(K) edge node [left] {$i+i'$}(L)
(M) edge node [right] {$o+o'$}(L);
\end{tikzpicture}
\]
$$d+d' := 1 \xrightarrow{\lambda^{-1}} 1 \times 1 \xrightarrow{d \times d'} F(b) \times F(b') \xrightarrow{\phi_{b,b'}} F(b+b')$$
}

\end{enumerate}
\end{theorem}


We will also need a variant of Fong's Theorem that gives a merely monoidal category:

\begin{theorem}\label{Fong2}
Let $\mathsf{C}$ be a category with finite colimts and $F \colon (\mathsf{C},+,0) \to (\mathsf{Set},\times,1)$ a lax monoidal functor. Then there exists a monoidal category $F\mathsf{Cospan}$ where the relevant structure is given as in Theorem \ref{Fong}.
\end{theorem}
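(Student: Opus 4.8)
The plan is to reuse the construction of $F\mathsf{Cospan}$ from Theorem~\ref{Fong} essentially verbatim and to observe that the symmetry of $F$ enters the argument in exactly one place. The hypotheses here differ from those of Theorem~\ref{Fong} only in dropping the requirement that the lax monoidal functor $F$ be symmetric, and the data defining $F\mathsf{Cospan}$---its objects, its morphisms as isomorphism classes of $F$-decorated cospans, the composite and tensor product of decorated cospans, the identities, and the associativity and unit constraints---is specified exactly as before, using only the structure maps $\phi_{b,b'}$ and $\phi_0$ together with $\lambda^{-1}$. So the task reduces to revisiting the proof of Theorem~\ref{Fong} and isolating which axioms genuinely use that $F$ is symmetric. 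I claim this occurs only in producing the braiding isomorphisms of $F\mathsf{Cospan}$ and in checking their naturality and the two hexagon identities; discarding these, everything needed for a merely monoidal category remains.

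Concretely, I would verify the axioms in the following order. First, that $F\mathsf{Cospan}$ is a category: associativity and unitality of composition. On underlying cospans this is the standard fact that composition by pushout in a category with finite colimits is associative and unital up to canonical comparison isomorphisms, and on decorations it follows from naturality of $\phi_{b,b'}$ together with the associativity and unit coherence of the lax monoidal functor $F$, neither of which mentions symmetry. Second, that $\otimes$ is functorial on objects and on decorated cospans, i.e.\ the interchange law. Third, that the associator and unitors---obtained by decorating the associativity and unit isomorphisms of the coproduct in $\mathsf{C}$ via $\phi$ and $\phi_0$---are natural isomorphisms of $F$-decorated cospans. Fourth, that the pentagon and triangle identities hold; these reduce to the pentagon and triangle for the coproduct in $\mathsf{C}$ together with the coherence of $\phi$, again with no appeal to symmetry.

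The step demanding the most care, and the main obstacle, is the interchange law, i.e.\ bifunctoriality of the tensor product. Here one compares the $F$-decorated cospan built by first composing two composable pairs and then tensoring the results against the one built by first tensoring and then composing. The two underlying cospans have canonically isomorphic apices, since both are colimits of the same diagram in $\mathsf{C}$, so the content lies entirely in the decorations: one must confirm that the canonical comparison isomorphism carries one decoration to the other. Tracing the definitions, this amounts to rearranging a fourfold product of the form $F(b)\times F(b')\times F(e)\times F(e')$, and the crux is to check that this rearrangement is controlled solely by the naturality and associativity coherence of $\phi$ together with the symmetry of the target $(\mathsf{Set},\times,1)$, so that the symmetry of $F$ itself is not required at this stage. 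Once interchange is secured the remaining coherence is formal, and one simply omits the braiding, which is the sole place the symmetry of $F$ would have been invoked.
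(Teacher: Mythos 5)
You have correctly reduced the problem to auditing where the symmetry of $F$ enters the proof of Theorem~\ref{Fong}, and you have correctly singled out the interchange law (bifunctoriality of $\otimes$) as the delicate step. (The paper itself supplies no argument for Theorem~\ref{Fong2}, so this audit is indeed the only route available.) However, your resolution of that step is where the gap lies. The canonical comparison between the apices of $(M'\circ M)\otimes(N'\circ N)$ and $(M'\otimes N')\circ(M\otimes N)$ restricts, at the coproduct stage, to the middle-four interchange isomorphism $\tau\colon (b+b')+(e+e')\to (b+e)+(b'+e')$ in $\mathsf{C}$, and $\tau$ is built not only from associators but from the braiding $b'+e\cong e+b'$ of the coproduct. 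Transporting the decoration therefore requires the identity $F(\tau)\circ\phi_{b+b',e+e'}\circ(\phi_{b,b'}\times\phi_{e,e'})=\phi_{b+e,b'+e'}\circ(\phi_{b,e}\times\phi_{b',e'})\circ\tau'$, where $\tau'$ is the corresponding rearrangement in $(\mathsf{Set},\times,1)$. This identity decomposes into the associativity hexagons for $\phi$ (available for any lax monoidal $F$) \emph{and} the braiding square $F(\beta)\circ\phi_{b',e}=\phi_{e,b'}\circ\beta'$ --- which is precisely the symmetry axiom you are trying to do without. So the symmetry of the target alone does not suffice; the symmetry of $F$ is invoked not only for the braiding of $F\mathsf{Cospan}$ but already for $\otimes$ to be a functor.

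That this is not a removable technicality is visible in the paper's own running example (Section~\ref{Graphs}): for the graph functor, the two decorations produced by the two orders of composing and tensoring are graph structures on the same apex with edge sets $(E_b+E_{b'})+(E_e+E_{e'})$ and $(E_b+E_e)+(E_{b'}+E_{e'})$. Strictifying $(\mathsf{FinSet},+,0)$ makes associators identities but cannot make the symmetry an identity, so these edge sets remain merely isomorphic, not equal; and since the equivalence relation on decorated cospans demands $F(\theta)(d)=d'$ on the nose for some isomorphism $\theta$ of apices, while $F(\theta)$ only reindexes sources and targets and leaves the edge set fixed, no choice of $\theta$ repairs the discrepancy. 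Your proof as written therefore does not establish the interchange law, and any complete proof of Theorem~\ref{Fong2} must either confront this point directly (e.g.\ by coarsening the notion of isomorphism of decorations, which is what the double-categorical reformulation of Chapter~\ref{Chapter4} accomplishes) or impose some residual compatibility of $\phi$ with the braiding.
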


The necessity of this weaker result was pointed out by an anonymous referee of Moeller and Vasilakopoulou \cite{MV}, which we explain in the introduction of Chapter \ref{Chapter3} on `structured cospans'.

\section{Applications}\label{FongApps}
In this section we present some examples of applications of decorated cospans which have been studied in previous works \cite{BF,BFP,BP,BWY, Fong}.

\subsection{Graphs}\label{Graphs}

Our first example is the category of `open graphs'. This makes clear some difficulties in Fong's approach to decorated cospans---problems that will be solved using our double category approach. Let $(\mathsf{FinSet},+,0)$ denote the category of finite sets and functions made symmetric monoidal using coproducts.   To apply Fong's Theorem, we seek a symmetric lax monoidal functor $F \colon (\mathsf{FinSet},+,0) \to (\mathsf{Set},\times,1)$ that assigns to a finite set $N$ the set of all graphs whose underlying set of vertices is $N$. So, we define a \define{graph structure} on $N$ to be a diagram in $\mathsf{FinSet}$ of the following form.
\[
\begin{tikzpicture}[scale=1.5]
\node (A) at (0,0) {$E$};
\node (B) at (1,0) {$N$};
\path[->,font=\scriptsize,>=angle 90]
(A) edge[bend left] node [above] {$s$}(B)
(A) edge[bend right] node [below] {$t$}(B);
\end{tikzpicture}
\]
Here $E$ is the set of \define{edges} of the graph while $s,t \colon E \to N$ are the \define{source} and \define{target} functions, respectively. 

If we naively try to take $F(N)$ to be the set of graph structures on $N$, we immediately notice a problem: this is not a set, but rather a proper class. Fong \cite{Fong} gets around this by replacing $\mathsf{FinSet}$ with an equivalent small category, which by abuse of notation we shall call $\mathsf{FinSet}$.  Using this small version of $\mathsf{FinSet}$ in the definition of graph structure, we see that there is an actual set $F(N)$ of graph structures on any $N \in \mathsf{FinSet}$. Given a function $f \colon N \to N'$ we define $F(f) \colon F(N) \to F(N')$ as follows. Given a graph structure on $N$, the function $f$ induces a graph structure on $N'$ if we demand that the following diagrams commute:
\[
\begin{tikzpicture}[scale=1.5]
\node (A) at (5,-0.5) {$E$};
\node (E) at (6,0) {$N$};
\node (E') at (6,-1) {$N'$};
\node (B) at (7,-0.5) {$E$};
\node (F) at (8,0) {$N$};
\node (F') at (8,-1) {$N'$};
\path[->,font=\scriptsize,>=angle 90]
(E) edge node [right] {$f$} (E')
(F) edge node [right] {$f$} (F')
(A) edge node[above]{$s$}(E)
(A) edge node [below] {$s'$} (E')
(B) edge node[above]{$t$}(F)
(B) edge node [below] {$t'$} (F');
\end{tikzpicture}
\]
This results in a graph structure on $N'$ given by $s',t' \colon E \to N'$ where $s'=fs$ and $t'=ft$. In other words, we are pushing forward the set $E$ of edges along the function $f$ in such a way that sources and targets of edges are preserved. It is clear that this procedure is associative and preserves identities, and thus defines a functor $F \colon \mathsf{FinSet} \to \mathsf{Set}$.

The next question is whether $F$ is lax monoidal.  For this, note that given a graph structure $d_1$ on a finite set $N_1$ and a graph structure $d_2$ on another finite set $N_2$, there is a graph structure $d_1 + d_2$ on $N_1+N_2$, given by taking pointwise coproducts of the respective graph structures on $N_1$ and $N_2$:
\[
\begin{tikzpicture}[scale=1.5]
\node (A) at (0,0) {$E_1 + E_2$};
\node (B) at (2,0) {$N_1 + N_2$};
\path[->,font=\scriptsize,>=angle 90]
(A) edge[bend left] node [above] {$s_1 + s_2$}(B)
(A) edge[bend right] node [below] {$t_1 + t_2$}(B);
\end{tikzpicture}
\]
One can check that there is a natural transformation $$\mu_{N_1,N_2} \colon F(N_1) \times F(N_2) \to F(N_1+N_2)$$ mapping $(d_1, d_2)$ to $d_1 + d_2$, as one would expect if $F$ were lax monoidal and $\mu$ were its laxator. Note the non-invertibility of the maps $\mu_{N_1,N_2}$. For example, the figure below shows two graphs $d_1 \in F(N_1)$ and $d_2 \in F(N_2)$ in black; taking them together we get $d_1 + d_2 \in F(N_1 + N_2)$. If we also include the red edge we obtain a graph that is not in the image of the laxator $\mu_{N_1, N_2}$, but is a perfectly fine element of $F(N_1 + N_2)$. 
\[
\begin{tikzpicture}[scale=1.5]
\node (A) at (0,0) {$v_1$};
\node (B) at (2,0) {$v_2$};
\node (C) at (1,-2) {$v_3$};
\node (D) at (3,0) {$w_1$};
\node (E) at (5,0) {$w_2$};
\node (F) at (3,-2) {$w_3$};
\node (G) at (5,-2) {$w_4$};
\node (H) at (1,1) {$\Gamma_1 \in F(N_1)$};
\node (I) at (4,1) {$\Gamma_2 \in F(N_2)$};
\path[->,font=\scriptsize,>=angle 90]
(D) edge node [above] {$e_1'$} (E)
(D) edge node [left] {$e_3'$} (F)
(E) edge node [right] {$e_2'$} (G)
(F) edge node [above] {$e_4'$} (G)
(C) edge [red] node {$$} (F)
(A) edge[bend left] node [above] {$e_1$}(B)
(B) edge[bend left] node [right] {$e_2$}(C)
(A) edge[bend right] node [left] {$e_3$}(C);
\end{tikzpicture}
\]
We also have a morphism $\mu \colon 1 \to F(\emptyset)$ which is, in fact, an isomorphism as the empty graph with no edges is the only possible graph structure on $\emptyset$. However, as pointed out by the anonymous referee of Moeller and Vasilakopoulou's paper \cite{MV}, $\mu$ does not obey the hexagon law required of a lax monoidal functor! We explain why at the start of Chapter \ref{Chapter3}. 
To fix this, we can use Mac Lane's Theorem to choose a small strict monoidal category equivalent to $(\mathsf{FinSet},+,0)$---that is, one for which the associator and unitors are identities. (See Theorem \ref{Mac Lane} below.)   Henceforth we use $(\mathsf{FinSet},+,0)$ to denote this small strict monoidal category.
Then we obtain the desired lax monoidal functor $F \colon (\mathsf{FinSet},+,0) \to (\mathsf{Set},\times,1)$, so we can apply Theorem \ref{Fong2} and get a monoidal category of decorated cospans. 
Unfortunately, we cannot use Fong's Theorem (Theorem \ref{Fong}) to make this category symmetric monoidal, as there is no symmetric monoidal category equivalent to $(\mathsf{FinSet},+,0)$ for which the symmetries are identities. By Theorem \ref{Fong2}, we have the following:

\begin{corollary}
Let $F \colon (\mathsf{FinSet},+,0) \to (\mathsf{Set},\times,1)$ be the lax monoidal functor described above which assigns to $N \in \mathsf{FinSet}$ set of all graph structures whose underlying set of vertices is $N$. Then there exists a monoidal category $F\mathsf{Cospan}$ which has:
\begin{enumerate}
\item{objects as those of $(\mathsf{FinSet},+,0)$ and}
\item{morphisms as isomorphism classes of \define{open graphs}, where an open graph is given by a pair of diagrams:
\[
\begin{tikzpicture}[scale=1.5]
\node (A) at (0,0) {$N$};
\node (B) at (-1,-1) {$X$};
\node (C) at (1,-1) {$Y$};
\node (D) at (2,-0.5) {$E$};
\node (E) at (3,-0.5) {$N$};
\path[->,font=\scriptsize,>=angle 90]
(D) edge [bend left] node [above] {$s$} (E)
(D) edge [bend right] node [below] {$t$} (E)
(B) edge node [left] {$i$}(A)
(C) edge node [right] {$o$}(A);
\end{tikzpicture}
\]
Two open graphs are in the same isomorphism class if the following diagrams commute:
\[
\begin{tikzpicture}[scale=1.5]
\node (A) at (0,0) {$N$};
\node (B) at (-1,-1) {$X$};
\node (C) at (1,-1) {$Y$};
\node (F) at (0,-2) {$N'$};
\path[->,font=\scriptsize,>=angle 90]
(A) edge node [left] {$f$} (F)
(A) edge node [right] {$\sim$} (F)
(B) edge node [left] {$i$}(A)
(C) edge node [right] {$o$}(A)
(B) edge node [left] {$i'$}(F)
(C) edge node [right] {$o'$}(F);
\end{tikzpicture}
\]
\[
\begin{tikzpicture}[scale=1.5]
\node (A) at (5,-0.5) {$E$};
\node (E) at (6,0) {$N$};
\node (E') at (6,-1) {$N'$};
\node (B) at (7,-0.5) {$E$};
\node (F) at (8,0) {$N$};
\node (F') at (8,-1) {$N'$};
\path[->,font=\scriptsize,>=angle 90]
(E) edge node [right] {$f$} (E')
(F) edge node [right] {$f$} (F')
(A) edge node[above]{$s$}(E)
(A) edge node [below] {$s'$} (E')
(B) edge node[above]{$t$}(F)
(B) edge node [below] {$t'$} (F');
\end{tikzpicture}
\]
for some isomorphism $f$. Composition and tensoring of objects and morphisms are given as in Theorem \ref{Fong}.}
\end{enumerate}
\end{corollary}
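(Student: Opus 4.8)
The plan is to obtain this corollary as a direct application of Theorem \ref{Fong2} to the specific functor $F$ constructed above, so that essentially all that remains is to translate the abstract notion of an $F$-decorated cospan into the concrete language of open graphs. The preceding discussion has already done the substantive work: it exhibits $F \maps (\FinSet,+,0) \to (\Set,\times,1)$ as a lax monoidal functor, once we replace $\FinSet$ by the small strict monoidal category furnished by Mac Lane's Theorem (Theorem \ref{Mac Lane}), so that the laxator $\mu$ obeys the required coherence laws. Granting this, Theorem \ref{Fong2} immediately yields a monoidal category $F\mathsf{Cospan}$ whose objects are those of $\FinSet$ and whose morphisms are isomorphism classes of $F$-decorated cospans. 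The content of the corollary is then the identification of these abstract data with the pictured open-graph data.

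First I would unpack a morphism. By Theorem \ref{Fong2}, an $F$-decorated cospan consists of a cospan $X \xrightarrow{i} N \xleftarrow{o} Y$ in $\FinSet$ together with a decoration $d \in F(N)$. By the definition of $F$, such a $d$ is precisely a graph structure $s,t \maps E \to N$ on $N$, i.e.\ a diagram $E \rightrightarrows N$. Assembling these gives exactly the pair of diagrams displayed in the statement, so the morphisms of $F\mathsf{Cospan}$ are open graphs.

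Next I would verify that the isomorphism relation matches. In Theorem \ref{Fong}, two $F$-decorated cospans are identified when there is an isomorphism $f \maps N \to N'$ commuting with the cospan legs and satisfying $F(f) \circ d = d'$ (viewing $d \maps 1 \to F(N)$ and $d' \maps 1 \to F(N')$). The commuting-legs condition is precisely the first triangle diagram in the statement. For the second condition, I would recall that $F(f)$ acts on a graph structure by pushing forward along $f$, replacing $s,t$ by $fs, ft$; hence $F(f) \circ d = d'$ unwinds to $s' = fs$ and $t' = ft$, which is exactly the second commuting square. Thus two open graphs lie in the same isomorphism class iff both displayed diagrams commute for some isomorphism $f$, as claimed.

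Finally, composition and tensoring are inherited verbatim from Theorem \ref{Fong2}: composition glues the cospans by pushout over the shared leg and transports decorations via the laxator followed by $F(j)$, while the tensor is computed pointwise. For this particular $F$ the laxator is the pointwise-coproduct map $\mu_{N_1,N_2}$ described above, so these operations reduce to the familiar coproducts of edge sets and source/target maps, giving the expected open-graph composite and tensor. The one point that genuinely requires care---and the only real obstacle---is the coherence of $F$ as a lax monoidal functor: the relevant axioms fail for the naive $\FinSet$, which is exactly why we must first pass to the strict monoidal model and why we land in the merely monoidal Theorem \ref{Fong2} rather than the symmetric Theorem \ref{Fong}. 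Once that replacement is in force no further coherence checking is needed, since all categorical structure is imported wholesale from the general theorem.
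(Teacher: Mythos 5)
Your proposal is correct and follows the same route as the paper: the corollary is obtained by applying Theorem \ref{Fong2} to the lax monoidal functor $F$ (made coherent by first replacing $(\mathsf{FinSet},+,0)$ with a strict monoidal model via Mac Lane's Theorem), and then unpacking $F$-decorated cospans, their isomorphisms, and their composites as open graphs. Your identification of $F(f)\circ d = d'$ with the conditions $s'=fs$, $t'=ft$, and your remark on why only a merely monoidal (not symmetric monoidal) category results, match the paper's reasoning exactly.
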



Again we emphasize that in the above theorem we are using $(\mathsf{FinSet},+, 0)$ to mean some small strict monoidal category equivalent to the usual category of this name. For any object $N$ in this category, $F(N)$ is the set of all graph structures on $N$ defined using this equivalent category. Thus, given graph structures on objects $N_1, N_2$ and $N_3$:
\[
\begin{tikzpicture}[scale=1.5]
\node (D) at (2,-0.5) {$E_1$};
\node (E) at (3,-0.5) {$N_1$};
\node (D') at (4,-0.5) {$E_2$};
\node (E') at (5,-0.5) {$N_2$};
\node (D'') at (6,-0.5) {$E_3$};
\node (E'') at (7,-0.5) {$N_3$};
\path[->,font=\scriptsize,>=angle 90]
(D) edge [bend left] node [above] {$s_1$} (E)
(D) edge [bend right] node [below] {$t_1$} (E)
(D') edge [bend left] node [above] {$s_2$} (E')
(D') edge [bend right] node [below] {$t_2$} (E')
(D'') edge [bend left] node [above] {$s_3$} (E'')
(D'') edge [bend right] node [below] {$t_3$} (E'');
\end{tikzpicture}
\]
the following two graph structures are equal:
\[
\begin{tikzpicture}[scale=1.5]
\node (D) at (0,-0.5) {$E_1+(E_2+E_3)$};
\node (E) at (3,-0.5) {$N_1+(N_2+N_3)$};
\node (D'') at (5,-0.5) {$(E_1+E_2)+E_3$};
\node (E'') at (8,-0.5) {$N_1+(N_2+N_3)$};
\path[->,font=\scriptsize,>=angle 90]
(D) edge [bend left] node [above] {$s_1+(s_2+s_3)$} (E)
(D) edge [bend right] node [below] {$t_1+(t_2+t_3)$} (E)
(D'') edge [bend left] node [above] {$(s_1+s_2)+s_3$} (E'')
(D'') edge [bend right] node [below] {$(t_1+t_2)+t_3$} (E'');
\end{tikzpicture}
\]
This strictification in the graph structures is necessary in order for the functor $F$ of the previous corollary to be lax monoidal. We will also employ this strictification of structures in the following two applications.
\subsection{Electrical circuits}\label{k-graphs}
The remaining two applications, while taking on more of an applied flavor, are structurally very similar.
\begin{definition}
Given a field $k$, a \define{field with positive elements} is a pair $(k,k^+)$ where $k^+ \subset k$ is a subset such that $r^2 \in k^+$ for every nonzero $r \in k$ and such that $k^+$ is closed under addition, multiplication and division.
\end{definition}
\begin{definition}\label{definition:k-graph}
Let $k$ be a field with positive elements. A $k$\define{-graph} is given by a diagram:
\[
\begin{tikzpicture}[scale=1.5]
\node (C) at (1,-0.5) {$k^+$};
\node (D) at (2,-0.5) {$E$};
\node (E) at (3,-0.5) {$N$};
\path[->,font=\scriptsize,>=angle 90]
(D) edge [bend left] node [above] {$s$} (E)
(D) edge [bend right] node [below] {$t$} (E)
(D) edge node [above] {$r$}(C);
\end{tikzpicture}
\]
where $r(e) \in k^+$ is the \define{resistance} along the edge $e \in E$.
\end{definition}
Following the same ideas as in the previous example and using a small strict monoidally equivalent copy of $\mathsf{FinSet}$, we see there is a lax monoidal functor that assigns to any $N \in \mathsf{FinSet}$ the set of all $k$-graph structures on $N$.  Thus, by Theorem \ref{Fong2}, we have the following.
\begin{theorem}
Let $F \colon (\mathsf{FinSet},+,0) \to (\mathsf{Set},\times,1)$ be the lax monoidal functor which assigns to any $N \in \mathsf{FinSet}$ the set of all $k$-graph structures on $N$. Then there exists a monoidal category $F\mathsf{Cospan}$ which has:
\begin{enumerate}
\item{objects as those of $(\mathsf{FinSet},+,0)$ and}
\item{morphisms as isomorphism classes of \define{open $k$-graphs}, where an open $k$-graph is given by a pair of diagrams:
\[
\begin{tikzpicture}[scale=1.5]
\node (A) at (0,0) {$N$};
\node (B) at (-1,-1) {$X$};
\node (C) at (1,-1) {$Y$};
\node (F) at (2,-0.5) {$k^+$};
\node (D) at (3,-0.5) {$E$};
\node (E) at (4,-0.5) {$N$};
\path[->,font=\scriptsize,>=angle 90]
(D) edge node[above] {$r$} (F)
(D) edge [bend left] node [above] {$s$} (E)
(D) edge [bend right] node [below] {$t$} (E)
(B) edge node [left] {$i$}(A)
(C) edge node [right] {$o$}(A);
\end{tikzpicture}
\]
Two open graphs are in the same isomorphism class if the following diagrams commute:
\[
\begin{tikzpicture}[scale=1.5]
\node (A) at (0,0) {$N$};
\node (B) at (-1,-1) {$X$};
\node (C) at (1,-1) {$Y$};
\node (F) at (0,-2) {$N'$};
\path[->,font=\scriptsize,>=angle 90]
(A) edge node [left] {$f$} (F)
(A) edge node [right] {$\sim$} (F)
(B) edge node [left] {$i$}(A)
(C) edge node [right] {$o$}(A)
(B) edge node [left] {$i'$}(F)
(C) edge node [right] {$o'$}(F);
\end{tikzpicture}
\]
\[
\begin{tikzpicture}[scale=1.5]
\node (C) at (3,-0.5) {$k^+$};
\node (D) at (4,-0.5) {$E$};
\node (A) at (5,-0.5) {$E$};
\node (E) at (6,0) {$N$};
\node (E') at (6,-1) {$N'$};
\node (B) at (7,-0.5) {$E$};
\node (F) at (8,0) {$N$};
\node (F') at (8,-1) {$N'$};
\path[->,font=\scriptsize,>=angle 90]
(E) edge node [right] {$f$} (E')
(F) edge node [right] {$f$} (F')
(D) edge[bend right] node[above]{$r$} (C)
(A) edge node[above]{$s$}(E)
(A) edge node [below] {$s'$} (E')
(D) edge[bend left] node[below]{$r'$} (C)
(B) edge node[above]{$t$}(F)
(B) edge node [below] {$t'$} (F');
\end{tikzpicture}
\]
for some isomorphism $f$. Composition and tensoring of objects and morphisms are given as in Theorem \ref{Fong}.}
\end{enumerate}
\end{theorem}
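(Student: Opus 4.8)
The plan is to construct the decorating functor $F$ explicitly, verify that it is lax monoidal, and then invoke Theorem \ref{Fong2}. The argument runs almost exactly parallel to the graph example of Section \ref{Graphs}; the only new ingredient is the resistance decoration $r \colon E \to k^+$, which turns out to cause no extra difficulty. First I would replace $(\mathsf{FinSet},+,0)$ by a small strict monoidal category equivalent to it, exactly as in the graph case, so that for each object $N$ the collection $F(N)$ of all $k$-graph structures with vertex set $N$ is a genuine set rather than a proper class. On objects, $F(N)$ is the set of diagrams $k^+ \xleftarrow{r} E \rightrightarrows N$ (with the two arrows $s,t$); on a morphism $f \colon N \to N'$ I would define $F(f)$ to push the source and target forward, setting $s' = fs$ and $t' = ft$, while leaving the resistance function $r$ untouched, since $r$ is a function on edges and the edge set is unchanged. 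Preservation of identities and composites is immediate, so $F$ is a functor.

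Next I would supply the lax monoidal structure. Given $k$-graphs $d_1$ on $N_1$ and $d_2$ on $N_2$, their disjoint union yields a $k$-graph on $N_1 + N_2$ with edge set $E_1 + E_2$, source and target $s_1 + s_2$ and $t_1 + t_2$, and resistance $r_1 + r_2 \colon E_1 + E_2 \to k^+$ determined by the universal property of the coproduct. Sending $(d_1,d_2) \mapsto d_1 + d_2$ defines the candidate laxator $\mu_{N_1,N_2} \colon F(N_1) \times F(N_2) \to F(N_1+N_2)$, and the unit map $1 \to F(\emptyset)$ selects the unique empty $k$-graph on $\emptyset$. Naturality of $\mu$ in both arguments follows from functoriality of the coproduct together with the compatibility of pushforward with disjoint union.

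The main obstacle is the coherence of this lax monoidal structure, and it is exactly the obstruction flagged in the graph example: on the genuinely non-strict category $(\mathsf{FinSet},+,0)$ the naive laxator fails the hexagon/associativity axiom required of a lax monoidal functor. I would resolve this precisely as before, by appealing to Mac Lane's strictification (Theorem \ref{Mac Lane}) to work with a small strict monoidal category equivalent to $(\mathsf{FinSet},+,0)$—one whose associator and unitors are identities—and by correspondingly strictifying the $k$-graph structures so that the triple disjoint unions $E_1 + (E_2 + E_3)$ and $(E_1 + E_2) + E_3$, together with their source, target, and resistance data, literally coincide. With the associator trivialized, the coherence diagrams collapse to identities and $F$ becomes honestly lax monoidal. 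The resistance decoration introduces nothing new here, since $r$ is strictified in lockstep with the underlying source and target functions.

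Finally, having established $F$ as a lax monoidal functor $(\mathsf{FinSet},+,0) \to (\mathsf{Set},\times,1)$, I would apply Theorem \ref{Fong2} verbatim. This produces a monoidal category $F\mathsf{Cospan}$ whose objects are those of $(\mathsf{FinSet},+,0)$ and whose morphisms are isomorphism classes of $F$-decorated cospans. Unwinding the definition of $F$ identifies these decorated cospans with the open $k$-graphs displayed in the statement, and the isomorphism relation of Theorem \ref{Fong} specializes to the condition that the two stated squares commute for some isomorphism $f$. Composition and tensoring of objects and morphisms are inherited directly from the construction in Theorem \ref{Fong} as restricted by Theorem \ref{Fong2}, which completes the proof.
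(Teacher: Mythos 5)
Your proposal is correct and follows exactly the route the paper takes: the paper simply remarks that, "following the same ideas as in the previous example and using a small strict monoidally equivalent copy of $\mathsf{FinSet}$," one obtains the lax monoidal functor $F$ and then invokes Theorem \ref{Fong2}. Your writeup fills in the details (the definition of $F$ on morphisms, the laxator via disjoint union, the strictification needed for the hexagon) more explicitly than the paper does, but the argument is the same.
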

An electrical circuit made of resistors can then be seen as a $k$-graph in which we take the field $k$ to be $\R$ and take $k^+$ to consist of the positive real numbers.  Baez and Fong also consider more general circuits containing resistors, inductors and capacitors, using a larger field with positive elements \cite{BF}.   They study the behavior of these circuits using a `black-boxing' functor from $F\Cospan$ to a category of linear relations.

\subsection{Petri nets}
Our final example involves Petri nets, which have been studied extensively by Baez and Master in a recent work \cite{BM}.
\begin{definition}\label{definition:Petri_net}
A \define{Petri net} is given by the following diagram in $\mathsf{Set}$.
\[
\begin{tikzpicture}[scale=1.5]
\node (D) at (3,-0.5) {$T$};
\node (E) at (4,-0.5) {$\mathbb{N}[S]$};
\path[->,font=\scriptsize,>=angle 90]
(D) edge [bend left] node [above] {$s$} (E)
(D) edge [bend right] node [below] {$t$} (E);
\end{tikzpicture}
\]
We call $S$ the set of \define{species} and $T$ the set of \define{transitions}; $\N[S]$ stands for the free commutative monoid on $S$.
\end{definition}

In this example, we wish to use Fong's Theorem with a functor $F$ that assigns to each set $S$ the set $F(S)$ of all Petri nets having $S$ as their set of species.   Unfortunately, if we do this, $F(S)$ is not a set: it is a proper class.  To avoid this problem, we invoke the axiom of universes and choose a Grothendieck universe $U$.  We call sets in $U$ \define{small} and arbitrary sets \define{large}.  

We let $(\mathsf{Set},+, 0)$ be a strict monoidal category that is monoidally equivalent to the category of small sets with coproduct as its monoidal structure.    The category $(\mathsf{Set},+,0)$ is a large category: more precisely, it is a category with a large set of objects and a large set of morphisms. For any $S \in (\mathsf{Set},+,0)$, there is a large set $F(S)$ of Petri nets having $S$ as its set of species and some $T \in (\mathsf{Set},+,0)$ as its set of transitions. We write $(\mathsf{SET}, \times, 1)$ for the category of large sets with product as its monoidal structure. We can make $F \colon (\mathsf{Set},+,0) \to (\mathsf{SET}, \times, 1)$ into a lax monoidal functor where the natural transformation $$\mu_{S_1,S_2} \colon F(S_1) \times F(S_2) \to F(S_1+S_2)$$ is obtained in the same way as the previous natural transformations in the last three examples, namely by considering two individual Petri nets in parallel as a single Petri net. By Fong's Theorem \ref{Fong2}, we have the following.

\begin{theorem}
Let $F \colon (\mathsf{Set},+,0) \to (\mathsf{SET},\times,1)$ be the lax monoidal functor that assigns to a set $S$ the large set $F(S)$ of all Petri nets whose set of species is given by the set $S$. Then there exists a monoidal category $F\mathsf{Cospan}$ which has:
\begin{enumerate}
\item{objects as those of $(\mathsf{Set},+,0)$ and}
\item{morphisms as isomorphism classes of \define{open Petri nets} which are given by pairs of diagrams:
\[
\begin{tikzpicture}[scale=1.5]
\node (A) at (0,0) {$S$};
\node (B) at (-1,-1) {$X$};
\node (C) at (1,-1) {$Y$};
\node (D) at (2,-0.5) {$T$};
\node (E) at (3,-0.5) {$\mathbb{N}(S)$};
\path[->,font=\scriptsize,>=angle 90]
(D) edge [bend left] node [above] {$s$} (E)
(D) edge [bend right] node [below] {$t$} (E)
(B) edge node [left] {$i$}(A)
(C) edge node [right] {$o$}(A);
\end{tikzpicture}
\]
Two open Petri nets are in the same isomorphism class if the following diagrams commute:
\[
\begin{tikzpicture}[scale=1.5]
\node (A) at (0,0) {$S$};
\node (B) at (-1,-1) {$X$};
\node (C) at (1,-1) {$Y$};
\node (F) at (0,-2) {$S'$};
\path[->,font=\scriptsize,>=angle 90]
(A) edge node [left] {$f$} (F)
(A) edge node [right] {$\sim$} (F)
(B) edge node [left] {$i$}(A)
(C) edge node [right] {$o$}(A)
(B) edge node [left] {$i'$}(F)
(C) edge node [right] {$o'$}(F);
\end{tikzpicture}
\]
\[
\begin{tikzpicture}[scale=1.5]
\node (A) at (5,-0.5) {$T$};
\node (E) at (6,0) {$\mathbb{N}[S]$};
\node (E') at (6,-1) {$\mathbb{N}[S']$};
\node (B) at (7,-0.5) {$T$};
\node (F) at (8,0) {$\mathbb{N}[S]$};
\node (F') at (8,-1) {$\mathbb{N}[S']$};
\path[->,font=\scriptsize,>=angle 90]
(E) edge node [right] {$\mathbb{N}[f]$} (E')
(F) edge node [right] {$\mathbb{N}[f]$} (F')
(A) edge node[above]{$s$}(E)
(A) edge node [below] {$s'$} (E')
(B) edge node[above]{$t$}(F)
(B) edge node [below] {$t'$} (F');
\end{tikzpicture}
\]
for some isomorphism $f$. Composition and tensoring of objects and morphisms is given as in Theorem \ref{Fong}.
}
\end{enumerate}
\end{theorem}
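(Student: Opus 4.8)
The plan is to check that $F \maps (\Set,+,0) \to (\mathsf{SET},\times,1)$ is a lax monoidal functor and then feed it to Theorem \ref{Fong2}; the category $F\Cospan$, together with its composition, its tensor, and the identification of open Petri nets up to isomorphism, is then produced automatically. The argument runs exactly parallel to the graph and $k$-graph examples, and as there we will only obtain a monoidal (not symmetric monoidal) category, since no symmetric strictification of $(\Set,+,0)$ is available.

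First I would make $F$ into a functor. The free commutative monoid construction $S \mapsto \N[S]$ is itself functorial, so a function $f \maps S \to S'$ yields a monoid homomorphism $\N[f] \maps \N[S] \to \N[S']$. Given a Petri net $s,t \maps T \to \N[S]$ on $S$, I push it forward to the Petri net $\N[f]\,s,\, \N[f]\,t \maps T \to \N[S']$ on $S'$, retaining the same transition set $T$; functoriality of $\N[-]$ shows at once that this preserves composites and identities. The universe argument given above is what guarantees that each $F(S)$ is a genuine (large) set rather than a proper class, so that $F$ indeed lands in $(\mathsf{SET},\times,1)$.

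Next I would supply the lax structure. The laxator $\mu_{S_1,S_2} \maps F(S_1)\times F(S_2) \to F(S_1+S_2)$ places two Petri nets side by side: given $s_i,t_i \maps T_i \to \N[S_i]$, I form the transition set $T_1+T_2$ and compose the source and target maps with the injections $\N[S_i] \hookrightarrow \N[S_1+S_2]$ induced by $S_i \hookrightarrow S_1+S_2$, obtaining a single Petri net on $S_1+S_2$. The unit map $1 \to F(0)$ selects the empty Petri net, the one with empty transition set on $\N[\emptyset]$. Naturality of $\mu$ in each variable is a routine diagram chase from the functoriality of $\N[-]$ and of the coproduct.

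The crux, and the step I expect to be the genuine obstacle, is coherence: the associativity hexagon and the unit triangles that $(\mu,\epsilon)$ must satisfy to make $F$ lax monoidal. As the referee's remark in the graph example shows, these identities fail for the naive laxator over the usual symmetric monoidal category of sets, precisely because the associator and unitors of $(\Set,+)$ are not identities. The remedy, applied verbatim as in the previous two examples, is to replace $(\Set,+,0)$ by the monoidally equivalent \emph{strict} monoidal category furnished by Mac Lane's strictification theorem (Theorem \ref{Mac Lane}) and to build all Petri net structures over this strict copy. Once the associators and unitors are identities, the hexagon collapses to the strict equality $T_1+(T_2+T_3)=(T_1+T_2)+T_3$ together with the evident identification of $\N[-]$ with coproducts, and the unit triangles become trivial. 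This establishes that $F$ is lax monoidal, whereupon Theorem \ref{Fong2} delivers the monoidal category $F\Cospan$ with exactly the objects, open-Petri-net morphisms, isomorphism relation, composition, and tensor described in the statement.
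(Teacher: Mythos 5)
Your proposal is correct and follows essentially the same route as the paper: invoke a Grothendieck universe so that $F(S)$ is a large set rather than a proper class, replace $(\mathsf{Set},+,0)$ by a monoidally equivalent strict monoidal category via Mac Lane's theorem so that the laxator hexagon and unit squares commute on the nose, define the laxator by placing Petri nets side by side, and then apply Theorem \ref{Fong2}. The paper only sketches these steps before stating the theorem, and your writeup is a faithful expansion of that same argument.
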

Following ideas similar to those in the last two examples, Baez and Master study the reachability relation of states of open Petri nets via black-boxing \cite{BM}. They in fact go further and construct a `double category' of open Petri nets and a corresponding black box double functor which shows a certain compatibility relation between `maps of open Petri nets' and their black-boxings. Double categories are at the heart of this thesis and we will begin using them in the next chapter.

$\textnormal{ }$
}

{\ssp
\chapter{Structured cospan double categories}\label{Chapter3}
The present chapter is about a particular kind of double categories, namely `foot-replaced double categories'. The first main result of this chapter is the construction of foot-replaced double categories in Theorem \ref{trick1} and the corresponding symmetric monoidal versions of these in Theorem \ref{trick2}. 
The most important kind of foot-replaced double categories are the `structured cospan double categories', which are the content of Theorem \ref{SC}. In Section \ref{SecSCApp} we revisit the applications of Section \ref{FongApps}, but from the perspective of structured cospans. In Section \ref{SCMaps} we define maps of foot-replaced double categories, of which maps between structured cospan double categories are a special case. But first, let us explain the need for some of these concepts. At this point it would be fruitful for readers unfamiliar with double categories to read Appendix \ref{DoubleCatAppendix}.

Recall the first example of Fong's theory of decorated cospans introduced in the previous chapter. Let $F \colon \textsf{FinSet} \to \textsf{Set}$ be the symmetric lax monoidal functor that assigns to a finite set $b$ the (large) set of all possible graph structures on the finite set $b$, where a graph structure on $b$ is given by a diagram in $\textsf{Set}$ of the form:
\[

\]
This commutative triangle in $\mathsf{Set}$ in the context of the symmetric lax monoidal functor $F \colon \mathsf{FinSet} \to \mathsf{Set}$ says the following: given a decoration $d \in F(b)$, which is a graph structure with underlying set of vertices $b$, the function $F(f)$ pushes forward the graph structure $d$ to the graph structure $d^\prime \in F(b^\prime)$ with underlying set of vertices $b^\prime$, and \emph{precisely} this graph structure. The graph structure is given by the set of edges of $d$. For example, take $b = \{v_1 ,v_2\}$ as before and let $d \in F(b)$ be given by:
\[

\]
Thus, these two $F$-decorated cospans constitute distinct isomorphism classes! This nuisance is amplified when viewed from a higher categorical perspective, as seen in the first attempt at building a bicategory of decorated cospans \cite{Cour}. In the first proposed bicategory $F\mathbf{Cospan}(\mathsf{C})$, there is no 2-morphism from the former single-edged graph to the latter, when clearly there ought to be. The theory of foot-replaced double categories serves to remedy this situation. Again, for an introduction to double categories, see Appendix \ref{DoubleCatAppendix}.

Another obstacle with decorated cospans was pointed out by an anonymous referee of Moeller and Vasilakopoulou \cite{MV}. For the original incarnation of decorated cospans, we start with a symmetric lax monoidal functor $F \colon (\mathsf{C},+,0) \to (\mathsf{Set},\times,1)$ where $\mathsf{C}$ is a finitely cocomplete category made symmetric monoidal with chosen binary coproducts and an initial object. The anonymous referee has pointed out that even in the simplest of examples, namely the example of open graphs in Section \ref{Graphs}, the `laxator hexagon' required to commute in the definition of symmetric lax monoidal functor (Definition \ref{defn:monoidal_functor}) may do so only up to isomorphism. This can be seen explicitly with the following example.

Let $F \colon (\mathsf{FinSet},+,0) \to (\mathsf{Set},\times,1)$ be the functor of Section \ref{Graphs}. In order for $F$ to actually be a lax monoidal functor, the following laxator hexagon must commute:
\[
\begin{tikzpicture}[scale=1.5]
\node (A) at (0,0.5) {$(F(a) \otimes F(b)) \otimes F(c)$};
\node (A') at (4.5,0.5) {$F(a) \otimes (F(b) \otimes F(c))$};
\node (B) at (0,-0.25) {$F(a \otimes b) \otimes F(c)$};
\node (C) at (4.5,-0.25) {$F(a) \otimes F(b \otimes c)$};
\node (C') at (0,-1) {$F((a \otimes b) \otimes c)$};
\node (D) at (4.5,-1) {$F(a \otimes (b \otimes c))$};
\path[->,font=\scriptsize,>=angle 90]
(A) edge node[above]{$\alpha'$} (A')
(A) edge node[left]{$\mu_{a,b} \otimes 1_{F(c)}$} (B)
(A') edge node[right]{$1_{F(a)} \otimes \mu_{b,c}$} (C)
(B) edge node[left]{$\mu_{a \otimes b,c}$} (C')
(C) edge node [right] {$\mu_{a,b \otimes c}$} (D)
(C') edge node [above] {$F(\alpha)$} (D);
\end{tikzpicture}
\]
Let $a=\{a_1,a_2\}$, $b=\{b_1,b_2\}$ and $c=\{c_1,c_2\}$ all be two-element sets, and let $d_a \in F(a)$, $d_b \in F(b)$ and $d_c \in F(c)$ be given by the following graph structures:
\[
\begin{tikzpicture}
  [scale=.8,auto=left]
  \node [style={circle,fill=teal}] (n1) at (1,10) {$a_1$};
  \node[style={circle,fill=teal}] (n2) at (4,10)  {$a_2$};
  \node [style={circle,fill=teal}] (n1') at (6,10) {$b_1$};
  \node[style={circle,fill=teal}] (n2') at (9,10)  {$b_2$};
  \node [style={circle,fill=teal}] (n1'') at (11,10) {$c_1$};
  \node[style={circle,fill=teal}] (n2'') at (14,10)  {$c_2$};
\path[->,font=\scriptsize,>=angle 90]
(n1) edge node[above]{$e_a$} (n2)
(n1') edge node[above]{$e_b$} (n2')
(n1'') edge node[above]{$e_c$} (n2'');
\end{tikzpicture}
\]
Then the graph $(d_a \times d_b) \times d_c \in (F(a) \otimes F(b)) \otimes F(c) = (F(a) \times F(b)) \times F(c)$ is an object of the category given by the top left corner of the above hexagon. Starting from this top left corner and traversing the object $(d_a \times d_b) \times d_c$ through the hexagon right and then down to $F(a \otimes (b \otimes c))=F(a + (b + c))$ results in a graph with vertex set $a+(b+c)$ and edge set $\{e_a\} + (\{e_b\} + \{e_c\})$, whereas traversing the hexagon down and then right results in a graph with the same vertex set $a+(b+c)$ but now edge set $(\{e_a\}+\{e_b\}) + \{e_c\}$. These two graphs would visually appear to be the same and indeed have the same sets of vertices, but their edge sets would only be (naturally) isomorphic, causing the above hexagon to not commute on-the-nose as required by the definition of lax monoidal functor.

One remedy to this as suggested by John Baez is to replace the finitely cocomplete category $(\mathsf{FinSet},+,0)$ containing our graph structures with an equivalent strictified version courtesy of a theorem of Mac Lane:

\begin{theorem}[Mac Lane \cite{ML}]\label{Mac Lane}
Given a (braided, symmetric) monoidal category $\mathsf{C}$, there exists a strict (braided, symmetric) monoidal category $\mathsf{C}'$ and a (braided) monoidal equivalence $F \colon \mathsf{C} \to \mathsf{C}'$. 
\end{theorem}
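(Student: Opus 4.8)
The plan is to prove the theorem by the standard \emph{representation} (or ``translation'') argument, realizing $\mathsf{C}$ up to monoidal equivalence inside a manifestly strict monoidal category built from endofunctors, where strictness comes for free from the fact that composition of functors is strictly associative and strictly unital. Note first what ``strict'' must mean here: as the thesis already observes for $(\mathsf{FinSet},+,0)$, one cannot in general force the symmetry to be an identity, so in the braided and symmetric cases ``strict'' refers only to the associativity and unit constraints, while the braiding is merely transported along the equivalence.

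First I would define the target category $\mathsf{C}'$. Its objects are \emph{right $\mathsf{C}$-module endofunctors}: pairs $(F,s)$ consisting of a functor $F \colon \mathsf{C} \to \mathsf{C}$ together with a natural isomorphism $s_{x,y} \colon F(x) \otimes y \to F(x \otimes y)$ compatible with the associator and unitors of $\mathsf{C}$; a morphism $(F,s) \to (G,t)$ is a natural transformation commuting with $s$ and $t$. I would take the monoidal product on $\mathsf{C}'$ to be composition of endofunctors (with the induced module structure) and the unit to be the identity functor with its canonical module structure. Because composition of functors is strictly associative and strictly unital, and the induced module data compose on the nose, $(\mathsf{C}', \circ, \mathrm{Id})$ is a \emph{strict} monoidal category; this is the whole point of the construction, and it requires no coherence input beyond checking that composites of module endofunctors are again module endofunctors.

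Next I would produce the comparison functor $\Phi \colon \mathsf{C} \to \mathsf{C}'$ sending an object $c$ to the translation $(c \otimes -)$ equipped with the associator as its module structure, and a morphism $f$ to the natural transformation $f \otimes (-)$. The associativity and unit constraints of $\mathsf{C}$ make $\Phi$ a strong monoidal functor, since $\Phi(c) \circ \Phi(c')$ and $\Phi(c \otimes c')$ differ only by an associator, which is a module isomorphism. I would then check the two halves of ``equivalence.'' For \emph{essential surjectivity}, given any module endofunctor $(F,s)$ I set $c := F(I)$ and use $s^{-1}$ together with the unitor to build a module isomorphism $F \cong (c \otimes -) = \Phi(c)$. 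For \emph{full faithfulness}, I would show that a module natural transformation $(c \otimes -) \Rightarrow (c' \otimes -)$ is completely determined by its component at $I$, which via the unitors corresponds to a unique morphism $c \to c'$ in $\mathsf{C}$; this is exactly where the module condition does its work, cutting the otherwise-too-large space of plain natural transformations down to $\mathsf{C}(c,c')$.

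Finally, for the braided and symmetric refinements I would transport the braiding of $\mathsf{C}$ across the strong monoidal equivalence $\Phi$: since $\Phi$ is an equivalence, there is a braiding on $\mathsf{C}'$ for which $\Phi$ is braided, and the hexagon (resp. symmetry) axioms for this induced braiding follow because $\Phi$ is fully faithful and reflects the commuting diagrams from $\mathsf{C}$. The main obstacle I anticipate is not essential surjectivity or full faithfulness, which are clean, but the bookkeeping establishing that $(\mathsf{C}', \circ, \mathrm{Id})$ is genuinely \emph{strict}: one must verify that the module-structure isomorphisms attached to a composite of module endofunctors are literally equal (not merely isomorphic) under reassociation, and that $\Phi$'s strong-monoidal coherence cells interact correctly with the transported braiding. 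These are precisely the coherence-type verifications that the representation construction is designed to reduce to routine naturality computations rather than an appeal to the full coherence theorem.
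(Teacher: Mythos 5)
The thesis does not actually prove this statement---it is quoted from Mac Lane's book and used as a black box---so there is no in-paper argument to compare yours against; your proof has to stand on its own, and it does. What you describe is the standard ``Cayley-style'' strictification via right $\mathsf{C}$-module endofunctors (essentially the Joyal--Street proof), and the argument is correct: strictness of $(\mathsf{C}',\circ,\mathrm{Id})$ really does come for free once one checks that the module constraint attached to a composite $(F,s)\circ(G,t)$, namely $F(t_{x,y})\circ s_{G(x),y}$, associates on the nose for a triple composite (it does, by functoriality of $F$, with no coherence input); essential surjectivity via $c:=F(I)$ and $s_{I,y}^{-1}\circ F(\lambda_y^{-1})$ is clean; and full faithfulness follows from evaluating the module-naturality square at $x=I$ and using the triangle identity, exactly as you say. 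Two points deserve explicit care in a written-up version. First, the compatibility conditions on $s$ with $\alpha$, $\lambda$, $\rho$ should be stated, since they are what make the essential-surjectivity isomorphism a \emph{module} isomorphism and what cut the natural transformations down to $\mathsf{C}(c,c')$. Second, your opening caveat is exactly right and matches the thesis's own remark in Section \ref{Graphs}: the braiding cannot be made an identity in general, so in the braided and symmetric cases one only transports the braiding along the equivalence $\Phi$ (using a chosen quasi-inverse), and the hexagon and symmetry axioms for the transported braiding follow from full faithfulness. With those verifications spelled out, the proof is complete.
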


A monoidal equivalence $F$ is a functor that is simultaneously a monoidal functor and an equivalence, and a \emph{strict} (braided, symmetric) monoidal category is a (braided, symmetric) monoidal category in which the associator and left and right unitors are identity morphisms. By taking our graph structures from the strict monoidal category $(\mathsf{FinSet},+,0)$, the two graphs each with vertex sets $a+(b+c)$ and edge sets $\{e_a\} + (\{e_b\} + \{e_c\})$ and $(\{e_a\} + \{e_b\}) + \{e_c\}$ are now identified and thus the laxator hexagon commutes. A similar problem arises with two unitality squares which is also resolved by this strictification, and thus we obtain the lax monoidal functor $F \colon (\mathsf{FinSet},+,0) \to (\mathsf{Set},\times,1)$ of 
Section \ref{Graphs} and are able to utilize Theorem \ref{Fong2}. Unfortunately, we are unable to obtain the desired \emph{symmetric} monoidal category of Fong's original Theorem \ref{Fong}. Structured cospans will also serve as a remedy to this problem.

\section{Foot-replaced double categories}
The main content of this chapter are foot-replaced double categories as introduced in a work with Baez \cite{BC2}. A special case of foot-replaced double categories are given by structured cospan double categories. A \define{cospan} in any category is diagram of the form:
\[
\begin{tikzpicture}[scale=1.5]
\node (A) at (0,0) {$b$};
\node (B) at (-1,-1) {$a_1$};
\node (C) at (1,-1) {$a_2$};
\path[->,font=\scriptsize,>=angle 90]
(B) edge node [above] {$i$}(A)
(C) edge node [above] {$o$}(A);
\end{tikzpicture}
\]
We call $b$ the \define{apex} of the cospan, $i$ and $o$ the \define{legs} of the cospan, and $a_1$ and $a_2$ the \define{feet} of the cospan. In the framework of structured cospan double categories, given a functor $L \colon \mathsf{A} \to \mathsf{X}$ a \define{structured cospan} is a cospan in $\mathsf{X}$ of the form:
\[
\begin{tikzpicture}[scale=1.5]
\node (A) at (0,0) {$x$};
\node (B) at (-1,-1) {$L(a_1)$};
\node (C) at (1,-1) {$L(a_2)$};
\path[->,font=\scriptsize,>=angle 90]
(B) edge node [above] {$i$}(A)
(C) edge node [above] {$o$}(A);
\end{tikzpicture}
\]
Formally, this is a cospan in $\mathsf{X}$ whose feet are objects of $\mathsf{X}$, but from the perspective of structured cospans, the feet of this cospan are the objects $a_1$ and $a_2$ in $\mathsf{A}$. Here we are replacing the feet of the cospan in $\mathsf{X}$ with objects from another category $\mathsf{A}$, hence the name `foot-replaced double category'.
\begin{theorem} \label{trick1}
Given a double category $\mathbb{X}$ and a functor $L \maps \mathsf{A} \to \mathbb{X}_0$, there is a unique double category $_{L} \mathbb{X}$ for which:
\begin{itemize}
\item an object is an object of $\mathsf{A}$,
\item a vertical 1-morphism is a morphism of $\mathsf{A}$, 
\item a horizontal 1-cell from $a$ to $a'$ is a horizontal 1-cell $L(a) \xrightarrow{M} L(a')$ of $\mathbb{X}$, 
\item a 2-morphism is a 2-morphism in $\mathbb{X}$ of the form:
\[
\begin{tikzpicture}[scale=1.5]
\node (A) at (0,0) {$L(a)$};
\node (C) at (1,0) {$L(b)$};
\node (A') at (0,-1) {$L(a')$};
\node (C') at (1,-1) {$L(b')$,};
\node (B) at (0.5,-0.5) {$\Downarrow \alpha$};
\path[->,font=\scriptsize,>=angle 90]
(A) edge node[above]{$M$} (C)
(A) edge node [left]{$L(f)$} (A')
(C)edge node[right]{$L(g)$}(C')
(A')edge node [above] {$N$}(C');
\end{tikzpicture}
\]
\item composition of vertical 1-morphisms is composition in $\mathsf{A}$,
\item composition of horizontal 1-morphisms are defined as in $\mathbb{X}$,
\item vertical and horizontal composition of 2-morphisms is defined as in $\mathbb{X}$,
\item the associator and unitors are defined as in $\mathbb{X}$.
\end{itemize}
\end{theorem}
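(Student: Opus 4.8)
The plan is to build ${}_{L}\mathbb{X}$ by \emph{base change} along $L$, transporting every structure map from $\mathbb{X}$, and then to observe that since the statement prescribes each piece of data there is no freedom left, which yields uniqueness. First I would recall that a (pseudo) double category is a pseudo-category internal to $\Cat$: it consists of a category $\mathbb{X}_0$ of objects and vertical $1$-morphisms, a category $\mathbb{X}_1$ of horizontal $1$-cells and $2$-morphisms, source and target functors $S,T \maps \mathbb{X}_1 \to \mathbb{X}_0$, an identity-assigning functor $U \maps \mathbb{X}_0 \to \mathbb{X}_1$, a horizontal composition functor on $\mathbb{X}_1 \times_{\mathbb{X}_0} \mathbb{X}_1$, together with associator and unitor natural isomorphisms satisfying the pentagon and triangle identities. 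To construct ${}_{L}\mathbb{X}$ I set $({}_{L}\mathbb{X})_0 = \mathsf{A}$ and define $({}_{L}\mathbb{X})_1$ as the pullback
$$({}_{L}\mathbb{X})_1 = (\mathsf{A}\times\mathsf{A}) \times_{\mathbb{X}_0\times\mathbb{X}_0} \mathbb{X}_1,$$
taken along $L\times L$ and $(S,T)$. Unwinding this, an object of $({}_{L}\mathbb{X})_1$ is a horizontal $1$-cell $M \maps L(a)\to L(a')$ of $\mathbb{X}$ together with chosen feet $a,a'\in\mathsf{A}$, and a morphism is exactly a $2$-morphism $\alpha$ of $\mathbb{X}$ of the displayed shape, i.e.\ one whose vertical sides have the form $L(f)$ and $L(g)$. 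The source/target functors, the unit functor $a\mapsto U_{L(a)}$, and the horizontal composition functor are then the evident ones induced from $\mathbb{X}$.

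Next I would verify that these assignments genuinely land in $({}_{L}\mathbb{X})_1$; this is the step requiring actual checking. The content is that the prescribed class of $2$-morphisms is closed under both compositions and contains the needed units. Vertical composition of two such $2$-morphisms, with sides $L(f),L(g)$ and $L(f'),L(g')$, has sides $L(f')\circ L(f)=L(f'\circ f)$ and $L(g')\circ L(g)=L(g'\circ g)$, again of the required form by functoriality of $L$; horizontal composition is handled identically, using that composable horizontal $1$-cells $L(a)\to L(b)\to L(c)$ compose to one $L(a)\to L(c)$; and the unit $2$-cell on $M\maps L(a)\to L(a')$ has vertical sides $\id_{L(a)}=L(\id_a)$ and $\id_{L(a')}=L(\id_{a'})$. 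Hence $({}_{L}\mathbb{X})_1$ is a genuine category and $S,T,U$ and horizontal composition are genuine functors into it.

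For the coherence data I would take the associator and unitors of ${}_{L}\mathbb{X}$ to be precisely those of $\mathbb{X}$: these are globular $2$-isomorphisms whose vertical sides are identities, and since $\id_{L(a)}=L(\id_a)$ each such identity lies in the image of $L$, so they are admissible $2$-morphisms of ${}_{L}\mathbb{X}$. Their naturality and the pentagon and triangle identities then hold in ${}_{L}\mathbb{X}$ because they hold in $\mathbb{X}$ and every structure map of ${}_{L}\mathbb{X}$ was defined by restriction along the pullback; nothing new must be verified. Uniqueness is then immediate, since the statement fixes the objects, vertical $1$-morphisms, horizontal $1$-cells, $2$-morphisms, both compositions, and the associator and unitors, each either as data of $\mathsf{A}$ or ``as in $\mathbb{X}$,'' leaving no choices, so any double category meeting the listed conditions coincides with ${}_{L}\mathbb{X}$.

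I expect the only real obstacle to be this well-definedness/closure step---confirming that the special vertical boundaries of the form $L(f)$ are preserved under all operations and that the inherited associator and unitor cells have admissible boundaries---which is exactly where preservation of identities and composites by $L$ is used. Everything else is transported along the pullback for free.
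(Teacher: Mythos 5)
Your construction is correct and is exactly the ``straightforward verification'' that the paper asserts but does not write out: you realize $({}_{L}\mathbb{X})_1$ as the pullback of $\mathbb{X}_1$ along $L\times L$, check closure of the admissible $2$-morphisms under both compositions and units via functoriality of $L$, inherit the coherence cells (which are admissible since $\mathrm{id}_{L(a)}=L(\mathrm{id}_a)$), and get uniqueness because every piece of data is prescribed. The only stylistic caveat is that, since $L$ need not be faithful, a morphism of $({}_{L}\mathbb{X})_1$ should be remembered as the pair $(f,g)$ together with $\alpha$ (which your pullback definition already does), rather than merely a $2$-morphism of $\mathbb{X}$ whose sides happen to lie in the image of $L$.
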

The proof is a straightforward verification using the definition of a double category, which is Definition \ref{defn:double_category}. Throughout this thesis we use `double category' to mean `pseudo double category': composition of horizontal 1-cells need not be strictly associative. However, if the double category $\mathbb{X}$ is strict, so is the foot-replaced double category $_L \mathbb{X}$. 

There is also a version of Theorem \ref{trick1} for symmetric monoidal double categories.

\begin{theorem}\label{trick2}
If $\lX$ is a symmetric monoidal double category, $\A$ is a symmetric monoidal
category and $L \maps \A \to \lX_0$ is a (strong) symmetric monoidal functor, then the double category $_L \lX$ becomes symmetric monoidal in a canonical way. 
\end{theorem}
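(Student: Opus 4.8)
The plan is to build the symmetric monoidal structure on ${}_L\lX$ piece by piece, following the definition of a symmetric monoidal double category in the sense of Shulman (see Appendix~\ref{DoubleCatAppendix}). Such a structure consists of a tensor double functor $\otimes \maps {}_L\lX \times {}_L\lX \to {}_L\lX$, a unit object, and invertible globular transformations playing the roles of associator, unitors and braiding, all subject to coherence axioms. The double category ${}_L\lX$ itself already exists by Theorem~\ref{trick1}, so only the monoidal data and its axioms remain. The organizing idea is that ${}_L\lX$ agrees with $\lX$ on horizontal 1-cells and 2-morphisms --- these are \emph{exactly} the horizontal 1-cells and 2-morphisms of $\lX$ between objects in the image of $L$ --- and differs only in that its objects and vertical 1-morphisms are those of $\A$. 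Accordingly I would import the object- and vertical-level monoidal data from the symmetric monoidal category $\A$, import the horizontal-1-cell- and 2-morphism-level data from the symmetric monoidal double category $\lX$, and use the structure isomorphisms $\phi_{a,b} \maps L(a)\otimes L(b) \xrightarrow{\sim} L(a\otimes b)$ and $\phi_0 \maps I_{\lX} \xrightarrow{\sim} L(I_{\A})$ of the strong symmetric monoidal functor $L$ to glue the two levels together.

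First I would define the tensor double functor. On objects and vertical 1-morphisms it is the tensor product of $\A$, with unit the monoidal unit $I_\A$. On a pair of horizontal 1-cells $M \maps L(a)\to L(a')$ and $N \maps L(b)\to L(b')$ the tensor should be a horizontal 1-cell with feet $L(a\otimes b)$ and $L(a'\otimes b')$. The natural candidate is $M \otimes_\lX N \maps L(a)\otimes L(b)\to L(a')\otimes L(b')$, but its feet are $L(a)\otimes L(b)$ and $L(a')\otimes L(b')$ rather than $L(a\otimes b)$ and $L(a'\otimes b')$; so I would transport it along the vertical isomorphisms $\phi_{a,b}$ and $\phi_{a',b'}$. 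Since these are invertible, such a transport is available, and the action on 2-morphisms is the evident one obtained from $\otimes_\lX$ together with the naturality 2-cells of $\phi$. Functoriality of $\otimes_\lX$ and naturality of $\phi$ should then make $\otimes$ a double functor.

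Next I would define the associator, left and right unitors, and braiding. Each is a globular transformation whose object components are vertical 1-morphisms in ${}_L\lX$, that is, morphisms of $\A$; for these I would take the associator, unitors and braiding of $\A$ verbatim. The remaining components --- the invertible 2-morphisms attached to horizontal 1-cells --- I would obtain by pasting the corresponding 2-morphisms of $\lX$ (the associator, unitor and braiding cells of the symmetric monoidal double category $\lX$) with the coherence cells of the monoidal functor $L$, namely the 2-cells expressing the associativity and unit constraints of $L$ and its compatibility with the two braidings. Because $L$ is strong, all of these constraint cells are invertible, so the pasted transformations are invertible, as required.

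The bulk of the work, and the part I expect to be the main obstacle, is twofold. First, one must make the transport of horizontal 1-cells along the isomorphisms $\phi_{a,b}$ genuinely precise and check that it is functorial, so that $\otimes$ really is a double functor respecting horizontal composition up to the coherent comparison cells; bridging the mismatch between $L(a)\otimes L(b)$ and $L(a\otimes b)$ is exactly where invertibility of $\phi$ does its work and is the chief technical subtlety. Second, one must verify the coherence axioms --- the pentagon, triangle and hexagon identities and the symmetry condition. I expect each of these for ${}_L\lX$ to reduce to a combination of three already-available facts: the same axiom in $\lX$, the same axiom in $\A$, and the monoidal-functor coherence of $L$. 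The delicate point is bookkeeping: one must arrange the transport so that every copy of $\phi$ introduced cancels in a pair when the axioms of $\lX$, $\A$ and $L$ are invoked, and I would organize this by separating the object-level identities (which live entirely in $\A$) from the horizontal- and 2-morphism-level identities (which live in $\lX$, modulo $\phi$), since the two levels interact only through $L$. Finally, \emph{canonical} here should be read as: the whole structure is determined, with no further choices, by the symmetric monoidal structures of $\lX$ and $\A$ and the monoidal structure of $L$; equivalently, one may phrase the argument as transporting the symmetric monoidal structure of $\lX$ along the structure-forgetting double functor ${}_L\lX \to \lX$ that is $L$ on objects and vertical 1-morphisms and the identity on horizontal 1-cells and 2-morphisms.
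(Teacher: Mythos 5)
Your proposal is correct and follows essentially the same route as the paper: the paper likewise tensors objects and vertical 1-morphisms in $\A$, defines the tensor of horizontal 1-cells $M$ and $N$ as $\phi_{a_2,b_2}\,(M\otimes N)\,\phi_{a_1,b_1}^{-1}$ using the invertible laxators of the strong monoidal functor $L$, and obtains every coherence cell and axiom (the globular isomorphisms $\chi$ and $\mu$, associators, unitors, braiding, and the pentagon, triangle and hexagon identities) by conjugating the corresponding data of $\lX$ by $\phi$ and invoking the coherence of $\A$, $\lX$ and $L$. The only cosmetic difference is your closing reformulation via transport along the forgetful double functor ${}_L\lX \to \lX$, which the paper does not use.
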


\begin{proof}
As noted in Definition\ \ref{defn:double_category}, every double category $\lD$ has not only a category of objects $\lD_0$, but also a category of arrows $\lD_1$ with horizontal 1-cells of $\lD$ as
objects and 2-morphisms of $\lD$ as morphisms.  The definition of a symmetric monoidal double category, which is Definition \ref{defn:symmetric_monoidal_double_category}, can be expressed in terms of structure involving these categories.

For the double category $_L \lX$, the category of objects $_L \lX_0$ is just $\A$.   The category of arrows $_L \lX_1$ has horizontal 1-cells in $\lX$ of this form:
\[   L(a) \xrightarrow{M} L(b) \]
as objects and diagrams in $\lX$ of this form:
\[
\begin{tikzpicture}[scale=1.5]
\node (A) at (0,0) {$L(a)$};
\node (C) at (1,0) {$L(b)$};
\node (A') at (0,-1) {$L(a')$};
\node (C') at (1,-1) {$L(b')$};
\node (B) at (0.5,-0.5) {$\Downarrow \alpha$};
\path[->,font=\scriptsize,>=angle 90]
(A) edge node[above]{$M$} (C)
(A) edge node [left]{$L(f)$} (A')
(C) edge node[right]{$L(g)$}(C')
(A') edge node [above] {$N$}(C');
\end{tikzpicture}
\]
as morphisms, which are composed vertically.

As explained in Definition\ \ref{defn:monoidal_double_category}, to 
make $_L \lX$ into a monoidal double category we need to do the following:

(1) We must choose a monoidal structure for $_L \lX_0 = \A$ and for $_L \lX_1$.   The category
$\A$ is monoidal by hypothesis; we give $_L \lX_1$ a monoidal structure
using the fact that $\lX_1$ and the functor $L$ are strong monoidal,
as follows.    Given two objects of $_L \lX_1$:
\[ L(a_1) \xrightarrow{M} L(a_2) \qquad  L(b_1) \xrightarrow{N} L(b_2) \]
their tensor product is
\[  L(a_1 \otimes b_1) \xrightarrow{\phi^{-1}_{a_1,b_1}} 
        L(a_1) \otimes L(b_1) \xrightarrow{M \otimes N} L(a_2) \otimes L(b_2) 
        \xrightarrow{\phi_{a_2,b_2}} L(a_2 \otimes b_2), \]
defined using the laxator $\phi_{a,b} \maps L(a) \otimes L(b) \to L(a \otimes b)$ for $L$.  Note that $\phi$ is invertible because $L$ is strong monoidal.  Given two morphisms of $_L \lX_1$:
\[

\]

(2) Any double category $\lD$ has an identity-assigning functor $U \maps \lD_0 \to \lD_1$, and
for $\lD$ to be monoidal we need $U$ to preserve the monoidal unit.  This is true for $_L \lX$ because $U \maps \A \to {}_L \lX_1$ maps any object $a \in \A$ to 
\[         L(a) \xrightarrow{\hat{U}(L(a))} L(a), \]
so $U$ maps the monoidal unit $I \in \A$ to the monoidal unit for ${}_L \lX_1$, given in
Equation \eqref{eq:monoidal_unit}.

(3) In a monoidal double category $\lD$ the source and target functors $S, T \maps \lD_1 \to \lD_0$
must be strict monoidal.  For ${}_L \lX$ this is easy to check, given the monoidal structures
defined in item (1), because the source and target of an object
\[   L(a) \xrightarrow{M} L(b) \]
of $_L \lX_1$ are $a \in {}_L \lX_0$ and $b \in {}_L \lX_0$, respectively, and the source and target of a morphism 
\[
\begin{tikzpicture}[scale=1.5]
\node (A) at (0,0) {$L(a)$};
\node (C) at (1,0) {$L(b)$};
\node (A') at (0,-1) {$L(a')$};
\node (C') at (1,-1) {$L(b')$};
\node (B) at (0.5,-0.5) {$\Downarrow \alpha$};
\path[->,font=\scriptsize,>=angle 90]
(A) edge node[above]{$M$} (C)
(A) edge node [left]{$L(f)$} (A')
(C)edge node[right]{$L(g)$}(C')
(A')edge node [above] {$N$}(C');
\end{tikzpicture}
\]
in $_L \lX_1$ are the morphisms $f \colon a \to a'$ and $g \colon b \to b'$ in $_L \lX_0$, respectively. We can choose the images of the source and target functors to ensure that they are strict symmetric monoidal, meaning that for two horizontal 1-cells $M$ and $N$, $$S(M \otimes N) = a \otimes a' = S(M) \otimes S(N)$$ and likewise for the target morphism $T$.  The unit for the tensor product in ${}_L \lX_1$ is given in Equation \eqref{eq:monoidal_unit}, and applying $S$ or $T$ we obtain $I \in {}_L \lX_0$.

(4) A \define{globular 2-morphism} in a double category $\lD$ is a morphism $\alpha$ in $\lD_1$ such that $S\alpha$ and $T\alpha$ are identity morphisms in $\lD_0$.   In a monoidal double category $\lD$ we must have invertible globular 2-morphisms 
\[\chi \maps (M_1\otimes N_1)\odot (M_2\otimes N_2)\xrightarrow{\sim} (M_1\odot M_2)\otimes (N_1\odot N_2)\]
and
\[ \mu\maps U_{A\otimes B} \xrightarrow{\sim} (U_A \otimes U_B)\]
expressing the compatiblity of the composition functor $\odot \colon \lD_1 \times_{\lD_0} \lD_1 \to \lD_1$ and identity-assigning functor $U \maps \lD_0 \to \lD_1$ with the tensor product.   These must make three diagrams commute, as detailed in Definition\ \ref{defn:monoidal_double_category}.  
In the case of $_L \lX$ this follows from the commutativity of the corresponding diagrams in $\lX$ together with the natural isomorphisms given by the invertible laxators of the strong monoidal functor $L \colon \A \to \X$. Explicitly, given composable horizontal 1-cells $M_1,M_2,N_1$ and $N_2$ in $_L \lX_1$:
\[

	\]
Here we have `unrolled' the diagram to make it fit on the page; the reader should identify the objects at the top of the diagram with those at the bottom.

Similarly, a braided monoidal double category is a monoidal double
category with the following additional structure.

(6) $\lD_{0}$ and $\lD_1$ are braided monoidal categories.

(7) The functors $S$ and $T$ are strict braided monoidal (i.e.\ they
  preserve the braidings).

(8) The following diagrams commute, expressing that the braiding is
  a transformation of double categories.
  \[\xymatrix{(M_1\odot M_2)\otimes (N_1\odot N_2) \ar[r]^\fs\ar[d]_\chi &
    (N_1\odot N_2)\otimes (M_1 \odot M_2)\ar[d]^\chi\\
    (M_1\otimes N_1)\odot (M_2\otimes N_2) \ar[r]_{\fs\odot \fs} &
    (N_1\otimes M_1) \odot (N_2 \otimes M_2)}
  \]
  \[\xymatrix{U_A \otimes U_B \ar[r]^(0.55)\mu \ar[d]_\fs &
    U_{A\otimes B} \ar[d]^{U_\fs}\\
    U_B\otimes U_A \ar[r]_(0.55)\mu &
    U_{B\otimes A}}.
  \]

These follow from the fact that $\lX_0$ and $\lX_1$ are braided monoidal categories and that the corresponding functors $S$ and $T$ of $\lX$ are strict braided monoidal and we can choose the source and target functors of $_L \lX$ to agree with the braidings of $_L \lX_0$ and $_L \lX_1$, meaning that $$\beta' (S(M \otimes N)) = \beta' (S(M) \otimes S(N)) = \beta'(a \otimes a') = a' \otimes a = S(N \otimes M) = S(\beta(M \otimes N))$$ and likewise for the target morphism $T$. The above diagrams commute in $_L \lX$ as the corresponding diagrams commute in $\lX$ and the laxators of the strong monoidal functor $L$ are invertible.

(9) $\lD_0$ and $\lD_1$ are symmetric monoidal categories.

This follows from the fact that $\A$, $\lX_0$ and $\lX_1$ are symmetric monoidal categories. Explicitly, the triangle identity for $_L \lX_1$ is given by:
\[

	\]
in which the top and the bottom tensor products of horizontal 1-cells coincide. The \textcolor{red}{red} is to highlight that the pentagon identity of $\lX_1$ is nested within the pentagon identity of $_L \lX_1$, and likewise for the triangle identity on the previous page, although that one we have not colored.
\end{proof}

\section{Structured cospan double categories}

The most important example of a double category in this thesis is given by $\mathbb{C}\mathbf{sp}(\mathsf{X})$ for some category $\mathsf{X}$ with pushouts. This double category has:
\begin{enumerate}
\item{objects as those of $\mathsf{X}$,}
\item{vertical 1-morphisms as morphisms of $\mathsf{X}$,}
\item{horizontal 1-cells as cospans in $\mathsf{X}$, and}
\item{2-morphisms as maps of cospans in $\mathsf{X}$ given by commutative diagrams of the form:
\[
\begin{tikzpicture}[scale=1.5]
\node (E) at (3,0) {$x$};
\node (F) at (5,0) {$y$};
\node (G) at (4,0) {$z$};
\node (E') at (3,-1) {$x'$};
\node (F') at (5,-1) {$y'$};
\node (G') at (4,-1) {$z'$};
\path[->,font=\scriptsize,>=angle 90]
(F) edge node[above]{$o$} (G)
(E) edge node[left]{$f$} (E')
(F) edge node[right]{$h$} (F')
(G) edge node[left]{$g$} (G')
(E) edge node[above]{$i$} (G)
(E') edge node[below]{$i'$} (G')
(F') edge node[below]{$o'$} (G');
\end{tikzpicture}
\]
}
\end{enumerate}

That $\lCsp(\X)$ is indeed a double category when $\X$ is a category with pushouts was shown by Niefield \cite{Nie}; see also \cite{Cour}. This also follows from Theorem \ref{DCord} when the decorations are taken to be trivial---see Corollary \ref{DCordCor}.

\begin{theorem} \label{_L Csp(X)}
Let $L \maps \A \to \X$ be a functor where $\X$ is a category with pushouts. Then there exists a double category $_L \lCsp(\X)$ for which:
\begin{itemize}
\item an object is an object of $\A$,
\item a vertical 1-morphism is a morphism of $\A$,
\item a horizontal 1-cell from $a$ to $b$ is an $L$-\define{structured cospan}, meaning a cospan in $\X$ of the form:
\[

\]
\item The associator and unitors are defined using the universal property of pushouts.
\end{itemize}
\end{theorem}

\begin{proof}
We apply Theorem \ref{trick1} to the double category $\mathbb{C} \mathbf{sp}(\X)$.
\end{proof}

If the category $\lX$ has not only pushouts but also finite colimits, meaning pushouts and an initial object which will serve as the unit object for tensoring, then the aforementioned double category $\mathbb{C}\mathbf{sp}(\lX)$ is in fact symmetric monoidal.

\begin{corollary}
Given a category $\X$ with finite colimits, the double category $\lCsp(\X)$  is symmetric monoidal with the monoidal structure given by chosen coproducts in $\X$.  Thus:
\begin{itemize}
\item the tensor product of two objects $x_1$ and $x_2$ is $x_1 + x_2$,
\item the tensor product of two vertical 1-morphisms is given by
\[

\]
\item The unit for the tensor product is a chosen initial object of $\X$,
\item The symmetry for any two objects $x$ and $y$ is defined using the canonical isomorphism $x + y \cong y + x$.
\end{itemize}
\end{corollary}

\begin{proof}
This is just a special case of Theorem \ref{DC} where, as in Corollary \ref{DCordCor}, each $F$-decorated cospan is once again equipped with the trivial decoration.
\end{proof}

We then have the following symmetric monoidal double category of \emph{structured cospans}, the primary result of the aforementioned work \cite{BC2}.

\begin{theorem}\label{SC}
Let $L \maps \mathsf{A} \to \mathsf{X}$ be a functor preserving finite coproducts, where $\mathsf{A}$ has finite coproducts and $\mathsf{X}$ has finite colimits. Then the double category $_L \mathbb{C}\mathbf{sp}(\mathsf{X})$ is symmetric monoidal with the monoidal structure given by chosen coproducts in $\mathsf{A}$ and $\mathsf{X}$. Thus:
\begin{enumerate}
\item{the tensor product of two objects $a_1$ and $a_2$ is $a_1 + a_2$,}
\item{the tensor product of two vertical 1-morphisms is given by
\[

\]}
\end{enumerate}
The unit for the tensor product is the initial object of $\mathsf{X}$ which is isomorphic to the image of the unit object of $\mathsf{A}$ under the functor $L$, and the symmetry for any two objects $a$ and $b$ is defined using the canonical isomorphism $a + b \cong b + a$.
\end{theorem}

Theorem \ref{SC} is one of the main results on structured cospans in a joint work with Baez \cite{BC2}. The method of proof used there however is different from the more direct approach taken here in this thesis. The word `rex' is a standard abbreviation of `right exact', which means finitely cocontinuous, i.e., preserving finite colimits.  Denoting by $\mathbf{Rex}$ the 2-category of finitely cocomplete categories, finitely cocontinuous functors and natural transformations, it is shown that if $\mathsf{A} \in \mathbf{Rex}$, then $\mathbb{C}\mathbf{sp}(\mathsf{A})$ is a `pseudocategory object' in $\mathbf{Rex}$---see Definition \ref{pseudocategory}. A morphism $L \colon \mathsf{A} \to \mathsf{X}$ then yields the above symmetric monoidal double category $_L \mathbb{C}\mathbf{sp}(\mathsf{X})$ being realized as a pseudocategory object in $\mathbf{Rex}$. Denoting by $\mathbf{SymMonCat}$ the 2-category of symmetric monoidal categories, (strong) symmetric monoidal functors and monoidal natural transformations, there exists a 2-functor $\Phi \colon \mathbf{Rex} \to \mathbf{SymMonCat}$ which turns a finitely cocomplete category into a symmetric monoidal category by prescription of chosen binary coproducts for every pair of objects to serve as their tensor product and a chosen initial object to serve as the monoidal unit. The rest of the symmetric monoidal structure is then induced by these choices. This 2-functor $\Phi$ preserves the necessary pullbacks and applying this 2-functor $\Phi$ to $_L \mathbb{C}\mathbf{sp}(\mathsf{X})$ then results in $\Phi({_L \mathbb{C}\mathbf{sp}(\mathbf{X})})$ as a pseudocategory object in $\mathbf{SymMonCat}$. A pseudocategory object in the 2-category $\mathbf{Cat}$ is the same as a double category.  A pseudocategory object in $\mathbf{SymMonCat}$ is almost the same as a symmetric monoidal double category, but not quite, because the source and target functors $S$ and $T$ are not required to be strict symmetric monoidal functors.  Luckily, an easy verification shows that that this is indeed the case for  $\Phi(_L \lCsp(\X))$, so it is a symmetric monoidal double category.

Analogous comments also apply for maps between structured cospan double categories, which are given by weakly commuting squares in $\mathbf{Rex}$:
\[
\begin{tikzpicture}[scale=1.5]
\node (A) at (0,0) {$\A$};
\node (B) at (1,0) {$\X$};
\node (C) at (0,-1) {$\A'$};
\node (D) at (1,-1) {$\X'$};
\node (F) at (0.5,-0.5) {$\alpha \NEarrow$};
\path[->,font=\scriptsize,>=angle 90]
(A) edge node [above] {$L$} (B)
(A) edge node [left]{$F_0$} (C)
(B) edge node [right]{$F_1$} (D)
(C) edge node [below] {$L'$} (D);
\end{tikzpicture}
\]

Assuming $L \colon \mathsf{A} \to \mathsf{X}$ is a morphism in $\mathbf{Rex}$ is stronger than the hypothesis used in Theorem \ref{SC}, but this simplifies many proofs and also produces stronger results: not only can we tensor and compose structured cospans as we can in an ordinary symmetric monoidal double category of structured cospans, but we can even take \emph{finite colimits} of structured cospans, themselves. This is not the case for the symmetric monoidal double category $_L \mathbb{C}\mathbf{sp}(\mathsf{X})$ of Theorem \ref{SC} due to $\mathsf{A}$ only being required to have finite coproducts and only requiring finite coproducts be preserved by $L$.


A well-known result regarding adjoints is the following.
\begin{proposition}
Every left adjoint $L \colon \mathsf{A} \to \mathsf{X}$ preserves all colimits and every right adjoint $R \colon \mathsf{X} \to \mathsf{A}$ preserves all limits.
\end{proposition}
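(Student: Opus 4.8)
The plan is to prove only the first statement---that a left adjoint $L \maps \mathsf{A} \to \mathsf{X}$ preserves all colimits---since the second, that a right adjoint preserves all limits, will follow by the dual argument (equivalently, by passing to opposite categories). I would begin by fixing a right adjoint $R \maps \mathsf{X} \to \mathsf{A}$ to $L$, so that there is an isomorphism $\mathsf{X}(L(a),x) \cong \mathsf{A}(a,R(x))$ natural in both $a \in \mathsf{A}$ and $x \in \mathsf{X}$. The guiding idea is that colimits are detected by the representable functors: an object $c$ equipped with a cocone under a diagram $D \maps \mathsf{J} \to \mathsf{A}$ is a colimit of $D$ precisely when, for every object $a$, the hom-set $\mathsf{A}(c,a)$ is naturally isomorphic to the set of cocones under $D$ with nadir $a$, which is itself the limit $\lim_j \mathsf{A}(D(j),a)$ computed in $\mathsf{Set}$.

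With this reformulation in hand, let $D \maps \mathsf{J} \to \mathsf{A}$ be any diagram possessing a colimit $\colim D$ in $\mathsf{A}$. The central computation is to establish, for an arbitrary object $x \in \mathsf{X}$, a chain of natural isomorphisms
\[
\mathsf{X}(L(\colim D), x) \cong \mathsf{A}(\colim D, R(x)) \cong \lim_j \mathsf{A}(D(j), R(x)) \cong \lim_j \mathsf{X}(L(D(j)), x),
\]
where the first and third isomorphisms are instances of the adjunction and the middle one is the universal property of $\colim D$. The composite exhibits $\mathsf{X}(L(\colim D), x)$ as naturally isomorphic to the set of cocones under $L \circ D$ with nadir $x$. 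By the reformulation of the previous paragraph, this says precisely that $L(\colim D)$, equipped with the image under $L$ of the colimiting cocone, is a colimit of $L \circ D$; hence $L(\colim D) \cong \colim(L \circ D)$ and $L$ preserves the colimit.

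The one step demanding genuine care---and which I expect to be the main obstacle---is verifying that the composite natural isomorphism actually carries the identity on $L(\colim D)$ to the cocone obtained by applying $L$ to the colimiting cocone of $D$, rather than merely yielding an abstract bijection of hom-sets. This is what guarantees that the colimit of $L \circ D$ is realized by the expected cocone and not just by some isomorphic object. Tracking this amounts to chasing the unit of the adjunction through the displayed isomorphisms and invoking the Yoneda lemma, which converts the natural isomorphism $\mathsf{X}(L(\colim D),-) \cong \lim_j \mathsf{X}(L(D(j)),-)$ into the desired universal cocone. Once this bookkeeping of naturality is complete, the statement for the right adjoint $R$ follows by the dual argument, interchanging colimits with limits, cocones with cones, and the roles of $L$ and $R$ accordingly.
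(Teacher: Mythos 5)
Your argument is correct: this is the standard representability proof, reducing preservation of colimits to the chain of natural isomorphisms $\mathsf{X}(L(\colim D),x)\cong\mathsf{A}(\colim D,R(x))\cong\lim_j\mathsf{A}(D(j),R(x))\cong\lim_j\mathsf{X}(L(D(j)),x)$, and you correctly flag the one genuinely delicate point, namely that tracing $1_{L(\colim D)}$ through these isomorphisms (via the unit and the triangle identities) recovers the cocone $L(\iota_j)$ rather than an unrelated bijection. Note that the paper itself offers no proof --- it records the proposition as a well-known fact --- so there is no in-text argument to compare against; your write-up supplies exactly the standard one.
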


The following is a particularly useful result on structured cospan double categories.

\begin{corollary}\label{AdjointSC}
Let $L \maps \mathsf{A} \to \mathsf{X}$ be a left adjoint between two categories $\mathsf{A}$ and $\X$ with finite colimits. Then the double category $_L \mathbb{C}\mathbf{sp}(\mathsf{X})$ is symmetric monoidal with the monoidal structure given as in Theorem \ref{SC}.
\end{corollary}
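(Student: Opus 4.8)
The plan is to derive the statement directly from Theorem \ref{SC}, which already establishes that $_L \mathbb{C}\mathbf{sp}(\mathsf{X})$ is symmetric monoidal, with exactly the monoidal structure described in its statement, whenever $\mathsf{A}$ has finite coproducts, $\mathsf{X}$ has finite colimits, and $L \maps \mathsf{A} \to \mathsf{X}$ preserves finite coproducts. Consequently the whole task reduces to verifying that the hypotheses of the corollary imply the three hypotheses of Theorem \ref{SC}; once that is done, the conclusion and the explicit monoidal structure are inherited verbatim.

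First I would dispatch the two immediate conditions. By assumption $\mathsf{X}$ has finite colimits, which is precisely what Theorem \ref{SC} requires of $\mathsf{X}$. Likewise $\mathsf{A}$ is assumed to have finite colimits, and since finite coproducts are a special case of finite colimits, $\mathsf{A}$ in particular has finite coproducts, as needed. The remaining and only substantive point is that $L$ preserves finite coproducts. Here I would invoke the Proposition stated immediately above, namely that every left adjoint preserves all colimits: since $L$ is a left adjoint by hypothesis, it preserves all colimits, and in particular all finite coproducts. With all three hypotheses of Theorem \ref{SC} verified, the corollary follows at once.

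There is no genuine obstacle in this argument; the only point worth being explicit about is that the corollary's hypothesis of \emph{finite colimits} properly subsumes Theorem \ref{SC}'s weaker requirements of \emph{finite coproducts} on $\mathsf{A}$ and preservation of finite coproducts by $L$. Both subsumptions are immediate from the definition of a finite colimit together with the cited Proposition, so the proof is simply the two-line verification above followed by an appeal to Theorem \ref{SC}.
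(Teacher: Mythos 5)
Your proposal is correct and matches the paper's (implicit) justification exactly: the corollary is stated immediately after the proposition that left adjoints preserve all colimits, precisely so that the hypotheses of Theorem \ref{SC} follow as you describe. Nothing further is needed.
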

The examples we present of structured cospan double categories, which are to be seen as improvements of the corresponding examples of decorated cospans of the previous chapter, will be applications of the above corollary. Another application may be found in the work of Cicala \cite{Cic} who uses structured cospan double categories to study rewrite rules in a topos.

\section{Applications}\label{SecSCApp}

\subsection{Graphs}\label{scgraphs}

\begin{definition}
Let $\mathsf{FinGraph}$ be the category whose objects are finite graphs, which are diagrams in $\mathsf{FinSet}$ of the form:
\[
\begin{tikzpicture}[scale=1.5]
\node (B) at (1,0) {$E$};
\node (C) at (2,0) {$N$};
\path[->,font=\scriptsize,>=angle 90]
(B)edge[bend left] node[above]{$s$}(C)
(B)edge[bend right] node[below]{$t$}(C);
\end{tikzpicture}
\]
and whose morphisms are given by pairs of functions $(f,g)$ such that the following two squares commute:
\[
\begin{tikzpicture}[scale=1.5]
\node (A) at (5,0) {$E$};
\node (A') at (5,-1) {$E'$};
\node (E) at (6,0) {$N$};
\node (E') at (6,-1) {$N'$};
\node (B) at (7,0) {$E$};
\node (B') at (7,-1) {$E'$};
\node (F) at (8,0) {$N$};
\node (F') at (8,-1) {$N'$};
\path[->,font=\scriptsize,>=angle 90]
(E) edge node [right] {$f$} (E')
(F) edge node [right] {$f$} (F')
(A) edge node[above]{$s$}(E)
(A') edge node [below] {$s'$} (E')
(A) edge node [left] {$g$} (A')
(B) edge node [left] {$g$} (B')
(B) edge node[above]{$t$}(F)
(B') edge node [below] {$t'$} (F');
\end{tikzpicture}
\]
\end{definition}
Define a functor $L \maps \mathsf{FinSet} \to \mathsf{FinGraph}$ where given a set $N$, $L(N)$ is the discrete graph on $N$ (with no edges) and given a function $f \maps N \to N'$, $L(f) \maps L(N) \to L(N')$ is the graph morphism that takes vertices of $L(N)$ to $L(N')$ as prescribed by the function $f$. This functor $L$ preserves finite coproducts as it is left adjoint to the forgetful functor $R \maps \mathsf{FinGraph} \to \mathsf{FinSet}$ that takes a graph $(E,N,s,t)$ where $N$ and $E$ are finite to its underlying set of vertices $N$. The categories $\mathsf{FinSet}$ and $\mathsf{FinGraph}$ both have finite colimits. By Corollary \ref{AdjointSC}, we have the following.

\begin{theorem}\label{scgraphs}
Let $L \maps \mathsf{FinSet} \to \mathsf{FinGraph}$ be the left adjoint defined above. Then there exists a symmetric monoidal double category $_L \mathbb{C}\mathbf{sp}(\mathsf{FinGraph})$ which has:
\begin{enumerate}
\item{finite sets as objects,}
\item{functions as vertical 1-morphisms,}
\item{\define{open graphs} which are cospans of graphs of the form
\[

\]
to its underlying vertex set $N$. We then have a natural isomorphism $\hom_{\mathsf{FinGraph}_k}(L(c),d) \cong \hom_{\mathsf{FinSet}}(c,R(d))$. 
\end{proof}

\begin{lemma}\label{finite_colimits}
The category $\mathsf{FinGraph}_k$ has finite colimits.
\end{lemma}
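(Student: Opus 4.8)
The plan is to exhibit $\FinGraph_k$ as a comma category and then invoke the standard fact that comma categories inherit colimits from their legs. Let $\mathrm{Ed} \maps \FinGraph \to \FinSet$ be the functor sending a finite graph $(E,N,s,t)$ to its edge set $E$ and a graph morphism to its action on edges, and let $K \maps \mathbf{1} \to \FinSet$ be the functor from the terminal category picking out the object $k^+$ (here $k^+$ need only be a fixed set, not a finite one). Then an object of the comma category $(\mathrm{Ed} \downarrow K)$ is precisely a finite graph $\Gamma$ together with a function $r \maps \mathrm{Ed}(\Gamma) \to k^+$, and a morphism is precisely a graph morphism $\phi$ with $r' \circ \mathrm{Ed}(\phi) = r$. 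This is exactly the data and the commuting condition defining $\FinGraph_k$, so $\FinGraph_k \cong (\mathrm{Ed} \downarrow K)$.

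Next I would verify the two hypotheses needed for the comma category to have finite colimits: that the domain $\FinGraph$ of the left leg has finite colimits and that $\mathrm{Ed}$ preserves them (the right leg has domain $\mathbf{1}$, which trivially has all colimits). As already noted above, $\FinGraph$ is the functor category $[G, \FinSet]$, where $G$ is the category with two objects and two parallel arrows, and since $\FinSet$ has finite colimits these are computed pointwise in $\FinGraph$; in particular finite colimits of finite sets remain finite, so they stay within $\FinGraph$. Because colimits in $\FinGraph$ are pointwise, the edge set of a colimit is the colimit of the edge sets, which says exactly that $\mathrm{Ed}$ preserves finite (indeed all) colimits. The general comma category theorem then gives that $(\mathrm{Ed} \downarrow K) \cong \FinGraph_k$ has finite colimits.

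Concretely, given a finite diagram of $k$-graphs $(\Gamma_i, r_i)$, I would first form the colimit $\Gamma = \colim_i \Gamma_i$ in $\FinGraph$, computed pointwise on vertices and edges; the compatibility $r_j \circ \mathrm{Ed}(\phi_{ij}) = r_i$ exhibits $(r_i)$ as a cocone from the diagram of edge sets to the constant object $k^+$, so the universal property of $\mathrm{Ed}(\Gamma) = \colim_i \mathrm{Ed}(\Gamma_i)$ induces a unique $r \maps \mathrm{Ed}(\Gamma) \to k^+$, and $(\Gamma, r)$ is the desired colimit. The main point to watch is precisely the handling of the fixed, possibly infinite set $k^+$ together with the on-the-nose preservation condition $r' \circ g = r$: both are accommodated by placing $k^+$ in the terminal-category leg of the comma category, after which the resistance map on any colimit is forced by the universal property of the colimit of edge sets, and checking the universal property of $(\Gamma, r)$ reduces to the uniqueness of maps out of $\Gamma$ and out of $\colim_i \mathrm{Ed}(\Gamma_i)$.
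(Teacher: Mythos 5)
Your proof is correct, but it takes a different and more systematic route than the paper. The paper's own proof is a one-liner: it exhibits an initial object (the empty $k$-graph) and pushouts, the latter computed pointwise on the underlying graphs, and relies on the standard fact that an initial object plus pushouts gives all finite colimits. You instead realize $\mathsf{FinGraph}_k$ as the comma category $(\mathrm{Ed} \downarrow K)$ and invoke the general theorem that comma categories inherit colimits when the left leg preserves them. Your approach buys two things: it makes explicit where the resistance function on a colimit comes from (it is forced by the universal property of $\colim_i \mathrm{Ed}(\Gamma_i)$, a point the paper's proof silently elides), and it is immediately reusable for Lemma \ref{finite_colimits_petri_rates} on $\Petri_{\mathrm{rates}}$, which the paper only proves ``similarly.'' The paper's approach buys brevity. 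One small repair to your write-up: since $k^+$ is generally an infinite set, it is not an object of $\FinSet$, so you should form the comma category over $\Set$, taking $K \maps \mathbf{1} \to \Set$ and replacing $\mathrm{Ed}$ by its composite with the inclusion $\FinSet \hookrightarrow \Set$; this inclusion preserves finite colimits, so the rest of your argument goes through unchanged.
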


\begin{proof}
The category $\mathsf{FinGraph}_k$ has an initial object given by the empty $k$-graph as well as pushouts given by taking the pushout of the underlying span of finite graphs which is done pointwise.
\end{proof}

\begin{theorem}\label{left_adj_smdc}
Let $L \maps \mathsf{FinSet} \to \mathsf{FinGraph}_k$ be the left adjoint as described above. Then there exists a symmetric monoidal double category $_{L} \mathbb{C}\mathbf{sp} (\mathsf{FinGraph}_k)$ which has:
\begin{enumerate}
\item{finite sets as objects,}
\item{functions as vertical 1-morphisms,}
\item{\define{open $k$-graphs}: that is, cospans of finite sets where the apex is equipped with a $k$-graph
\[
\begin{tikzpicture}[scale=1.5]
\node (A) at (0,0) {$L(a)$};
\node (B) at (1,0) {$N$};
\node (C) at (2,0) {$L(b)$};
\path[->,font=\scriptsize,>=angle 90]
(A) edge node[above]{$i$} (B)
(C)edge node[above]{$o$}(B);
\end{tikzpicture}
\]
\[
\begin{tikzpicture}[scale=1.5]
\node (A) at (0,0) {$k^+$};
\node (B) at (1,0) {$E$};
\node (C) at (2,0) {$N$};
\path[->,font=\scriptsize,>=angle 90]
(B) edge node[above]{$r$} (A)
(B)edge[bend left] node[above]{$s$}(C)
(B)edge[bend right] node[below]{$t$}(C);
\end{tikzpicture}
\]
as horizontal 1-cells, and}
\item{maps of cospans of finite sets equipped with a map of $k$-graphs
\[
\begin{tikzpicture}[scale=1.5]
\node (A) at (0,0) {$L(a)$};
\node (B) at (1,0) {$N$};
\node (C) at (2,0) {$L(b)$};
\node (A') at (0,-1) {$L(a')$};
\node (B') at (1,-1) {$N'$};
\node (C') at (2,-1) {$L(b')$};
\path[->,font=\scriptsize,>=angle 90]
(A) edge node[above]{$i$} (B)
(C)edge node[above]{$o$}(B)
(A') edge node[above]{$i'$} (B')
(C')edge node[above]{$o'$}(B')
(A) edge node [left] {$L(h_1)$} (A')
(C) edge node [right] {$L(h_2)$} (C')
(B) edge node [left] {$f$} (B');
\end{tikzpicture}
\]
\[
\begin{tikzpicture}[scale=1.5]
\node (C) at (3,-0.5) {$k^+$};
\node (D) at (4,0) {$E$};
\node (D') at (4,-1) {$E'$};
\node (A) at (5,0) {$E$};
\node (A') at (5,-1) {$E'$};
\node (E) at (6,0) {$N$};
\node (E') at (6,-1) {$N'$};
\node (B) at (7,0) {$E$};
\node (B') at (7,-1) {$E'$};
\node (F) at (8,0) {$N$};
\node (F') at (8,-1) {$N'$};
\path[->,font=\scriptsize,>=angle 90]
(E) edge node [right] {$f$} (E')
(F) edge node [right] {$f$} (F')
(D) edge node[above]{$r$} (C)
(A) edge node[above]{$s$}(E)
(A') edge node [below] {$s'$} (E')
(D') edge node[below]{$r'$} (C)
(D) edge node [left] {$g$} (D')
(A) edge node [left] {$g$} (A')
(B) edge node [left] {$g$} (B')
(B) edge node[above]{$t$}(F)
(B') edge node [below] {$t'$} (F');
\end{tikzpicture}
\]
as 2-morphisms.}
\end{enumerate}
\end{theorem}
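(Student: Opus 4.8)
The plan is to obtain this theorem as an immediate application of Corollary \ref{AdjointSC}, whose hypotheses have been arranged by the two preceding lemmas. First I would check that those hypotheses are met: Lemma \ref{left_adj} establishes that $L \maps \mathsf{FinSet} \to \mathsf{FinGraph}_k$ is a left adjoint, Lemma \ref{finite_colimits} establishes that $\mathsf{FinGraph}_k$ has finite colimits, and $\mathsf{FinSet}$ has finite colimits by standard facts. Thus Corollary \ref{AdjointSC} applies verbatim and yields a symmetric monoidal double category $_L \mathbb{C}\mathbf{sp}(\mathsf{FinGraph}_k)$ whose monoidal structure is given by chosen coproducts in $\mathsf{FinSet}$ and $\mathsf{FinGraph}_k$, exactly as in Theorem \ref{SC}.

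The remaining work is purely a matter of unwinding the abstract description of $_L \mathbb{C}\mathbf{sp}(\mathsf{FinGraph}_k)$ supplied by Theorem \ref{_L Csp(X)} and matching it against the concrete data listed in the statement. By that theorem an object is an object of $\mathsf{FinSet}$, that is, a finite set; a vertical $1$-morphism is a morphism of $\mathsf{FinSet}$, that is, a function; a horizontal $1$-cell from $a$ to $b$ is an $L$-structured cospan $L(a) \to \Gamma \leftarrow L(b)$ in $\mathsf{FinGraph}_k$; and a $2$-morphism is a map of such structured cospans. Items (1) and (2) are then immediate.

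The one point that deserves explicit verification---and the only place where anything specific to $\mathsf{FinGraph}_k$ is used, hence what I regard as the main content---is the identification of $L$-structured cospans with open $k$-graphs. Here I would invoke the defining feature of $L$: for a finite set $a$, the $k$-graph $L(a)$ has vertex set $a$ and \emph{empty} edge set. Consequently a morphism $L(a) \to \Gamma$ in $\mathsf{FinGraph}_k$, where $\Gamma$ is a $k$-graph with vertex set $N$, carries no data on edges and is therefore determined precisely by a function $a \to N$ on vertices. A structured cospan $L(a) \to \Gamma \leftarrow L(b)$ thus unwinds to a cospan $a \to N \leftarrow b$ of finite sets together with a $k$-graph structure on the apex $N$ (an edge set $E$ with source and target maps into $N$ and a resistance map $E \to k^+$), which is exactly an open $k$-graph. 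Applying the same observation to the two vertical legs of a $2$-morphism shows that a map of $L$-structured cospans is precisely a map of open $k$-graphs as displayed, so the concrete description in items (3) and (4) agrees with the output of Corollary \ref{AdjointSC}, completing the proof.
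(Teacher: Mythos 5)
Your proposal is correct and follows essentially the same route as the paper, which simply cites the finite cocompleteness of $\mathsf{FinGraph}_k$ and the left-adjointness of $L$ and then invokes the structured cospan machinery (Corollary \ref{AdjointSC}). Your extra step of unwinding why a morphism out of the discrete $k$-graph $L(a)$ is determined by a function on vertices is accurate and just makes explicit the identification of structured cospans with open $k$-graphs that the paper leaves implicit.
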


\begin{proof}
As $\mathsf{FinGraph}_k$ has finite colimits, we get a symmetric monoidal double category $\mathbb{C}\mathbf{sp}(\mathsf{FinGraph}_k)$ and hence a symmetric monoidal structured cospan double category $_L \mathbb{C}\mathbf{sp}(\mathsf{FinGraph}_k)$.
\end{proof}

\subsection{Petri nets}\label{petri_nets_structured_cospans}
For the last example, Baez and Pollard have constructed a black-boxing functor $\blacksquare \maps \Dynam \to \SemiAlg\Rel$ \cite{BP}. Here, $\Dynam$ is a symmetric monoidal category of `open dynamical systems' and $\SemiAlg\Rel$ is a symmetric monoidal category of `semialgebraic relations'. A particular kind of dynamical system is given by a Petri net with rates. Petri nets have also been studied extensively by Baez and Master \cite{BM} in the context of double categories and double functors.

Recall that a Petri net consists of a set $S$ of \emph{species}, a set $T$ of \emph{transitions} and functions $s,t \maps S \times T \to \mathbb{N}$. For a species $\sigma \in S$ and a transition $\tau \in T$, $s(\sigma,\tau)$ is the number of times the species $\sigma$ appears as an input for the transition $\tau$ and $t(\sigma,\tau)$ is the number of times the species $\sigma$ appears as an output for the transition $\tau$. 

\begin{definition}
A \define{Petri net with rates} is a Petri net with finite sets of species and transitions together with a function $r \maps T \to [0,\infty)$ where $r(\tau)$ is the rate of the transition $\tau$.
\end{definition}
We can also say that a Petri net with rates is a diagram of the form:
\[

\]
\begin{lemma}\label{left_adj_petri_rates}
The functor $L \maps \FinSet \to \Petri_{\mathrm{rates}}$ defined above is left adjoint to the forgetful functor $R \maps \Petri_{\mathrm{rates}} \to \FinSet$.
\end{lemma}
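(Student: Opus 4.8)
The plan is to establish the adjunction by producing a bijection of hom-sets
\[
\hom_{\Petri_{\mathrm{rates}}}(L(S), P) \;\cong\; \hom_{\FinSet}(S, R(P)),
\]
natural in both arguments, exactly as in the proof of Lemma~\ref{left_adj}. Fix a finite set $S$ and a Petri net with rates $P$, and write $R(P) = S'$ for its species set, $T'$ for its transition set, $s', t' \colon T' \to \mathbb{N}[S']$ for its source and target maps, and $r' \colon T' \to [0,\infty)$ for its rate map.

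First I would unpack what a morphism $L(S) \to P$ in $\Petri_{\mathrm{rates}}$ is. By the definition of a map of Petri nets with rates it consists of a species function $f \colon S \to S'$ together with a transition function $g$ satisfying the source, target and rate compatibility conditions. But the transition set of $L(S)$ is $\emptyset$, so $g$ is forced to be the unique function $\emptyset \to T'$. Moreover each compatibility condition is an equation between parallel maps whose common domain is the transition set $\emptyset$ of $L(S)$; since $\emptyset$ is initial in $\FinSet$, any two such maps coincide, and so every condition holds automatically. Hence a morphism $L(S) \to P$ carries precisely the data of a function $f \colon S \to S' = R(P)$, and the assignment $(f,g) \mapsto f$ gives the desired bijection, with inverse sending $f$ to the pair consisting of $f$ and the unique map $\emptyset \to T'$.

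Next I would check naturality in $S$ and in $P$, which is routine: a function $S_0 \to S$ or a Petri-net morphism $P \to P_0$ acts on the species component in the evident way while leaving the uniquely determined transition component undisturbed, so the bijection commutes with these actions. Equivalently, one verifies that the unit $\eta_S \colon S \to R(L(S))$ is the identity function, since $R(L(S)) = S$, and that this family enjoys the required universal property.

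The one point requiring attention — and it is a mild one — is the observation that every source, target and rate square collapses at the transition level precisely because it emanates from the initial object $\emptyset$; once this is in hand the adjunction is immediate. This is the same mechanism already used in Lemma~\ref{left_adj}, where the free $k$-graph on a finite set has empty edge set, so that there too the conditions imposed at the level of edges are vacuous.
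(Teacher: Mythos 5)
Your proof is correct and follows the same route the paper takes: the paper's own proof is a one-line appeal to the argument of Lemma~\ref{left_adj}, which establishes the natural bijection $\hom_{\Petri_{\mathrm{rates}}}(L(S),P)\cong\hom_{\FinSet}(S,R(P))$. You have simply spelled out the details — that the empty transition set of $L(S)$ forces the transition component of any morphism and renders the compatibility squares vacuous — which the paper leaves implicit.
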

\begin{proof}
This is similar as to why the functors used in the previous two applications are also left adjoints.
\end{proof}
\begin{lemma}\label{finite_colimits_petri_rates}
The category $\Petri_{\mathrm{rates}}$ has finite colimits.
\end{lemma}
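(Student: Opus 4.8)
The plan is to follow the same strategy as in the proof of Lemma \ref{finite_colimits} for $\mathsf{FinGraph}_k$: exhibit an initial object and construct pushouts componentwise, then invoke the standard fact that a category with an initial object and all pushouts has all finite colimits. The only genuinely new feature is that the source and target maps of a Petri net with rates have codomain $\mathbb{N}[S]$ rather than $S$ itself, so throughout I will exploit functoriality of the free commutative monoid functor $\mathbb{N}[-] \colon \mathsf{FinSet} \to \mathsf{CommMon}$.

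First I would take the initial object of $\Petri_{\mathrm{rates}}$ to be the empty Petri net with rates, with $S = \emptyset$ and $T = \emptyset$: here $\mathbb{N}[\emptyset]$ is the trivial monoid and the structure maps $s,t,r$ are the unique maps out of $\emptyset$. For any Petri net with rates $Q$, the unique functions $\emptyset \to S_Q$ and $\emptyset \to T_Q$ assemble into a morphism, the required square commutativity holding vacuously, and this morphism is clearly unique.

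Next, given a span $P_1 \leftarrow P_0 \rightarrow P_2$ with $P_i = (S_i, T_i, s_i, t_i, r_i)$ and span-leg components $f_{0i}\colon S_0 \to S_i$, $g_{0i}\colon T_0 \to T_i$, I would form the pushouts $S = S_1 +_{S_0} S_2$ and $T = T_1 +_{T_0} T_2$ in $\mathsf{FinSet}$, with coprojections $\iota_i$ and $\kappa_i$; these remain finite. To define $s \colon T \to \mathbb{N}[S]$ I observe that the composites $\mathbb{N}[\iota_i]\, s_i \colon T_i \to \mathbb{N}[S]$ agree after restriction along $g_{0i}$: using that each span leg is a map of Petri nets ($s_i g_{0i} = \mathbb{N}[f_{0i}]\, s_0$) and that $\iota_1 f_{01} = \iota_2 f_{02}$ in the pushout $S$, both composites equal $\mathbb{N}[\iota_1 f_{01}]\, s_0$. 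Hence the universal property of the pushout $T$ induces a unique $s$ with $s \kappa_i = \mathbb{N}[\iota_i]\, s_i$. The target map $t$ is built identically, and the rate map $r \colon T \to [0,\infty)$ is induced from $(r_i)$ since rate-preservation gives $r_1 g_{01} = r_0 = r_2 g_{02}$. By construction the coprojections are maps of Petri nets with rates.

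Finally I would verify the universal property: given a cocone $(\phi_i,\gamma_i)\colon P_i \to Q$, the induced $\phi\colon S \to S_Q$ and $\gamma\colon T \to T_Q$ satisfy $\mathbb{N}[\phi]\, s = s_Q\, \gamma$, $\mathbb{N}[\phi]\, t = t_Q\, \gamma$ and $r_Q\, \gamma = r$, each checked by precomposing with the jointly epic coprojections $\kappa_i$ and applying functoriality of $\mathbb{N}[-]$, while uniqueness follows from uniqueness in the two underlying $\mathsf{FinSet}$ pushouts. The one step requiring real care---and the sole deviation from the graph case---is this bookkeeping with $\mathbb{N}[-]$: one must repeatedly use that $\mathbb{N}[-]$ is a functor (indeed a left adjoint, hence colimit-preserving) to see that the cocones assembled from $\mathbb{N}[\iota_i]\, s_i$ are compatible and that the source/target conditions transport cleanly through the pushout. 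Everything else is formally identical to the $\mathsf{FinGraph}_k$ argument, and combining the initial object with pushouts yields all finite colimits.
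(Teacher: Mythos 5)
Your proof is correct and follows essentially the same route as the paper, which simply asserts that $\Petri_{\mathrm{rates}}$ has an initial object and pushouts (constructed pointwise, as in the $\mathsf{FinGraph}_k$ case) and leaves the details implicit. Your write-up supplies the details the paper omits, in particular the bookkeeping with $\mathbb{N}[-]$ needed to induce the source and target maps on the pushout of transitions, and that bookkeeping is carried out correctly.
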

\begin{proof}
This is similar to the proof of Lemma \ref{finite_colimits} --- the category $\Petri_{\textnormal{rates}}$ has pushouts and an initial object.
\end{proof}
\begin{theorem}\label{scpetri}
Let $L \maps \FinSet \to \Petri_{\mathrm{rates}}$ be the left adjoint described above. Then there exists a symmetric monoidal double category $_L \lCsp(\Petri_{\mathrm{rates}})$ which has:
\begin{enumerate}
\item{finite sets as objects,}
\item{functions as vertical 1-morphisms,}
\item{cospans of sets whose apices are equipped with the stuff of a Petri net with rates as horizontal 1-cells, and}
\item{maps of cospans as above as 2-morphisms, as in the following commutative diagrams.
\[
\begin{tikzpicture}[scale=1.5]
\node (E) at (3,0) {$L(a)$};
\node (F) at (5,0) {$L(b)$};
\node (G) at (4,0) {$S$};
\node (E') at (3,-1) {$L(a')$};
\node (F') at (5,-1) {$L(b')$};
\node (G') at (4,-1) {$S'$};
\path[->,font=\scriptsize,>=angle 90]
(F) edge node[above]{$o$} (G)
(E) edge node[left]{$L(h_1)$} (E')
(F) edge node[right]{$L(h_2)$} (F')
(G) edge node[left]{$f$} (G')
(E) edge node[above]{$i$} (G)
(E') edge node[above]{$i'$} (G')
(F') edge node[above]{$o'$} (G');
\end{tikzpicture}
\]
\[
\begin{tikzpicture}[scale=1.5]
\node (C) at (3,-0.5) {$[0,\infty)$};
\node (D) at (4,0) {$T$};
\node (D') at (4,-1) {$T'$};
\node (A) at (5,0) {$T$};
\node (A') at (5,-1) {$T$};
\node (E) at (6,0) {$\mathbb{N}[S]$};
\node (E') at (6,-1) {$\mathbb{N}[S']$};
\node (B) at (7,0) {$T$};
\node (B') at (7,-1) {$T'$};
\node (F) at (8,0) {$\mathbb{N}[S]$};
\node (F') at (8,-1) {$\mathbb{N}[S']$};
\path[->,font=\scriptsize,>=angle 90]
(E) edge node [right] {$\mathbb{N}[f]$} (E')
(F) edge node [right] {$\mathbb{N}[f]$} (F')
(D) edge node[above]{$r$} (C)
(A) edge node[above]{$s$}(E)
(A') edge node [below] {$s'$} (E')
(D') edge node[below]{$r'$} (C)
(D) edge node [left] {$g$} (D')
(A) edge node [left] {$g$} (A')
(B) edge node [left] {$g$} (B')
(B) edge node[above]{$t$}(F)
(B') edge node [below] {$t'$} (F');
\end{tikzpicture}
\]
}
\end{enumerate}
\end{theorem}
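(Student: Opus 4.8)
The plan is to obtain this theorem as a direct instance of Corollary \ref{AdjointSC}, exactly as the two preceding applications were obtained. That corollary produces a symmetric monoidal double category $_L \lCsp(\X)$ whenever $L \maps \A \to \X$ is a left adjoint between categories with finite colimits. Here I take $\A = \FinSet$ and $\X = \Petri_{\mathrm{rates}}$. The category $\FinSet$ has finite colimits, the category $\Petri_{\mathrm{rates}}$ has finite colimits by Lemma \ref{finite_colimits_petri_rates}, and the functor $L \maps \FinSet \to \Petri_{\mathrm{rates}}$ sending a finite set to its discrete Petri net with rates is a left adjoint by Lemma \ref{left_adj_petri_rates}. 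With all three hypotheses in hand, Corollary \ref{AdjointSC} immediately yields the symmetric monoidal double category $_L \lCsp(\Petri_{\mathrm{rates}})$, with tensor product given by chosen coproducts in $\FinSet$ and $\Petri_{\mathrm{rates}}$ as in Theorem \ref{SC}.

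It then remains only to unwind this abstract output and check that the four pieces of structure match the concrete descriptions in the statement. By the construction of $_L \lCsp(\X)$ in Theorem \ref{_L Csp(X)}, objects are objects of $\A = \FinSet$, i.e.\ finite sets, and vertical $1$-morphisms are morphisms of $\FinSet$, i.e.\ functions; these two items are immediate. The horizontal $1$-cells are $L$-structured cospans, namely cospans $L(a) \to x \leftarrow L(b)$ in $\Petri_{\mathrm{rates}}$. The key observation is to see what a morphism out of a discrete Petri net with rates amounts to: since $L(a)$ has species set $a$, empty transition set, and therefore trivial source, target and rate data, a $\Petri_{\mathrm{rates}}$-morphism $L(a) \to x$ is determined by a single function $a \to S$ on species, where $S$ is the species set of the apex $x$. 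Hence a horizontal $1$-cell is precisely a cospan of finite sets $a \to S \leftarrow b$ together with the data $T$, $s,t \maps T \to \N[S]$ and $r \maps T \to [0,\infty)$ decorating the apex, which is exactly the claimed notion of open Petri net with rates.

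Finally, the $2$-morphisms of $_L \lCsp(\X)$ are maps of $L$-structured cospans, and the same analysis of morphisms out of discrete Petri nets shows that such a square reduces to a pair of functions $f \maps S \to S'$ and $g \maps T \to T'$ intertwining $s,t,r$ with $s',t',r'$, together with the compatible legs at the feet, reproducing the two commuting diagrams displayed in the statement; composition, tensoring and the symmetry are all inherited verbatim from Theorem \ref{SC}. I expect the only genuine verification, and hence the main obstacle, to lie not in this theorem itself but in its two supporting lemmas: confirming in Lemma \ref{finite_colimits_petri_rates} that pushouts in $\Petri_{\mathrm{rates}}$ can be computed pointwise on species, transitions and rates while respecting the maps into $\N[S]$, and confirming the adjunction in Lemma \ref{left_adj_petri_rates}. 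Once these are granted, the present theorem is a formal consequence of Corollary \ref{AdjointSC}.
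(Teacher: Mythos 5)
Your proposal is correct and follows exactly the paper's route: the paper's proof is simply the citation of Corollary \ref{AdjointSC} together with Lemma \ref{left_adj_petri_rates} and Lemma \ref{finite_colimits_petri_rates}, and your unwinding of what a morphism out of a discrete Petri net with rates amounts to is the same (implicit) verification. No gaps.
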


\begin{proof}
This follows from Corollary \ref{AdjointSC}, Theorem \ref{left_adj_petri_rates} and Lemma \ref{finite_colimits_petri_rates}.
\end{proof}

\section{Maps of foot-replaced double categories}\label{SCMaps}

In this section we define maps between foot-replaced double categories. In Theorem \ref{trick1} we showed how to construct a foot-replaced double category ${_L \lX}$ starting from a pair $$(\lX,L\maps \A \to \lX_0)$$ where $\lX$ is a double category and $L \maps \A \to \lX_0$ is a functor that maps the category $\A$, which contains the objects and morphisms of the foot-replaced double category $_L \lX$, into the category of objects $\lX_0$ of the double category $\lX$. 
Suppose that we have two foot-replaced double categories: $_L \lX$ obtained from a pair $(\lX,L \maps \A \to \lX_0)$ and $_{L'}\lX'$ obtained from a pair $(\lX',L' \maps \A' \to \lX'_0).$ Then we can construct a map from $_L \lX$ to $_{L'} \lX'$ given a functor $F \colon \mathsf{A} \to \mathsf{A'}$ together with a double functor $\mathbb{F} \maps \lX \to \lX'$ such that the following diagram commutes up to a specified isomorphism $\theta$:
\[
\begin{tikzpicture}[scale=1.5]
\node (A) at (0,0) {$\A$};
\node (B) at (1,0) {$\lX_0$};
\node (C) at (0,-1) {$\A'$};
\node (D) at (1,-1) {$\lX'_0$};
\node (F) at (0.5,-0.5) {$\SWarrow \theta$};
\path[->,font=\scriptsize,>=angle 90]
(A) edge node [above] {$L$} (B)
(A) edge node [left]{$F$} (C)
(B) edge node [right]{$\mathbb{F}_0$} (D)
(C) edge node [above] {$L'$} (D);
\end{tikzpicture}
\]
In the case where $_{L} \lX$ and $_{L'} \lX'$ are symmetric monoidal and we wish to construct a symmetric monoidal double functor between them, we will then require that both the functor $F$ and double functor $\mathbb{F}$ be symmetric monoidal, and that $\theta$ be monoidal as well. (For the definition of `symmetric monoidal double functor', see Definition \ref{defn:monoidal_double_functor}, and for the definition of `monoidal transformation', see Definition \ref{defn:monoidal_transformation}.) 
\begin{theorem}\label{MapTrick1}
Let ${_L \lX}$ and $_{L'} \lX'$ be two foot-replaced double categories. Given a functor $F \maps \A \to \A'$ and a double functor $\mathbb{F} \maps \lX \to \lX'$ such that the following diagram commutes up to isomorphism:
\[

\]
where $S,T$ and $S',T'$ are the source and target functors of the double categories $_L \lX$ and $_{L'} \lX'$, respectively, together with natural transformations $${ _F \mathbb{F} }_{\odot} \maps { _F \mathbb{F} }(M) \odot { _F \mathbb{F} }(N) \to { _F \mathbb{F} }(M \odot N)$$ for every pair of composable horizontal 1-cells $M$ and $N$ of $_L \lX$ and a natural transformation $${ _F \mathbb{F} }_U \maps {U'}_{F(a)} \to { _F \mathbb{F} }(U_a)$$ for every object $a \in {_L \lX}$ that satisfy the standard coherence axioms of a monoidal category given by the laxator hexagon and unitality squares.

The functors ${ _F \mathbb{F} }_0 = F$ and ${ _F \mathbb{F} }_1$ are defined as in the statement of the theorem. To see that the above squares commute, if we focus on the left one, starting at the upper left corner, for an object of $_L \lX_1$ which is given by a horizontal 1-cell, we have going right that:
\[

\]
The double functor ${ _F \mathbb{F} }$ is pseudo, lax or oplax depending on whether the double functor $\mathbb{F}$ is pseudo, lax or oplax, respectively.
\end{proof}
If both $F \maps \A \to \A'$ and $\mathbb{F} \maps \lX \to \lX'$ are (strong) symmetric monoidal and $\theta \colon \mathbb{F}_0 L \colon L'F$ a monoidal natural isomorphism, then ${ _F \mathbb{F} } \maps _L \lX \to { _{L'}\lX'}$ is a (strong) symmetric monoidal double functor.
\begin{theorem}
Let $_L \lX$ and $_{L'} \lX'$ be symmetric monoidal foot-replaced double categories obtained from pairs $(\lX,L \maps \A \to \lX_0)$ and $(\lX',L' \maps \A' \to \lX_0')$, respectively, via Theorem \ref{trick2}. If ${ _F \mathbb{F} } \maps _L \lX \to { _{L'} \lX'}$ is a foot-replaced double functor obtained from a square
\[
\begin{tikzpicture}[scale=1.5]
\node (A) at (0,0) {$\A$};
\node (B) at (1,0) {$\lX_0$};
\node (C) at (0,-1) {$\A'$};
\node (D) at (1,-1) {$\lX'_0$};
\node (F) at (0.5,-0.5) {$\SWarrow \theta$};
\path[->,font=\scriptsize,>=angle 90]
(A) edge node [above] {$L$} (B)
(A) edge node [left]{$F$} (C)
(B) edge node [right]{$\mathbb{F}_0$} (D)
(C) edge node [above] {$L'$} (D);
\end{tikzpicture}
\]
as in Theorem \ref{MapTrick1} with $\theta$ monoidal and $F$ and $\mathbb{F}$ (strong) symmetric monoidal, then ${ _F \mathbb{F} }$ is a (strong) symmetric monoidal double functor of foot-replaced double categories.
\end{theorem}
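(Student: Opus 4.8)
The plan is to verify, one datum at a time, the structure required of a (strong) symmetric monoidal double functor as laid out in Definition~\ref{defn:monoidal_double_functor}, reusing at each stage the corresponding structure on $\mathbb{F}$ together with the monoidality of $\theta$. Recall that such a functor consists of component functors on the categories of objects and of arrows, each equipped with monoidal comparison data, together with globular 2-morphisms witnessing compatibility with the comparison isomorphisms $\chi$ and $\mu$ of the two monoidal double categories, all subject to coherence axioms. Since Theorem~\ref{MapTrick1} already supplies the underlying double functor ${ _F \mathbb{F} }$, only the monoidal overlay must be produced, and the qualifier `(strong)' will track the strength of $F$ and $\mathbb{F}$ throughout.

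First I would treat the category of objects. Here $({ _F \mathbb{F} })_0 = F \maps \A \to \A'$, and the monoidal structures on $_L \lX_0 = \A$ and $_{L'} \lX'_0 = \A'$ are just those of $\A$ and $\A'$; thus $({ _F \mathbb{F} })_0$ is (strong) symmetric monoidal immediately by the hypothesis on $F$, with no further work.

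Next, and this is the technical heart, I would supply the monoidal comparison for $({ _F \mathbb{F} })_1 \maps {}_L \lX_1 \to {}_{L'} \lX'_1$. Recall from Theorem~\ref{trick2} that the tensor of two objects $M,N$ of $_L \lX_1$ is $\phi_{a_2,b_2}(M \otimes N)\phi_{a_1,b_1}^{-1}$, conjugating the $\lX_1$-tensor by the invertible laxators of $L$, and that $({ _F \mathbb{F} })_1$ sends $M \maps L(a) \to L(b)$ to $\theta_b \mathbb{F}_1(M)\theta_a^{-1}$. I would compute both $({ _F \mathbb{F} })_1(M \otimes N)$ and the tensor $({ _F \mathbb{F} })_1(M) \otimes ({ _F \mathbb{F} })_1(N)$ formed in $_{L'} \lX'_1$ — the latter conjugating the $\lX'_1$-tensor by the laxators $\phi'$ of $L'$ — and exhibit a globular isomorphism between them. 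This isomorphism is assembled from the monoidal comparison of $\mathbb{F}_1$ (relating $\mathbb{F}_1(M \otimes N)$ to $\mathbb{F}_1(M) \otimes \mathbb{F}_1(N)$) together with the naturality of $\theta$ and, crucially, the axioms expressing that $\theta$ is a \emph{monoidal} natural isomorphism: it is precisely the monoidality of $\theta$ that reconciles the conjugation by $\phi$, pushed through $\mathbb{F}$, with conjugation by $\phi'$. The unit comparison is handled identically, using that $\theta$ respects units and that $\mathbb{F}_1$ and the identity-assigning functors preserve the monoidal unit as in Equation~\eqref{eq:monoidal_unit}.

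Finally I would check preservation of the globular comparisons $\chi$ and $\mu$ and the remaining coherence and symmetry axioms. Preservation of $\chi$ and $\mu$ reduces, just as in the proof of Theorem~\ref{trick2}, to the commuting of the corresponding diagrams for $\mathbb{F}$ in $\lX$ and $\lX'$ — which hold because $\mathbb{F}$ is a monoidal double functor — sandwiched between the invertible laxator-conjugations, the sandwiching made coherent by the monoidality of $\theta$. Strictness and compatibility of the source and target functors is inherited because on $_L \lX$ these act as $F$, already strict symmetric monoidal by the choice made in item~(3) of Theorem~\ref{trick2}. The braiding and symmetry axioms follow from those of $\A, \A', \lX, \lX'$ together with the fact that $\theta$ commutes with the symmetries $\fs$. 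I expect the main obstacle to be purely the bookkeeping of the object-level comparison for $({ _F \mathbb{F} })_1$: every appearance of a laxator $\phi$ or $\phi'$ and every component of $\theta$ must be tracked through the conjugations, and the verification that the resulting square is coherent is exactly the content of the monoidal-naturality axiom for $\theta$; once that single compatibility is established, all remaining diagrams close formally.
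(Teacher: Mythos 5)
Your proposal is correct and follows essentially the same route as the paper's proof: the object component is just $F$, the arrow-level comparison is assembled from the comparison constraint of $\mathbb{F}$, the invertible laxators of $L$, $L'$ and $F$, and the monoidal naturality of $\theta$, and the coherence diagrams reduce to the corresponding ones for $\mathbb{F}$ conjugated by invertible laxators. One small correction: the comparison 2-morphisms for $({ _F \mathbb{F} })_1$ are not globular in general --- their vertical source and target are the laxator components $\tau_{a,a'} \colon F(a) \otimes F(a') \to F(a \otimes a')$ of $F$ (appearing as $L'(\tau_{a,a'})$ in the relevant squares of the paper's proof), so they are genuine 2-morphisms of $_{L'} \lX'$ lying over the monoidal comparison of the object component rather than over identities.
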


\begin{proof}
Since the functor $F \maps \A \to \A'$ is symmetric monoidal, for every pair of objects $a$ and $b$ of $\A$, we have a natural transformation $$\mu_{a,b} \maps F(a) \otimes F(b) \to F(a \otimes b)$$ together with a morphism $$\epsilon \maps 1_{_{L'} \lX'} \to F(1_{_L \lX})$$ 
where the unit object of $ _{L'} \lX'$ is given by $1_{{_{L'}} \lX'} = 1_{\A'} \cong F(1_{\A})$ and the unit object of $_L \lX$ is given by $1_{_L \lX} = 1_{\A}$. These together make the following diagrams commute for every triple of objects $a,b,c$ of $_L \lX$, which are just objects of $\A$. Note that the object component of the double functor ${ _F \mathbb{F} }$ is just ${ _F \mathbb{F} }_0=F$.
 \[\xymatrix{
    (F(a) \otimes F(b)) \otimes F(c) \ar[r]^{\alpha'}\ar[d]_{\mu_{a,b} \otimes 1}
    & F(a) \otimes (F(b) \otimes F(c)) \ar[d]^{1 \otimes \mu_{b,c}}\\
    F(a \otimes b) \otimes F(c) \ar[d]_{\mu_{a \otimes b,c}} &
    F(a) \otimes F(b \otimes c) \ar[d]^{\mu_{a,b \otimes c}}\\
    F((a \otimes b) \otimes c)\ar[r]^{F\alpha} &
    F(a \otimes (b \otimes c))}\]
\[

\]
where $\sigma_{F(a),F(a')} \colon L'(F(a)) \otimes L'(F(a')) \to L'(F(a) \otimes F(a'))$ is the natural isomorphism coming from the (strong) symmetric monoidal functor $L' \colon \mathsf{A}' \to {\lX_0}'$, $\sigma'_{a,a'} \colon L(a) \otimes L(a') \to L(a \otimes a')$ is the natural isomorphism coming from the (strong) symmetric monoidal functor $L \colon \mathsf{A} \to {\lX_0}$, and $\mu_{x,y}' \maps \mathbb{F}_0(x) \otimes \mathbb{F}_0(y) \to \mathbb{F}_0(x \otimes y)$ is the natural isomorphism coming from the (strong) symmetric monoidal functor $\mathbb{F}_0 \colon \lX_0 \to \lX_0'$. On the other hand, $M \otimes M'$ is given by:
\[

\]
where $\tau \colon 1_\mathsf{A'} \to F(1_\mathsf{A}) $ comes from the (strong) symmetric monoidal functor $F \colon \mathsf{A} \to \mathsf{A'}$.

These transformations $\nu'$ and $\delta'$ together make the following diagrams commute for every triple of horizontal 1-cells $M,N,P$ of ${_L \lX}$.
 \[\xymatrix{
    ({ _F \mathbb{F} }(M) \otimes { _F \mathbb{F} }(N)) \otimes { _F \mathbb{F} }(P) \ar[r]^{\alpha'}\ar[d]_{\nu'_{M,N} \otimes 1}
    & { _F \mathbb{F} }(M) \otimes ({ _F \mathbb{F} }(N) \otimes { _F \mathbb{F} }(P)) \ar[d]^{1 \otimes \nu'_{N,P}}\\
    { _F \mathbb{F} }(M \otimes N) \otimes { _F \mathbb{F} }(P) \ar[d]_{\nu'_{M \otimes N,P}} &
    { _F \mathbb{F} }(M) \otimes { _F \mathbb{F} }(N \otimes P) \ar[d]^{\nu'_{M,N \otimes P}}\\
    { _F \mathbb{F} }((M \otimes N) \otimes P)\ar[r]^{{ _F \mathbb{F} }(\alpha)} &
    { _F \mathbb{F} }(M \otimes (N \otimes P))}\]
\[
\begin{tikzpicture}[scale=1.5]
\node (A) at (0,1) {${ _F \mathbb{F} }(M) \otimes U_{1_{_{L'} \lX'}}$};
\node (C) at (3,1) {${ _F \mathbb{F} }(M)$};
\node (A') at (0,0) {${ _F \mathbb{F} }(M) \otimes { _F \mathbb{F} }(U_{1_{_L \lX}})$};
\node (C') at (3,0) {${ _F \mathbb{F} }(M \otimes U_{1_{_L \lX}})$};
\node (B) at (6,1) {$U_{1_{_{L'} \lX'}} \otimes { _F \mathbb{F} }(M)$};
\node (B') at (6,0) {${ _F \mathbb{F} }(U_{1_{_L \lX}}) \otimes { _F \mathbb{F} }(M)$};
\node (D) at (9,1) {${ _F \mathbb{F} }(M)$};
\node (D') at (9,0) {${ _F \mathbb{F} }(U_{1_{_L \lX}} \otimes M)$};
\path[->,font=\scriptsize,>=angle 90]
(A) edge node[left]{$1 \otimes \delta'$} (A')
(C') edge node[right]{${ _F \mathbb{F} }(r_{M})$} (C)
(A) edge node[above]{$r_{{ _F \mathbb{F} }(M)}$} (C)
(A') edge node[above]{$\nu'_{M,U_{1_{_L \lX}}}$} (C')
(B) edge node[left]{$\delta' \otimes 1$} (B')
(B') edge node[above]{$\nu'_{U_{1_{_L \lX}},M}$} (D')
(B) edge node[above]{$\ell_{{ _F \mathbb{F} }(M)}$} (D)
(D') edge node[right]{${ _F \mathbb{F} }(\ell_{M})$} (D);
\end{tikzpicture}
\]
By another abuse of notation, the following diagram commutes where we denote the braiding in both ${_L\lX}_1$ and ${_{L'}\lX'}_1$ by $\beta$.
\[
\begin{tikzpicture}[scale=1.5]
\node (B) at (5,1) {${ _F \mathbb{F} }(M) \otimes { _F \mathbb{F} }(N)$};
\node (B') at (5,0) {${ _F \mathbb{F} }(M \otimes N)$};
\node (D) at (10,1) {${ _F \mathbb{F} }(N) \otimes { _F \mathbb{F} }(M)$};
\node (D') at (10,0) {${ _F \mathbb{F} }(N \otimes M)$};
\path[->,font=\scriptsize,>=angle 90]
(B) edge node[left]{$\nu'_{M,N}$} (B')
(B') edge node[above]{${ _F \mathbb{F} }(\beta_{M,N})$} (D')
(B) edge node[above]{$\beta_{{ _F \mathbb{F} }(M),{ _F \mathbb{F} }(N)}$} (D)
(D) edge node[right]{$\nu'_{N,M}$} (D');
\end{tikzpicture}
\]
Lastly, we have transformations $\Phi_{M,N} \colon \otimes \circ ({ _F \mathbb{F} },{ _F \mathbb{F} }) \Rightarrow { _F \mathbb{F} } \circ \otimes$ and $\Phi_U \colon I_{ {_{L'}\mathbb{X}'} } \Rightarrow { _F \mathbb{F} } \circ I_{{ _L \mathbb{X}}}$ satisfying the axioms of a symmetric monoidal functor with respect to $\otimes$ which come from the corresponding transformations $\Psi_{M,N} \colon \otimes \circ (\mathbb{F},\mathbb{F}) \Rightarrow \mathbb{F} \circ \otimes$, $\Psi_U \colon I_{\mathbb{X}'} \Rightarrow \mathbb{F} \circ I_{\mathbb{X}}$ of the symmetric monoidal double functor $\mathbb{F}$, the natural isomorphisms $\mu_{a,b}$ and $\mu$ of the symmetric (strong) monoidal functor $F \colon \mathsf{A} \to \mathsf{A'}$, and the monoidal natural isomorphism $\theta \colon \mathbb{F}_0 L \Rightarrow L' F$.
\end{proof}

\section{Transformations of foot-replaced double categories}
We can also consider double transformations between these foot-replaced double functors and symmetric monoidal versions of such. By the previous section, we can produce a map between two foot-replaced double categories $_L \lX = (\lX,L \maps \A \to \lX_0)$ and $_{L'} \lX' = (\lX',L' \maps \A' \to \lX'_0)$ from a triple $(F,\mathbb{F},\theta)$ as in the following diagram.
\[

\]
We will denote the double transformation that results from the pair $(\phi,\Phi)$ as $_\phi \Phi \colon _F \mathbb{F} \Rightarrow _G \mathbb{G}$. 

\begin{theorem}
Let ${ _F \mathbb{F} } \maps _L \lX \to{_{L'}\lX'}$ and ${ _G \mathbb{G} }  \maps _L \lX \to {_{L'}\lX'}$ be double functors obtained from triples $(F,\mathbb{F},\theta)$ and $(G,\mathbb{G},\psi)$ via Theorem \ref{MapTrick1}, respectively. Given a double transformation $\Phi \maps \mathbb{F} \Rightarrow \mathbb{G}$ and a transformation $\phi \maps F \Rightarrow G$ such that the diagrams above commute, then from the pair $(\phi,\Phi)$ we can construct a double transformation $\Xi = { _\phi \Phi} \maps _F \mathbb{F} \Rightarrow { _G \mathbb{G} }$ (see Definition \ref{double_transformation}). The object component $\Xi_0$ is given by the composite $$ \Xi_a = \psi_a^{-1} L'(\phi_a) \theta_a \maps { _F \mathbb{F} }(a) \to { _G \mathbb{G} }(a)$$ and the arrow component $\Xi_1$ is given by $\Phi_1$, the arrow component of the double transformation $\Phi$.
\end{theorem}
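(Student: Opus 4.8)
The plan is to unpack Definition~\ref{double_transformation} and verify its clauses one at a time, exploiting that both ${_F \mathbb{F}}$ and ${_G \mathbb{G}}$ were manufactured in Theorem~\ref{MapTrick1} by conjugating $\mathbb{F}$ and $\mathbb{G}$ with the coherence isomorphisms $\theta$ and $\psi$. A double transformation $\Xi \maps {_F \mathbb{F}} \Rightarrow {_G \mathbb{G}}$ carries an object component $\Xi_0$, a natural transformation between the object-level functors, and an arrow component $\Xi_1$, a natural transformation between the arrow-level functors ${_F \mathbb{F}}_1, {_G \mathbb{G}}_1 \maps {_L \lX}_1 \to {_{L'} \lX'}_1$, subject to source--target compatibility with $\Xi_0$ and to two coherence squares asserting that $\Xi$ respects the composition and unit comparison cells of the two double functors. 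I would take the object component to be $\Xi_a := \psi_a^{-1} L'(\phi_a) \theta_a$ and the arrow component $(\Xi_1)_M$ to be $(\Phi_1)_M$ whiskered by the isomorphisms $\theta_a, \theta_b, \psi_a, \psi_b$ so that its source and target become $({_F \mathbb{F}})_1(M)$ and $({_G \mathbb{G}})_1(M)$; this is what the statement abbreviates as $\Xi_1 = \Phi_1$.

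The first move is to observe that the hypothesis --- equality of the two pasting composites built from $(\phi, \theta)$ and $(\Phi_0, \psi)$ --- says exactly that $L'(\phi_a)\, \theta_a = \psi_a\, \Phi_{0, L(a)}$, so that $\Xi_a = \psi_a^{-1} L'(\phi_a)\theta_a = \Phi_{0, L(a)}$. This identification is the engine of the argument: it lets me rewrite every clause of Definition~\ref{double_transformation} for $\Xi$ in terms of the corresponding, already-established clause for the double transformation $\Phi$ and the natural transformation $\phi$, cancelling the invertible factors $\theta$ and $\psi$ wherever they appear.

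With this identity the easy clauses fall out quickly. Naturality of $\Xi_0$ is naturality of $\Phi_0$ whiskered by $L$ (equivalently, naturality of $\phi$ together with functoriality of $L'$ and naturality of $\theta, \psi$). Naturality of $\Xi_1$ follows from naturality of $\Phi_1$ after conjugating the top and bottom $1$-cells of each square by the relevant $\theta$'s and $\psi$'s, precisely the manipulation already performed for horizontal composites in the proof of Theorem~\ref{MapTrick1}. Source--target compatibility is immediate, since the vertical legs of $(\Xi_1)_M = (\Phi_1)_M$ are $\Phi_{0, L(a)}$ and $\Phi_{0, L(b)}$, which are $\Xi_a$ and $\Xi_b$ by the identification above.

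The main obstacle, and the only part needing genuine care, is the pair of coherence squares: compatibility of $\Xi$ with the composition comparisons ${_F \mathbb{F}}_{M,N}, {_G \mathbb{G}}_{M,N}$ and with the unit comparisons ${_F \mathbb{F}}_U, {_G \mathbb{G}}_U$. Here I would substitute the explicit formulas for these comparison cells recorded in the proof of Theorem~\ref{MapTrick1} --- each a conjugate of $\mathbb{F}_{M,N}, \mathbb{G}_{M,N}, \mathbb{F}_a, \mathbb{G}_a$ by $\theta$, $\psi$ and the laxators of $L'$ --- and then slide the $\phi$-components past them using $L'(\phi_a)\theta_a = \psi_a \Phi_{0, L(a)}$ repeatedly. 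After these substitutions the two squares collapse onto the composition- and unit-coherence squares that $\Phi$ already satisfies as a double transformation, together with naturality of $\theta$ and $\psi$. No computation beyond this bookkeeping is required; I expect the written proof to consist mainly of the diagram-chase organizing these cancellations, with all the conceptual content concentrated in the single identity $\Xi_a = \Phi_{0, L(a)}$.
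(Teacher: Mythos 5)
Your proposal is correct and follows essentially the same route as the paper's proof: the central step in both is the identity $\Xi_a = \psi_a^{-1} L'(\phi_a)\theta_a = \psi_a^{-1}\psi_a\,\Phi_{0,L(a)} = \Phi_{0,L(a)}$, extracted from the commuting pasting hypothesis, which reduces every clause of Definition \ref{double_transformation} for $\Xi$ to the corresponding clause already satisfied by $\Phi$. Your treatment of the composition and unit coherence squares is somewhat more explicit than the paper's, which simply appeals to $\Phi$ being a double transformation, but the argument is the same.
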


\begin{proof}
Because $\Phi \maps \mathbb{F} \Rightarrow \mathbb{G}$ is a double transformation and the diagram on the previous page commutes, we have that the following equations hold.
\[

\]
Here we use the isomorphisms $\theta_a \colon \mathbb{F}_0(L(a)) \xrightarrow{\sim} L' (F(a))$ and $\psi_a \colon \mathbb{G}_0(L(a)) \xrightarrow{\sim} L'(G(a))$ together with the natural transformation $\phi \colon F \Rightarrow G$ to cook up the object component of the double natural transformation ${ _\phi \Phi} \colon { _F \mathbb{F} } \Rightarrow { _G \mathbb{G} }$.
In detail, every object of $_L \lX$ is of the form $L(a)$ for some $a$ in $\A$. We thus have for every object $L(a)$ in $_L \lX$ a map $\theta_a \colon \mathbb{F}_0(L(a)) \xrightarrow{\sim} L' (F(a))$. The natural transformation $\phi \maps F \Rightarrow G$ evaluated at $a$ then gives a map $\phi_a \maps F(a) \to G(a)$ and applying the functor $L'$ to the map $\phi_a$ then gives a map $L'(\phi_a) \maps L'(F(a)) \to L'(G(a))$. Then, we use the other natural isomorphism $\psi_a \colon \mathbb{G}_0(L(a)) \to L'(G(a))$ to obtain a map $\psi_{a}^{-1} \colon L'(G(a)) \xrightarrow{\sim} \mathbb{G}_0(L(a))$, and thus $$ \Xi_a = \psi_a^{-1} L'(\phi_a) \theta_a \maps { _F \mathbb{F} }(a) \to { _G \mathbb{G} }(a).$$ Moreover, the map $\Xi_a$ for each object $a$ will make the above equations hold for ${ _\phi \Phi} \colon { _F \mathbb{F} } \Rightarrow { _G \mathbb{G} }$ as $$\Xi_a = \psi_a^{-1} L'(\phi_a) \theta_a = \psi_a^{-1} \psi_a {\Phi_0}_{L(a)}={\Phi_0}_{L(a)}$$ and the corresponding equations utilizing the component ${\Phi_0}_{L(a)}$ hold as $\Phi \maps \lX \Rightarrow \lX'$ is a double transformation.

Finally, because $\Phi \maps \mathbb{F} \Rightarrow \mathbb{G}$ is a double transformation and by the commutativity of the diagram on the previous page, for a horizontal 1-cell $M$ in $_L \lX$ we have that $S({\Phi_1}_{M})=\Xi_{S(M)}$ and $T({\Phi_1}_{M}) = \Xi_{T(M)}$.
\end{proof}

The double transformation ${ _\phi \Phi}$ is a double natural isomorphism if and only if $\phi$ is a natural isomorphism and $\Phi$ is a double natural isomorphism.

As with functors of foot-replaced double categories, if both the transformation $\phi \maps F \Rightarrow G$ and the double transformation $\Phi \maps \mathbb{F} \Rightarrow \mathbb{G}$ are symmetric monoidal, then ${ _\phi \Phi } \maps { _F \mathbb{F} } \Rightarrow { _G \mathbb{G} }$ is a symmetric monoidal double transformation of symmetric monoidal foot-replaced double functors. 

\begin{theorem}
Let ${ _\phi \Phi } \maps { _F \mathbb{F} } \Rightarrow { _G \mathbb{G} }$ be a foot-replaced double transformation between two symmetric monoidal foot-replaced double functors ${ _F \mathbb{F} } \maps _L \lX \to {_{L'}\lX'}$ and ${ _G \mathbb{G} } \maps _L \lX \to {_{L'}\lX'}$, where $_L \lX = (\lX,L \maps \A \to \lX_0)$ and $_{L'} \lX'=(\lX',L' \maps \A' \to \lX'_0)$. If $\phi \maps F \Rightarrow G$ is a monoidal transformation and $\Phi \maps \mathbb{F} \Rightarrow \mathbb{G}$ is a monoidal double transformation, then ${ _\phi \Phi } \maps { _F \mathbb{F} } \Rightarrow { _G \mathbb{G} }$ is a monoidal double transformation (see Definition \ref{monoidal_double_transformation}) of foot-replaced double functors.
\end{theorem}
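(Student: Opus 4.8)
The plan is to unwind Definition \ref{monoidal_double_transformation} and reduce each required coherence to data already under control. Recall from the preceding theorem that ${ _\phi \Phi }$ has object component $\Xi_a = \psi_a^{-1} L'(\phi_a)\theta_a$ and arrow component $\Xi_1 = \Phi_1$, and that in fact $\Xi_a = \psi_a^{-1}\psi_a {\Phi_0}_{L(a)} = {\Phi_0}_{L(a)}$, since $\theta_a$ and $\psi_a$ are precisely the structure isomorphisms built into ${ _F \mathbb{F} }$ and ${ _G \mathbb{G} }$. Thus both components of $\Xi$ are literally the components of $\Phi$ restricted to the objects and horizontal 1-cells of $_L \lX$, all of which have the form $L(a)$ and $M \maps L(a) \to L(b)$. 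This identification is the key leverage: the monoidal axioms for $\Xi$ will be inherited from those for $\Phi$, corrected only by the invertible laxators.

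First I would write out the two coherence squares that a monoidal double transformation must satisfy: a tensor-compatibility axiom relating $\Xi$, the tensor laxators $\nu'$ of ${ _F \mathbb{F} }$ and ${ _G \mathbb{G} }$, and the globular $\chi$; and a unit-compatibility axiom relating $\Xi$, the unit comparisons $\delta'$ of the two functors, and $\mu$. The laxators $\nu'_{M,N}$ and $\delta'$ were constructed explicitly in the proof of the monoidal-functor theorem as conjugates of the laxators $\mathbb{F}_{M,N}$ (resp.\ $\mathbb{G}_{M,N}$) and the unit comparison $\delta$ by the structure isomorphisms $\theta$ (resp.\ $\psi$), the strong-monoidal isomorphisms $\sigma$ of $L'$, and the monoidal comparison $\mu_{a,b}$ of $F$ (resp.\ $G$).

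Then I would substitute these formulas together with $\Xi = \Phi|$ into each axiom. Because $\Xi_1 = \Phi_1$ and each $\nu', \delta'$ differs from the corresponding laxator of $\mathbb{F}, \mathbb{G}$ only by conjugation with invertible maps built from $\theta, \psi, \sigma$ and by composition with $L'(\mu_{a,b})$, every diagram for $\Xi$ decomposes into an inner cell --- exactly the monoidal-double-transformation axiom for $\Phi$, which holds by hypothesis --- surrounded by outer cells assembled from naturality of $\theta$ and $\psi$, functoriality and coherence of $\sigma$ under $L'$, and $L'$ applied to the compatibility square of $\phi$ with the monoidal comparisons of $F$ and $G$. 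That last square commutes because $\phi$ is a monoidal transformation. After cancelling the matched $\theta\theta^{-1}$ and $\psi\psi^{-1}$ pairs, composing the inner axiom with these outer cells yields the required identity for $\Xi$. The same computation run with the braidings of $\A, \A', \lX_1, \lX_1'$ (which $S$ and $T$ preserve strictly) shows $\Xi$ respects the symmetry, so ${ _\phi \Phi }$ is a (symmetric) monoidal double transformation.

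The main obstacle will be bookkeeping rather than conceptual difficulty: the laxators $\nu', \delta'$ are conjugates of those for $\mathbb{F}, \mathbb{G}$ by long strings of invertible laxators, so the rectangles one must chase are of the large ``unrolled'' kind appearing in the proof of Theorem \ref{trick2}. Every newly introduced square expresses either naturality of an invertible laxator, functoriality of $L'$, or the monoidal axiom for $\phi$; the delicate point is to verify that what survives the cancellations is \emph{precisely} the monoidal coherence for $\Phi$ and the monoidal coherence for $\phi$, with no residual mismatch between the two layers of structure.
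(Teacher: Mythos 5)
Your proposal is correct and follows essentially the same route as the paper: identify the object and arrow components of ${_\phi \Phi}$ with $\Xi_a = \psi_a^{-1}L'(\phi_a)\theta_a = {\Phi_0}_{L(a)}$ and $\Phi_1$ respectively, then observe that the tensor- and unit-compatibility squares for ${_\phi\Phi}$ reduce to the corresponding monoidal axioms for $\phi$ and $\Phi$ conjugated by the invertible structure isomorphisms. The paper simply asserts that these diagrams commute once the components are so identified, whereas you spell out the bookkeeping of the surrounding naturality cells; the substance is the same.
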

\begin{proof}
The double transformation ${ _\phi \Phi }$ acts as $\Xi$ (defined above) on objects and vertical 1-morphisms. This means that the following diagrams commute.
\[
\begin{tikzpicture}[scale=1.5]
\node (B) at (5,1) {${ _F \mathbb{F} }(a) \otimes { _F \mathbb{F} }(b)$};
\node (B') at (5,0) {${ _F \mathbb{F} }(a \otimes b)$};
\node (D) at (8,1) {${ _G \mathbb{G} }(a) \otimes { _G \mathbb{G} }(b)$};
\node (D') at (8,0) {${ _G \mathbb{G} }(a \otimes b)$};
\path[->,font=\scriptsize,>=angle 90]
(B) edge node[left]{$\mu_{a,b}$} (B')
(B') edge node[above]{$\Xi_{a \otimes b}$} (D')
(B) edge node[above]{$\Xi_a \otimes \Xi_b$} (D)
(D) edge node[right]{$\mu'_{a,b}$} (D');
\end{tikzpicture}
\]
\[
\begin{tikzpicture}[scale=1.5]
\node (B) at (5,1) {$1_{_{L'}\lX'}$};
\node (B') at (6,0) {${ _F \mathbb{F} }(1_{_L \lX})$};
\node (D) at (7,1) {${ _G \mathbb{G} }(1_{_L\lX})$};
\path[->,font=\scriptsize,>=angle 90]
(B) edge node[left]{$\epsilon$} (B')
(B') edge node[right]{$\phi_{1_{ _L \lX}}$} (D)
(B) edge node[above]{$\epsilon'$} (D);
\end{tikzpicture}
\]
Similarly, the double transformation ${ _\phi \Phi }$ acts as $\Phi$ on horizontal 1-cells and 2-morphisms, which means that the following diagrams commute.
\[
\begin{tikzpicture}[scale=1.5]
\node (A) at (9,1) {${ _F \mathbb{F} }(M) \otimes { _F \mathbb{F} }(N)$};
\node (A') at (13,1) {${ _G \mathbb{G} }(M) \otimes { _G \mathbb{G} }(N)$};
\node (C) at (9,0) {${ _F \mathbb{F} }(M \otimes N)$};
\node (C') at (13,0) {${ _G \mathbb{G} }(M \otimes N)$};
\path[->,font=\scriptsize,>=angle 90]
(A) edge node[above]{${\Phi_1}_M \otimes {\Phi_1}_N$} (A')
(A') edge node[right]{${\mu'}_{M,N}$} (C')
(A) edge node[left]{$\mu_{M,N}$} (C)
(C) edge node[above]{${\Phi_1}_{M \otimes N}$} (C');
\end{tikzpicture}
\]
\[
\begin{tikzpicture}[scale=1.5]
\node (A) at (8,1) {$U_{1_{_{L'}\lX'}}$};
\node (C) at (9,0) {${ _F \mathbb{F} }(U_{1_{_L \lX}})$};
\node (C') at (10,1) {${ _G \mathbb{G} }(U_{1_{_L \lX}})$};
\path[->,font=\scriptsize,>=angle 90]
(A) edge node[left]{$\delta$} (C)
(C) edge node[right]{${\Phi_1}_{U_{1_{_L \lX}}}$} (C')
(A) edge node[above]{$\delta'$} (C');
\end{tikzpicture}
\]
Hence both the object and arrow components are monoidal natural transformations and thus ${ _\phi \Phi } \maps { _F \mathbb{F} } \Rightarrow { _G \mathbb{G} }$ is a symmetric monoidal double transformation.
\end{proof}
}

{\ssp
\chapter{Decorated cospan double categories}\label{Chapter4}

In this chapter we present an improved version of Fong's theory of decorated cospan categories \cite{Fong} from the perspective of double categories. The main difference here is that, given a category $\mathsf{A}$ with finite colimits, we instead start with a \emph{pseudofunctor} $F \colon \mathsf{A} \to \mathsf{Cat}$ rather than functor $F \colon \mathsf{A} \to \mathsf{Set}$. The additional structure of $\mathsf{Cat}$ viewed as a 2-category then allows us more flexibility in defining what the isomorphism class of an $F$-decorated cospan consists of. This ultimately results in a second solution to the problems with the original incarnation of decorated cospans, structured cospans being the first.

Given a finitely cocomplete category $\mathsf{A}$ and a lax monoidal pseudofunctor $F \colon \mathsf{A} \to \mathbf{Cat}$, the first result is the existence of a double category $F\mathbb{C}\mathbf{sp}$ in which $F$-decorated cospans appear as horizontal 1-cells, except now we can exploit the 2-categorical structure of $\mathbf{Cat}$ to define 2-morphisms. This is Theorem \ref{DCord}. In Theorem \ref{DC} we show that when this lax monoidal pseudofunctor $F$ is symmetric, then the resulting double category $F\mathbb{C}\mathbf{sp}$ is in fact symmetric monoidal. We then define maps between decorated cospan double categories in Section \ref{DCMaps}. Finally, as both structured cospan double categories and decorated cospan double categories are solutions to the problems with Fong's original decorated cospans, in Section \ref{SCDCequiv} we show that under certain conditions these approaches lead to equivalent symmetric monoidal double categories, the main result being Theorem \ref{Equiv}.

\section{A double category of decorated cospans}

\begin{theorem}\label{DCord}
Let $\mathsf{A}$ be a category with finite colimits and $F \colon (\mathsf{A},+,0) \to (\mathbf{Cat},\times,1)$ a lax monoidal pseudofunctor. Then there exists a double category $F\mathbb{C}\mathbf{sp}$ for which:
\begin{enumerate}
\item{an object is an object of $\mathsf{A}$,}
\item{a vertical 1-morphism is a morphism of $\mathsf{A}$,}
\item{a horizontal 1-cell is an \define{$F$-decorated cospan} in $\mathsf{A}$, which is a pair:
\[

\]
with the corresponding decoration of the apex $x \odot y \in F(m+_{b} n)$ being the element determined by:
$$1 \xrightarrow{\lambda^{-1}} 1 \times 1 \xrightarrow{x \times y} F(m) \times F(n) \xrightarrow{\phi_{m,n}} F(m+n) \xrightarrow{F(\psi)} F(m+_{b}n)$$
where $\psi \colon m + n \to m+_{b} n$ is the natural map from the coproduct to the pushout and $\phi_{m,n} \colon F(m) \times F(n) \to F(m+n)$ is the natural transformation coming from the structure of the lax monoidal pseudofunctor $F \colon \mathsf{A} \to \mathbf{Cat}$. The source and target functors satisfy the equations $S(N \odot M)=S(M)$ and $T(N \odot M)=T(N)$.

Given three composable horizontal 1-cells $M_1, M_2$ and $M_3$:
\[
\begin{tikzpicture}[scale=1.5]
\node (A) at (0,0) {$a$};
\node (B) at (1,0) {$m_1$};
\node (C) at (2,0) {$b$};
\node (D) at (1,-0.5) {$x \in F(m_1)$};
\node (E) at (3,0) {$b$};
\node (F) at (4,0) {$m_2$};
\node (G) at (5,0) {$c$};
\node (H) at (4,-0.5) {$y \in F(m_2)$};
\node (I) at (6,0) {$c$};
\node (J) at (7,0) {$m_3$};
\node (K) at (8,0) {$d$};
\node (L) at (7,-0.5) {$z \in F(m_3)$};
\path[->,font=\scriptsize,>=angle 90]
(A) edge node[above]{$i$} (B)
(C) edge node[above]{$o$} (B)
(E) edge node[above]{$i^\prime$} (F)
(G) edge node[above]{$o^\prime$} (F)
(I) edge node[above]{$i''$} (J)
(K) edge node[above]{$o''$} (J);
\end{tikzpicture}
\]
we get a natural isomorphism $a_{M_1,M_2,M_3} \colon (M_1 \odot M_2) \odot M_3 \to M_1 \odot (M_2 \odot M_3)$ which is the globular 2-morphism given by a map of cospans $(1,\sigma,1)$:
\[
\begin{tikzpicture}[scale=1.5]
\node (A) at (0,0.5) {$a$};
\node (A') at (0,-0.5) {$a$};
\node (B) at (1.5,0.5) {$(m_1+_{b} m_2)+_{c} m_3$};
\node (C) at (3,0.5) {$d$};
\node (C') at (3,-0.5) {$d$};
\node (D) at (1.5,-0.5) {$m_1+_{b}(m_2 +_{c} m_3)$};
\node (E) at (5.5,0.5) {$(x \odot y) \odot z \in F((m_1+_{b} m_2)+_{c} m_3)$};
\node (F) at (5.5,-0.5) {$x \odot (y \odot z) \in F(m_1+_{b} (m_2 +_{c} m_3))$};
\path[->,font=\scriptsize,>=angle 90]
(A) edge node[above]{$$} (B)
(C) edge node[above]{$$} (B)
(A) edge node[left]{$1$} (A')
(C) edge node[right]{$1$} (C')
(A') edge node {$$} (D)
(C') edge node {$$} (D)
(B) edge node [left] {$\sigma$} (D);
\end{tikzpicture}
\]
with the decorations on the cospan's apices given by:
$$\hspace{-.5in} (x \odot y) \odot z := 1 \xrightarrow{\zeta_1} F(m_1+_{b} m_2) \times F(m_3) \xrightarrow{\phi_{m_1+_{b} m_2, m_3}} F((m_1+_{b}m_2) +m_3) \xrightarrow{F(j_{m_1+_{b} m_2,m_3})} F((m_1+_{b} m_2)+_{c} m_3)$$ $$\zeta_1 = (1 \times z) \rho^{-1} F(j_{m_1,m_2}) \phi_{m_1,m_2} (x \times y) \lambda^{-1}$$
and
$$\hspace{-.5in} x \odot (y \odot z) := 1 \xrightarrow{\zeta_2} F(m_1) \times F(m_2 +_{c} m_3) \xrightarrow{\phi_{m_1, m_2 +_{c} m_3}} F(m_1+(m_2 +_{c} m_3)) \xrightarrow{F(j_{m_1,m_2 +_{c} m_3})} F(m_1+_{b} (m_2+_{c} m_3))$$ $$\zeta_2 = (x \times 1) \lambda^{-1} F(j_{m_2,m_3}) \phi_{m_2,m_3} (y \times z) \rho^{-1}$$
together with the isomorphism $\iota_\sigma \colon F(\sigma)((x \odot y) \odot z) \to x \odot (y \odot z)$. The map $\sigma \colon (m_1+_{b} m_2)+_{c}m_3 \to m_1+_{b}(m_2+_{c}m_3)$ is the universal map between two colimits of the same diagram. We can also define left and right unitors as follows. Given a horizontal 1-cell $M$:
\[

\]
where the middle square commutes since $F$ is a lax monoidal pseudofunctor and the right square commutes because we have taken a commutative square and applied the pseudofunctor $F$ to it. The decorations $x \odot y$ and $x' \odot y'$ are given respectively by top and bottom composite of arrows and the morphism of decorations $\iota_{\alpha \odot \beta}$ is given by composing $\iota_\alpha \times \iota_\beta$ with the two commuting squares, which can equivalently be viewed as a morphism in $F(m' +_{b'} n')$. 

Returning to the interchange law, composing the two horizontal compositions above vertically then results in:
\[

\]
As usual for the interchange law in double categories of this nature, only the `interior' of the two composites appears different, but the two morphisms $(h_1^\prime +_{g^\prime} h_2^\prime)(h_1 +_g h_2) \colon m+_{b} n \to m'' +_{b''} n''$ and $(h_1^\prime h_1) +_{g^\prime g} (h_2^\prime h_2) \colon m+_{b} n \to m'' +_{b''}n''$ are the same universal map realized in two different ways. The two morphisms of decorations $\iota_{(\alpha^\prime \odot \beta^\prime)(\alpha \odot \beta)}$ and $\iota_{(\alpha^\prime \alpha) \odot (\beta^\prime \beta)}$ are obtained as two different compositions of four 2-morphisms in $\mathbf{Cat}$, namely horizontally then vertically and vertically then horizontally. As $\mathbf{Cat}$ is a 2-category, the interchange law for these 2-morphisms already holds, and as a result, the decoration morphisms $$\iota_{(\alpha' \odot \beta')(\alpha \odot \beta)} \colon F((h_1^\prime +_{g^\prime} h_2^\prime)(h_1 +_g h_2))(x \odot y) \to x'' \odot y''$$ and $$\iota_{(\alpha^\prime \alpha)\odot(\beta^\prime \beta)} \colon F((h_1^\prime h_1)+_{g^\prime g} (h_2^\prime h_2))(x \odot y) \to x'' \odot y''$$ are also the same. Thus the interchange law for 2-morphisms holds and $F\mathbb{C}\mathbf{sp}$ is a double category.
\end{proof}

\begin{corollary}\label{DCordCor}
Given a category $\mathsf{A}$ with pushouts, $\lCsp(\A)$ is a double category with the relevant structure given as in Theorem \ref{DCord}.
\end{corollary}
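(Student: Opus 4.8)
The plan is to realize $\lCsp(\A)$ as the decorated cospan double category $F\mathbb{C}\mathbf{sp}$ of Theorem \ref{DCord} for a suitable trivial choice of $F$, and then to observe that in this trivial case the construction requires only pushouts rather than all finite colimits. Concretely, I would take $F \colon (\mathsf{A},+,0) \to (\mathbf{Cat},\times,1)$ to be the constant pseudofunctor sending every object of $\mathsf{A}$ to the terminal category $\mathbf{1}$ and every morphism to the identity functor on $\mathbf{1}$. The laxators $\phi_{m,n} \colon \mathbf{1} \times \mathbf{1} \to \mathbf{1}$ and $\phi \colon 1 \to \mathbf{1}$ are then the unique (invertible) comparison functors, and the coherence axioms hold automatically because all the categories involved are terminal; thus $F$ is a lax (indeed strong) monoidal pseudofunctor, so Theorem \ref{DCord} applies to it whenever its colimit hypotheses are met.

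Next I would unwind what the data of $F\mathbb{C}\mathbf{sp}$ becomes for this $F$. Since $F(m) = \mathbf{1}$ has a single object and only the identity morphism, a decoration $x \in F(m)$ carries no information: every $F$-decorated cospan $a \xrightarrow{i} m \xleftarrow{o} b$, $x \in F(m)$ is determined by its underlying cospan, and every map of $F$-decorated cospans is determined by its underlying commuting diagram of cospans, the accompanying morphism $\iota \colon F(h)(x) \to x'$ being forced to be the unique morphism of $\mathbf{1}$. Hence the objects, vertical 1-morphisms, horizontal 1-cells and 2-morphisms of $F\mathbb{C}\mathbf{sp}$ coincide exactly with the objects, morphisms, cospans and maps of cospans described for $\lCsp(\A)$.

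The key point — and the only place the weaker hypothesis must be checked — is that the construction of Theorem \ref{DCord} uses coproducts $m+n$ and the initial object $0$ \emph{solely} to produce and compose decorations: the trivial decoration $!_a$ is built from $F(!) \colon F(0) \to F(a)$ and $\phi \colon 1 \to F(0)$, and the composite decoration $x \odot y$ is built via $\phi_{m,n} \colon F(m) \times F(n) \to F(m+n)$ followed by $F(\psi)$. With $F$ constant at $\mathbf{1}$ these formulas all return the unique object, so they impose no requirement on $\mathsf{A}$ beyond what is needed to form the pushouts $m +_b n$ themselves. Thus composition of horizontal 1-cells and 2-morphisms, together with the associator and unitors (all obtained from universal properties of the pushout), are defined using pushouts alone, and the proof of Theorem \ref{DCord} — the verifications of functoriality of $U$, $S$, $T$ and $\odot$, the pentagon and triangle identities, and the interchange law — goes through verbatim with every decoration-morphism equal to the identity. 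I expect the main obstacle to be precisely this bookkeeping: confirming that each appearance of a coproduct or of the initial object in the proof of Theorem \ref{DCord} is confined to the decoration layer and hence becomes vacuous, leaving a construction that depends only on the existence of pushouts in $\mathsf{A}$.
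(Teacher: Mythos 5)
Your proposal is correct and follows essentially the same route as the paper, which also obtains $\lCsp(\A)$ as the special case of Theorem \ref{DCord} in which every cospan carries the trivial decoration, so that all diagrams involving decorations commute trivially and the argument reduces to checking that $\lCsp(\A)$ is a double category. If anything, your version is slightly more careful than the paper's: fixing $F$ to be constant at the terminal category makes the triviality of decorations automatic rather than a choice, and you explicitly track why the coproducts and initial object demanded by Theorem \ref{DCord} are confined to the decoration layer, so that only pushouts in $\mathsf{A}$ are actually needed.
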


\begin{proof}
This is a special case of Theorem \ref{DCord} where each $F$-decorated cospan is equipped with the trivial decoration. Namely, given a cospan in $\A$:
\[
\begin{tikzpicture}[scale=1.5]
\node (A) at (0,0) {$a$};
\node (B) at (1,0) {$m$};
\node (C) at (2,0) {$b$};
\path[->,font=\scriptsize,>=angle 90]
(A) edge node[above]{$i$} (B)
(C) edge node[above]{$o$} (B);
\end{tikzpicture}
\]
the trivial decoration on the apex $m$ is given by the composite $$!_m = F(!) \phi \colon 1 \to F(m)$$where $\phi \colon 1 \to F(0)$ is the morphism between monoidal units coming from the structure of a lax monoidal pseudofunctor and $! \colon 0 \to m$ is the unique morphism from the initial object $0$ of $\A$ to the object $m$. By equipping each $F$-decorated cospan with the trivial decoration, all of the diagrams involving decorations commute trivially, and the proof of Theorem \ref{DCord} reduces to a proof that $\lCsp(\A)$ is a double category.
\end{proof}

If the lax monoidal pseudofunctor $F \colon (\mathsf{A},+,0) \to (\mathbf{Cat},\times,1)$ is symmetric lax monoidal, then the above double category $F \mathbb{C} \mathbf{sp}$ is also symmetric monoidal.

\begin{theorem}\label{DC}
Let $\mathsf{A}$ be a category with finite colimits and $F \colon (\mathsf{A},+,0) \to (\mathbf{Cat},\times,1)$ a symmetric lax monoidal pseudofunctor. Then the double category $F\mathbb{C}\mathbf{sp}$ of Theorem \ref{DCord} is symmetric monoidal where:
\begin{enumerate}
\item{the tensor product of two objects $a_1$ and $a_2$ is a chosen coproduct $a_1 + a_2$,}
\item{the tensor product of two vertical 1-morphisms is given by:
\[

\]
since $$F(\rho \otimes 1)((x_1+0)+x_2)=F((1 \otimes \lambda)a)((x_1+0)+x_2)$$ as the corresponding triangle of cospan apices in the symmetric monoidal category $(\mathsf{A},+,0)$ commutes and applying the pseudofunctor $F$ to this commutative triangle results in a commutative triangle in $\mathbf{Cat}$.

For a tensor product of objects $M_1 \otimes M_2$ in $F\mathbb{C}\mathbf{sp}_1$, the source and target functors $S,T \colon F\mathbb{C}\mathbf{sp}_1 \to F\mathbb{C}\mathbf{sp}_0$ satisfy the following equations: $$S(M_1 \otimes M_2)=S(M_1) \otimes S(M_2)$$ $$T(M_1 \otimes M_2)=T(M_1) \otimes T(M_2).$$
For two objects $M_1$ and $M_2$ in $F\mathbb{C}\mathbf{sp}_1$, we have a braiding $\beta_{M_1,M_2} \colon M_1 \otimes M_2 \to M_2 \otimes M_1$ given by:
\[

	\]
which does indeed commute. Here, all of the vertical 1-morphisms on the left and right are associators or identities, the middle vertical 1-morphisms labeled on the left are the 2-morphisms from the previous commutative diagram, and the cospan legs are natural maps into each colimit, all of which are naturally isomorphic to each other as all the middle objects are colimits of the same diagram, namely the previous collection of cospans, taken in various ways. By identifying the top and bottom edges of the above diagram, it can be visualized as a hexagonal prism. Every face of this prism commutes. As for the morphisms of decorations, which are labeled on the right of the interior vertical 1-morphisms, each isomorphism $\iota_n$ goes from the domain under the image of the functor $F$ applied to the natural isomorphism adjacent to it to the codomain as written, meaning that, for example: $$\iota_1 \colon F(a \odot a)(((x_1+y_1)+z_1) \odot ((x_2+y_2)+z_2)) \to (x_1+(y_1+z_1)) \odot (x_2+(y_2+z_2)).$$  The following diagram commutes in the category $F((m_1+_b m_2) + ((n_1+_{b'} n_2) + (p_1+_{b''} p_2)))$:
\[
\begin{tikzpicture}[scale=1.5]
\node (A) at (0,0.5) {$F( a (\chi \otimes 1)\chi)(((x_1+y_1)+z_1) \odot ((x_2+y_2)+z_2))$};
\node (A') at (6,0.5) {$F((1 \otimes \chi) \chi)((x_1+(y_1+z_1)) \odot (x_2+(y_2+z_2)))$};
\node (B) at (0,-0.5) {$F(a(\chi  \otimes 1))(((x_1+y_1) \odot (x_2+y_2)) + (z_1 \odot z_2))$};
\node (C) at (6,-0.5) {$F(1 \otimes \chi)((x_1 \odot x_2) + ((y_1+z_1) \odot (y_2+z_2)))$};
\node (C') at (0,-1.5) {$F(a)(((x_1 \odot x_2)+(y_1 \odot y_2)) + (z_1 \odot z_2))$};
\node (D) at (6,-1.5) {$(x_1 \odot x_2) + ((y_1 \odot y_2) + (z_1 \odot z_2))$};
\path[->,font=\scriptsize,>=angle 90]
(A) edge node[above]{$F((1 \otimes \chi) \chi)(\iota_1)$} (A')
(A) edge node[left]{$F(a(\chi \otimes 1))(\iota_4)$} (B)
(A') edge node[right]{$F(1 \otimes \chi)(\iota_2)$} (C)
(B) edge node[left]{$F(a)(\iota_5)$} (C')
(C) edge node [right] {$\iota_3$} (D)
(C') edge node [above] {$\iota_6$} (D);
\end{tikzpicture}
\]
since $$F(a(\chi \otimes 1)\chi)(((x_1+y_1)+z_1) \odot ((x_2+y_2)+z_2)) = F((1 \otimes \chi) \chi (a \odot a))(((x_1+y_1)+z_1) \odot ((x_2+y_2)+z_2))$$
as the hexagon formed by the morphisms between the cospan apices of the above underlying diagram of maps of cospans commutes and then applying the pseudofunctor $F$ to this hexagon yields a commutative hexagon in $\mathbf{Cat}$.

Another requirement for a double category to be symmetric monoidal is that the braiding $$\beta_{ (-,-) } \colon F\mathbb{C}\mathbf{sp}_1 \times F\mathbb{C}\mathbf{sp}_1 \to F\mathbb{C}\mathbf{sp}_1 \times F\mathbb{C}\mathbf{sp}_1$$ be a transformation of double categories, and one of the diagrams that is required to commute is the following:
\[

\]
since $$F(\chi \beta)((x_1 \odot x_2)+(y_1 \odot y_2)) = F((\beta \odot \beta)\chi)((x_1 \odot x_2)+(y_1 \odot y_2))$$
as the square formed by the morphisms between the cospan apices of the above underlying diagram of maps of cospans commutes and then applying the pseudofunctor $F$ to this square yields a commutative square in $\mathbf{Cat}$. The other diagrams are shown to commute similarly.
\end{proof}

\section{Maps of decorated cospan double categories}\label{DCMaps}

Given another symmetric lax monoidal pseudofunctor $F^\prime \colon \mathsf{A^\prime} \to \mathbf{Cat}$, we can obtain another symmetric monoidal double category $F^\prime \mathbb{C}\mathbf{sp}$. A map from $F\mathbb{C}\mathbf{sp}$ to $F^\prime \mathbb{C}\mathbf{sp}$ will then be a double functor $\mathbb{H} \colon F\mathbb{C}\mathbf{sp} \to F' \mathbb{C}\mathbf{sp}$ whose object component is given by a finite colimit preserving functor $\mathbb{H}_0 = H \colon \mathsf{A} \to \mathsf{A^\prime}$ and whose arrow component is given by a functor $\mathbb{H}_1$ defined on horizontal 1-cells by:
\[

\]
We summarize this in the following theorem:
\begin{theorem}
Given two finitely cocomplete categories $\mathsf{A}$ and $\mathsf{A}'$, two symmetric lax monoidal pseudofunctors $F \colon \mathsf{A} \to \mathbf{Cat}$ and $F' \colon \mathsf{A}' \to \mathbf{Cat}$, a finite colimit preserving functor $H \colon \mathsf{A} \to \mathsf{A}'$, a symmetric lax monoidal pseudofunctor $E \colon \mathbf{Cat} \to \mathbf{Cat}$ and a monoidal natural isomorphism $\theta \colon EF \Rightarrow F^\prime H$ as in the following diagram, the triple $(H,E,\theta)$ induces a symmetric monoidal double functor $\mathbb{H} \colon F\mathbb{C}\mathbf{sp} \to F'\mathbb{C}\mathbf{sp}$ as defined above.
\[

\]
where $$F^\prime(\alpha \kappa (\kappa+1))(d_{(\mathbb{H}(M_1) \otimes \mathbb{H}(M_2)) \otimes \mathbb{H}(M_3)}) = F^\prime(\kappa (\kappa+1) \alpha^\prime)(d_{(\mathbb{H}(M_1) \otimes \mathbb{H}(M_2)) \otimes \mathbb{H}(M_3)})$$
as the corresponding hexagon for the finite colimit preserving functor $H \colon \mathsf{A} \to \mathsf{A'}$ commutes. The map $\mu_{M_1,M_2}$ is also compatible with the braidings $\beta$ and $\beta^\prime$ of $F \mathbb{C}\mathbf{sp}_1$ and $F' \mathbb{C}\mathbf{sp}_1$, respectively, and make the necessary square commute as a consequence of the corresponding commutative square involving braidings from the finite colimit preserving functor $H \colon \mathsf{A} \to \mathsf{A'}$.

The monoidal unit of $F\mathbb{C}\mathbf{sp}_1$ is given by:
\[

\]
where $$F^\prime(\ell)(d_{!_{1_\mathsf{A^\prime}}} \otimes d_{\mathbb{H}(M)})=F^\prime(H(\ell^\prime)\kappa(\mu \otimes 1))(d_{!_{1_\mathsf{A^\prime}}} \otimes d_{\mathbb{H}(M)})$$ since the corresponding square involving left unitors for the finite colimit preserving functor $H \colon \mathsf{A} \to \mathsf{A'}$ commutes. The other square involving the right unitors $r$ and $r^\prime$ is similar. The comparison and unit constraints $\mathbb{H}_{M,N}$ and $\mathbb{H}_U$ are monoidal transformations and this suffices for a functor of symmetric monoidal double categories which are isofibrant, which $F\mathbb{C}\mathbf{sp}$ and $F'\mathbb{C}\mathbf{sp}$ are by Lemma \ref{DCFibrant}. Note that because the comparison constraints $\mu$ and $\mu_{(- ,-)}$ are both isomorphisms, the symmetric monoidal double functor $\mathbb{H}$ is strong.
\end{proof}

\section{Structured cospans versus decorated cospans}\label{SCDCequiv}
In this section we compare the double categories obtained via structured cospans and decorated cospans. Under conditions discovered by Christina Vasilakopoulou, the two frameworks will be shown to be equivalent as double categories. This is Theorem \ref{Equiv} and the main content of this section. But first, we make precise what it meant by an `equivalence of double categories'.

We define an equivalence of double categories following Shulman \cite{Shul2}. Given a double category $\mathbb{A}$, we write $_f \mathbb{A}_g(M,N)$ for the set of 2-morphisms in $\mathbb{A}$ of the form:
\[
  \xymatrix@-.5pc{
    A \ar[r]|{|}^{M}  \ar[d]_f \ar@{}[dr]|{\Downarrow a}&
    B\ar[d]^g\\
    C \ar[r]|{|}_N & D
  }
\]
We call $M$ and $N$ the \define{horizontal source and target} of the 2-morphism $a$, respectively, and likewise we call $f$ and $g$ the \define{vertical source and target} of the 2-morphism $a$, respectively. Thus $_f \mathbb{A}_g(M,N)$ denotes the set of 2-morphisms in $\mathbb{A}$ with horizontal source and target $M$ and $N$ and vertical source and target $f$ and $g$.
\begin{definition}
A (possibly lax or oplax) double functor $\mathbb{F} \colon \mathbb{A} \to \mathbb{X}$ is \define{full} (respectively, \define{faithful}) if $\mathbb{F}_0 \colon \mathbb{A}_0 \to \mathbb{X}_0$ is full (respectively, faithful) and each map $$\mathbb{F}_1 \colon _f \mathbb{A}_g(M,N) \to _{\mathbb{F}(f)} \mathbb{X}_{\mathbb{F}(g)}(\mathbb{F}(M),\mathbb{F}(N))$$ is surjective (respectively, injective).
\end{definition}
\begin{definition}
A (possibly lax or oplax) double functor $\mathbb{F} \colon \mathbb{A} \to \mathbb{X}$ is \define{essentially surjective} if we can simultaneously make the following choices:
\begin{enumerate}
\item{For each object $x \in \mathbb{X}$, we can find an object $a \in \mathbb{A}$ together with a vertical 1-isomorphism $\alpha_x \colon \mathbb{F}(a) \to x$, and}
\item{For each horizontal 1-cell $N \colon x_1 \to x_2$  of $\mathbb{X}$, we can find a horizontal 1-cell $M \colon a_1 \to a_2$ of $\mathbb{A}$ and a 2-isomorphism $a_{N}$ of $\mathbb{X}$ as in the following diagram:
\[
  \xymatrix@-.5pc{
    \mathbb{F}(a_1) \ar[r]|{|}^{\mathbb{F}(M)}  \ar[d]_{\alpha_{x_1}} \ar@{}[dr]|{\Downarrow a_N}&
    \mathbb{F}(a_1) \ar[d]^{\alpha_{x_2}}\\
    x_1 \ar[r]|{|}_N & x_2
  }
\]
}
\end{enumerate}
\end{definition}
\begin{definition}
A double functor $\mathbb{F} \colon \mathbb{A} \to \mathbb{X}$ is \define{strong} if the comparison and unit constraints are globular isomorphisms, meaning that for each composable pair of horizontal 1-cells $M$ and $N$ we have a natural isomorphism $$\mathbb{F}_{M,N} \colon \mathbb{F}(M) \odot \mathbb{F}(N) \xrightarrow{\sim} \mathbb{F}(M \odot N)$$and for each object $a \in \mathbb{A}$ a natural isomorphism $$\mathbb{F}_a \colon \hat{U}_{\mathbb{F}(a)} \xrightarrow{\sim} \mathbb{F}(U_a).$$
\end{definition}

Shulman \cite[Theorem 7.8]{Shul2} proved that a strong double functor is part of a `double equivalence' if and only if it is full, faithful and essentially surjective in the sense of a double functor as given above. We will take this theorem and use it as the definition of a double equivalence.

\begin{definition}
Given a strong double functor $\mathbb{F} \colon \mathbb{A} \to \mathbb{X}$, $\mathbb{F}$ is part of a \define{double equivalence} if and only if $\mathbb{F}$ is full, faithful and essentially surjective. We say that $\mathbb{F} \colon \mathbb{A} \to \mathbb{X}$ is a double equivalence and that $\mathbb{A}$ and $\mathbb{X}$ are equivalent as double categories.
\end{definition}

\begin{definition}
Given a double equivalence $\mathbb{F} \colon \mathbb{A} \to \mathbb{X}$, if $\mathbb{F}$, $\mathbb{A}$ and $\mathbb{X}$ are all symmetric monoidal, then $\mathbb{F}$ is a \define{symmetric monoidal double equivalence}, and $\mathbb{A}$ and $\mathbb{X}$ are equivalent as symmetric monoidal double categories.
\end{definition}

Given a symmetric lax monoidal pseudofunctor $F \colon (\mathsf{A},+,0) \to (\mathbf{Cat},\times,1)$, one can obtain a functor $R \colon \int F \to \mathsf{A}$ by the Grothendieck construction, as explained in Definition \ref{Groth}. Moreover, if the pseudofunctor $F \colon A \to \mathbf{Cat}$ factors through $\mathbf{Rex} \to \mathbf{Cat}$ as an ordinary pseudofunctor, the category $\int F$ will have finite colimits and this functor $R$ will preserve finite colimits and be right adjoint to a fully faithful left adjoint $L \colon \mathsf{A} \to \int F$ between two categories with finite colimits which then allows for the construction of a structured cospan double category. The bridge which allows us to obtain a left adjoint $L \colon (\A,+,0) \to (\int F,+,0)$ from a lax monoidal pseudofunctor $F \colon (\A,+,0) \to (\mathbf{Cat},\times,1)$ is established in Lemma \ref{lem:fiberwiselimits}, Corollary \ref{Rex} and Proposition \ref{prop:opfibtolari}. In this case, the resulting decorated cospan double category $F \mathbb{C} \mathbf{sp}$ and structured cospan double category $_L \mathbb{C} \mathbf{sp}(\int{F})$ are equivalent as symmetric monoidal double categories.

First we find conditions under which an opfibration has a left adjoint. This bridge between the notions of opfibration and left adjoint is due to Christina Vasilakopoulou, who together with Baez and the author have investigated this situation and its consequences in more detail \cite{BCV}.

The definitions of 2-category and pseudofunctor are given in Definitions \ref{2-cat_definition} and \ref{pseudofunctor_definition}, respectively, of the Appendix.

\begin{definition}
Let \define{Rex} denote the 2-category of categories with finite colimits and finite colimit preserving functors.
\end{definition}

\begin{definition}
A functor $R \colon \mathsf{X} \to \mathsf{A}$ is a \define{Grothendieck opfibration} if for any object $a \in \mathsf{A}$ and every object $x \in \mathsf{X}$ such that $R(x)=a$, for any morphism $f \colon a \to b$ there exists a \define{cocartesian lifting} of $f$. This means that there exists a morphism  $\beta$ in $\mathsf{X}$ whose domain is $x$ which satisfies the following universal property: for any morphism $g \colon b \to b'$ in $\mathsf{A}$ and morphism $\gamma \colon x \to y'$ in $\mathsf{X}$ such that $R(\gamma) = g \circ f$, there exists a unique morphism $\delta \colon y \to y'$ such that $\gamma = \delta \circ \beta$ and  $R(\delta)=g$.
\begin{displaymath}
\xymatrix @R=.1in @C=.6in
{&& y'\ar @{.>}@/_/[dd] &&\\
x\ar[r]_-{\beta} \ar @{.>}@/_/[dd]
\ar[urr]^-{\gamma} & 
y \ar @{.>}@/_/[dd] \ar @{-->}[ur]_-{\exists! \delta}
&& \textrm{in }\X\\
&& b' &&\\
a\ar[r]_-{f=R(\beta)} \ar[urr]^-{g\circ f=R(\gamma)}
 & b \ar[ur]_-g && \textrm{in }\A}
\end{displaymath}
We call $\mathsf{X}$ the \define{total category} and $\mathsf{A}$ the \define{base category} of the opfibration $R \colon \mathsf{X} \to \mathsf{A}$.
\end{definition}
For any object $a \in \mathsf{A}$, the \define{fiber category} $\mathsf{X}_a$ consists of all objects $x \in \mathsf{X}$ such that $R(x)=a$ and all morphisms $\gamma \colon x \to x'$ such that $R(f)=1_a$. The axiom of choice allows us to select a cocartesian lifting for any $f \colon a \to b$ which we denote by $$\textnormal{Cocart}(f,x) \colon x \to f_!(x).$$
This choice also induces \define{reindexing functors} $$f_! \colon \mathsf{X}_a \to \mathsf{X}_b$$ between any two fiber categories $\mathsf{X}_a$ and $\mathsf{X}_b$. Note that by the universal property of a cocartesian lifting, we have natural isomorphisms $(1_a)_! \cong 1_{\mathsf{X}_a}$ and for any composable morphisms $f$ and $g$ in $\mathsf{A}$, $(f \circ g)_! \cong f_! \circ g_!$. If these natural isomorphisms are equalities, we say that $R$ is a \define{split opfibration}.

\begin{definition}
Let $\mathbf{OpFib}(\mathsf{A})$ be the 2-subcategory of the slice 2-category of $\mathbf{Cat}/\mathsf{A}$ of opfibrations over $\mathsf{A}$, cocartesian lifting preserving functors and natural transformations with vertical components.
\end{definition}

There is a 2-equivalence between opfibrations and pseudofunctors which is given by the well known `Grothendieck construction'.

\begin{definition}\label{Groth}
Given a pseudofunctor $F \colon \mathsf{A} \to \mathbf{Cat}$ where $\mathsf{A}$ is a category with trivial 2-morphisms, the \define{Grothendieck category} $\int F$ has:
\begin{enumerate}
\item{objects as pairs $(a, x \in F(a))$ and}
\item{a morphism from a pair $(a, x\in F(a))$ to another pair $(b, y \in F(b))$ is given by a pair $(f \colon a \to b, \iota \colon F(f)(x) \to y)$ in $\mathsf{A} \times F(b)$.}
Note that a morphism can be viewed as a morphism together with a 2-morphism:
\[
\begin{tikzpicture}[scale=1]
\node (A) at (0,0) {$a$};
\node (B) at (0,-1.5) {$b$};
\node (C) at (1,-0.75) {$1$};
\node (D) at (2.5,0) {$F(a)$};
\node (E) at (2.5,-1.5) {$F(b)$};
\node (F) at (2,-.75) {$\iota \SWarrow$};
\path[->,font=\scriptsize,>=angle 90]
(A) edge node [left]{$f$}(B)
(C) edge node [above]{$x$}(D)
(C) edge node [below]{$y$}(E)
(D) edge node [right]{$F(f)$} (E);
\end{tikzpicture}
\]
\end{enumerate}
There is an opfibration $R \colon \int F \to \mathsf{A}$ where the fiber categories are given by $(\int F)_a = F(a)$ and the associated reindexing functors are given by $f_! = F(f)$. We call the entirety of this the \define{Grothendieck construction} of the pseudofunctor $F$.
\end{definition}
The Grothendieck construction provides one direction of a well known equivalence.

\begin{theorem}\label{corresp}
\begin{enumerate}
\item{Every opfibration $R \colon \mathsf{X} \to \mathsf{A}$ gives rise to a pseudofunctor $F_R \colon \mathsf{A} \to \mathbf{Cat}$.}
\item{Every pseudofunctor $F \colon \mathsf{A} \to \mathbf{Cat}$ gives rise to an opfibration $R_F \colon \int F \to \mathsf{A}$.}\
\item{The above two correspondences give rise to an equivalence of 2-categories $$[ \mathsf{A}, \mathbf{Cat} ]_{\textnormal{ps}} \simeq \mathbf{OpFib}(\mathsf{A})$$ such that $F_{R_F} \cong F$ and $R_{F_R} \cong R$.}
\end{enumerate}
\end{theorem}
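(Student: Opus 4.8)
The plan is to treat the three claims as a single package built on one engine: the universal property of cocartesian liftings, which supplies both the \emph{existence} of comparison morphisms and, crucially, their \emph{uniqueness}. It is this uniqueness that forces every coherence diagram in sight to commute. I would establish (1) and (2) first as explicit constructions, then assemble (3) by exhibiting the two round-trips and promoting the resulting object-level comparison to an equivalence on the 1-cells and 2-cells of the two 2-categories.

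For (1), given an opfibration $R \colon \mathsf{X} \to \mathsf{A}$, I would set $F_R(a) := \mathsf{X}_a$ on objects and $F_R(f) := f_!$ on morphisms, using the reindexing functors induced by a choice of cocartesian liftings $\textnormal{Cocart}(f,x) \colon x \to f_!(x)$. The structural isomorphisms $(1_a)_! \cong 1_{\mathsf{X}_a}$ and $(g \circ f)_! \cong g_! \circ f_!$ noted just after the definition of opfibration then serve as the unitor and compositor of a pseudofunctor. That these satisfy the associativity and unit coherence axioms of a pseudofunctor (Definition \ref{pseudofunctor_definition}) follows because, in each coherence square, both composites are morphisms out of a cocartesian lift sitting over the same base morphism of $\mathsf{A}$, hence are equal by the uniqueness clause; functoriality of $f_!$ on the fibers is the same uniqueness argument.

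Claim (2) is essentially Definition \ref{Groth}: the projection $R_F \colon \int F \to \mathsf{A}$ is already built, so it remains to verify it is an opfibration. Given $f \colon a \to b$ and an object $(a,x)$, I would take $(f, 1_{F(f)(x)}) \colon (a,x) \to (b, F(f)(x))$ as the cocartesian lift and check its universal property directly from the description of morphisms in $\int F$, where the required factorization is forced by the invertibility of the compositor of $F$. For (3) at the level of objects, the composite $F_{R_F}$ returns the fibers $(\int F)_a = F(a)$ with reindexing $f_! = F(f)$, giving $F_{R_F} \cong F$; conversely the canonical functor $\mathsf{X} \to \int F_R$ sending an object $x$ over $a$ to $(a,x)$ and a morphism $\gamma$ over $f$ to its induced vertical factorization through $\textnormal{Cocart}(f,x)$ is a cocartesian-lifting-preserving equivalence over $\mathsf{A}$, giving $R_{F_R} \cong R$. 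Since a general opfibration need not be split, these comparisons are only natural isomorphisms, which is exactly why the target is pseudofunctors and why the statement asserts $\cong$ rather than equality.

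The remaining work, and the step I expect to be the main obstacle, is upgrading these object-level statements to a genuine 2-equivalence $[\mathsf{A}, \mathbf{Cat}]_{\textnormal{ps}} \simeq \mathbf{OpFib}(\mathsf{A})$. This requires defining both assignments on 1-cells—sending a pseudonatural transformation to a cocartesian-lifting-preserving functor and back—and on 2-cells—matching modifications with the natural transformations with vertical components that appear in the definition of $\mathbf{OpFib}(\mathsf{A})$—and then checking that the unit and counit of the induced biadjunction are invertible. The bookkeeping is heavy but mechanical: each commutativity that must be verified reduces, via the universal property of cocartesian liftings, to the uniqueness of a morphism lying over a prescribed base arrow. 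Because $\mathsf{A}$ is taken here to have trivial 2-morphisms, the higher coherence data collapses and the pseudonaturality constraints on 1-cells become the only nontrivial input, which keeps the verification tractable. As this is a classical correspondence, I would spell out the coherence checks only where the opfibration structure is genuinely used and otherwise appeal to the standard Grothendieck construction for the purely diagrammatic remainder.
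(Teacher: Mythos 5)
The paper offers no proof of this theorem: it is stated as ``a well known equivalence'' (the Grothendieck construction) and the text immediately moves on to the monoidal generalization of Moeller and Vasilakopoulou, so there is no argument in the paper to compare yours against. Your outline is a correct account of the standard proof, and it identifies the right engine: every coherence check reduces to the uniqueness clause in the universal property of a cocartesian lifting. Two small points are worth making precise when you write it out. In (2), the factorization $\delta = (g,\delta')$ through the lift $(f, 1_{F(f)(x)})$ exists and is unique because $\delta'$ is forced to be $\iota$ precomposed with the inverse of the compositor $F(g)F(f)(x) \cong F(gf)(x)$, so you are using pseudofunctoriality (invertibility of the compositor) exactly where you say you are. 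In (1), the coherence \emph{cells} $(gf)_! \cong g_! f_!$ are not equalities but canonical isomorphisms obtained from the universal property; what the uniqueness clause gives you on the nose is that the two composites appearing in each coherence \emph{axiom} are both the unique factorization of the same morphism through the same cocartesian lift, hence equal. With those caveats your sketch is the standard and correct argument, and the 2-categorical upgrade in (3) is, as you say, mechanical bookkeeping once the correspondence between pseudonatural transformations and cocartesian-lifting-preserving functors over $\mathsf{A}$ (and between modifications and vertical natural transformations) is set up.
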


Moeller and Vasilakopoulou \cite{MV} have generalized the Grothendieck construction to the monoidal situation, meaning that \emph{lax monoidal} pseudofunctors $F \colon \mathsf{A} \to \mathbf{Cat}$ correspond bijectively to monoidal structures on the total category $\int F$ such that the corresponding opfibration $R_F \colon \int F \to \mathsf{A}$ is a strict monoidal functor and the tensor product $\otimes_{\int F}$ preserves cocartesian liftings. If $\mathsf{A}$ is cocartesian monoidal, there is a further correspondence given by:

\begin{gather}
\textrm{lax monoidal pseudofunctors }F\colon(\A,+,0)\to(\mathbf{Cat},\times,1) \notag\\
\vertsimeq \notag\\
\textrm{monoidal opfibrations }R\colon(\X,\otimes,I)\to(\A,+,0) \label{monGroth}\\
\vertsimeq \notag\\
\textrm{pseudofunctors }F\colon\A\to\mathbf{MonCat} \notag
\end{gather}

The second equivalence is due to Shulman \cite{Shul2}. In detail, given a lax monoidal structure $(\phi, \phi_0)$ on a pseudofunctor $F$, each fiber category inherits a monoidal structure via: 

\begin{gather}\label{eq:explicitstructure1}
\otimes_a\colon F(a)\times F(a)\xrightarrow{\phi_{a,a}}F(a+a)\xrightarrow{F(\nabla)}F(a)\\
I_x\colon\mathbf{1}\xrightarrow{\phi_0}F(0)\xrightarrow{F(!)}F(a).\nonumber
\end{gather}

These correspondences further restrict when the Grothendieck category $\int F$ is cocartesian monoidal itself. In this case, the monoidal opfibration clauses for $R \colon (\mathsf{X},+,0) \to (\mathsf{A},+,0)$ results in a functor (strictly) preserving coproducts and the initial object, and these bijectively correspond to pseudofunctors $F \colon \mathsf{A} \to \mathbf{cocartCat}$ where $\mathbf{cocartCat}$ is the 2-category of cocartesian categories, coproduct preserving functors and natural transformations. The following statement, which relates the existence of any class of colimits in the total category of an opfibration to their existence in the fibers, then brings pushouts into the picture by addressing when opfibrations preserve all finite colimits.  For more details, see the work of Hermida \cite{Herm}.

\begin{lemma}\label{lem:fiberwiselimits}
Let $\mathsf{J}$ be a small category and $R \colon \mathsf{X} \to \mathsf{A}$ an opfibration. If the base category $\mathsf{A}$ has $\mathsf{J}$-colimits, then the following are equivalent:
\begin{enumerate}
\item{All the fiber categories have $\mathsf{J}$-colimits and all reindexing functors preserve them.}
\item{The total category $\mathsf{X}$ has $\mathsf{J}$-colimits and $R$ preserves them.}
\end{enumerate}
\end{lemma}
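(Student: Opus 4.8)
The plan is to use the cocartesian liftings of $R$ to transport any $\mathsf{J}$-diagram in $\mathsf{X}$ into a \emph{single} fiber lying over the colimit of its image in $\mathsf{A}$, and then to identify colimits computed there with colimits in $\mathsf{X}$. Fix a diagram $D \colon \mathsf{J} \to \mathsf{X}$ and, using that $\mathsf{A}$ has $\mathsf{J}$-colimits, write $a = \colim RD$ with colimit cocone $\lambda_j \colon RD(j) \to a$. For each $j$ form the cocartesian lifting $\mathrm{Cocart}(\lambda_j, D(j)) \colon D(j) \to (\lambda_j)_!(D(j))$, and for each $u \colon j \to j'$ in $\mathsf{J}$ note that $\lambda_{j'} \circ RD(u) = \lambda_j$, so the universal property of the cocartesian morphism $\mathrm{Cocart}(\lambda_j, D(j))$ yields a unique morphism $D'(u) \colon (\lambda_j)_!(D(j)) \to (\lambda_{j'})_!(D(j'))$ over $1_a$ commuting with the liftings. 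Uniqueness makes $D' \colon \mathsf{J} \to \mathsf{X}_a$ a functor into the fiber over $a$, and the liftings assemble into a morphism of diagrams from $D$ to $D'$ lying over the cocone $\lambda$.

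For the implication $(1) \Rightarrow (2)$, I would take $c = \colim_{\mathsf{X}_a} D'$ with fiber cocone $\mu_j \colon D'(j) \to c$ and claim that $c$, viewed as an object of $\mathsf{X}$, is $\colim D$ with cocone $\mu_j \circ \mathrm{Cocart}(\lambda_j, D(j))$. Applying $R$ to this cocone recovers $\lambda_j$, which simultaneously shows $R(c) = a$ and that $R$ preserves the colimit. To verify the universal property, given any cocone $\nu_j \colon D(j) \to z$ in $\mathsf{X}$, its image under $R$ factors uniquely through $\lambda$ by a comparison map $g \colon a \to R(z)$; one then factors each $\nu_j$ through $\mathrm{Cocart}(\lambda_j, D(j))$ to get morphisms over $g$, reindexes along $g$ to obtain a cocone in the fiber $\mathsf{X}_{R(z)}$ on the diagram $g_! \circ D'$, uses that $g_!$ preserves $\mathsf{J}$-colimits (so that $g_!(c) = \colim_{\mathsf{X}_{R(z)}} g_! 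D'$) to factor through $g_!(c)$, and finally precomposes with $\mathrm{Cocart}(g,c)$ to produce the required map $c \to z$. Uniqueness follows by running the same cocartesian factorizations backwards.

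For $(2) \Rightarrow (1)$, I would start from a fiber diagram $E \colon \mathsf{J} \to \mathsf{X}_a$ and its total colimit $c = \colim_{\mathsf{X}} E$, with cocone $\kappa_j \colon E(j) \to c$. Here $RE$ is the constant diagram at $a$, so $R(c)$ is the colimit of that constant diagram and need not equal $a$ when $\mathsf{J}$ is disconnected; the constant cocone $1_a$ induces a canonical fold map $p \colon R(c) \to a$ with $p \circ R\kappa_j = 1_a$. Pushing forward, $d = p_!(c)$ lies in $\mathsf{X}_a$, and the composites $\mathrm{Cocart}(p,c) \circ \kappa_j$ land in the fiber and exhibit $d = \colim_{\mathsf{X}_a} E$: any fiber cocone $\rho_j \colon E(j) \to w$ factors through $c$ by a map whose $R$-image must be $p$ (by uniqueness of the fold map), hence factors uniquely through $\mathrm{Cocart}(p,c)$ inside the fiber. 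Preservation of these colimits by each reindexing functor $f_!$ then follows by comparing the fiber colimit over $a$ with the fiber colimit over $b$ through the cocartesian liftings $\mathrm{Cocart}(f,-)$ and the same uniqueness clause.

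I expect the main obstacle to be the universal-property check in $(1) \Rightarrow (2)$, specifically the nested argument in which an arbitrary cocone with codomain over a \emph{variable} base object $R(z)$ must be resolved by first a cocartesian factorization and then a reindexing step: it is precisely here that the hypothesis ``all reindexing functors preserve $\mathsf{J}$-colimits'' (and not merely ``all fibers have $\mathsf{J}$-colimits'') is indispensable. Keeping the bookkeeping of which morphisms lie over $1_a$, over $g$, or over the fold map $p$ straight---so that every invocation of the cocartesian universal property is applied to a morphism of the correct shape---will be the delicate part; the remaining functoriality of $D'$ and the coherence of the various cocones are routine consequences of the uniqueness of cocartesian factorizations.
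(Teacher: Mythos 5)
The paper does not actually prove this lemma: it is stated as a known result with a pointer to Hermida's thesis, so there is no in-text argument to compare against. Your proof is the standard one for this classical fact and is correct -- transporting a $\mathsf{J}$-diagram into the fiber over $\colim RD$ via cocartesian liftings and computing the colimit there gives $(1)\Rightarrow(2)$, and using the fold map $p\colon R(c)\to a$ to push a total colimit of a fiber diagram back into the fiber gives $(2)\Rightarrow(1)$; you also correctly locate the one place where preservation of $\mathsf{J}$-colimits by the reindexing functors (not just their existence in fibers) is indispensable. The only step you compress is the final preservation claim in $(2)\Rightarrow(1)$, which does go through as you indicate but silently uses that a composite of cocartesian morphisms is cocartesian, i.e.\ $(f\circ p)_!\cong f_!\circ p_!$, to split the factorization of a cocone over $f$ into a vertical piece after $\mathrm{Cocart}(f,d)\circ\mathrm{Cocart}(p,c)$.
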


The first part regards the existence of colimits locally in each fiber which can equivalently be expressed as the image of the associated pseudofunctor $F \colon \mathsf{A} \to \mathbf{Cat}$ landing in the sub-2-category $\mathbf{Rex}$ of finitely cocomplete categories and finite colimit preserving functors. The second part regards the existence of colimits globally in the total category $\int F$. These two combine to result in:

\begin{corollary}\label{Rex}
Let $\mathsf{A}$ be a category with finite colimts and $F \colon (\mathsf{A},+,0) \to (\mathbf{Cat},\times,1)$ a lax monoidal pseudofunctor. If the pseudofunctor $\mathsf{A} \to \mathbf{MonCat}$ via the correspondence in Equation \ref{monGroth} factors through $\mathbf{Rex}$, meaning that each $F(a)$ is finitely cocomplete and that the associated reindexing functors are finitely cocontinuous, then the Grothendieck category $\int F$ has all finite colimits and the corresponding opfibration $R_F \colon \int F \to \mathsf{A}$ preserves them.
\end{corollary}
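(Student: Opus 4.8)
The plan is to reduce the entire statement to repeated application of Lemma \ref{lem:fiberwiselimits}, run once for each finite shape. First I would unpack the Grothendieck construction (Definition \ref{Groth}) to identify the relevant data of the opfibration $R_F \colon \int F \to \mathsf{A}$: its fiber over an object $a \in \mathsf{A}$ is exactly $(\int F)_a = F(a)$, and its reindexing functors are $f_! = F(f)$. With this identification in hand, the hypothesis that the associated pseudofunctor factors through $\mathbf{Rex}$ translates verbatim into a statement about $R_F$: every fiber category of $R_F$ is finitely cocomplete, and every reindexing functor of $R_F$ is finitely cocontinuous. This step is purely bookkeeping, but it is what makes the fiberwise hypothesis line up with the fiberwise conclusion of the lemma.

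Next I would fix an arbitrary finite category $\mathsf{J}$ and check condition (1) of Lemma \ref{lem:fiberwiselimits} for this particular $\mathsf{J}$. Each fiber $F(a)$ has $\mathsf{J}$-colimits because it is finitely cocomplete, and each reindexing functor $f_! = F(f)$ preserves them because it is finitely cocontinuous. Since $\mathsf{A}$ has all finite colimits by assumption, in particular it has $\mathsf{J}$-colimits, so the base-category hypothesis of the lemma is satisfied. The lemma then delivers condition (2) for this $\mathsf{J}$: the total category $\int F$ has $\mathsf{J}$-colimits and $R_F$ preserves them. As $\mathsf{J}$ was an arbitrary finite category, I would conclude that $\int F$ has all finite colimits and that $R_F$ preserves all of them.

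The only thing to watch, and the nearest thing to an obstacle, is the ordering of quantifiers: Lemma \ref{lem:fiberwiselimits} is phrased for a single diagram shape $\mathsf{J}$, so the argument must be carried out shape-by-shape rather than invoked once globally. What makes this harmless is precisely that the $\mathbf{Rex}$-factorization hypothesis supplies $\mathsf{J}$-cocompleteness of the fibers and $\mathsf{J}$-cocontinuity of the reindexing functors uniformly for \emph{every} finite $\mathsf{J}$ at once, so the per-shape checks never require any new input. Beyond this quantifier bookkeeping there is no genuine difficulty — all the mathematical content has been absorbed into Lemma \ref{lem:fiberwiselimits} and the Grothendieck correspondence — and the proof is a direct verification rather than an argument requiring a new idea.
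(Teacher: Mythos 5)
Your proposal is correct and is exactly the argument the paper intends: the paper gives no separate proof of Corollary \ref{Rex}, presenting it as an immediate consequence of Lemma \ref{lem:fiberwiselimits} once one identifies the fibers of $R_F$ with the categories $F(a)$ and the reindexing functors with the $F(f)$. Your shape-by-shape bookkeeping over finite $\mathsf{J}$ is a faithful (and slightly more careful) spelling-out of the same reduction.
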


For applications to structured cospans, we want a left adjoint $L_F$ to the induced monoidal opfibration $R_F \colon \int F \to \mathsf{A}$ of the Grothendieck construction of $F$. Gray found sufficient conditions for the existence of such a left adjoint.

\begin{proposition}[{{\cite[Prop.\ 4.4]{Gray}}}]\label{prop:opfibtolari}
Let $R \colon \mathsf{X} \to \mathsf{A}$ be an opfibration. Then $R$ is a right adjoint \define{left inverse}, meaning that the unit $ \eta \colon 1_\mathsf{A} \to RL$ is an identity, if and only if its fibers have initial objects which are preserved by the reindexing functors.
\end{proposition}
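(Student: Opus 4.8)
The plan is to prove both implications directly from the universal property of cocartesian liftings together with the defining bijection of the adjunction, taking care that the unit $\eta$ is required to be the identity. Throughout I write $\mathsf{X}_a$ for the fiber over $a$ and $0_a$ for an initial object of $\mathsf{X}_a$, and I use that the hypothesis $RL = 1_\mathsf{A}$ with $\eta = 1$ forces the adjunction bijection $\mathsf{X}(L(a),x) \cong \mathsf{A}(a,R(x))$ to send $h$ to $R(h)\circ\eta_a = R(h)$.

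For the ``only if'' direction, assume $L \dashv R$ is a right adjoint left inverse. Since $R(L(a)) = a$, the object $L(a)$ lies in $\mathsf{X}_a$. A vertical morphism $L(a) \to x$ with $x \in \mathsf{X}_a$ is exactly a morphism $h$ with $R(h) = 1_a$, and under the bijection $h \mapsto R(h)$ these correspond to the single element $1_a \in \mathsf{A}(a,a)$; hence there is a unique such $h$, and $L(a)$ is initial in $\mathsf{X}_a$, so I may take $0_a := L(a)$. To see that reindexing preserves these initial objects, fix $f \colon a \to b$ and consider $\Cocart(f, L(a)) \colon L(a) \to f_!(L(a))$. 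For any $x \in \mathsf{X}_b$, the cocartesian universal property puts vertical morphisms $f_!(L(a)) \to x$ in bijection with morphisms $h \colon L(a) \to x$ satisfying $R(h) = f$, and the adjunction bijection shows there is exactly one such $h$ (the preimage of $f$). Thus $f_!(L(a))$ admits a unique vertical morphism to every object of $\mathsf{X}_b$, so it is initial there; being isomorphic to the initial object $L(b)$, the reindexing functor preserves initial objects.

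For the ``if'' direction, assume each $\mathsf{X}_a$ has an initial object $0_a$ preserved, up to the canonical comparison, by every $f_!$. I would define $L \colon \mathsf{A} \to \mathsf{X}$ on objects by $L(a) = 0_a$ and on a morphism $f \colon a \to b$ by the composite $0_a \xrightarrow{\Cocart(f,0_a)} f_!(0_a) \xrightarrow{\sim} 0_b$, where the second arrow is the unique (hence invertible) vertical comparison provided by preservation. Functoriality follows from pseudofunctoriality of reindexing and uniqueness of maps out of initial objects, while $R(L(f)) = 1_b \circ f = f$ gives $RL = 1_\mathsf{A}$ on the nose. It then remains to exhibit universality: for each $x$ with $R(x) = b$ and each $f \colon a \to b$, the lifting $\Cocart(f, 0_a)$ followed by the unique vertical map $f_!(0_a) \to x$ (which exists because $f_!(0_a)$ is initial in $\mathsf{X}_b$) yields a map $h \colon 0_a \to x$ with $R(h) = f$, and the cocartesian property together with initiality of $f_!(0_a)$ makes $h$ unique. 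This is precisely universality of the pair $(L(a),\ \eta_a = 1_a)$, establishing $L \dashv R$ with identity unit.

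The main obstacle I anticipate is the preservation-by-reindexing clause, in both directions: one must consistently translate between the universal property of the cocartesian lifting $\Cocart(f,-)$, which factors an arbitrary morphism through a vertical one, and initiality in the fiber, which supplies and rigidifies those vertical morphisms. The verification $RL = 1_\mathsf{A}$ and the functoriality of $L$ are routine once the vertical comparison $f_!(0_a) \xrightarrow{\sim} 0_b$ is named, so the crux is checking that ``a unique vertical map out of $f_!(L(a))$ into each $x \in \mathsf{X}_b$'' is equivalent to ``a unique morphism $L(a) \to x$ over $f$,'' which is exactly where the hypothesis $\eta = 1$ is indispensable.
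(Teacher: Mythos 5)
Your proof is correct and the construction of $L$ in the ``if'' direction—sending $a$ to the initial object of its fiber and $f$ to the cocartesian lift followed by the unique comparison isomorphism between initial objects—is exactly the one given in the paper, which otherwise defers to Gray. The additional material you supply (the verification of the adjunction bijection via the cocartesian universal property, and the converse direction showing $L(a)$ is initial and that reindexing preserves initial objects) correctly fills in what the paper leaves to the citation.
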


\begin{proof}
The left adjoint $L \colon \mathsf{A} \to \mathsf{X}$ maps an object $a$ to the initial object in its fiber which we denote by $\bot_a$ or $!_a$ in other sections of this thesis. By construction, we have that $R(L(a)) = R(\bot_a) = a$. For a morphism $f \colon a \to a'$, $L(f)$ is given by: $$\bot_a \xrightarrow{ \textnormal{Cocart}(f,{\bot_a}) } f_!(\bot_a) \xrightarrow{} \bot_{a'}$$ where the second arrow is the unique isomorphism between initial objects in the fiber above $a'$ as $f_!$ preserves them. For more details, see Gray \cite[Proposition\ 4.4]{Gray}.
\end{proof}

Notice that under Lemma \ref{lem:fiberwiselimits}, if $\mathsf{A}$ has an initial object $0_\mathsf{A}$, then the above conditions are equivalent to $\mathsf{X}$ having an initial object $0_\mathsf{X}$ above $0_\mathsf{A}$. Then $\bot_a$ is the cocartesian lifting of the unique map $!_a \colon 0_\mathsf{A} \to a$ in the base category $\mathsf{A}$:

\begin{displaymath}
\xymatrix @C=.4in @R=.2in
{0_\X \ar @{.>}[d]\ar[rr]^-{\textnormal{Cocart}({{!_a},{0_\mathsf{X}}})} && (!_a)_!(0_\mathsf{X})=:\bot_a\ar @{.>}[d] & \textrm{in }\X  \\
0_\A\ar[rr]^-{!_a} && a & \textrm{in }\A}
\end{displaymath}
\vspace{.075in}

Furthermore, if $R=R_F$ for a pseudofunctor $F \colon \mathsf{A} \to \mathbf{Cat}$ as in Theorem \ref{corresp}, the reindexing functors $(!_a)_!$ of the opfibration are given by $F(!_a)$ and therefore $\bot_a = (a,F(!_a)(0_\mathsf{X}))$. Lastly, if the pseudofunctor $(F,\phi,\phi_0) \colon (\mathsf{A},+,0) \to (\mathbf{Cat},\times,1)$ is lax monoidal to begin with, the Grothendieck construction in the cocartesian case expresses $\bot_a$ as the image of the composite $$1 \xrightarrow{\phi_0} F(0_\mathsf{A}) \xrightarrow{F(!_a)} F(a).$$

Regarding the opposite direction, which is not needed in the proof of the main result of this chapter below, we have the following result. For a discussion on the `strict cocontinuity' condition, we refer to the work of Cicala and Vasilakopoulou \cite{CV}.

\begin{proposition}
Suppose that $R \colon \mathsf{X} \to \mathsf{A}$ is a right adjoint and left inverse. If $\mathsf{X}$ and $\mathsf{A}$ both have chosen pushouts and initial objects and $R$ strictly preserves them, then $R$ is an opfibration.
\end{proposition}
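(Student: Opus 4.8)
The final statement to prove is: if $R \colon \mathsf{X} \to \mathsf{A}$ is a right adjoint and left inverse, where $\mathsf{X}$ and $\mathsf{A}$ both have chosen pushouts and initial objects which $R$ strictly preserves, then $R$ is an opfibration. This is the converse direction to Proposition \ref{prop:opfibtolari}, so I should expect the proof to reverse-engineer cocartesian liftings out of the left adjoint data.

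Let me think about what's available. $R$ has a left adjoint $L$ with the unit $\eta \colon 1_{\mathsf{A}} \to RL$ being an identity, so $RL = 1_{\mathsf{A}}$ and $R(L(a)) = a$ on the nose. Let $\varepsilon \colon LR \to 1_{\mathsf{X}}$ be the counit. Let me recall the definition of opfibration: given $x \in \mathsf{X}$ with $R(x) = a$ and a morphism $f \colon a \to b$ in $\mathsf{A}$, I must construct a cocartesian lifting of $f$ with domain $x$.

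Here is my plan. I would construct the candidate cocartesian lifting as a pushout in $\mathsf{X}$. The idea: I have $L(a) \xrightarrow{\varepsilon_x} x$ (using $RL(a)=a=R(x)$, the counit component or a canonical map built from the adjunction) and $L(a) \xrightarrow{L(f)} L(b)$. Form the pushout of this span in $\mathsf{X}$, giving an object $f_!(x)$ together with a map $\beta \colon x \to f_!(x)$. Because $R$ strictly preserves pushouts and initial objects (and hence strictly preserves the relevant colimits), applying $R$ to the pushout square sends it to the pushout in $\mathsf{A}$ of $a \xleftarrow{1} a \xrightarrow{f} b$, which is just $b$ with $R(\beta) = f$. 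So $\beta$ is a genuine lifting of $f$. The verification of the universal property of the cocartesian lifting will then follow from the universal property of the pushout in $\mathsf{X}$ together with the adjunction isomorphism: given $\gamma \colon x \to y'$ with $R(\gamma) = g \circ f$ and a target $g \colon b \to b'$, I transpose the data across the adjunction $L \dashv R$ to produce the two legs needed to invoke the pushout's universal property, yielding a unique $\delta \colon f_!(x) \to y'$ with $R(\delta) = g$ and $\gamma = \delta \beta$.

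The key steps in order would be: first, carefully identify the canonical map $L(a) \to x$ lying over $1_a$ coming from the adjunction (the transpose of $1_a \colon a = R(x) \to R(x)$); second, form the pushout square in $\mathsf{X}$ and use strict preservation of pushouts by $R$ to confirm $R$ applied to the square is a pushout in $\mathsf{A}$, identifying the apex with $b$ and $R(\beta)$ with $f$; third, establish the universal property of $\beta$ as a cocartesian lifting by transposing across $L \dashv R$ and appealing to the pushout universal property in $\mathsf{X}$; fourth, check uniqueness of $\delta$ and that $R(\delta) = g$. I would also want to initialize the argument in the case $a = 0_{\mathsf{A}}$ by noting $L(0_{\mathsf{A}})$ is the initial object of $\mathsf{X}$ over $0_{\mathsf{A}}$, which the strict preservation hypothesis guarantees, consistent with the discussion following Proposition \ref{prop:opfibtolari}.

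The main obstacle I anticipate is \emph{matching the universal properties correctly}: verifying that the pushout universal property in $\mathsf{X}$ translates precisely into the cocartesian universal property, which quantifies over morphisms $\gamma$ in $\mathsf{X}$ constrained by a condition on their image under $R$. The delicate point is handling the constraint $R(\delta) = g$ and the factorization $R(\gamma) = g \circ f$ simultaneously; I must use the adjunction transpose to convert the $\mathsf{A}$-level data $(g, R(\gamma))$ into a cocone over the span in $\mathsf{X}$, and then verify that the induced pushout map has the correct image under $R$. Strict (rather than merely up-to-isomorphism) preservation of pushouts is exactly what makes this bookkeeping go through cleanly, since it lets me identify $R$ of the constructed pushout with the on-the-nose pushout $b$ in $\mathsf{A}$ without inserting coherence isomorphisms; this is why the hypothesis is stated with strict preservation, and I would flag that any weakening to preservation-up-to-isomorphism would require tracking those isomorphisms throughout the transpose argument.
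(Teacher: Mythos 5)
The paper does not actually prove this proposition: it is stated without proof, with the reader referred to Cicala and Vasilakopoulou for the ``strict cocontinuity'' discussion, so there is no in-text argument to compare yours against. Your strategy---take the chosen pushout in $\mathsf{X}$ of the span $x \xleftarrow{\varepsilon_x} L(a) \xrightarrow{L(f)} L(b)$, where $\varepsilon_x$ is the counit (the transpose of $1_{R(x)}$), and verify cocartesianness by transposing across $L \dashv R$---is the natural and, I believe, the intended one, and the cocone condition $\gamma \circ \varepsilon_x = (\varepsilon_{y'} \circ L(g)) \circ L(f)$ does follow from naturality of $\varepsilon$ as you indicate.

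However, two steps in your sketch have genuine gaps. First, strict preservation of \emph{chosen} pushouts only tells you that $R$ of your cocone is the chosen pushout cocone $(Q, q_2 f, q_2)$ of $a \xleftarrow{1_a} a \xrightarrow{f} b$ in $\mathsf{A}$, where $q_2 \colon b \to Q$ is an isomorphism but need not be an identity; so you get $R(\beta) = q_2 \circ f$, not $R(\beta) = f$, and the paper's definition of opfibration requires the equality on the nose. Strictness of preservation is not the same as normalization of the choices, so you must either additionally normalize the chosen pushouts (so that pushout along an identity is the identity cocone) or explain how to transport the apex along $q_2^{-1}$, which amounts to an isofibrancy property you have not established. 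Second, the uniqueness clause: the pushout universal property gives a unique $\delta$ subject to $\delta j_x = \gamma$ \emph{and} $\delta j_{L(b)} = \varepsilon_{y'} L(g)$, whereas cocartesianness demands uniqueness subject to $\delta j_x = \gamma$ and $R(\delta) = g$. The bridge is that, since the unit is an identity, $R$ restricts to a bijection $\hom_{\mathsf{X}}(L(b), y') \to \hom_{\mathsf{A}}(b, R(y'))$, so $R(\delta) = g$ already forces $\delta j_{L(b)}$ to equal the transpose of $g$; this observation does real work and must be stated, not left under ``check uniqueness.'' (Your remark about the case $a = 0_{\mathsf{A}}$ is harmless but plays no role in the lifting construction.)
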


Before presenting the main proof, we outline a sketch. Given a lax monoidal pseudofunctor $F \colon (\mathsf{A},+,0_\mathsf{A}) \to (\mathbf{Cat},\times,1)$, the double category of decorated cospans $F\mathbb{C}\mathbf{sp}$ has $\mathsf{A}$ as its category of objects, horizontal 1-cells as $F$-decorated cospans given by pairs $(a \rightarrow m \leftarrow b,x \in F(m))$ and 2-morphisms as maps of cospans $k \colon m \to m'$ together with a morphism $F(k)(x) \to x'$ as in Theorem \ref{DC}.

When the pseudofunctor $F$ factors through $\mathbf{Rex}$, by Corollary \ref{Rex}, the Grothendieck construction yields a finitely cocomplete Grothendieck category $\int F$ such that the corresponding opfibration $R_F \colon (\int F,+,0) \to (\mathsf{A},+,0)$ preserves all finite colimits. In particular, the initial object is preserved and so Lemma \ref{lem:fiberwiselimits} and Corollary \ref{prop:opfibtolari} apply to obtain a left adjoint $L_F \colon \mathsf{A} \to \int F$ which is right inverse to $R_F$. This left adjoint is explicitly defined on objects by $L(a) = (a, \bot_a)$ where $\bot_a$ is initial in the finitely cocomplete category $F(a)$. We can also express $\bot_a$ as $\bot_a = F(!_a) \phi_0$. Diagrammatically, this process can be expressed as:

\begin{displaymath}
 F\colon\A\to\mathbf{Cat}\quad\mapsto\quad\begin{tikzcd}[baseline=.3]\inta F\ar[d,"R_F"'] \\ \A \end{tikzcd}\quad\mapsto\quad\begin{tikzcd}\A\ar[r,bend left,pos=.55,"L_F"]\ar[r,phantom,"\bot"description] & \inta F\ar[l,bend left,pos=.45,"R_F"]\end{tikzcd}
\end{displaymath}

From this left adjoint $L_F \colon \mathsf{A} \to \int F$ which goes between finitely cocomplete categories and preserves finite colimits, we can obtain a double category of structured cospans ${ _{L_F} \mathbb{C} \mathbf{sp}(\int F) }$. This double category will also have $\mathsf{A}$ as its category of objects, but now horizontal 1-cells are given by cospans of the form $L_F(a) \rightarrow x \leftarrow L_F(b)$ in the Grothendieck category $\int F$. Explicitly, horizontal 1-cells are given by:

\begin{equation}\label{eq:scsphor1cell}
 (a,\bot_a)\xrightarrow{\scalebox{0.7}{$\begin{cases}i\colon a\to m &\textrm{in }\A \\!\colon F(i)(\bot_a)\to x &\textrm{in }F(m)\end{cases}$}}(m,x)\xleftarrow{\scalebox{0.7}{$\begin{cases}o\colon b\to m &\textrm{in }\A \\!\colon F(o)(\bot_b)\to x &\textrm{in }F(m)\end{cases}$}}(b,\bot_b)
\end{equation}

where $x \in F(m)$, as in Definition \ref{Groth}. A 2-morphism is given explicitly by:

\begin{displaymath}\label{SC2morph}
 \begin{tikzcd}[sep=1.5in,ampersand replacement=\&]
 (a,\bot_a)\ar[r,"{\scalebox{0.8}{${\begin{cases}i\colon a\to m &\textrm{in }\A \\!\colon F(i)(\bot_a)\to x &\textrm{in }F(m)\end{cases}}$}}"]\ar[d,"{\scalebox{0.8}{${\begin{cases}f\colon a\to a' &\textrm{in }\A \\\chi_a\colon F(f)(\bot_a)\cong \bot_{a'} &\textrm{in }F(a')\end{cases}}$}}"description] \& (m,x) \ar[d,"{\scalebox{0.8}{${\begin{cases}k\colon m\to m' &\textrm{in }\A \\ \iota\colon F(k)(x)\to x' &\textrm{in }F(m')\end{cases}}$}}"description] \& (b,\bot_b)\ar[l,"{\scalebox{0.8}{${\begin{cases}o\colon b\to m &\textrm{in }\A \\!\colon F(o)(\bot_b)\to x &\textrm{in }F(m)\end{cases}}$}}"']\ar[d,"{\scalebox{0.8}{${\begin{cases}g\colon b\to b' &\textrm{in }\A \\\chi_b\colon F(g)(\bot_b)\cong \bot_{b'} &\textrm{in }F(b')\end{cases}}$}}"description] \\
 (a',\bot_{a'})\ar[r,"{\scalebox{0.8}{${\begin{cases}i'\colon a'\to m' &\textrm{in }\A \\!\colon F(i')(\bot_{a'})\to x' &\textrm{in }F(m')\end{cases}}$}}"'] \& (m',x') \& (b',\bot_{b'})\ar[l,"{\scalebox{0.8}{${\begin{cases}o'\colon b'\to m' &\textrm{in }\A \\!\colon F(o')(\bot_{b'})\to x' &\textrm{in }F(m')\end{cases}}$}}"]
 \end{tikzcd}
\end{displaymath}
where the three vertical 1-morphisms in the middle come from $L_F$ applied to vertical 1-morphisms in $F\mathbb{C}\mathbf{sp}$, which are just morphisms of $\mathsf{A}$. Each of the above squares commutes which says that $ki=i'f$ and $ko=o'g$ in $\mathsf{A}$. Then in the Grothendieck category, we have:
\begin{gather}\label{eq:Grothcommutativity}
 F(k\circ i)(\bot_a)\xrightarrow{\cong} Fk(Fi(\bot_a))\xrightarrow{Fk(!)}Fk(x)\xrightarrow{\iota}x' \hspace{.1in}= \\
 F(i'\circ f)(\bot_a)\xrightarrow{\cong} Fi'(Ff(\bot_a))\xrightarrow{Fi'(\chi_a)}Fi'(\bot_{a'})\xrightarrow{!}x'\nonumber
\end{gather}
in $F(m')$. Note that all the maps in the above equality are unique and originate from initial objects, which are preserved by reindexing functors. Thus no extra conditions are imposed on these morphisms, and likewise for the square involving $o$ and $o'$.

We define a double functor $\mathbb{E} \colon { _{L_F} \mathbb{C} \mathbf{sp}} (\int F) \to F \mathbb{C}\mathbf{sp}$ whose object component is the identity on the category $\mathsf{A}$. Given a horizontal 1-cell:
\begin{equation}\label{eq:scsphor1cell}
 (a,\bot_a)\xrightarrow{\scalebox{0.7}{$\begin{cases}i\colon a\to m &\textrm{in }\A \\!\colon F(i)(\bot_a)\to x &\textrm{in }F(m)\end{cases}$}}(m,x)\xleftarrow{\scalebox{0.7}{$\begin{cases}o\colon b\to m &\textrm{in }\A \\!\colon F(o)(\bot_b)\to x &\textrm{in }F(m)\end{cases}$}}(b,\bot_b)
\end{equation}
the image is given by $$a\xrightarrow{i} m \xleftarrow{o} b \textnormal{ together with the decoration } x \in F(m).$$ Note that this is actually a bijective correspondence as the unique maps from the initial objects in the fibers provides no extra information. Given a 2-morphism of $L_F$-structured cospans as in Equation (\ref{SC2morph}), the image is given by the following map of cospans in $\mathsf{A}$:
\begin{displaymath}
 \begin{tikzcd}
a\ar[r,"i"]\ar[d,"f"'] & m\ar[d,"k"] & b\ar[l,"o"']\ar[d,"g"] \\
a'\ar[r,"i'"']& m' & b'\ar[l,"o'"]
 \end{tikzcd}
\end{displaymath}
together with the morphism $\iota \colon F(k)(x) \to x'$ as in Equation (\ref{SC2morph}). This is again a bijective correspondence and commutativity of Equation (\ref{eq:Grothcommutativity}) holds by initiality of the domain.

The double functor $\mathbb{E}=(\mathbb{E}_0,\mathbb{E}_1)$ is in fact strong. We have natural isomorphisms:
\begin{gather*}
 \mathbb{E}(M) \odot \mathbb{E}(N) \xrightarrow{\sim} \mathbb{E}(M \odot N) \\
\hat{U}_{\mathbb{E}(m)} \xrightarrow{\sim} \mathbb{E}(U_m)
\end{gather*}
for any composable horizontal 1-cells: $$M = (a,\bot_a) \xrightarrow{i} (m,x) \xleftarrow{o} (b,\bot_b)$$ and $$N = (b,\bot_b) \xrightarrow{i'} (n,y) \xleftarrow{o'} (c,\bot_c)$$ and any object $m \in {_{L_F} \mathbb{C}\mathbf{sp}}(\int F)$. The horizontal composite $\mathbb{E}(M) \odot \mathbb{E}(N)$ is given as in Theorem \ref{DCord} via a pushout and decoration:
\begin{displaymath}
 \begin{tikzcd}
  & m+_bn & \\
  a\ar[ur,"j_m\circ i"] && c,\ar[ul,"j_n\circ o'"']
 \end{tikzcd}
 \begin{tikzcd}
  1\ar[r,"x\times y"]\ar[drr,dashed] & F(m)\times F(n)\ar[r,"\phi_{m,n}"] & F(m+ n)\ar[d,"F(j)"] \\ 
  && F(m+_b n)
 \end{tikzcd}
\end{displaymath}
where $j_m \colon m \to m+_b n$ and $j_n \colon n \to m+_b n$ are the canonical maps into a pushout. If we first compose $M$ and $N$ in the structured cospan double category $_{L_F} \mathbb{C}\mathbf{sp}(\int F)$ by using fiberwise pushouts constructed using Lemma \ref{lem:fiberwiselimits}, we obtain:
\begin{displaymath}
 \begin{tikzcd}
 && (m+_b n,F(j_m)x+_{\bot_{m+_bn}}F(j_n)y) && \\
 & (m,x) \ar[ur] && (n,y)\ar[ul] \\
 (a,\bot_a)\ar[ur] && (b,\bot_b)\ar[ur]\ar[ul] && (c,\bot_c)\ar[ul]
 \end{tikzcd}
\end{displaymath}
and the image of this composite is given by the cospan $a \xrightarrow{} m+_b n \xleftarrow{} c$ together with the same decoration as the following diagram commutes:
\begin{displaymath}
 \begin{tikzcd}
F(m)\times F(n)\ar[rr,"\phi"]\ar[d,"F(j_m)\times F(j_n)"'] && F(m+n)\ar[d,"F(j)"] \\
F(m+_bn)\times F(m+_bn)\ar[r,"\phi"] & F((m+_bn)+(m+_bn))\ar[r,"F(\nabla)"] & F(m+_bn)
 \end{tikzcd}
\end{displaymath}
as the pushout is over an initial object and hence really a coproduct. The fiberwise coproduct in $F(m+_b n)$ is given as in Equation (\ref{eq:explicitstructure1}).

Lastly, for the identity morphisms, we have that $U_m$ is given by: $$(m,\bot_m) \xrightarrow{} (m,\bot_m) \xleftarrow{} (m,\bot_m)$$ with $1_m$ as the $\mathsf{A}$-component of the cospan legs together with isomorphisms between initial objects in the fibers. Hence $\mathbb{E}(U_m)$ is the identity cospan on $m$ in $\mathsf{A}$ together with the `initial decoration' or `trivial decoration' $\bot_m \in F(m)$. On the other hand, $U_{\mathbb{E}(m)}$ is the same cospan and decoration. This concludes the outline that $\mathbb{E}$ is a strong double functor.

Finally, here is the main result relating structured and decorated cospans \cite{BCV}. 

\begin{theorem}\label{Equiv}
Let $\mathsf{A}$ be a category with finite colimits and $F \colon \mathsf{A} \to \mathbf{Cat}$ a symmetric lax monoidal pseudofunctor such that $F$ factors through $\mathbf{Rex}$ as above. Then the symmetric monoidal double category $_L \mathbb{C}\mathbf{sp}(\int{F})$ built using structured cospans and the symmetric monoidal double category $F\mathbb{C}\mathbf{sp}$ built using decorated cospans are equivalent as symmetric monoidal double categories.
\end{theorem}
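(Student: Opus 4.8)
The plan is to exhibit the double functor $\mathbb{E} \colon {_L \mathbb{C}\mathbf{sp}(\int F)} \to F\mathbb{C}\mathbf{sp}$ sketched in the preceding pages and verify that it is a \emph{symmetric monoidal} double functor which is full, faithful, and essentially surjective; by our definition of double equivalence (following Shulman) this suffices. Most of the structural work---that $\mathbb{E}$ is a well-defined strong double functor, with the comparison constraints $\mathbb{E}(M) \odot \mathbb{E}(N) \xrightarrow{\sim} \mathbb{E}(M \odot N)$ and $\hat{U}_{\mathbb{E}(m)} \xrightarrow{\sim} \mathbb{E}(U_m)$ being globular isomorphisms---has already been outlined, so in the proof I would first recall that construction, leaning on Corollary \ref{Rex} to know $\int F$ is finitely cocomplete and on Proposition \ref{prop:opfibtolari} to know $L = L_F$ exists and is right inverse to $R_F$. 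The object component $\mathbb{E}_0$ is the identity on $\mathsf{A}$, so it is trivially an isomorphism of categories, hence essentially surjective and fully faithful on objects and vertical $1$-morphisms.

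Next I would establish the three conditions of a double equivalence. \emph{Faithfulness and fullness} reduce to the observation repeatedly emphasized in the outline: a horizontal $1$-cell of ${_L \mathbb{C}\mathbf{sp}(\int F)}$ is a cospan $(a,\bot_a) \to (m,x) \leftarrow (b,\bot_b)$ in $\int F$, and because the legs out of $L(a) = (a,\bot_a)$ and $L(b) = (b,\bot_b)$ carry \emph{unique} maps out of initial objects in the fibers $F(a)$, $F(b)$, the data of such a cospan is \emph{exactly} the data of an $F$-decorated cospan $a \to m \leftarrow b$ with $x \in F(m)$. The same initiality argument shows that the morphism-of-decorations datum $\iota \colon F(k)(x) \to x'$ in a $2$-morphism is preserved bijectively, while the commutativity in Equation \eqref{eq:Grothcommutativity} holds automatically by initiality of the domain and imposes no constraint. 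Thus $\mathbb{E}_1$ is a bijection on each hom-set $_f {(_L \mathbb{C}\mathbf{sp}(\int F))}_g(M,N)$, giving full faithfulness; and every $F$-decorated cospan and every map of such lies in the image, giving \emph{essential surjectivity} (in fact surjectivity on the nose, with the $\alpha_x$ and $a_N$ taken to be identities).

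Then I would verify that $\mathbb{E}$ respects the symmetric monoidal structures of Theorem \ref{SC} and Theorem \ref{DC}. Both monoidal structures are built from the coproduct $+$ on $\mathsf{A}$ together with the laxators $\phi_{m,n} \colon F(m) \times F(n) \to F(m+n)$, so the required globular isomorphism $\mathbb{E}(M \otimes N) \cong \mathbb{E}(M) \otimes \mathbb{E}(N)$ again comes down to comparing two expressions that differ only by reindexing along maps out of initial fiber objects; the commuting square displayed in the outline (with the $F(\nabla)$ leg, using that a pushout over an initial object is a coproduct) is precisely what identifies the fiberwise tensor product $\otimes_a$ of Equation \eqref{eq:explicitstructure1} with the decoration assigned by $F\mathbb{C}\mathbf{sp}$. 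One must also check compatibility with associators, unitors, and the braiding, but each of these reduces---exactly as in the coherence verifications inside the proof of Theorem \ref{DC}---to applying the pseudofunctor $F$ to a commuting diagram of universal maps between colimits in $(\mathsf{A},+,0)$ and invoking that $\mathbf{Cat}$ is a $2$-category.

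I expect the \textbf{main obstacle} to be bookkeeping rather than conceptual: making the comparison constraints of $\mathbb{E}$ genuinely natural and checking that they satisfy the hexagon and unit coherence axioms for a \emph{symmetric monoidal} double functor simultaneously with the composition-comparison axioms for a strong double functor. Concretely, one has to confirm that the globular isomorphism identifying the two decorations on a composite-of-tensors agrees whether one composes-then-tensors or tensors-then-composes, which is the interchange-flavored coherence already handled for $F\mathbb{C}\mathbf{sp}$ in Theorem \ref{DC}; the content is to transport those verifications across the bijection $\mathbb{E}_1$ and confirm nothing new is introduced by the initial-object reindexing. Because every comparison morphism here is an isomorphism arising from $F$ applied to a canonical isomorphism of colimits in $\mathsf{A}$, all such coherence squares commute by functoriality of $F$ and the universal property of colimits, so the obstacle is ensuring completeness of the checklist rather than surmounting any genuine difficulty.
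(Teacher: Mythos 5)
Your proposal is correct and follows essentially the same route as the paper: construct the strong double functor $\mathbb{E}$ with $\mathbb{E}_0 = 1_{\mathsf{A}}$, use the initiality of the fiberwise trivial decorations (equivalently, the explicit description via $R$ and the unit of $L \dashv R$) to get fullness, faithfulness, and essential surjectivity, and then reduce all the symmetric monoidal coherence checks to applying $F$ to commuting diagrams of universal maps between colimits in $(\mathsf{A},+,0)$. The only point the paper makes explicit that you elide is that both double categories are isofibrant, so it suffices for the comparison constraints to be monoidal natural transformations; this is a routine addition and does not affect the argument.
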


We will sometimes denote a decoration $x \in F(m)$ as $d_{\mathbb{E}(M)} \in F(R(x))$ where $M$ is a horizontal 1-cell of $$_L \mathbb{C} \mathbf{sp}(\mathsf{X}) = _{L_F} \mathbb{C} \mathbf{sp}(\int F),$$ and given an object $a \in { _L \mathbb{C}\mathbf{sp}(\mathsf{X})}$, the initial decoration or trivial decoration will be denoted as $\bot_a \in F(a)$ or $!_a \in F(a)$. Note, that as mentioned above, $\bot_a$ is determined by the unique map $!_a \colon 0_\mathsf{A} \to a$. The object $d_{\mathbb{E}(M)}$ is not to be mistaken for an object of $\mathsf{A}$ which we will denote by $a,b$ and $c$, or $m$ and $n$ with various primes and subscripts.

\begin{proof}[Proof of Theorem \ref{Equiv}]
As each $F \colon \mathsf{A} \to \mathbf{Cat}$ factors through $\mathbf{Rex}$, there exists a fully faithful left adjoint $L \colon \mathsf{A} \to \int{F}$ of the Grothendieck construction $R \colon \int{F} \to \mathsf{A}$ of $F$, $\int{F}$ is finitely cocomplete and $R$ preserves finite colimits. 

Next we define a double functor $\mathbb{E}$, prove it is a double equivalence, and show it is symmetric monoidal. For notation, let $\int F = \mathsf{X}$. We define a double functor $\mathbb{E} \colon _L \mathbb{C}\mathbf{sp}(\mathsf{X}) \to F\mathbb{C}\mathbf{sp}$ as follows: the object component of the double functor $\mathbb{E}$ is given by $\mathbb{E}_0 = 1_{\mathsf{A}}$ as both double categories $_L \mathbb{C}\mathbf{sp}(\mathrm{X})$ and $F\mathbb{C}\mathbf{sp}$ have objects and morphisms of $\mathsf{A}$ as objects and vertical 1-morphisms, respectively. The functor $\mathbb{E}_0$ is trivially an equivalence of categories.

Given a horizontal 1-cell $M$ of $_L \mathbb{C}\mathbf{sp}(\mathsf{X})$, which is a cospan in $\mathsf{X}$ of the form:
\[

\]
together with morphisms $\iota_M \colon F(R(\alpha))(x) \to x'$ in $F(R(x'))$ and $\iota_{M'} \colon F(R(\alpha')(x') \to x''$ in $F(R(x''))$, respectively. The vertical composite $\mathbb{E}_1(M')\mathbb{E}_1(M)$ of the above two 2-morphisms is given by $\mathbb{E}_1(M'M)$ as $R$ is a functor and $\iota_{M'M}=\iota_{M'} \iota_M$. The functors $\mathbb{E}_0$ and $\mathbb{E}_1$ satisfy the equations $\mathbb{E}_0 S = S \mathbb{E}_1$ and $\mathbb{E}_0 T = T \mathbb{E}_1$.

To show that $\mathbb{E}$ is part of a double equivalence, we need to show it is essentially surjective, full, faithful and strong.   To show it is essentially surjective, given a horizontal 1-cell in $F\mathbb{C}\mathbf{sp}$:
\[
\begin{tikzpicture}[scale=1.5]
\node (A) at (0,0) {$c_1$};
\node (B) at (1,0) {$c$};
\node (C) at (2,0) {$c_2$};
\node (D) at (3,0) {$x \in F(c)$};
\path[->,font=\scriptsize,>=angle 90]
(A) edge node[above]{$i$} (B)
(C) edge node[above]{$o$} (B);
\end{tikzpicture}
\]
we can find a 2-isomorphism in $F\mathbb{C}\mathbf{sp}$ whose codomain is the above horizontal 1-cell and whose domain is the image of the following horizontal 1-cell in $_L \mathbb{C}\mathbf{sp}(\mathsf{X})$:
\[
\begin{tikzpicture}[scale=1.5]
\node (A) at (0,0) {$L(c_1)$};
\node (B) at (1,0) {$x$};
\node (C) at (2,0) {$L(c_2)$};
\path[->,font=\scriptsize,>=angle 90]
(A) edge node[above]{$i^\prime$} (B)
(C) edge node[above]{$o^\prime$} (B);
\end{tikzpicture}
\]
with the 2-isomorphism in $F\mathbb{C}\mathbf{sp}$ given by:
\[
\begin{tikzpicture}[scale=1.5]
\node (A) at (-1,0) {$c_1$};
\node (B) at (1,0) {$R(x)$};
\node (C) at (3,0) {$c_2$};
\node (A') at (-1,-1) {$c_1$};
\node (B') at (1,-1) {$c$};
\node (C') at (3,-1) {$c_2$};
\node (D) at (4,0) {$x \in F(R(x))$};
\node (D') at (4,-1) {$x \in F(c)$};
\path[->,font=\scriptsize,>=angle 90]
(A) edge node[above]{$R(i^\prime) \eta_{c_1}$} (B)
(C) edge node[above]{$R(o^\prime) \eta_{c_2}$} (B)
(A') edge node[above]{$i$} (B')
(C') edge node[above]{$o$} (B')
(A) edge node [left]{$1$} (A')
(B) edge node [left]{${(R(e) \eta_c)}^{-1}$} (B')
(C) edge node [right]{$1$} (C');
\end{tikzpicture}
\]
$$\iota \colon F({(R(e)\eta_c)}^{-1})(x) \to x$$
where $e \colon L(c) \to x$ is given by the unique map from the trivial decoration on $c$ to $x \in F(c)$. The object and arrow components $\mathbb{E}_0$ and $\mathbb{E}_1$ satisfy the equations $S \mathbb{E}_1 = \mathbb{E}_0 S$ and $T \mathbb{E}_1 = \mathbb{E}_0 T$.

To show that the double functor $\mathbb{E}$ is full and faithful, we need to show that the map  $$\mathbb{E}_1 \colon _f { _L \mathbb{C}\mathbf{sp}(\mathsf{X})}_g(M,N) \to {_{\mathbb{E}(f)} {F\mathbb{C}\mathbf{sp}}_{\mathbb{E}(g)}}(\mathbb{E}(M),\mathbb{E}(N))$$ is bijective for arbitrary vertical 1-morphisms $f$ and $g$ and horizontal 1-cells $M$ and $N$ of $_L \mathbb{C}\mathbf{sp}(\mathsf{X})$. Consider a 2-morphism in $_L \mathbb{C}\mathbf{sp}(\mathsf{X})$ with horizontal source and target $M$ and $N$, respectively and vertical source and target $f$ and $g$, respectively:
\[
\begin{tikzpicture}[scale=1.5]
\node (A) at (0,0) {$L(c_1)$};
\node (B) at (1,0) {$x$};
\node (C) at (2,0) {$L(c_2)$};
\node (A') at (0,-1) {$L(c_1^\prime)$};
\node (B') at (1,-1) {$x^\prime$};
\node (C') at (2,-1) {$L(c_2^\prime)$};
\node (D) at (1,0.5) {$M$};
\node (E) at (-1,-0.5) {$f$};
\node (F) at (1,-1.5) {$N$};
\node (G) at (3,-0.5) {$g$};
\path[->,font=\scriptsize,>=angle 90]
(A) edge node[above]{$i$} (B)
(C) edge node[above]{$o$} (B)
(A') edge node[above]{$i^\prime$} (B')
(C') edge node[above]{$o^\prime$} (B')
(A) edge node [left]{$L(f)$} (A')
(B) edge node [left]{$\alpha$} (B')
(C) edge node [right]{$L(g)$} (C');
\end{tikzpicture}
\]
Thus the set $$_f { _L \mathbb{C}\textnormal{sp}(\mathsf{X})}_g(M,N)$$ consists of triples $$(f,\alpha,g)$$ rendering the above diagram commutative where $f$ and $g$ are morphisms of $\mathsf{A}$ and $\alpha$ is a morphism of $\mathsf{X}$. The image of the above 2-morphism under the double functor $\mathbb{E}$ has horizontal source and target given by $\mathbb{E}(M)$ and $\mathbb{E}(N)$, respectively, and vertical source and target given by $\mathbb{E}(f)$ and $\mathbb{E}(g)$, respectively:
\[
\begin{tikzpicture}[scale=1.5]
\node (A) at (0,0) {$c_1$};
\node (B) at (1,0) {$R(x)$};
\node (C) at (2,0) {$c_2$};
\node (A') at (0,-1) {$c_1^\prime$};
\node (B') at (1,-1) {$R(x^\prime)$};
\node (C') at (2,-1) {$c_2^\prime$};
\node (D) at (1,0.5) {$x \in F(R(x))$};
\node (D') at (1,-1.5) {$x^\prime \in F(R(x^\prime))$};
\node (E) at (1,1) {$\mathbb{E}(M)$};
\node (F) at (-1,-0.5) {$\mathbb{E}(f)$};
\node (G) at (1,-2) {$\mathbb{E}(N)$};
\node (H) at (3,-0.5) {$\mathbb{E}(g)$};
\path[->,font=\scriptsize,>=angle 90]
(A) edge node[above]{$R(i)\eta_{c_1}$} (B)
(C) edge node[above]{$R(o)\eta_{c_2}$} (B)
(A') edge node[above]{$R(i^\prime)\eta_{c_1^\prime}$} (B')
(C') edge node[above]{$R(o^\prime)\eta_{c_2^\prime}$} (B')
(A) edge node [left]{$f$} (A')
(B) edge node [left]{$R(\alpha)$} (B')
(C) edge node [right]{$g$} (C');
\end{tikzpicture}
\]
together with a morphism $\iota \colon F(R(\alpha))(x) \to x^\prime$ of $F(R(x^\prime))$.
Thus the set $$_{\mathbb{E}(f)} {F\mathbb{C}\mathbf{sp}}_{\mathbb{E}(g)}(\mathbb{E}(M),\mathbb{E}(N))$$ consists of 4-tuples $$(f,R(\alpha),g,\iota)$$ rendering the above diagram commutative and where $f,g$ and $R(\alpha)$ are morphisms of $\mathsf{A}$ and $\iota$ is a morphism in $F(R(x'))$. The morphisms $R(\alpha) \colon R(x) \to R(x^\prime)$ and $\iota \colon F(R(\alpha))(x) \to x^\prime$ together determine the morphism $\alpha \colon x \to x^\prime$ in $\mathrm{X}$ and conversely: given two objects $x=(c,x \in F(c))$ and $x^\prime=(c^\prime,x^\prime \in F(c^\prime))$ of $\mathsf{X}=\int{F}$, a morphism from $\alpha \colon x \to x^\prime$ is a pair $$(h \colon c \to c^\prime, \iota \colon F(h)(x) \to x^\prime)$$ where $h \colon c \to c^\prime$ is given by $R(\alpha) \colon R(x) \to R(x^\prime)$. This shows that $\mathbb{E}$ is fully faithful.

Next we show that the double functor $\mathbb{E}$ is strong by exhibiting a natural isomorphism $$\mathbb{E}_{M,N} \colon \mathbb{E}(M) \odot \mathbb{E}(N) \xrightarrow{\sim} \mathbb{E}(M \odot N)$$ for every pair of composable horizontal 1-cells $M$ and $N$ of $_L \mathbb{C}\mathbf{sp}(\mathsf{X})$ and for every object $c \in { _L \mathbb{C}\mathbf{sp}(\mathsf{X})}$ a natural isomorphism $$\mathbb{E}_c \colon \hat{U}_{\mathbb{E}(c)} \xrightarrow{\sim} \mathbb{E}(U_c)$$ where $U$ and $\hat{U}$ are the unit functors of $_L \mathbb{C} \mathbf{sp}(\mathsf{X})$ and $F\mathbb{C}\mathbf{sp}$, respectively. For any object $c$, the horizontal 1-cell $\hat{U}_{\mathbb{E}(c)}$ is given by $\hat{U}_c$ which is given by the pair:
\[

\]
First, the right adjoint $R$ also preserves finite colimits and so we have a natural isomorphism $$\kappa \colon R(x) +_{R(L(c_2))} R(x^\prime) \to R(x+_{L(c_2)}x^\prime).$$ Also, since the left adjoint $L \colon \mathsf{A} \to \mathsf{X}$ is fully faithful, the unit of the adjunction $L \dashv R$ at the object $c_2$ gives a natural isomorphism $\eta_{c_2} \colon c_2 \to R(L(c_2))$ which results in a natural isomorphism $$j_{\eta_{c_2}} \colon R(x) +_{c_2} R(x^\prime) \to R(x) +_{R(L(c_2))} R(x^\prime).$$ Composing these two results in a natural isomorphism $$\sigma \colon = \kappa j_{\eta_{c_2}} \colon R(x) +_{c_2} R(x^\prime) \to R(x+_{L(c_2)}x^\prime).$$
Next, to see that the above diagram commutes, it suffices to show that for the object $c_1 \in \mathsf{A}$, $$R(J)R(\zeta)R(i)\eta_{c_1}(c_1) = R(J \zeta i)\eta_{c_1}(c_1)  \stackrel{!}{=} \sigma j \psi R(i)\eta_{c_1}(c_1) = \kappa j_{\eta_{c_2}} \psi R(i) \eta_{c_1}(c_1).$$ This follows as $R(i) \eta_{c_1} \colon c_1 \to R(x)$ and the following diagram commutes:
\[
\begin{tikzpicture}[scale=1.5]
\node (B) at (0,0) {$R(x)$};
\node (C) at (2,0) {$R(x)+R(x^\prime)$};
\node (A') at (4,0) {$R(x)+_{c_2}R(x^\prime)$};
\node (B') at (4,-2) {$R(x+_{L(c_2)}x^\prime)$};
\node (D) at (0,-2) {$R(x+x^\prime)$};
\node (D') at (4,-1) {$R(x)+_{R(L(c_2))} R(x^\prime)$};
\path[->,font=\scriptsize,>=angle 90]
(C) edge node[above]{$j$} (A')
(B) edge node[above]{$\psi$} (C)
(D) edge node[above]{$R(J)$} (B')
(B) edge node [left]{$R(\zeta)$} (D)
(A') edge node [right]{$j_{\eta_{c_2}}$} (D')
(A') edge [out=345,in=15] node [right]{$\sigma$} (B')
(D') edge node [right]{$\kappa$} (B');
\end{tikzpicture}
\]
Lastly, this map of cospans comes with an isomorphism $\iota \colon F(\sigma)(d_{\mathbb{E}(M) \odot \mathbb{E}(N)}) \to d_{\mathbb{E}(M \odot N)}$ in $F(R(x+_{L(c_2)}x^\prime))$. This shows that $\mathbb{E}$ is strong, and so $\mathbb{E} \colon _L \mathbb{C}\mathbf{sp}(\mathsf{X}) \xrightarrow{\sim} F\mathbb{C}\mathbf{sp}$ is part of a double equivalence by  a Theorem of Shulman \cite[Theorem 7.8]{Shul2}.

Next we will show that this equivalence of double categories $\mathbb{E} \colon _L \mathbb{C}\mathbf{sp}(\mathsf{X}) \to F\mathbb{C}\mathbf{sp}$ is symmetric monoidal. First, note that we have a natural isomorphism $\epsilon \colon 1_{F\mathbb{C}\mathbf{sp}} \to \mathbb{E}(1_{_L \mathbb{C}\mathbf{sp}(\mathsf{X})})$ and natural isomorphisms $\mu_{c_1,c_2} \colon \mathbb{E}(c_1) \otimes \mathbb{E}(c_2) \to \mathbb{E}(c_1 \otimes c_2)$ for every pair of objects $c_1,c_2 \in {_L \mathbb{C} \mathbf{sp}(\mathsf{X})}$ both of which are given by identities since both double categories $_L \mathbb{C}\mathbf{sp}(\mathsf{X})$ and $F\mathbb{C}\mathbf{sp}$ have $\mathsf{A}$ as their category of objects and $\mathbb{E}_0=1_{\mathsf{A}}$. The diagrams utilizing these maps that are required to commute do so trivially.

For the arrow component $\mathbb{E}_1$, we have a natural isomorphism $\delta \colon U_{1_{F\mathbb{C}\mathbf{sp}}} \to \mathbb{E}(U_{1_{_L \mathbb{C}\mathbf{sp}(\mathsf{X})}})$ where the horizontal 1-cell $U_{1_{F\mathbb{C}\mathbf{sp}}}$ is given by:
\[

\]
The above square commutes because $$F((\kappa)(\beta_\mathsf{A}))(x_1+x_2) = F((R(\beta_\mathsf{X}))(\kappa))(x_1+x_2)$$ as the corresponding braiding square for the finite colimit preserving functor $R \colon (\mathsf{X},1_\mathsf{X},+) \to (\mathsf{A},1_\mathsf{A},+)$ commutes. The comparison and unit constraints $\mathbb{E}_{M,N}$ and $\mathbb{E}_c$ are monoidal natural transformations, and as both $_L \mathbb{C} \mathbf{sp}(\mathsf{X})$ and $F\mathbb{C}\mathbf{sp}$ are isofibrant by Lemmas \ref{trick3} and \ref{DCFibrant}, respectively, the double functor $\mathbb{E} \colon _L\mathbb{C}\mathbf{sp}(\mathsf{X}) \to F\mathbb{C}\mathbf{sp}$ is symmetric monoidal.
\end{proof}

\section{Applications}

In this section we present the three examples that were illustrated with the original decorated cospans as well as structured cospans. The first example regarding graphs was mentioned in the introduction and is the easiest example to keep in mind. The next two examples, taking on more of an applied flavor, consist of electrical circuits and Petri nets. Each of these has been studied extensively in work on `black-boxing' \cite{BCR,BF,BFP,BM,BP}. Black-boxing is a way of describing the behavior of an open system, that is, a system with prescribed inputs and outputs such as the terminals of an electrical circuit, by observing the activity at the inputs and the outputs, typically while the system is in a `steady state'. The relation between the activity at inputs and outputs can be seen as a morphism in some category of relations.  A black-boxing functor sending open electrical circuits to Lagrangian linear relations was first constructed using Fong's theory of decorated cospans \cite{BF}, and later via the theory of props \cite{BCR}. A black-boxing double functor sending open Petri nets to relations was constructed using structured cospans \cite{BM}. A black-boxing functor for a special class of Markov processes was constructed using decorated cospans \cite{BFP}; later it was generalized and enhanced to a double functor \cite{BC}, as explained in Chapter \ref{Chapter6}.



\subsection{Graphs}

As a first example, let $L \colon \mathsf{FinSet} \to \mathsf{FinGraph}$ be the functor that assigns to a set $N$ the \define{discrete graph} $L(N)$ which is the edgeless graph with $N$ as its set of vertices. Both $\mathsf{FinSet}$ and $\mathsf{FinGraph}$ have finite colimits and the functor $L \colon \mathsf{FinSet} \to \mathsf{FinGraph}$ is left adjoint to the forgetful functor $R \colon \mathsf{FinGraph} \to \mathsf{FinSet}$ which assigns to a finite graph $G$ its underlying finite set of vertices, $R(G)$. Using structured cospans and appealing to Theorem \ref{SC}, we get a symmetric monoidal double category $_L \mathbb{C}\mathbf{sp}(\mathsf{FinGraph})$ which has:
\begin{enumerate}
\item{finite sets as objects,}
\item{functions as vertical 1-morphisms,}
\item{\define{open graphs}, or cospans of graphs of the form
\[
\begin{tikzpicture}[scale=1.5]
\node (A) at (0,0) {$L(N)$};
\node (B) at (1,0) {$G$};
\node (C) at (2,0) {$L(M)$};
\path[->,font=\scriptsize,>=angle 90]
(A) edge node[above]{$I$} (B)
(C) edge node[above]{$O$} (B);
\end{tikzpicture}
\]
as horizontal 1-cells, where $L(N)$ and $L(M)$ are discrete graphs on the sets $N$ and $M$, respectively, $G$ is a graph and $I$ and $O$ are graph morphisms, and}
\item{maps of cospans of graphs of the form
\[
\begin{tikzpicture}[scale=1.5]
\node (A) at (0,0) {$L(N_1)$};
\node (B) at (1,0) {$G_1$};
\node (C) at (2,0) {$L(M_1)$};
\node (A') at (0,-1) {$L(N_2)$};
\node (B') at (1,-1) {$G_2$};
\node (C') at (2,-1) {$L(M_2)$};
\path[->,font=\scriptsize,>=angle 90]
(A) edge node[above]{$I_1$} (B)
(C) edge node[above]{$O_1$} (B)
(A') edge node[above]{$I_2$} (B')
(C') edge node[above]{$O_2$} (B')
(A) edge node [left]{$L(f)$} (A')
(B) edge node [left]{$\alpha$} (B')
(C) edge node [right]{$L(g)$} (C');
\end{tikzpicture}
\]
as 2-morphisms, where $L(f)$ and $L(g)$ are maps of discrete graphs induced by the underlying functions $f$ and $g$, respectively, and $\alpha \colon G_1 \to G_2$ is a graph morphism.
}
\end{enumerate}

This is precisely Theorem \ref{scgraphs}. We can obtain a similar symmetric monoidal double category using decorated cospans. Let $F \colon \mathsf{FinSet} \to \mathbf{Cat}$ be the symmetric lax monoidal pseudofunctor that assigns to a finite set $N$ the \emph{category} of all graph structures whose underlying set of vertices is $N$. Thus, $F(N)$ is the category where:
\begin{enumerate}
\item{objects are given by graphs each having $N$ as their set of vertices
\[
\begin{tikzpicture}[scale=1.5]
\node (B) at (1,0) {$E$};
\node (C) at (2,0) {$N$};
\path[->,font=\scriptsize,>=angle 90]
(B)edge[bend left] node[above]{$s$}(C)
(B)edge[bend right] node[below]{$t$}(C);
\end{tikzpicture}
\]
 and}
\item{morphisms are given by maps of edges $f \colon E \to E'$ making the following two triangles commute:
\[
\begin{tikzpicture}[scale=1.5]
\node (A) at (5,0) {$E$};
\node (A') at (5,-1) {$E'$};
\node (E) at (6,-0.5) {$N$};
\node (B) at (7,0) {$E$};
\node (B') at (7,-1) {$E'$};
\node (F) at (8,-0.5) {$N$};
\path[->,font=\scriptsize,>=angle 90]
(A) edge node[above]{$s$}(E)
(A') edge node [below] {$s'$} (E)
(A) edge node [left] {$f$} (A')
(B) edge node [left] {$f$} (B')
(B) edge node[above]{$t$}(F)
(B') edge node [below] {$t'$} (F);
\end{tikzpicture}
\]
}
\end{enumerate}

The laxator $$\mu_{N_1,N_2} \colon F(N_1) \times F(N_2) \to F(N_1+N_2)$$for this symmetric lax monoidal pseudofunctor $F$ is analogous to the laxator for the monoidal functor $F$ of Section \ref{Graphs}. By Theorem \ref{DC}, we have the following:

\begin{theorem}\label{dcgraphs}
Let $F \colon \mathsf{FinSet} \to \mathbf{Cat}$ be the symmetric lax monoidal pseudofunctor which assigns to a finite set $N$ the category of all graph structures whose underlying set of vertices is $N$. Then there exists a symmetric monoidal double category $F\mathbb{C}\mathbf{sp}$ which has:
\begin{enumerate}
\item{finite sets as objects,}
\item{functions as vertical 1-morphisms,}
\item{horizontal 1-cells as pairs:
\[
\begin{tikzpicture}[scale=1.5]
\node (A) at (0,0) {$N$};
\node (B) at (1,0) {$P$};
\node (C) at (2,0) {$M$};
\node (D) at (3.25,0) {$G \in F(P)$};
\path[->,font=\scriptsize,>=angle 90]
(A) edge node[above]{$i$} (B)
(C) edge node[above]{$o$} (B);
\end{tikzpicture}
\]
which can also be thought of as open graphs, and}
\item{2-morphisms as maps of cospans of finite sets
\[
\begin{tikzpicture}[scale=1.5]
\node (A) at (0,0) {$N_1$};
\node (A') at (0,-1) {$N_2$};
\node (C') at (2,-1) {$M_2$};
\node (B) at (1,0) {$P_1$};
\node (C) at (2,0) {$M_1$};
\node (D) at (1,-1) {$P_2$};
\node (E) at (3,0) {$G_1 \in F(P_1)$};
\node (F) at (3,-1) {$G_2 \in F(P_2)$};
\path[->,font=\scriptsize,>=angle 90]
(A) edge node[above]{$i_1$} (B)
(C) edge node[above]{$o_1$} (B)
(A) edge node[left]{$f$} (A')
(C) edge node[right]{$g$} (C')
(C') edge node [above] {$o_2$} (D)
(A') edge node [above] {$i_2$} (D)
(B) edge node [left] {$h$} (D);
\end{tikzpicture}
\]
together with a graph morphism $\iota \colon F(h)(G_1) \to G_2$ in $F(P_2)$.}
\end{enumerate}
\end{theorem}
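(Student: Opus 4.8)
The plan is to recognize Theorem \ref{dcgraphs} as a direct application of Theorem \ref{DC}: once we check that the stated assignment $F$ is a symmetric lax monoidal pseudofunctor $(\mathsf{FinSet},+,0) \to (\mathbf{Cat},\times,1)$, Theorem \ref{DC} produces the symmetric monoidal double category $F\mathbb{C}\mathbf{sp}$, and unwinding its general construction for this particular $F$ yields exactly the objects, vertical 1-morphisms, horizontal 1-cells and 2-morphisms listed. Since $\mathsf{FinSet}$ is finitely cocomplete, the only genuine work is verifying the hypotheses on $F$.

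First I would make $F$ precise on 1-cells. On objects $N \mapsto F(N)$ is the category of graph structures on $N$. On a function $f \colon N \to N'$ the functor $F(f) \colon F(N) \to F(N')$ pushes a graph structure $(E,s,t)$ to $(E,fs,ft)$, fixing the edge set and post-composing the source and target functions with $f$, and acts as the identity on the edge set of a morphism of graph structures. Because post-composition is strictly associative and unital, $F$ is in fact a strict $2$-functor, which we regard as a pseudofunctor (Definition \ref{pseudofunctor_definition}) with identity coherence constraints; the pseudofunctoriality axioms then hold trivially.

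Next I would supply the lax monoidal structure. The laxator $\mu_{N_1,N_2} \colon F(N_1)\times F(N_2) \to F(N_1+N_2)$ sends a pair of graph structures to their disjoint union, with edge set $E_1+E_2$ and source/target functions $s_1+s_2$, $t_1+t_2$; on morphisms it takes coproducts of the maps of edge sets. Naturality in $\mathsf{FinSet}$ holds because pushing forward and then taking disjoint union agrees with taking the disjoint union and then pushing forward along $f_1+f_2$. The unit $\phi_0 \colon 1 \to F(0)$ selects the empty graph on the empty set, and the braiding of the laxator comes from the symmetry $E_1+E_2 \cong E_2+E_1$ of edge sets. The crucial point, which is precisely how this pseudofunctorial formulation repairs the defect of Section \ref{Graphs}, is that the laxator hexagon and unitality squares are now required to commute only up to natural isomorphism: the two composites around the hexagon produce graphs with the same vertex set but edge sets $(E_a+E_b)+E_c$ and $E_a+(E_b+E_c)$, and the natural isomorphism filling the hexagon is simply the image under $F$ of the canonical associativity isomorphism for coproducts of edge sets.

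The main obstacle is bookkeeping rather than mathematical difficulty: one must confirm that these comparison natural isomorphisms obey the coherence diagrams required of a symmetric lax monoidal pseudofunctor (the hexagons and unit squares of Definition \ref{defn:monoidal_functor}, now read up to isomorphism). Each such diagram, however, reduces after applying $F$ to a coherence diagram for the symmetric monoidal structure of $(\mathsf{FinSet},+,0)$ on the underlying edge sets, and $(\mathsf{FinSet},+,0)$ is a genuine symmetric monoidal category, so all of them commute. With $F$ thus established as a symmetric lax monoidal pseudofunctor into $(\mathbf{Cat},\times,1)$ and $\mathsf{FinSet}$ finitely cocomplete, Theorem \ref{DC} applies and delivers the symmetric monoidal double category $F\mathbb{C}\mathbf{sp}$ with the claimed description.
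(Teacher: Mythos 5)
Your proposal is correct and follows the same route as the paper: the paper's proof consists entirely of the observation that the result follows immediately from Theorem \ref{DC}, with the laxator described informally just before the theorem statement as analogous to the one in Section \ref{Graphs}. Your additional verification that $F$ really is a symmetric lax monoidal pseudofunctor --- in particular that the hexagon now only needs to commute up to the canonical isomorphism $(E_a+E_b)+E_c \cong E_a+(E_b+E_c)$ of edge sets, which is exactly the point of passing from $\mathsf{Set}$-valued to $\mathbf{Cat}$-valued decorations --- is detail the paper leaves implicit, and it is accurate.
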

\begin{proof}
This follows immediately from Theorem \ref{DC}.
\end{proof}
We thus have two symmetric monoidal double categories: $_L \mathbb{C}\mathbf{sp}(\mathsf{FinGraph})$ obtained from structured cospans and $F\mathbb{C}\mathbf{sp}$ obtained from decorated cospans. Both of these double categories have $\mathsf{FinSet}$ as their categories of objects, open graphs as horizontal 1-cells and maps of open graphs as 2-morphisms, and by Theorem \ref{Equiv}, we have an equivalence of symmetric monoidal double categories $_L \mathbb{C}\mathbf{sp}(\mathsf{FinGraph}) \simeq F\mathbb{C}\mathbf{sp}$.

\begin{corollary}
The symmetric monoidal double category $_L \lC\mathbf{sp}(\mathsf{FinGraph})$ of Theorem \ref{scgraphs} and the symmetric monoidal double category $F\mathbb{C}\mathbf{sp}$ of Theorem \ref{dcgraphs} are equivalent.
\end{corollary}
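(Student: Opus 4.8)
The plan is to deduce this corollary directly from Theorem \ref{Equiv}, so almost all of the work lies in checking that the hypotheses of that theorem apply to the present situation. Take $\mathsf{A} = \mathsf{FinSet}$, which has finite colimits, and let $F \colon \mathsf{FinSet} \to \mathbf{Cat}$ be the symmetric lax monoidal pseudofunctor of Theorem \ref{dcgraphs} sending a finite set $N$ to the category of graph structures on $N$. Once we verify that $F$ factors through $\mathbf{Rex}$, that its Grothendieck category $\int F$ is (equivalent to) $\mathsf{FinGraph}$, and that the induced left adjoint $L_F$ agrees with the discrete-graph functor $L$ of Theorem \ref{scgraphs}, Theorem \ref{Equiv} yields the equivalence $_L \lCsp(\mathsf{FinGraph}) = {}_{L_F}\lCsp(\int F) \simeq F\mathbb{C}\mathbf{sp}$ of symmetric monoidal double categories.

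First I would check that $F$ factors through $\mathbf{Rex}$, which by Corollary \ref{Rex} means each $F(N)$ is finitely cocomplete and each reindexing functor $F(f)$ is finitely cocontinuous. A graph structure on $N$ is a single function $(s,t) \colon E \to N \times N$, and a morphism of such structures is a map of edge sets commuting with source and target; hence $F(N)$ is the slice category $\mathsf{FinSet}/(N \times N)$. Slices of the finitely cocomplete category $\mathsf{FinSet}$ are again finitely cocomplete, with colimits computed on underlying edge sets, so each $F(N)$ lies in $\mathbf{Rex}$. For a function $f \colon N \to N'$, the reindexing functor $F(f) \colon \mathsf{FinSet}/(N\times N) \to \mathsf{FinSet}/(N'\times N')$ is post-composition with $f \times f$, i.e. the dependent-sum functor $\Sigma_{f\times f}$; being a left adjoint (to pullback along $f\times f$), it preserves all colimits. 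Thus $F$ factors through $\mathbf{Rex}$ as required.

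Next I would identify the Grothendieck data. By Definition \ref{Groth}, an object of $\int F$ is a pair $(N,\, (s,t)\colon E \to N\times N)$, which is exactly a finite graph, and a morphism $(N,E) \to (N',E')$ is a pair $(f\colon N\to N',\, \iota\colon F(f)(E)\to E')$; unwinding $\iota$ shows it is a map $g\colon E\to E'$ with $s'g = fs$ and $t'g = ft$, so $(f,g)$ is precisely a graph homomorphism. This gives an isomorphism $\int F \cong \mathsf{FinGraph}$. Under this identification the left adjoint $L_F$ of Proposition \ref{prop:opfibtolari} sends $N$ to $(N,\bot_N)$, where $\bot_N$ is the initial object of $F(N)$, namely the graph with empty edge set; this is the discrete graph on $N$, so $L_F \cong L$. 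Applying Theorem \ref{Equiv} then completes the proof.

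The genuinely content-bearing step is the verification that $F$ factors through $\mathbf{Rex}$ together with the identifications $\int F \cong \mathsf{FinGraph}$ and $L_F \cong L$; everything else is packaged inside Theorem \ref{Equiv}. I do not expect any serious obstacle here, since these identifications are essentially bookkeeping about slice categories and the Grothendieck construction. The only point requiring mild care is confirming that the lax monoidal and symmetric structure used in Theorem \ref{dcgraphs} is the one induced fiberwise as in Equation \eqref{eq:explicitstructure1}, so that the monoidal Grothendieck correspondence of \eqref{monGroth} applies and the resulting equivalence is genuinely \emph{monoidal} rather than merely an equivalence of underlying double categories.
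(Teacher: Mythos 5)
Your proposal is correct and takes essentially the same route as the paper, which simply invokes Theorem \ref{Equiv}; you additionally spell out the verification that $F(N) \simeq \mathsf{FinSet}/(N\times N)$ lands in $\mathbf{Rex}$, that $\int F \cong \mathsf{FinGraph}$, and that $L_F$ is the discrete-graph functor, all of which the paper leaves implicit. These checks are accurate and fill in exactly the bookkeeping the paper omits.
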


\begin{proof}
This follows immediately from Theorem \ref{Equiv}.
\end{proof}

\subsection{Electrical circuits}
In a previous work \cite{BP}, Baez and Fong attempted to use decorated cospans to construct a symmetric monoidal category of open $k$-graphs. Now we can fix the problems in this construction. Recall from Definition \ref{definition:k-graph} that given a field $k$ with positive elements, a $k$-graph is given by a diagram in $\mathsf{Set}$ of the form:
\[
\begin{tikzpicture}[scale=1.5]
\node (B) at (1,0) {$E$};
\node (C) at (2,0) {$V$};
\node (A) at (0,0) {$k^+$};
\path[->,font=\scriptsize,>=angle 90]
(B)edge node[above] {$r$}(A)
(B)edge[bend left] node[above]{$s$}(C)
(B)edge[bend right] node[below]{$t$}(C);
\end{tikzpicture}
\]
Here the finite sets $E$ and $V$ are the sets of edges and vertices, respectively, and if we take the field $k = \mathbb{R}$, the function $r \colon E \to \mathbb{R}^+$ assigns to each edge $e \in E$ a positive real number $r(e) \in \mathbb{R}^+$ which can be interpreted as the resistance at the edge $e$. We restrict to finite sets to avoid convergence issues with certain summations. An open $k$-graph is then given by a cospan of finite sets 
\[
\begin{tikzpicture}[scale=1.5]
\node (B) at (1,0) {$V$};
\node (C) at (2,0) {$Y$};
\node (A) at (0,0) {$X$};
\path[->,font=\scriptsize,>=angle 90]
(A)edge node[above] {$i$}(B)
(C)edge node[above]{$o$}(B);
\end{tikzpicture}
\]
where the apex $V$ is equipped with the structure of a $k$-graph. See the original paper for more details \cite{BP}.

Let $\mathsf{FinGraph}_k$ be the category whose objects are given by $k$-graphs and morphisms by morphisms of $k$-graphs, where a morphism of $k$-graphs is given by a pair of functions $f \colon E \to E^\prime$ and $g \colon V \to V^\prime$ between the edge sets and vertex sets, respectively, of two $k$-graphs that respect the source and target functions of each, and such that the resistances of each edge are preserved. In the original work introducing structured cospans, it is shown that the category $\mathsf{FinGraph}_k$ has finite colimits \cite{BC2}. We can then obtain a double category of open $k$-graphs by defining a left adjoint $L \colon \mathsf{FinSet} \to \mathsf{FinGraph}_k$ that assigns to a finite set $V$ the discrete $k$-graph $L(V)$ given by the $k$-graph with $V$ as its set of vertices and no edges. The resulting symmetric monoidal double category $_L \mathbb{C}\mathbf{sp}(\mathsf{FinGraph}_k)$ has:
\begin{enumerate}
\item{finite sets as objects,}
\item{functions as vertical 1-morphisms,}
\item{open $k$-graphs as horizontal 1-cells
\[
\begin{tikzpicture}[scale=1.5]
\node (B) at (1,0) {$V$};
\node (C) at (2,0) {$Y$};
\node (A) at (0,0) {$X$};
\node (D) at (3,0) {$k^+$};
\node (E) at (4,0) {$E$};
\node (F) at (5,0) {$V$};
\path[->,font=\scriptsize,>=angle 90]
(E) edge node [above] {$r$} (D)
(E) edge [bend left] node [above] {$s$} (F)
(E) edge [bend right] node [below] {$t$} (F)
(A)edge node[above] {$i$}(B)
(C)edge node[above]{$o$}(B);
\end{tikzpicture}
\]
and}
\item{maps of cospans as 2-morphisms together with a map of $k$-graphs between the apices.
\[
\begin{tikzpicture}[scale=1.5]
\node (A) at (0,0) {$X_1$};
\node (A') at (0,-1) {$X_2$};
\node (C') at (2,-1) {$Y_2$};
\node (B) at (1,0) {$V_1$};
\node (C) at (2,0) {$Y_1$};
\node (D) at (1,-1) {$V_2$};
\path[->,font=\scriptsize,>=angle 90]
(A) edge node[above]{$i_1$} (B)
(C) edge node[above]{$o_1$} (B)
(A) edge node[left]{$h$} (A')
(C) edge node[right]{$h^\prime$} (C')
(C') edge node [above] {$o_2$} (D)
(A') edge node [above] {$i_2$} (D)
(B) edge node [left] {$g$} (D);
\end{tikzpicture}
\]
\[
\begin{tikzpicture}[scale=1.5]
\node (C) at (3,-0.5) {$k^+$};
\node (D) at (4,0) {$E_1$};
\node (D') at (4,-1) {$E_2$};
\node (A) at (5,0) {$E_1$};
\node (A') at (5,-1) {$E_2$};
\node (E) at (6,0) {$V_1$};
\node (E') at (6,-1) {$V_2$};
\node (B) at (7,0) {$E_1$};
\node (B') at (7,-1) {$E_2$};
\node (F) at (8,0) {$V_1$};
\node (F') at (8,-1) {$V_2$};
\path[->,font=\scriptsize,>=angle 90]
(E) edge node [right] {$g$} (E')
(F) edge node [right] {$g$} (F')
(D) edge node[above]{$r_1$} (C)
(A) edge node[above]{$s_1$}(E)
(A') edge node [below] {$s_2$} (E')
(D') edge node[below]{$r_2$} (C)
(D) edge node [left] {$f$} (D')
(A) edge node [left] {$f$} (A')
(B) edge node [left] {$f$} (B')
(B) edge node[above]{$t_1$}(F)
(B') edge node [below] {$t_2$} (F');
\end{tikzpicture}
\]
}
\end{enumerate}
We can also obtain a similar double category using decorated cospans: define a pseudofunctor $F \colon \mathsf{FinSet} \to \mathbf{Cat}$ that assigns to a finite set $V$ the category of all $k$-graph structures on the set $V$ and to a function $f \colon V \to V^\prime$ the corresponding functor $F(f) \colon F(V) \to F(V^\prime)$ between decoration categories. Both categories $\mathsf{FinSet}$ and $\mathbf{Cat}$ are symmetric monoidal and the pseudofunctor $F \colon \mathsf{FinSet} \to \mathbf{Cat}$ is symmetric lax monoidal, as given a $k$-graph structure on a finite set $V_1$ denoted by an element $K_1 \in F(V_1)$ and a $k$-graph structure on a finite set $V_2$ denoted by an element $K_2 \in F(V_2)$, there is a natural $k$-graph structure $\phi_{V_1,V_2}(K_1,K_2)$ on $V_1+V_2$. Thus we get a natural transformation $$\phi_{V_1,V_2} \colon F(V_1) \times F(V_2) \to F(V_1+V_2)$$ as well as a morphism $\phi \colon 1 \to F(\emptyset)$ which together satisfy the coherence conditions of a monoidal functor. The braiding is also clear as the following diagram commutes:
\[
\begin{tikzpicture}[scale=1.5]
\node (E) at (3,0) {$F(V_1) \times F(V_2)$};
\node (G) at (5,0) {$F(V_2) \times F(V_1)$};
\node (E') at (3,-1) {$F(V_1+V_2)$};
\node (G') at (5,-1) {$F(V_2+V_1)$};
\path[->,font=\scriptsize,>=angle 90]
(E) edge node[left]{$\phi_{V_1,V_2}$} (E')
(G) edge node[right]{$\phi_{V_2,V_1}$} (G')
(E) edge node[above]{$\beta'_{V_1,V_2}$} (G)
(E') edge node[above]{$F(\beta_{V_1,V_2})$} (G');
\end{tikzpicture}
\]
Thus the pseudofunctor $F$ is symmetric lax monoidal and so by Theorem \ref{DC} we have the following:

\begin{theorem}\label{dccircs}
Let $F \colon \mathsf{FinSet} \to \mathbf{Cat}$ be the symmetric lax monoidal pseudofunctor which assigns to a finite set $N$ the category of all $k$-graph structures whose underlying set of vertices is $N$. Then there exists a symmetric monoidal double category $F\mathbb{C}\mathbf{sp}$ which has:
\begin{enumerate}
\item{objects as finite sets,}
\item{vertical 1-morphisms as functions,}
\item{horizontal 1-cells as cospans of sets together with the structure of a $k$-graph given by an element of the image of the apex under the pseudofunctor $F$:
\[
\begin{tikzpicture}[scale=1.5]
\node (D) at (-3,0) {$U$};
\node (E) at (-2,0) {$V$};
\node (F) at (-1,0) {$W$};
\node (A) at (0,0) {$K \in F(V)$};
\path[->,font=\scriptsize,>=angle 90]
(D) edge node [above] {$i$} (E)
(F) edge node [above] {$o$} (E);
\end{tikzpicture}
\]
which are open $k$-graphs, and}
\item{2-morphisms as maps of cospans of finite sets 
\[
\begin{tikzpicture}[scale=1.5]
\node (E) at (3,0) {$U_1$};
\node (F) at (5,0) {$W_1$};
\node (G) at (4,0) {$V_1$};
\node (E') at (3,-1) {$U_2$};
\node (F') at (5,-1) {$W_2$};
\node (G') at (4,-1) {$V_2$};
\node (A) at (6,0) {$K_1 \in F(V_1)$};
\node (B) at (6,-1) {$K_2 \in F(V_2)$};
\path[->,font=\scriptsize,>=angle 90]
(F) edge node[above]{$o_1$} (G)
(E) edge node[left]{$f$} (E')
(F) edge node[right]{$g$} (F')
(G) edge node[left]{$h$} (G')
(E) edge node[above]{$i_1$} (G)
(E') edge node[above]{$i_2$} (G')
(F') edge node[above]{$o_2$} (G');
\end{tikzpicture}
\]
together with a morphism of $k$-graphs $\iota \colon F(h)(K_1) \to K_2$ in $F(V_2)$.}
\end{enumerate}

\end{theorem}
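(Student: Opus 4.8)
The plan is to exhibit the stated double category as a direct instance of Theorem \ref{DC}, exactly as was done for open graphs in Theorem \ref{dcgraphs}. The only genuine work is to confirm that the hypotheses of Theorem \ref{DC} are met: that $\mathsf{FinSet}$ is finitely cocomplete, and that $F \colon (\mathsf{FinSet},+,0) \to (\mathbf{Cat},\times,1)$ is a symmetric lax monoidal pseudofunctor. The first is standard, since $\mathsf{FinSet}$ has an initial object $\emptyset$, binary coproducts given by disjoint union, and all finite colimits.

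For the second, I would first check that $F$ is a well-defined pseudofunctor. On an object $N$ it returns the category of $k$-graph structures with vertex set $N$; on a morphism $f \colon N \to N'$ it returns the pushforward functor $F(f)$ sending a $k$-graph $(E,s,t,r)$ on $N$ to $(E, fs, ft, r)$ on $N'$ and carrying a morphism of edge sets to itself. Since this pushforward composes source and target functions with $f$ and leaves the resistance untouched, $F$ preserves composition and identities strictly, so it is in fact a strict $2$-functor and hence a fortiori a pseudofunctor into $\mathbf{Cat}$. Next I would invoke the symmetric lax monoidal structure assembled in the discussion preceding the theorem: the laxator $\phi_{V_1,V_2} \colon F(V_1)\times F(V_2) \to F(V_1+V_2)$ formed by taking the disjoint union of two $k$-graph structures, the unit morphism $\phi \colon 1 \to F(\emptyset)$ picking out the empty $k$-graph, and the braiding compatibility square displayed above. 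Verifying that these data satisfy the associativity and unit coherence conditions of a symmetric lax monoidal functor is routine and entirely parallel to the plain graph case of Section \ref{Graphs}, the coherence being inherited from that of disjoint union in $\mathsf{FinSet}$.

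With both hypotheses in hand, Theorem \ref{DC} applies verbatim and produces the symmetric monoidal double category $F\mathbb{C}\mathbf{sp}$, whose objects, vertical $1$-morphisms, horizontal $1$-cells, and $2$-morphisms unwind to precisely the four items listed: an $F$-decorated cospan $U \to V \leftarrow W$ with $K \in F(V)$ becomes a cospan of finite sets whose apex carries a $k$-graph structure, i.e.\ an open $k$-graph, and a map of $F$-decorated cospans becomes a map of such cospans together with the morphism $\iota \colon F(h)(K_1) \to K_2$ of $k$-graphs. No separate computation is needed beyond reading off the general construction of Theorems \ref{DCord} and \ref{DC} in this special case.

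The hard part — the only place where this example is not completely formal — is the size of the decorating categories. As in the original treatment of graphs in Section \ref{Graphs}, the category $F(N)$ of all $k$-graph structures on $N$ has a proper class of objects, since the edge set ranges over all finite sets; to land honestly in $\mathbf{Cat}$ one should replace these by an equivalent small category, for instance by fixing a small skeleton of $\mathsf{FinSet}$ from which edge sets are drawn, or by passing to a chosen Grothendieck universe as in the Petri net example. Once this standard size management is in place, nothing further is required and the result is immediate from Theorem \ref{DC}.
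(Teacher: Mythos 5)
Your proposal is correct and takes essentially the same route as the paper: the paper's proof consists of the discussion immediately preceding the theorem, which exhibits the laxator $\phi_{V_1,V_2} \colon F(V_1)\times F(V_2)\to F(V_1+V_2)$, the unit $\phi\colon 1\to F(\emptyset)$, and the braiding square, and then invokes Theorem \ref{DC} verbatim. Your extra observations---that $F$ is in fact a strict $2$-functor and that the decorating categories require the standard smallness adjustment---are sound refinements but do not change the argument.
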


\begin{corollary}
The symmetric monoidal double category $_L \lC\mathbf{sp}(\mathsf{FinGraph_k})$ of Theorem \ref{left_adj_smdc} and the symmetric monoidal double category $F\mathbb{C}\mathbf{sp}$ of Theorem \ref{dccircs} are equivalent.
\end{corollary}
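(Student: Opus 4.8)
The plan is to obtain this corollary directly from the general equivalence, Theorem \ref{Equiv}, by checking its hypotheses for the specific pseudofunctor $F \colon \mathsf{FinSet} \to \mathbf{Cat}$ of Theorem \ref{dccircs} and identifying the Grothendieck category $\int F$ with $\mathsf{FinGraph}_k$. Theorem \ref{Equiv} applies whenever $\mathsf{A}$ has finite colimits and $F \colon \mathsf{A} \to \mathbf{Cat}$ is a symmetric lax monoidal pseudofunctor factoring through $\mathbf{Rex}$. Here $\mathsf{A} = \mathsf{FinSet}$, which has finite colimits, and $F$ sends a finite set $V$ to the category of $k$-graph structures on $V$; its symmetric lax monoidal structure was exhibited in the discussion preceding Theorem \ref{dccircs}, with laxator $\phi_{V_1,V_2}$ placing two $k$-graph structures side by side. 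Thus the only hypothesis left to verify is that $F$ factors through $\mathbf{Rex}$, meaning each fiber $F(V)$ is finitely cocomplete and each reindexing functor $F(f)$ preserves finite colimits.

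To verify the $\mathbf{Rex}$-factoring I would use the monoidal Grothendieck correspondence of Equation \eqref{monGroth} together with Lemma \ref{lem:fiberwiselimits}: fiberwise finite cocompleteness with cocontinuous reindexing is equivalent to the total category $\int F$ having finite colimits and the associated opfibration $R_F$ preserving them. The total category $\int F$ is precisely $\mathsf{FinGraph}_k$, since by Definition \ref{Groth} an object of $\int F$ is a pair $(V, K)$ with $K \in F(V)$ — that is, a $k$-graph — and a morphism is a pair $(f, \iota)$ with $\iota \colon F(f)(K) \to K'$ — that is, a morphism of $k$-graphs. Lemma \ref{finite_colimits} already establishes that $\mathsf{FinGraph}_k$ has finite colimits, and under this identification $R_F \colon \mathsf{FinGraph}_k \to \mathsf{FinSet}$ is the forgetful functor to vertex sets, which preserves these colimits because coproducts and pushouts of $k$-graphs are computed with vertices and edges taken separately. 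Hence $F$ factors through $\mathbf{Rex}$.

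Finally I would match the two left adjoints. By Corollary \ref{Rex} and Proposition \ref{prop:opfibtolari}, the opfibration $R_F$ admits a fully faithful left adjoint $L_F \colon \mathsf{FinSet} \to \int F$ sending $V$ to the initial object $\bot_V$ of the fiber $F(V)$, namely the $k$-graph on $V$ with no edges — exactly the discrete $k$-graph. This is precisely the left adjoint $L$ of Theorem \ref{left_adj_smdc}, so the structured cospan double category $_{L_F}\mathbb{C}\mathbf{sp}(\int F)$ appearing in Theorem \ref{Equiv} coincides with $_L \mathbb{C}\mathbf{sp}(\mathsf{FinGraph}_k)$ of Theorem \ref{left_adj_smdc}. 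Theorem \ref{Equiv} then delivers a symmetric monoidal double equivalence $_L \mathbb{C}\mathbf{sp}(\mathsf{FinGraph}_k) \simeq F\mathbb{C}\mathbf{sp}$, which is the claim.

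The only step carrying genuine content beyond bookkeeping is the $\mathbf{Rex}$-factoring, and the mild subtlety there is the resistance decoration: one must check that the forgetful functor $R_F$ really preserves finite colimits, i.e., that coproducts and pushouts of $k$-graphs transport the $k^+$-valued resistance function correctly along the induced maps of edge sets. This is exactly what Lemma \ref{finite_colimits} records, so the obstacle dissolves once that lemma is invoked; the remaining identifications $\int F \cong \mathsf{FinGraph}_k$ and $L_F = L$ are then routine, and the symmetric monoidal refinement is automatic from Theorem \ref{Equiv}.
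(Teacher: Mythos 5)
Your proposal is correct and takes essentially the same route as the paper: the paper's entire proof is ``This follows immediately from Theorem \ref{Equiv}.'' Your verification that $F$ factors through $\mathbf{Rex}$, that $\int F \cong \mathsf{FinGraph}_k$, and that $L_F$ is the discrete $k$-graph functor simply makes explicit the hypothesis-checking the paper leaves implicit.
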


\begin{proof}
This follows immediately from Theorem \ref{Equiv}.
\end{proof}

\subsection{Petri nets}

In a previous work, Baez and Master used the framework of structured cospans to obtain a symmetric monoidal double category of `open Petri nets' \cite{BM}. Recall from Definition \ref{definition:Petri_net} that a Petri net is given by a diagram in $\mathsf{Set}$ of the form:
\[
\begin{tikzpicture}[scale=1.5]
\node (B) at (1,0) {$T$};
\node (C) at (2,0) {$\mathbb{N}[S].$};
\path[->,font=\scriptsize,>=angle 90]
(B)edge[bend left] node[above]{$s$}(C)
(B)edge[bend right] node[below]{$t$}(C);
\end{tikzpicture}
\]
Here, $T$ is the finite set of \define{transitions} and $S$ is the finite set of \define{species}, and $\mathbb{N}[S]$ is the free commutative monoid on the set $S$. Each transition then has a formal linear combination of species given by an element of $\mathbb{N}[S]$ as its source and target as prescribed by the functions $s$ and $t$, respectively. An example of a Petri net is given by:
\[
\begin{tikzpicture}
	\begin{pgfonlayer}{nodelayer}
		\node [style=species] (I) at (0,1) {H};
		\node [style=species] (T) at (0,-1) {O};
		\node [style=transition] (W) at (2,0) {$\alpha$};
		\node [style=species] (Water) at (4,0) {$\textnormal{H}_2$O};
	\end{pgfonlayer}
	\begin{pgfonlayer}{edgelayer}
		\draw [style=inarrow, bend right=40, looseness=1.00] (I) to (W);
		\draw [style=inarrow, bend left=40, looseness=1.00] (I) to (W);
		\draw [style=inarrow, bend right=40, looseness=1.00] (T) to (W);
		\draw [style=inarrow] (W) to (Water);
	\end{pgfonlayer}
\end{tikzpicture}
\]
This Petri net has a single transition $\alpha$ with $2\textnormal{H}+\textnormal{O}$ as its source and $\textnormal{H}_2 \textnormal{O}$ as its target. See the original paper for more details on Petri nets \cite{BM}.

Each set of species $S$ gives rise to a discrete Petri net $L(S)$ with $S$ as its set of species and no transitions. Baez and Master note the existence of a left adjoint $L \colon \mathsf{Set} \to \mathsf{Petri}$ where $\mathsf{Petri}$ is the category whose objects are Petri nets and whose morphisms are `morphisms of Petri nets'. They also show that $\mathsf{Petri}$ has finite colimits and thus using Theorem \ref{SC} obtain a symmetric monoidal double category $\mathbb{O}\mathbf{pen}(\mathsf{Petri})$ of open Petri nets which has:
\begin{enumerate}
\item{objects given by sets,}
\item{vertical 1-morphisms given by functions,}
\item{horizontal 1-cells as open Petri nets which are given by cospans in $\mathsf{Petri}$ of the form:
\[
\begin{tikzpicture}[scale=1.5]
\node (D) at (-3,0) {$L(X)$};
\node (E) at (-2,0) {$P$};
\node (F) at (-1,0) {$L(Y)$};
\path[->,font=\scriptsize,>=angle 90]
(D) edge node [above] {$I$} (E)
(F) edge node [above] {$O$} (E);
\end{tikzpicture}
\]
and}
\item{2-morphisms as maps of cospans in $\mathsf{Petri}$ of the form:
\[
\begin{tikzpicture}[scale=1.5]
\node (E) at (3,0) {$L(X_1)$};
\node (F) at (5,0) {$L(Y_1)$};
\node (G) at (4,0) {$P_1$};
\node (E') at (3,-1) {$L(X_2)$};
\node (F') at (5,-1) {$L(Y_2)$};
\node (G') at (4,-1) {$P_2$};
\path[->,font=\scriptsize,>=angle 90]
(F) edge node[above]{$O_1$} (G)
(E) edge node[left]{$L(f)$} (E')
(F) edge node[right]{$L(g)$} (F')
(G) edge node[left]{$\alpha$} (G')
(E) edge node[above]{$I_1$} (G)
(E') edge node[above]{$I_2$} (G')
(F') edge node[above]{$O_2$} (G');
\end{tikzpicture}
\]
}
\end{enumerate}
We can also obtain a similar double category using decorated cospans: define a pseudofunctor $F \colon \mathsf{Set} \to \mathbf{Cat}$ where given a set $s$, $F(s)$ is the category of all Petri net structures with $s$ as its set of species. This pseudofunctor $F$ is symmetric lax monoidal as both $(\mathsf{Set},+,\emptyset)$ and $(\mathbf{Cat},\times,1)$ are symmetric monoidal and given Petri nets $P \in F(s)$ and $P^\prime \in F(s^\prime)$, we can place them side by side and consider them together as a single Petri net $P+P^\prime \in F(s+s^\prime)$ with set of species $s+s^\prime$, and thus we have natural transformations $\phi_{s,s^\prime} \colon F(s) \times F(s^\prime) \to F(s+s^\prime)$ for any two sets $s$ and $s^\prime$. The other structure morphism between monoidal units $\phi \colon 1 \to F(\emptyset)$ is defined by the unique morphism from the terminal category to the empty Petri net with the empty set for its set of species, which is the only possible Petri net on the empty set. All of the diagrams that are required to commute are straightforward. Appealing to Theorem \ref{DC}, we have the following:

\begin{theorem}\label{dcpetri}
Let $F \colon \mathsf{Set} \to \mathbf{Cat}$ be the symmetric lax monoidal pseudofunctor which assigns to a set $S$ the category of all Petri nets whose set of species is $S$. Then there exists a symmetric monoidal double category $F\mathbb{C}\mathbf{sp}$ which has:
\begin{enumerate}
\item{objects given by sets,}
\item{vertical 1-morphisms given by functions,}
\item{horizontal 1-cells given by open Petri nets presented as pairs:
\[
\begin{tikzpicture}[scale=1.5]
\node (D) at (-3,0) {$X$};
\node (E) at (-2,0) {$Z$};
\node (F) at (-1,0) {$Y$};
\node (A) at (0,0) {$P \in F(Z)$};
\path[->,font=\scriptsize,>=angle 90]
(D) edge node [above] {$i$} (E)
(F) edge node [above] {$o$} (E);
\end{tikzpicture}
\]
and}
\item{2-morphisms as maps of cospans in $\mathsf{Set}$:
\[
\begin{tikzpicture}[scale=1.5]
\node (E) at (3,0) {$X_1$};
\node (F) at (5,0) {$Y_1$};
\node (G) at (4,0) {$Z_1$};
\node (E') at (3,-1) {$X_2$};
\node (F') at (5,-1) {$Y_2$};
\node (G') at (4,-1) {$Z_2$};
\node (A) at (6,0) {$P_1 \in F(Z_1)$};
\node (B) at (6,-1) {$P_2 \in F(Z_2)$};
\path[->,font=\scriptsize,>=angle 90]
(F) edge node[above]{$o_1$} (G)
(E) edge node[left]{$f$} (E')
(F) edge node[right]{$g$} (F')
(G) edge node[left]{$h$} (G')
(E) edge node[above]{$i_1$} (G)
(E') edge node[above]{$i_2$} (G')
(F') edge node[above]{$o_2$} (G');
\end{tikzpicture}
\]
together with a morphism of Petri nets $\iota \colon F(h)(P_1) \to P_2$ in $F(Z_2)$.}
\end{enumerate}
\end{theorem}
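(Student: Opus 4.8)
The plan is to reduce the entire statement to Theorem \ref{DC}, exactly as in the graph and electrical-circuit cases (Theorems \ref{dcgraphs} and \ref{dccircs}). The only genuine work is to confirm that the assignment $F \colon \mathsf{Set} \to \mathbf{Cat}$ described above really is a \emph{symmetric lax monoidal pseudofunctor}; once that is in hand, the double category $F\mathbb{C}\mathbf{sp}$ together with its symmetric monoidal structure is produced automatically by Theorem \ref{DC}, and the objects, vertical 1-morphisms, horizontal 1-cells and 2-morphisms listed in the statement are precisely the data that Theorems \ref{DCord} and \ref{DC} attach to such a pseudofunctor.

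First I would pin down $F$ on morphisms: a function $f \colon S \to S'$ induces a functor $F(f) \colon F(S) \to F(S')$ that pushes a Petri net structure $s,t \colon T \to \mathbb{N}[S]$ forward along the monoid homomorphism $\mathbb{N}[f] \colon \mathbb{N}[S] \to \mathbb{N}[S']$, leaving the transition set $T$ unchanged, and acts on a morphism of $F(S)$ (a map of transition sets commuting with source and target into $\mathbb{N}[S]$) by the same underlying map of transitions, now read over $S'$. I would then exhibit the two pseudofunctoriality isomorphisms $F(\mathrm{id}_S) \cong \mathrm{id}_{F(S)}$ and $F(g f) \cong F(g) F(f)$, which arise from the functoriality of $\mathbb{N}[-]$ up to canonical isomorphism, and check that they satisfy the coherence conditions of Definition \ref{pseudofunctor_definition}.

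Next I would supply the monoidal data. The laxator $\phi_{s,s'} \colon F(s) \times F(s') \to F(s+s')$ sends a pair of Petri nets to their disjoint union, regarded as a single Petri net on $s+s'$, and the unit constraint $\phi \colon 1 \to F(\emptyset)$ picks out the unique (empty) Petri net on the empty species set. The key observation---the whole reason this chapter succeeds where the original $\mathbf{Set}$-valued decorated cospans of Section \ref{Graphs} failed---is that the associativity hexagon and unit squares for $(\phi,\phi)$ need only commute \emph{up to coherent natural isomorphism} in the $2$-category $\mathbf{Cat}$: they compare the two ways of forming $\mathbb{N}[(s+s')+s''] \cong \mathbb{N}[s+(s'+s'')]$ together with the induced relabelling of transition sets, and these are canonically isomorphic rather than equal, which is exactly the flexibility a lax monoidal \emph{pseudo}functor permits. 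Symmetry is witnessed by the braiding square displayed just before the statement, which commutes because the canonical isomorphism $s+s' \cong s'+s$ in $\mathsf{Set}$ induces the evident isomorphism of disjoint unions of Petri nets.

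With $F$ confirmed to be a symmetric lax monoidal pseudofunctor, I would close by invoking Theorem \ref{DC}, which yields the symmetric monoidal double category $F\mathbb{C}\mathbf{sp}$ whose horizontal 1-cells are $F$-decorated cospans $X \to Z \leftarrow Y$ decorated by an object $P \in F(Z)$---that is, open Petri nets---and whose 2-morphisms are maps of such decorated cospans carrying a morphism $\iota \colon F(h)(P_1) \to P_2$ in $F(Z_2)$, matching the statement verbatim. I expect no serious obstacle; the only point demanding care rather than routine bookkeeping is articulating why the hexagon and unit coherences hold up to isomorphism, and this is dispatched by applying $F$ to the already-commuting coherence diagrams of $(\mathsf{Set},+,\emptyset)$ and reading off commuting diagrams in $\mathbf{Cat}$, precisely the mechanism used throughout the proof of Theorem \ref{DC}.
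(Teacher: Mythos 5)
Your proposal is correct and follows the same route as the paper: the paper likewise defines $F$ on objects as the category of Petri net structures, gives the laxator by placing two Petri nets side by side and the unit constraint by the unique empty Petri net on $\emptyset$, asserts the coherence diagrams commute (up to the isomorphisms a pseudofunctor permits), and then invokes Theorem \ref{DC}. Your write-up is in fact somewhat more explicit than the paper's, which leaves the verification that $F$ is a symmetric lax monoidal pseudofunctor as a brief remark before the theorem statement.
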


Thus we have a symmetric monoidal double category $\mathbb{O}\mathbf{pen}(\mathsf{Petri})$ of open Petri nets obtained from structured cospans and a symmetric monoidal double category $F\mathbb{C}\mathbf{sp}$ of open Petri nets obtain from decorated cospans, and these two symmetric monoidal double categories are equivalent.

\begin{corollary}
The symmetric monoidal double category $\mathbb{O}\mathbf{pen}(\mathsf{Petri})$ constructed by Baez and Master \cite{BM} utilizing structures cospans and the symmetric monoidal double category $F\mathbb{C}\mathbf{sp}$ of Theorem \ref{dcpetri} are equivalent.
\end{corollary}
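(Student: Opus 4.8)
The plan is to reduce everything to Theorem \ref{Equiv}, exactly as in the two preceding corollaries, but the reduction requires first matching up the two sides of that theorem with the two double categories named in the statement. Concretely, I must exhibit the Baez--Master double category $\mathbb{O}\mathbf{pen}(\mathsf{Petri})$ as a structured cospan double category $_{L}\mathbb{C}\mathbf{sp}(\mathsf{Petri})$, identify $\mathsf{Petri}$ with the Grothendieck category $\int F$ of the pseudofunctor $F$ of Theorem \ref{dcpetri}, and check that the induced left adjoint $L_F$ agrees with the left adjoint $L\maps\mathsf{Set}\to\mathsf{Petri}$ sending a set to its discrete Petri net. Once these identifications are in place, Theorem \ref{Equiv} gives the equivalence directly.

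First I would recall that $F\maps\mathsf{Set}\to\mathbf{Cat}$ from Theorem \ref{dcpetri} is the symmetric lax monoidal pseudofunctor assigning to a species set $S$ the category of Petri nets on $S$, with laxator $\phi_{S,S'}$ given by disjoint union of transitions. The crucial hypothesis of Theorem \ref{Equiv} is that $F$ factors through $\mathbf{Rex}$. I would verify this by noting that a Petri net on a fixed $S$ is a diagram $T\rightrightarrows\mathbb{N}[S]$, so each fiber $F(S)$ has finite colimits computed on the transition set $T$ (the species set being held fixed), and that the reindexing functor $F(f)$ for $f\maps S\to S'$ simply postcomposes the source and target maps with $\mathbb{N}[f]$ while leaving $T$ untouched. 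Since these reindexing functors act trivially on the indexing data over which fiber colimits are formed, they preserve finite colimits; hence the associated pseudofunctor lands in $\mathbf{Rex}$, and Corollary \ref{Rex} applies.

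Next I would run the monoidal Grothendieck construction. By Corollary \ref{Rex} the category $\int F$ is finitely cocomplete and the opfibration $R_F\maps\int F\to\mathsf{Set}$ preserves finite colimits; its objects are pairs $(S, P)$ with $P$ a Petri net on $S$, which is precisely the data of an object of $\mathsf{Petri}$, and its morphisms match morphisms of Petri nets, so $\int F\simeq\mathsf{Petri}$. By Proposition \ref{prop:opfibtolari}, $R_F$ then has a fully faithful left adjoint $L_F$ sending $S$ to the fiberwise initial object $\bot_S=(S,F(!_S)\phi_0)$, which is the Petri net on $S$ with no transitions --- exactly Baez and Master's discrete Petri net $L(S)$. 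Thus $\mathbb{O}\mathbf{pen}(\mathsf{Petri})={}_{L}\mathbb{C}\mathbf{sp}(\mathsf{Petri})\cong{}_{L_F}\mathbb{C}\mathbf{sp}(\int F)$, and Theorem \ref{Equiv} yields ${}_{L_F}\mathbb{C}\mathbf{sp}(\int F)\simeq F\mathbb{C}\mathbf{sp}$ as symmetric monoidal double categories.

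The main obstacle, as in the circuit example, is not the invocation of Theorem \ref{Equiv} but the bookkeeping of the two identifications: confirming that the Grothendieck category of the Petri-net pseudofunctor reproduces $\mathsf{Petri}$ (and $L_F$ reproduces $L$) on the nose rather than merely up to some unexamined equivalence, and handling the size issues --- small versus large sets, resolved in the original decorated-cospan treatment by a choice of Grothendieck universe --- consistently across $\mathsf{Set}$, $\mathbf{Cat}$, and $\int F$ so that all the finite-cocompleteness hypotheses genuinely hold. Everything else is an immediate consequence of Theorem \ref{Equiv}.
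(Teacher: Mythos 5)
Your proposal is correct and follows the same route as the paper, which proves this corollary simply by citing Theorem \ref{Equiv}. The additional verifications you carry out --- that $F$ factors through $\mathbf{Rex}$, that $\int F$ recovers $\mathsf{Petri}$ with $L_F$ the discrete-Petri-net left adjoint, and the universe bookkeeping --- are precisely the hypotheses the paper leaves implicit, so your write-up is a more careful version of the same argument rather than a different one.
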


\begin{proof}
This follows immediately from Theorem \ref{Equiv}.
\end{proof}

We may also construct a symmetric monoidal double category of open Petri nets with rates using decorated cospans, and this is equivalent to the symmetric monoidal double category $_L \mathbb{C}\mathbf{sp}(\mathsf{Petri}_\textnormal{rates})$ of Theorem \ref{scpetri}.

$\textnormal{ }$
}

{\ssp
\chapter{A brief digression to bicategories}\label{Chapter5}
If one prefers bicategories to double categories, one will be happy to learn that all of the main results in this thesis on double categories have bicategorical analogues thanks to a result of Mike Shulman \cite{Shul}. Bicategories are defined in Section \ref{bicat_definitions} of the \ref{Appendix}ppendix. First we discuss the relationship between 2-categories and double categories. As we are mainly interested in symmetric monoidal double categories, we are similarly primarily interested in `symmetric monoidal bicategories'. We will not define monoidal, braided monoidal, `sylleptic' monoidal or symmetric monoidal bicategories here. These definitions can be found in a work of Mike Stay \cite{Stay}.

The first thing we point out is that 2-categories are just a special case of strict double categories and that every strict double category has at least two canonical underlying 2-categories. Given a strict double category $\mathbb{C}$, there exists:
\begin{enumerate}
\item{a 2-category $\mathbf{H}(\mathbb{C})$ called the \define{horizontal 2-category} of $\mathbb{C}$ which has:
\begin{enumerate}
\item objects as objects of $\mathbb{C}$,
\item morphisms as horizontal 1-cells of $\mathbb{C}$, and
\item 2-morphisms as 2-morphisms of $\mathbb{C}$ with identity vertical 1-morphisms, also known as \define{globular 2-morphisms} of $\mathbb{C}$.
\end{enumerate}}
\item{a 2-category $\mathbf{V}(\mathbb{C})$ called the vertical 2-category of $\mathbb{C}$ which has:
\begin{enumerate}
\item objects as objects of $\mathbb{C}$,
\item morphisms as vertical 1-morphisms of $\mathbb{C}$, and
\item 2-morphisms as 2-morphisms of $\mathbb{C}$ with identity horizontal 1-cells, where now composition of 2-morphisms is given by horizontal composition of 2-morphisms in $\mathbb{C}$.
\end{enumerate}}
\end{enumerate}

Every \emph{pseudo} double category $\mathbb{C}$ has an underlying bicategory $\mathbf{H}(\mathbb{C})$ given by as above. Using our conventions, there is no underlying vertical bicategory $\mathbf{V}(\mathbb{C})$ as restricting the horizontal source and target of 2-morphisms, namely the horizontal 1-cells, to be identities does not force the horizontal source and target of the composite 2-morphisms in $\mathbb{C}$ to also be identities, due to the composition of horizontal 1-cells in a pseudo double category being neither strictly unital nor associative. 

Sometimes when the pseudo double category $\mathbb{C}$ is symmetric monoidal, the symmetric monoidal structure can be lifted to the horizontal bicategory $\mathbf{H}(\mathbb{C})$. This is due to the following result of Shulman \cite{Shul}. The definitions of `isofibrant' and `symmetric monoidal double category' are given in Definitions \ref{def:isofibrant} and \ref{defn:monoidal_double_category}, respectively. 

\begin{theorem}[{{\cite[Thm.\ 1.2]{Shul}}}]\label{Shulman1}
Let $\mathbb{X}$ be an isofibrant symmetric monoidal pseudo double category. Then the horizontal bicategory $\mathbf{H}(\mathbb{X})$ of $\mathbb{X}$ is a symmetric monoidal bicategory which has:
\begin{enumerate}
\item{objects as those of $\mathbb{X}$,}
\item{morphisms as horizontal 1-cells of $\mathbb{X}$, and}
\item{2-morphisms as globular 2-morphisms of $\mathbb{X}$.}
\end{enumerate}
\end{theorem}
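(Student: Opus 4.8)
The plan is to invoke Shulman's construction \cite{Shul}, since this statement is precisely his Theorem 1.2; what I would do is unpack how the symmetric monoidal structure on $\mathbb{X}$ furnishes the data of a symmetric monoidal bicategory on $\mathbf{H}(\mathbb{X})$. The underlying data are already in hand: the objects, 1-cells, and 2-cells of $\mathbf{H}(\mathbb{X})$ are the objects of $\mathbb{X}$, the horizontal 1-cells, and the globular 2-morphisms. The tensor product on objects is read off from the monoidal structure on $\mathbb{X}_0$ (Definition \ref{defn:monoidal_double_category}), while the tensor on 1-cells and on globular 2-cells is read off from the monoidal structure on the arrow category $\mathbb{X}_1$. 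Here I would note that a globular 2-morphism is a morphism of $\mathbb{X}_1$ whose images under $S$ and $T$ are identities, and that the tensor of $\mathbb{X}_1$ preserves this property precisely because $S$ and $T$ are strict monoidal; hence $\otimes$ restricts to globular 2-morphisms.

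Next I would exhibit $\otimes \colon \mathbf{H}(\mathbb{X}) \times \mathbf{H}(\mathbb{X}) \to \mathbf{H}(\mathbb{X})$ as a homomorphism of bicategories. The nontrivial data are the invertible 2-cells comparing $\otimes$ with horizontal composition and with horizontal units, and these are supplied verbatim by the globular isomorphisms $\chi \colon (M_1\otimes N_1)\odot (M_2\otimes N_2)\xrightarrow{\sim} (M_1\odot M_2)\otimes (N_1\odot N_2)$ and $\mu \colon U_{A\otimes B}\xrightarrow{\sim} U_A\otimes U_B$ of the monoidal double category. The three coherence diagrams these are required to satisfy in Definition \ref{defn:monoidal_double_category} are exactly the associativity and unit axioms making $\otimes$ a pseudofunctor.

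The key difficulty, and where the isofibrancy hypothesis (Definition \ref{def:isofibrant}) enters, is constructing the associativity, unit, and braiding equivalences of the monoidal bicategory. The associator of $\mathbb{X}_1$ is a morphism of $\mathbb{X}_1$, hence a 2-morphism of $\mathbb{X}$, but it is not globular: applying the strict monoidal functors $S$ and $T$ returns the associators of $\mathbb{X}_0$ rather than identities, and similarly for the unitors and the braiding. Isofibrancy lets me transport each such 2-morphism along those vertical isomorphisms to obtain a genuinely globular 2-isomorphism, producing the associator, unitors, and braiding of $\mathbf{H}(\mathbb{X})$ as adjoint equivalences. The main obstacle is then the bulk of coherence verification: one must construct the pentagonator, the two 2-unitors, the hexagon modifications, and the syllepsis, and check that each satisfies its defining axiom. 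Each follows from the corresponding requirement in the definition of symmetric monoidal double category—namely that the associator, unitors, and braiding be transformations of double categories—pushed forward through the globularization just described, the compatibility of those transformations with $\chi$ and $\mu$ being what guarantees the bicategorical axioms. Since all of this is carried out in detail by Shulman, I would present the result as a consequence of \cite[Thm.\ 1.2]{Shul} and confine my own verification to confirming that the symmetric monoidal double categories of interest in this thesis are isofibrant, which is established separately.
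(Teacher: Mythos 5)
Your proposal is correct and matches what the thesis does: Theorem \ref{Shulman1} is imported directly from Shulman \cite[Thm.\ 1.2]{Shul} with no proof given in the paper, and your plan likewise defers the coherence verification to Shulman while only checking isofibrancy for the double categories of interest. Your sketch of the mechanism --- $\chi$ and $\mu$ furnishing the pseudofunctoriality of $\otimes$, and isofibrancy being used to transport the non-globular associator, unitors, and braiding of $\mathbb{X}_1$ along companions of vertical isomorphisms to obtain globular equivalences --- is an accurate account of how Shulman's construction works.
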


The property of being isofibrant, meaning fibrant on vertical 1-isomorphisms, is precisely what allows the horizontal bicategory $\mathbf{H}(\lX)$ to inherit the portion of the symmetric monoidal structure that resides in the category of objects of $\lX$, namely, the associators, left and right unitors and braidings.

In the previous chapters we constructed various symmetric monoidal double categories which are in fact isofibrant, and thus have underlying symmetric monoidal bicategories.

\section{Foot-replaced bicategories}

Every foot-replaced double category $_L \lX$ has an underlying foot-replaced bicategory $\mathbf{H}( _L \lX)$ given by taking the 2-morphisms of $\mathbf{H}( _L \lX)$ to be globular 2-morphisms of $_L \lX$.

\begin{lemma}
Given a double category $\lX$, a category $\A$ and a functor $L \maps \A \to \lX_0$, there is a bicategory $\mathbf{H}(_{L} \lX)$ for which:
\begin{itemize}
\item objects are objects of $\A$,
\item morphisms from $a \in \A$ to $a' \in \A$ are horizontal 1-cells
$M \maps L(a) \to L(a')$ of $_L \lX$,
\item 2-morphisms are  globular 2-morphisms of $_L \lX$,
\item composition of morphisms is horizontal composition of horizontal 1-cells in $_L \lX$,
\item vertical and horizontal composition of 2-morphisms is vertical and horizontal
composition of 2-cells in $_L \lX$.
\end{itemize}
\end{lemma}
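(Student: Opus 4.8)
The plan is to derive this lemma as an immediate consequence of Theorem \ref{trick1} together with the general construction of the horizontal bicategory of a pseudo double category recalled at the start of this chapter. First I would invoke Theorem \ref{trick1}, which establishes that the data $(\lX, L \maps \A \to \lX_0)$ already determine a (pseudo) double category $_L \lX$. Since Theorem \ref{trick1} makes no additional hypotheses beyond a double category $\lX$, a category $\A$, and a functor $L \maps \A \to \lX_0$, the present lemma's hypotheses are exactly what is needed to produce $_L \lX$, and nothing further must be verified at this stage.

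Next I would recall that every pseudo double category $\mathbb{C}$ carries an underlying horizontal bicategory $\mathbf{H}(\mathbb{C})$: its objects are the objects of $\mathbb{C}$, its 1-morphisms are the horizontal 1-cells of $\mathbb{C}$, its 2-morphisms are the globular 2-morphisms of $\mathbb{C}$, composition of 1-morphisms is horizontal composition, and vertical and horizontal composition of 2-morphisms are inherited from $\mathbb{C}$. The coherence data of this bicategory — the associator and unitors — are supplied by the associator and unitors of $\mathbb{C}$ restricted to globular 2-morphisms, and the pentagon and triangle axioms hold because they hold in $\mathbb{C}$. Applying this construction to $\mathbb{C} = {_L \lX}$ then produces the desired bicategory $\mathbf{H}(_L \lX)$.

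It remains only to check that the five clauses of the statement are reproduced by this construction, which is a matter of unwinding definitions. By Theorem \ref{trick1}, the objects of $_L \lX$ are the objects of $\A$, and a horizontal 1-cell of $_L \lX$ from $a$ to $a'$ is precisely a horizontal 1-cell $M \maps L(a) \to L(a')$ of $\lX$; a globular 2-morphism of $_L \lX$ is a globular 2-morphism of $\lX$ whose boundaries are such cells; and both compositions of 2-morphisms as well as horizontal composition of 1-cells are, by Theorem \ref{trick1}, defined exactly as in $\lX$. Matching each of these against the clauses of the lemma completes the verification.

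I do not expect a substantive obstacle here: the result is essentially the composite of two facts already in hand, namely Theorem \ref{trick1} and the general passage $\mathbb{C} \mapsto \mathbf{H}(\mathbb{C})$. The only point worth recording is that $_L \lX$ is in general merely a \emph{pseudo} double category, so $\mathbf{H}(_L \lX)$ is a genuine bicategory rather than a $2$-category, and one must take the \emph{horizontal} underlying structure; as noted earlier in this chapter, the vertical construction is unavailable for pseudo double categories, so there is no competing candidate to rule out.
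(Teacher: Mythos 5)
Your proposal is correct and follows exactly the route the paper intends: apply Theorem \ref{trick1} to obtain the pseudo double category $_L \lX$, then pass to its horizontal bicategory $\mathbf{H}(_L \lX)$, with the associator, unitors, and coherence axioms inherited from $_L \lX$ (the paper states this lemma without further proof, relying on precisely this composite of the two constructions). Your closing remark that only the horizontal underlying bicategory is available for a pseudo double category also matches the discussion at the start of that chapter.
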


If the double category $\lX$ is isofibrant symmetric monoidal and we have a strong symmetric monoidal functor $L \colon \A \to \lX_0$, then Shulman's Theorem \ref{Shulman1} allows us to lift the monoidal structure of the foot-replaced double category $_L \lX$ to obtain a symmetric monoidal foot-replaced bicategory $\mathbf{H}(_L \lX)$.

\begin{lemma} \label{trick3}
If $\lX$ is an isofibrant symmetric monoidal double category, $\A$ is a symmetric monoidal
category and $L \maps \A \to \lX_0$ is a (strong) symmetric monoidal functor, then the bicategory $\mathbf{H}(_L \lX)$ becomes symmetric monoidal in a canonical way.
\end{lemma}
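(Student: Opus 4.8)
The plan is to combine the monoidal structure on the foot-replaced double category $_L \lX$ supplied by Theorem \ref{trick2} with Shulman's transfer result, Theorem \ref{Shulman1}. By Theorem \ref{trick2}, the hypotheses (that $\lX$ is symmetric monoidal, $\A$ symmetric monoidal, and $L \maps \A \to \lX_0$ strong symmetric monoidal) already make $_L \lX$ into a symmetric monoidal double category in a canonical way. Shulman's theorem says that the horizontal bicategory of any \emph{isofibrant} symmetric monoidal double category is a symmetric monoidal bicategory. Thus the whole proof reduces to a single additional verification: that $_L \lX$ is isofibrant. Once this is established, applying Theorem \ref{Shulman1} to $_L \lX$ yields precisely that $\mathbf{H}(_L \lX)$ is symmetric monoidal, with objects, morphisms and $2$-morphisms as described in the preceding lemma.

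So the key step is the isofibrancy of $_L \lX$, and I would carry it out as follows. Recall a $2$-morphism of $_L \lX$ is exactly a $2$-morphism of $\lX$ whose vertical source and target are of the form $L(f)$ and $L(g)$ for morphisms $f, g$ of $\A$. Given a horizontal $1$-cell $M \maps L(a) \to L(b)$ of $_L \lX$ together with vertical $1$-isomorphisms $f \maps a \to a'$ and $g \maps b \to b'$ in $\A$ (the vertical $1$-morphisms of $_L \lX$ are morphisms of $\A$, and the $1$-isomorphisms are the isomorphisms of $\A$), apply the functor $L$. Since $L$ preserves isomorphisms, $L(f) \maps L(a) \to L(a')$ and $L(g) \maps L(b) \to L(b')$ are vertical $1$-isomorphisms of $\lX$. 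Because $\lX$ is isofibrant, there is a horizontal $1$-cell $N \maps L(a') \to L(b')$ of $\lX$ and an invertible $2$-morphism of $\lX$ with horizontal source $M$, horizontal target $N$, and vertical sides $L(f)$ and $L(g)$.

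Now observe that $N$ is automatically a horizontal $1$-cell of $_L \lX$ from $a'$ to $b'$, since it has the form $L(a') \to L(b')$; and the invertible filler is a $2$-morphism of $_L \lX$, since its vertical sides are $L(f)$ and $L(g)$, exactly the form required by the definition of $_L \lX$. Hence the niche determined by $M$, $f$, $g$ is filled by an invertible $2$-morphism internal to $_L \lX$, which is precisely the isofibrancy condition. Therefore $_L \lX$ is an isofibrant symmetric monoidal double category, and Theorem \ref{Shulman1} applies.

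The main obstacle is really just bookkeeping: one must check that the filler produced by the isofibrancy of $\lX$ genuinely lands in $_L \lX$ --- that is, that its new horizontal $1$-cell is again of the foot-replaced form $L(a') \to L(b')$ and that its vertical legs lie in the image of $L$. The first holds because isofibrancy reindexes the \emph{existing} horizontal $1$-cell along the given vertical isomorphisms, leaving the apex objects $L(a')$ and $L(b')$ as the new source and target; the second holds because we feed in $L(f)$ and $L(g)$ rather than arbitrary vertical isomorphisms of $\lX$. A secondary point worth confirming is that the symmetric monoidal structure transported by Shulman's theorem agrees with the one obtained from Theorem \ref{trick2}, but this is immediate, since Shulman's construction only uses the monoidal data of the underlying double category together with isofibrancy, and we are feeding it exactly the structure produced by Theorem \ref{trick2}.
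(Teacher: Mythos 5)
Your proposal is correct and follows the same route the paper takes: Lemma \ref{trick3} is obtained by feeding the symmetric monoidal double category $_L \lX$ produced by Theorem \ref{trick2} into Shulman's Theorem \ref{Shulman1}, the only additional point being that $_L \lX$ inherits isofibrancy from $\lX$ --- a check the paper leaves implicit and which you supply. One small refinement: Definition \ref{def:isofibrant} asks for companions and conjoints of vertical $1$-isomorphisms rather than for fillers of niches, so the cleanest version of your key step is to take the companion (resp.\ conjoint) of $f \maps a \to a'$ in $_L \lX$ to be the companion (resp.\ conjoint) of $L(f)$ in $\lX$, noting that its defining $2$-morphisms have vertical legs $L(f)$ and $1_{L(a)} = L(1_a)$ and hence lie in $_L \lX$, and that the companion identities persist because composition in $_L \lX$ is computed in $\lX$.
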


\begin{lemma}\label{lemma:isofibrant}
If $\mathsf{X}$ is a category with finite colimits, then the symmetric monoidal double category $\mathbb{C}\mathbf{sp}(\mathsf{X})$ is isofibrant.
\end{lemma}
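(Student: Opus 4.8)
The plan is to unwind the definition of isofibrant (Definition~\ref{def:isofibrant}) into the concrete statement that the source–target functor $(S,T)\colon \mathbb{C}\mathbf{sp}(\mathsf{X})_1 \to \mathbb{C}\mathbf{sp}(\mathsf{X})_0 \times \mathbb{C}\mathbf{sp}(\mathsf{X})_0$ is an isofibration, and then to exhibit the required lifts by hand. Concretely, an object of $\mathbb{C}\mathbf{sp}(\mathsf{X})_1$ is a cospan $A \xrightarrow{i} N \xleftarrow{o} B$, and $(S,T)$ sends it to the pair of feet $(A,B)$. So I must show: given such a cospan $M$ together with a pair of vertical isomorphisms $f\colon A \xrightarrow{\sim} A'$ and $g\colon B \xrightarrow{\sim} B'$ in $\mathsf{X}$, there is a cospan $M'$ with feet $(A',B')$ and an invertible 2-morphism $M \Rightarrow M'$ in $\mathbb{C}\mathbf{sp}(\mathsf{X})_1$ whose vertical feet are exactly $f$ and $g$.

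First I would take the transported cospan $M'$ to have the \emph{same} apex, setting $N' := N$ with legs $i' := i f^{-1}\colon A' \to N$ and $o' := o g^{-1}\colon B' \to N$. This is a well-defined cospan in $\mathsf{X}$ since $f$ and $g$ are invertible, and it has the prescribed feet $A'$ and $B'$. Next I would produce the connecting 2-morphism as the square whose three vertical components are $f$ on the left foot, the identity $1_N$ on the apex, and $g$ on the right foot. The two commuting conditions of a map of cospans, namely $1_N \circ i = i' \circ f$ and $1_N \circ o = o' \circ g$, hold by construction since $i' f = i f^{-1} f = i$ and $o' g = o g^{-1} g = o$.

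Because a 2-morphism of $\mathbb{C}\mathbf{sp}(\mathsf{X})$ is invertible in $\mathbb{C}\mathbf{sp}(\mathsf{X})_1$ precisely when all three of its vertical components are isomorphisms in $\mathsf{X}$, and here those components are $f$, $1_N$, and $g$, the constructed 2-morphism is an isomorphism lying over $(f,g)$. This establishes the lifting property. The construction is symmetric in the two possible orientations of the lift, since $f$ and $g$ are invertible, so no separate argument is needed depending on which convention Definition~\ref{def:isofibrant} uses. I would also note that the lift uses neither pushouts nor any of the monoidal structure; the hypothesis that $\mathsf{X}$ has finite colimits only serves to guarantee that $\mathbb{C}\mathbf{sp}(\mathsf{X})$ is a symmetric monoidal double category in the first place, so that the isofibrancy claim is being made about the intended object.

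The only point requiring a moment's care — and the closest thing to an obstacle — is matching this chosen lift to the exact formulation of isofibrancy. If Definition~\ref{def:isofibrant} demands that the lift be cartesian (universal) rather than merely an isomorphism over $(f,g)$, I would observe that any isomorphism in $\mathbb{C}\mathbf{sp}(\mathsf{X})_1$ lying over an isomorphism in $\mathbb{C}\mathbf{sp}(\mathsf{X})_0 \times \mathbb{C}\mathbf{sp}(\mathsf{X})_0$ is automatically cartesian, so the square built above suffices with no extra verification.
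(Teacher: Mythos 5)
Your argument proves a different statement from the one Definition~\ref{def:isofibrant} actually asks for. In this thesis ``isofibrant'' means that every vertical 1-isomorphism has a companion and a conjoint: for each isomorphism $f \colon x \to y$ in $\mathsf{X}$ one must exhibit a horizontal 1-cell $\hat{f}$ together with two specific 2-morphisms and verify the two companion identities of Equation~(\ref{eq:CompanionEq}), and dually for the conjoint. What you establish instead is that the source--target functor $(S,T) \colon \mathbb{C}\mathbf{sp}(\mathsf{X})_1 \to \mathbb{C}\mathbf{sp}(\mathsf{X})_0 \times \mathbb{C}\mathbf{sp}(\mathsf{X})_0$ admits invertible lifts of pairs of isomorphisms. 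That lifting property is an equivalent characterization of isofibrancy (essentially Shulman's framed-bicategory theorem restricted to isomorphisms), but the equivalence is neither stated nor proved anywhere in the paper, and it is not purely formal in the direction you need: to pass from an invertible lift to a companion one must still choose the two companion 2-morphisms and check Equation~(\ref{eq:CompanionEq}), which is exactly the verification your proof omits. Your closing caveat addresses the wrong worry --- cartesianness of the lift --- rather than this one.

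The gap is easy to close, and your construction already contains the right data. Specializing your transported cospan to $M = U_x$ and the pair $(1_x, f)$ yields $x \xrightarrow{1} x \xleftarrow{f^{-1}} y$, which is isomorphic to the companion the paper uses, namely $\hat{f} = \bigl(x \xrightarrow{f} y \xleftarrow{1} y\bigr)$. The paper's proof simply writes down this cospan, exhibits the two squares (one with feet $(f,1)$ into $U_y$, one with feet $(1,f)$ out of $U_x$), checks the two identities of Equation~(\ref{eq:CompanionEq}) directly, and takes the conjoint to be the reversed cospan $y \xrightarrow{1} y \xleftarrow{f} x$. If you either carry out that verification for your chosen lifts or explicitly state and justify the equivalence between the lifting characterization and the companion/conjoint characterization, your proof becomes complete; as you correctly observe, neither pushouts nor the monoidal structure play any role in this step.
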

\begin{proof}
A vertical 1-isomorphism in $\lCsp(\X)$ is a isomorphism $f \maps x \to y$ in $\X$.  We take its companion $\hat{f}$ to be the cospan  
\[   

\]
with horizontal and vertical composition of 2-morphisms given by horizontal and vertical composition of globular 2-morphisms in Theorem \ref{_L Csp(X)}.}
\end{enumerate}
\end{theorem}

\begin{theorem}\label{scbicat}
Let $L \maps \A \to \X$ be a functor preserving finite coproducts, where $\A$ has finite coproducts and $\X$ has finite colimits.  Then the bicategory of Theorem \ref{Cspbicat} is symmetric monoidal with the monoidal structure given by:
\begin{enumerate}
\item{the tensor product of two objects $a_1$ and $a_2$ is $a_1 + a_2$,}
\item{the tensor product of two morphisms is given by the tensor product of two horizontal 1-cells in Theorem \ref{SC} and}
\item{the tensor product of two 2-morphisms is given by:
\[
\begin{tikzpicture}[scale=1.2]
\node (A) at (0,0) {$L(a_1)$};
\node (B) at (1,1) {$x_1$};
\node (B') at (1,-1) {$x_1'$};
\node (F') at (4,-1) {$x_2'$};
\node (J') at (8.5,-1) {$x_1' + x_2'$};
\node (C) at (2,0) {$L(a_1')$};
\node (D) at (2.5,0) {$\otimes$};
\node (E) at (3,0) {$L(a_2)$};
\node (F) at (4,1) {$x_2$};
\node (G) at (5,0) {$L(a_2')$};
\node (H) at (5.625,0) {$=$};
\node (I) at (6.5,0) {$L(a_1 + a_2)$};
\node (J) at (8.5,1) {$x_1 + x_2$};
\node (K) at (10.5,0) {$L(a_1' + a_2')$};
\path[->,font=\scriptsize,>=angle 90]
(A) edge node[above,left]{$i_1$} (B)
(C)edge node[above,right]{$o_1$}(B)
(A) edge node[below,left]{$i_1'$} (B')
(C)edge node[below,right]{$o_1'$}(B')
(E) edge node[above,left]{$i_2$} (F)
(G)edge node[above,right]{$o_2$}(F)
(E) edge node[below,left]{$i_2'$} (F')
(G)edge node[below,right]{$o_2'$}(F')
(I) edge node [above,left] {$(i_1 + i_2)\phi^{-1}$} (J)
(K) edge node [above,right] {$(o_1 + o_2)\phi^{-1}$} (J)
(I) edge node [below,left] {$(i_1' + i_2')\phi^{-1}$} (J')
(K) edge node [below,right] {$(o_1' + o_2')\phi^{-1}$} (J')
(B) edge node [left] {$\alpha_1$} (B')
(F) edge node [left] {$\alpha_2$} (F')
(J) edge node [left] {$\alpha_1 + \alpha_2$} (J');
\end{tikzpicture}
\]}
\end{enumerate}
where $\phi$ is the natural isomorphism $\phi_{a_1,a_2} \colon L(a_1) \otimes L(a_2) \to L(a_1+a_2)$ of the strong symmetric monoidal functor $L$. The unit for the tensor product is the initial object of $\A$, and the symmetry for any two objects $a$ and $b$ is defined using the canonical isomorphism $a + b \cong b + a$.
\end{theorem}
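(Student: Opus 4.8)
The plan is to obtain $\mathbf{H}(_L \lCsp(\X))$ as the horizontal bicategory of the symmetric monoidal double category $_L \lCsp(\X)$ and then invoke the machinery already assembled in this chapter, namely Lemma \ref{trick3} (which packages Shulman's lifting Theorem \ref{Shulman1}) together with the isofibrancy of cospan double categories. Concretely, I would take $\lX = \lCsp(\X)$ throughout and verify that the three hypotheses of Lemma \ref{trick3} are met.

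First I would establish that $\lCsp(\X)$ is an \emph{isofibrant symmetric monoidal} double category. Symmetric monoidality is precisely the Corollary following Theorem \ref{_L Csp(X)}, which applies because $\X$ has finite colimits, and isofibrancy is exactly Lemma \ref{lemma:isofibrant}. Next, since $\A$ has finite coproducts I would equip it with its cocartesian monoidal structure, so that $\A$ is symmetric monoidal; the hypothesis that $L \colon \A \to \X = \lCsp(\X)_0$ preserves finite coproducts is then precisely the assertion that $L$ is a (strong) symmetric monoidal functor between cocartesian monoidal categories, with comparison isomorphisms the canonical maps $\phi_{a_1,a_2}\colon L(a_1)+L(a_2) \xrightarrow{\sim} L(a_1+a_2)$, invertible by preservation of coproducts. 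This is exactly the input to Lemma \ref{trick3}, which immediately yields that $\mathbf{H}(_L \lCsp(\X))$ is a symmetric monoidal bicategory. Note that isofibrancy of the foot-replaced double category itself need not be checked separately, as Lemma \ref{trick3} handles the foot-replacement internally.

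It then remains only to match the canonical structure produced this way against the explicit description in the statement, which I would do by unwinding the construction. The tensor of objects is the coproduct $a_1+a_2$ because $_L\lCsp(\X)_0 = \A$ is cocartesian monoidal; the tensor of horizontal 1-cells is the tensor of horizontal 1-cells in $_L\lCsp(\X)$, i.e.\ the formula of Theorem \ref{SC}; and the tensor of two 2-morphisms, restricted to \emph{globular} 2-morphisms (those with identity vertical source and target on the feet), specializes the Theorem \ref{SC} formula to the apex maps, giving $\alpha_1 + \alpha_2$ with legs twisted by $\phi^{-1}$, which is exactly the displayed diagram. The unit and the symmetry descend from those of $_L\lCsp(\X)$ by the same unwinding. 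The main obstacle here is not a hard computation but this bookkeeping identification: the associator, unitors, braiding, and higher symmetry cells of the symmetric monoidal bicategory are produced abstractly and coherently by Lemma \ref{trick3}, so the pentagon, hexagon, and syllepsis coherences are guaranteed and need not be re-derived; only the concrete agreement of the tensor operations with the stated formulas requires verification.
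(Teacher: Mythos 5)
Your proposal is correct and follows essentially the same route as the paper: the paper's (implicit) proof of Theorem \ref{scbicat} is exactly to feed the isofibrant symmetric monoidal double category $\lCsp(\X)$ (Lemma \ref{lemma:isofibrant} plus the corollary to Theorem \ref{_L Csp(X)}), the cocartesian monoidal $\A$, and the coproduct-preserving (hence strong symmetric monoidal) $L$ into Lemma \ref{trick3}, i.e.\ into Shulman's lifting Theorem \ref{Shulman1} applied to $_L\lCsp(\X)$, and then read off the explicit tensor formulas from Theorem \ref{SC}. Your closing observation that only the identification of the canonical structure with the displayed formulas needs checking, the coherence being supplied by the lifting theorem, matches the paper's treatment.
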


\subsection{Graphs}

In Section \ref{scgraphs}, we constructed a symmetric monoidal double category $ _L \lCsp(\FinGraph)$ of open graphs. This double category is isofibrant by Lemma \ref{lemma:isofibrant}, and so we may extract from it a symmetric monoidal bicategory in which open graphs appear as morphisms.

\begin{theorem}
There exists a symmetric monoidal bicategory $\mathbf{OpenFinGraph}=\mathbf{H}( _L \lCsp(\FinGraph))$ which has:
\begin{enumerate}
\item{finite sets as objects,}
\item{open graphs: that is, cospans of graphs of the form
\[

\]

\subsection{Electrical circuits}
In Section \ref{SecSCApp}, we constructed a symmetric monoidal double category of open $k$-graphs. This symmetric monoidal double category is in fact isofibrant by Lemma \ref{lemma:isofibrant}, so we can apply Theorem \ref{Shulman1} to obtain a symmetric monoidal bicategory:
\begin{theorem}\label{thm:openfingraph_k}
There exists a symmetric monoidal bicategory $\mathbf{OpenFinGraph}_k= \mathbf{H}( {_L \lCsp(\mathsf{FinGraph}_k)})$ where:
\begin{enumerate}
\item{objects are finite sets,}
\item{morphisms are \define{open $k$-graphs}:
\[
\begin{tikzpicture}[scale=1.5]
\node (A) at (0,0) {$L(a)$};
\node (B) at (1,0) {$N$};
\node (C) at (2,0) {$L(b)$};
\path[->,font=\scriptsize,>=angle 90]
(A) edge node[above]{$i$} (B)
(C)edge node[above]{$o$}(B);
\end{tikzpicture}
\]
\[
\begin{tikzpicture}[scale=1.5]
\node (A) at (0,0) {$k^+$};
\node (B) at (1,0) {$E$};
\node (C) at (2,0) {$N$};
\path[->,font=\scriptsize,>=angle 90]
(B) edge node[above]{$r$} (A)
(B)edge[bend left] node[above]{$s$}(C)
(B)edge[bend right] node[below]{$t$}(C);
\end{tikzpicture}
\]
which are open graphs where the apex of the cospan representing the open graph is equipped with the structure of a $k$-graph, and}
\item{2-morphisms are \define{maps of open $k$-graphs}, which are maps of cospans such that the following diagrams commute
\[
\begin{tikzpicture}[scale=1.5]
\node (A) at (0,-0.5) {$L(a)$};
\node (B) at (1,0.5) {$N$};
\node (C) at (2,-0.5) {$L(b)$};
\node (B') at (1,-1.5) {$N'$};
\path[->,font=\scriptsize,>=angle 90]
(A) edge node[above]{$i$} (B)
(C)edge node[above]{$o$}(B)
(A) edge node[below]{$i'$} (B')
(C)edge node[below]{$o'$}(B')
(B) edge node [left] {$f$} (B');
\end{tikzpicture}
\]
\[
\begin{tikzpicture}[scale=1.5]
\node (C) at (3,-0.5) {$k^+$};
\node (D) at (4,0) {$E$};
\node (D') at (4,-1) {$E'$};
\node (A) at (5,0) {$E$};
\node (A') at (5,-1) {$E'$};
\node (E) at (6,0) {$N$};
\node (E') at (6,-1) {$N'$};
\node (B) at (7,0) {$E$};
\node (B') at (7,-1) {$E'$};
\node (F) at (8,0) {$N$};
\node (F') at (8,-1) {$N'$};
\path[->,font=\scriptsize,>=angle 90]
(E) edge node [right] {$f$} (E')
(F) edge node [right] {$f$} (F')
(D) edge node[above]{$r$} (C)
(A) edge node[above]{$s$}(E)
(A') edge node [below] {$s'$} (E')
(D') edge node[below]{$r'$} (C)
(D) edge node [left] {$g$} (D')
(A) edge node [left] {$g$} (A')
(B) edge node [left] {$g$} (B')
(B) edge node[above]{$t$}(F)
(B') edge node [below] {$t'$} (F');
\end{tikzpicture}
\]
for some morphisms $f$ and $g$.
}
\end{enumerate}
\end{theorem}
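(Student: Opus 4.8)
The plan is to obtain this theorem as a direct application of Shulman's Theorem in its foot-replaced form, Lemma \ref{trick3}, to the symmetric monoidal double category of structured open $k$-graphs built in Theorem \ref{left_adj_smdc}. In other words, the new bicategory is simply the horizontal bicategory of an isofibrant symmetric monoidal double category we have already constructed, so no genuinely new verifications are required: the substantive work resides in the results cited upstream.

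First I would assemble the three hypotheses of Lemma \ref{trick3}. By Lemma \ref{finite_colimits} the category $\mathsf{FinGraph}_k$ has finite colimits, so $\lCsp(\mathsf{FinGraph}_k)$ is a symmetric monoidal double category (with monoidal structure given by chosen coproducts), and it is isofibrant by Lemma \ref{lemma:isofibrant}. Next, $\mathsf{FinSet}$ has finite coproducts and an initial object, hence is symmetric monoidal. Finally, since $L \maps \mathsf{FinSet} \to \mathsf{FinGraph}_k$ is a left adjoint (Lemma \ref{left_adj}), it preserves all colimits, in particular finite coproducts and the initial object, so $L$ is a \emph{strong} symmetric monoidal functor, with its comparison morphisms $\phi_{a,b} \maps L(a)+L(b) \xrightarrow{\sim} L(a+b)$ the canonical coproduct-preservation isomorphisms. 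With these in hand, Lemma \ref{trick3} yields a symmetric monoidal bicategory $\mathbf{H}({_L \lCsp(\mathsf{FinGraph}_k)})$.

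It then remains to identify the cells of this bicategory with the data stated in the theorem. By the description of $\mathbf{H}({_L \lX})$ given just before Lemma \ref{trick3}, the objects are the objects of $\mathsf{FinSet}$, namely finite sets; the morphisms are the horizontal 1-cells of ${_L \lCsp(\mathsf{FinGraph}_k)}$, which by Theorem \ref{left_adj_smdc} are open $k$-graphs, i.e.\ cospans $L(a) \to N \leftarrow L(b)$ whose apex $N$ carries the structure of a $k$-graph; and the 2-morphisms are the globular 2-morphisms of ${_L \lCsp(\mathsf{FinGraph}_k)}$, i.e.\ maps of open $k$-graphs whose vertical components on the feet are identities. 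Unwinding this globular condition is the only place any checking is needed: requiring the foot-legs to be identities forces the two cospans to share the same inputs and outputs, so that a globular 2-morphism reduces to a vertex map $f$ and an edge map $g$ on the apices compatible with source, target, and resistance, which is exactly the pair of commuting diagrams displayed in the statement.

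The proof is therefore essentially an assembly argument, and I do not anticipate a serious obstacle; the two points demanding care are confirming that $L$ is strong rather than merely lax monoidal (which is precisely what left-adjointness buys us and what Lemma \ref{trick3} requires), and confirming that the isofibrancy hypothesis genuinely transfers from $\lCsp(\mathsf{FinGraph}_k)$ to the foot-replaced double category, which is already subsumed in the hypotheses of Lemma \ref{trick3}.
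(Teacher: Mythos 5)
Your proposal is correct and follows essentially the same route as the paper, which simply cites Theorems \ref{Cspbicat} and \ref{scbicat} applied to the functor $L \colon \mathsf{FinSet} \to \mathsf{FinGraph}_k$ of Theorem \ref{left_adj_smdc}; those two theorems are precisely the packaged form of the ingredients you assemble by hand (Lemma \ref{trick3}, Lemma \ref{lemma:isofibrant}, Lemma \ref{left_adj}, and Lemma \ref{finite_colimits}). Your additional unwinding of the globular condition on 2-morphisms is a correct and slightly more explicit account than the paper gives.
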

\begin{proof}
We obtain $\mathbf{OpenFinGraph}_k= \mathbf{H}( _L \lC \mathbf{sp}(\mathsf{FinGraph}_k))$ by applying Theorems \ref{Cspbicat} and \ref{scbicat} to the functor $L \colon \mathsf{FinSet} \to \mathsf{FinGraph}_k$ of Theorem \ref{left_adj_smdc}.
\end{proof}

We can then decategorify this symmetric monoidal bicategory $\mathbf{OpenFinGraph}_k$ to obtain a symmetric monoidal category $D(\mathbf{OpenFinGraph}_k)$ where:
\begin{enumerate}
\item{objects are finite sets, and}
\item{morphisms are  isomorphism classes of open $k$-graphs, where two open $k$-graphs are in the same isomorphism class if the following diagrams commute:
\[

\]
Here the top functor $\blacksquare \maps F\Cospan \to {\LagRel}_k$ is the original black-boxing functor constructed by Baez and Fong \cite{BF}.
While we shall not prove it here, one can extend this functor to a new one, also called $\blacksquare$, defined on $D(\mathbf{OpenFinGraph}_k)$. This also promotes the original black-box functor from a mere monoidal functor to a symmetric monoidal functor $\blacksquare \colon D(\mathbf{OpenFinGraph}_k) \to \LagRel_k$.

\subsection{Petri nets}

In Section \ref{petri_nets_structured_cospans}, we constructed a symmetric monoidal double category of open Petri nets with rates. This symmetric monoidal double category is also isofibrant by Lemma \ref{lemma:isofibrant}, so we can apply Theorem \ref{Shulman1} to obtain a symmetric monoidal bicategory:


\begin{theorem}\label{thm:petri_rates}
There exists a symmetric monoidal bicategory $\mathbf{Petri}_\mathrm{rates} = \mathbf{H}({ _L \lCsp(\Petri_{\mathrm{rates}})})$ where:
\begin{enumerate}
\item{objects are finite sets,}
\item{morphisms are \define{open Petri nets with rates}:
\[

\]
for some morphisms $f$ and $g$.
}
\end{enumerate}
\end{theorem}
\begin{proof}
We obtain $\mathbf{Petri}_{\mathrm{rates}} = \mathbf{H}( _L \lCsp(\Petri_{\mathrm{rates}}))$ by applying Theorems \ref{Cspbicat} and \ref{scbicat} to the functor $L \colon \mathsf{FinSet} \to \mathsf{Petri}_\mathrm{rates}$ of Theorem \ref{scpetri}.
\end{proof}
Once again, we can then decategorify this bicategory $\mathbf{Petri}_\mathrm{rates}$ to obtain a symmetric monoidal category $D(\mathbf{Petri_\mathrm{rates}})$ where:
\begin{enumerate}
\item{objects are finite sets, and}
\item{morphisms are isomorphism classes of open Petri nets with rates, where two open Petri nets with rates are in the same isomorphism class if the following diagrams commute:
\[
\begin{tikzpicture}[scale=1.5]
\node (G) at (-3,-0.5) {$L(a)$};
\node (H) at (-2,0.5) {$S$};
\node (I) at (-2,-1.5) {$S'$};
\node (J) at (-1,-0.5) {$L(b)$};
\path[->,font=\scriptsize,>=angle 90]
(G) edge node [above] {$i$} (H)
(G) edge node [below] {$i'$} (I)
(H) edge node [left] {$f$} (I)
(J) edge node [above] {$o$} (H)
(J) edge node [below] {$o'$} (I);
\end{tikzpicture}
\]
\[
\begin{tikzpicture}[scale=1.5]
\node (C) at (3,-0.5) {$[0,\infty)$};
\node (D) at (4,0) {$T$};
\node (D') at (4,-1) {$T'$};
\node (A) at (5,0) {$T$};
\node (A') at (5,-1) {$T'$};
\node (E) at (6,0) {$\mathbb{N}[S]$};
\node (E') at (6,-1) {$\mathbb{N}[S]$};
\node (B) at (7,0) {$T$};
\node (B') at (7,-1) {$T'$};
\node (F) at (8,0) {$\mathbb{N}[S]$};
\node (F') at (8,-1) {$\mathbb{N}[S]$};
\path[->,font=\scriptsize,>=angle 90]
(E) edge node [right] {$\mathbb{N}[f]$} (E')
(F) edge node [right] {$\mathbb{N}[f]$} (F')
(D) edge node[above]{$r$} (C)
(A) edge node[above]{$s$}(E)
(A') edge node [below] {$s'$} (E')
(D') edge node[below]{$r'$} (C)
(D) edge node [left] {$g$} (D')
(A) edge node [left] {$g$} (A')
(B) edge node [left] {$g$} (B')
(B) edge node[above]{$t$}(F)
(B') edge node [below] {$t'$} (F');
\end{tikzpicture}
\]
for some isomorphisms $f$ and $g$.}
\end{enumerate}
We can define a functor $G \maps \mathsf{Petri}_{\mathrm{rates}} \to D(\mathbf{Petri}_\mathrm{rates})$ that is the identity on objects and identifies morphisms in $\mathsf{Petri}_{\mathrm{rates}}$, if they are in the same isomorphism class in the sense of (2) above.  We can then consider the following diagram:
\[
\begin{tikzpicture}[scale=1.5]
\node (A) at (0,0) {$\Petri_{\mathrm{rates}}$};
\node (B) at (0,-1) {$D(\mathbf{Petri}_\mathrm{rates})$};
\node (C) at (2,0) {$\SemiAlg\Rel$};
\path[->,font=\scriptsize,>=angle 90]
(A) edge node[left]{$G$} (B)
(A)edge node[above]{$\blacksquare$}(C)
(B) edge [dashed] node [below] {$\blacksquare$}(C);
\end{tikzpicture}
\]
Here the top functor $\blacksquare \maps \mathsf{Petri}_{\mathrm{rates}} \to \SemiAlg\Rel$ was constructed by Baez and Pollard \cite{BP}.   While we shall not prove it here, one can extend this functor to a new one, also called $\blacksquare$, defined on $D(\mathbf{Petri}_\mathrm{rates})$.

\subsection{Maps of foot-replaced bicategories}

A result of Hansen and Shulman \cite{Shul3} not only allows us to lift symmetric monoidal double categories to their underlying symmetric monoidal horizontal-edge bicategories, but also maps between such.

\begin{corollary}
Given two symmetric monoidal foot-replaced double categories $_L \lX$ and $_{L'} \lX'$ and a symmetric monoidal double functor $_F \mathbb{F} \colon _L \lX \to _{L'} \lX'$ between the two, the symmetric monoidal double functor $_F \mathbb{F}$ induces a functor of symmetric monoidal bicategories between the underlying horizontal-edge bicategories of the foot-replaced double categories $_L \lX$ and $_{L'} \lX'$. $$\mathbf{H}({}_F \mathbb{F}) \colon \mathbf{H}( {}_L \lX) \to \mathbf{H}( {}_{L'} \lX')$$
\end{corollary}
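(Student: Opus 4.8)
The plan is to reduce everything to the functoriality of the horizontal-bicategory construction $\mathbf{H}(-)$ on symmetric monoidal double categories, as established by Hansen and Shulman \cite{Shul3}. First I would verify the hypotheses of that result. Both $_L \lX$ and $_{L'} \lX'$ are isofibrant—the companion of a vertical $1$-isomorphism $f$ in $\A$ is induced by the companion in $\lX$ of the $1$-isomorphism $L(f)$, since $L$ is a functor—and symmetric monoidal, so that by Theorem \ref{Shulman1} (as already invoked in Lemma \ref{trick3}) each of $\mathbf{H}(_L \lX)$ and $\mathbf{H}(_{L'} \lX')$ carries the structure of a symmetric monoidal bicategory. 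The symmetric monoidal double functor $_F \mathbb{F}$, produced from the triple $(F, \mathbb{F}, \theta)$ in Theorem \ref{MapTrick1}, then lies in the domain of the Hansen--Shulman construction, and its image $\mathbf{H}(_F \mathbb{F})$ is by definition a functor of symmetric monoidal bicategories.

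Next I would record the explicit action of $\mathbf{H}(_F \mathbb{F})$, reading off the underlying data from $_F \mathbb{F}$. Since $\mathbf{H}(_L \lX)$ has the objects of $_L \lX$ as objects, the horizontal $1$-cells as morphisms, and the globular $2$-morphisms as $2$-morphisms, the functor $\mathbf{H}(_F \mathbb{F})$ acts exactly as $_F \mathbb{F}$ does on each of these: $a \mapsto F(a)$ on objects, $M \mapsto \theta_b\, \mathbb{F}_1(M)\, \theta_a^{-1}$ on morphisms, and correspondingly on globular $2$-morphisms. The one point requiring genuine attention is that this restriction is well defined, i.e.\ that $_F \mathbb{F}$ sends globular $2$-morphisms to globular $2$-morphisms. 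This follows from the double-functor identities $S' \circ {_F \mathbb{F}}_1 = F \circ S$ and $T' \circ {_F \mathbb{F}}_1 = F \circ T$ verified in the proof of Theorem \ref{MapTrick1}, together with the fact that $F$ preserves identity vertical $1$-morphisms: a $2$-morphism whose vertical source and target are identities is sent to one whose vertical source and target are again identities.

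The coherence data of $\mathbf{H}(_F \mathbb{F})$ as a homomorphism of bicategories are supplied by the comparison constraints ${_F \mathbb{F}}_{M,N}$ and ${_F \mathbb{F}}_U$ built in Theorem \ref{MapTrick1}, taken on their globular components, and the monoidal comparison transformations $\nu'$ and $\delta'$ of the symmetric monoidal double functor likewise descend to the monoidal-structure data of the bicategory functor. The hard part is not these identifications, which are routine once globularity is checked, but confirming the full tower of coherence axioms for a functor of symmetric monoidal bicategories: the associativity and unit constraints of the pseudofunctor, together with the hexagon, braiding, and syllepsis/symmetry compatibilities. This is precisely the content that Hansen and Shulman package into the statement that $\mathbf{H}(-)$ is functorial from isofibrant symmetric monoidal double categories and their symmetric monoidal double functors to symmetric monoidal bicategories and their functors; every such axiom for $\mathbf{H}(_F \mathbb{F})$ is obtained by applying $\mathbf{H}(-)$ to the corresponding already-established axiom for $_F \mathbb{F}$. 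Thus the entire verification is absorbed into invoking \cite{Shul3}, and no further axiom-chasing at the bicategorical level is needed.
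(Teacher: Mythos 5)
Your proposal is correct and takes essentially the same route as the paper, whose proof consists solely of invoking Hansen and Shulman's functoriality of $\mathbf{H}(-)$ on isofibrant symmetric monoidal double categories; your additional checks (isofibrancy of ${}_L\lX$ via companions of $L(f)$, and preservation of globularity via $S'\circ{}_F\mathbb{F}_1 = F\circ S$ and $F(1_a)=1_{F(a)}$) are sound and merely make explicit the hypotheses the paper leaves implicit.
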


\begin{proof}
This follows immediately from the work of Hansen and Shulman \cite{Shul3}.
\end{proof}

\section{Decorated cospan bicategories}
\begin{lemma}\label{DCFibrant}
The double category $F\mathbb{C}\mathbf{sp}$ constructed in Theorem \ref{DCord} is fibrant.
\end{lemma}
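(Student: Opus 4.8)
The plan is to show that $F\mathbb{C}\mathbf{sp}$ satisfies the defining condition of a fibrant double category (Definition \ref{def:isofibrant}): every vertical $1$-morphism admits a companion and a conjoint. Since the objects and vertical $1$-morphisms of $F\mathbb{C}\mathbf{sp}$ are precisely the objects and morphisms of $\mathsf{A}$, a vertical $1$-morphism is just an arrow $f \colon a \to c$ of $\mathsf{A}$, and I would produce its companion and conjoint by lifting the ones already available in the undecorated case. Indeed, discarding decorations exhibits the cospan-layer of $F\mathbb{C}\mathbf{sp}$ as $\lCsp(\mathsf{A})$ (cf.\ Corollary \ref{DCordCor}); the argument of Lemma \ref{lemma:isofibrant}, applied with $\mathsf{X} = \mathsf{A}$, produces companions and (symmetrically) conjoints, and in fact does so using only $f$ itself, hence for every vertical $1$-morphism. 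So the underlying maps of cospans of all the data I need are already in hand, and the only genuinely new work is to equip each binding $2$-morphism with its decoration component $\iota$ and to verify the companion/conjoint equations at the level of decorations.

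First I would construct the companion. For $f \colon a \to c$ set $\hat f$ to be the $F$-decorated cospan
\[
a \xrightarrow{\; f\;} c \xleftarrow{\; 1_c\;} c, \qquad !_c \in F(c),
\]
where $!_c = F(!)\,\phi$ is the trivial decoration of Corollary \ref{DCordCor}. The two binding cells have underlying maps of cospans exactly as in Lemma \ref{lemma:isofibrant}: a cell $U_a \Rightarrow \hat f$ whose middle leg is $f$, and a cell $\hat f \Rightarrow U_c$ whose middle leg is $1_c$. Their decoration components are the canonical coherence isomorphisms of the pseudofunctor $F$. For the cell with middle leg $1_c$ one needs a morphism $F(1_c)(!_c) \to !_c$ in $F(c)$, supplied by the unitor $F(1_c) \cong 1_{F(c)}$; for the cell with middle leg $f$ one needs a morphism $F(f)(!_a) \to !_c$ in $F(c)$, and since $f \circ (!\colon 0 \to a) = (!\colon 0 \to c)$ in $\mathsf{A}$, the compositor of $F$ yields $F(f)(!_a) = F(f)F(!)\phi \cong F(f\circ !)\phi = F(!)\phi = !_c$. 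This is precisely the recipe already used to define the action of the unit functor $U$ on morphisms in Theorem \ref{DCord}.

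The conjoint is constructed symmetrically: take $\check f$ to be the decorated cospan $c \xrightarrow{1_c} c \xleftarrow{f} a$ carrying the same trivial decoration $!_c$, with binding cells mirroring those of the companion but with the two legs interchanged; the decoration components are again the same coherence isomorphisms. Both constructions make sense for an arbitrary $f$, not merely an isomorphism, so $F\mathbb{C}\mathbf{sp}$ is fibrant in the strong sense (a framed bicategory), which in particular gives the isofibrancy invoked in the proofs of Theorems \ref{DC} and \ref{Equiv}.

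What remains is to verify Equation (\ref{eq:CompanionEq}), the companion and conjoint equations, and I expect this to be the main (though routine) obstacle. On underlying cospans the equations hold verbatim by Lemma \ref{lemma:isofibrant}, so the only new content is that the two composite decoration morphisms agree in the relevant fiber. Crucially, every arrow of $\mathsf{A}$ entering these equations factors through the initial object $0$ and is therefore the unique arrow with its domain; hence the triangles of morphisms in $\mathsf{A}$ whose $F$-images must commute do so automatically, and applying the pseudofunctor $F$ turns them into commuting triangles of unitor and compositor isomorphisms in $\mathbf{Cat}$. Thus the decoration-level equations collapse to the coherence axioms for $F$, and no genuine obstruction arises.
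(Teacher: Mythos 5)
Your proposal is correct and follows essentially the same route as the paper: the companion of $f\colon a\to c$ is the cospan $a\xrightarrow{f}c\xleftarrow{1}c$ carrying the trivial decoration $!_c$, the binding cells are the evident maps of cospans equipped with the canonical coherence morphisms $F(1_c)(!_c)\to !_c$ and $F(f)(!_a)\to !_c$, the conjoint is the reversed cospan, and the companion equations reduce to coherence for $F$ since the underlying cospan identities already hold. Your added justification of the decoration morphism via $f\circ{!}={!}$ and the compositor of $F$ is a slightly more explicit version of what the paper leaves implicit.
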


\begin{proof}
Let $f \colon c \to c^\prime$ be a vertical 1-morphism in $F\mathbb{C}\mathbf{sp}$. We can lift $f$ to the companion horizontal 1-cell $\hat{f}$:
\[

\]
together with a morphism $\iota \colon F(h)(d) \to d^\prime$ in $F(c^\prime)$.}
\end{enumerate}
\end{corollary}

\begin{proof}
This follows immediately from Shulman's Theorem \ref{Shulman1} above applied to the fibrant symmetric monoidal double category $F\mathbb{C}\mathbf{sp}$.
\end{proof}

This symmetric monoidal bicategory $F\mathbf{Csp}$ is a superior version of the symmetric monoidal bicategory $F\mathbf{Cospan}(\mathsf{A})$ constructed earlier in a previous work \cite{Cour}, in that there is greater flexibility in what 2-morphisms are allowed. 

\subsection{Maps of decorated cospan bicategories}

Just as a result of Hansen and Shulman \cite{Shul3} allows us to lift maps of symmetric monoidal foot-replace double categories to maps between their underlying horizontal-edge bicategories, we can also lift maps between symmetric monoidal decorated cospan double categories to maps between their underlying horizontal-edge bicategories.

\begin{corollary}
Given two symmetric monoidal decorated cospan double categories $F \mathbb{C}\mathbf{sp}$ and $F' \mathbb{C}\mathbf{sp}$ and a symmetric monoidal double functor $\mathbb{H} \colon F \mathbb{C}\mathbf{sp} \to F' \mathbb{C}\mathbf{sp}$ between the two, the symmetric monoidal double functor $\mathbb{H}$ induces a functor of symmetric monoidal bicategories between the underlying horizontal-edge bicategories of the decorated cospan double categories $F \mathbb{C}\mathbf{sp}$ and $F' \mathbb{C}\mathbf{sp}$. $$\mathbf{H}(\mathbb{H}) \colon \mathbf{H}(F\mathbb{C}\mathbf{sp}) \to \mathbf{H}(F'\mathbb{C}\mathbf{sp})$$
\end{corollary}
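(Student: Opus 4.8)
The plan is to obtain $\mathbf{H}(\mathbb{H})$ by applying the horizontal-bicategory construction functorially, exactly as the analogous corollary for foot-replaced double categories was obtained from the work of Hansen and Shulman \cite{Shul3}. The key input is that the passage $\mathbb{X} \mapsto \mathbf{H}(\mathbb{X})$ from isofibrant symmetric monoidal double categories to symmetric monoidal bicategories is not merely an object-level assignment (Theorem \ref{Shulman1}) but extends to a functor on the appropriate $2$-categories, sending a symmetric monoidal double functor to a symmetric monoidal functor of bicategories. Once the hypotheses of this functoriality statement are checked for $\mathbb{H}$, the conclusion is immediate.

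First I would verify the hypotheses. By Lemma \ref{DCFibrant}, both $F\mathbb{C}\mathbf{sp}$ and $F'\mathbb{C}\mathbf{sp}$ are fibrant, hence in particular isofibrant, so Theorem \ref{Shulman1} applies to each and produces the symmetric monoidal bicategories $\mathbf{H}(F\mathbb{C}\mathbf{sp})$ and $\mathbf{H}(F'\mathbb{C}\mathbf{sp})$ whose objects, morphisms and $2$-morphisms are the objects, horizontal $1$-cells and globular $2$-morphisms of the respective double categories. By hypothesis $\mathbb{H} \colon F\mathbb{C}\mathbf{sp} \to F'\mathbb{C}\mathbf{sp}$ is a symmetric monoidal double functor, which is precisely the input required by the functoriality result.

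Next I would describe the induced functor concretely so that the invocation of \cite{Shul3} is unambiguous. The functor $\mathbf{H}(\mathbb{H})$ acts on objects by $\mathbb{H}_0$, on morphisms by the restriction of $\mathbb{H}_1$ to horizontal $1$-cells, and on $2$-morphisms by the restriction of $\mathbb{H}_1$ to globular $2$-morphisms. This restriction is well defined because $\mathbb{H}$ commutes with the source and target functors, so a $2$-morphism whose vertical source and target are identities is sent to one whose vertical source and target are again identities; that is, $\mathbb{H}_1$ carries globular $2$-morphisms to globular $2$-morphisms. The globular comparison and unit constraints $\mathbb{H}_{M,N}$ and $\mathbb{H}_U$ of the double functor, which are isomorphisms, serve as the compositor and unitor of $\mathbf{H}(\mathbb{H})$ viewed as a functor of bicategories, and the monoidal comparison data of $\mathbb{H}$ descends to make $\mathbf{H}(\mathbb{H})$ symmetric monoidal.

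The only real content --- and the step I would expect to be the main obstacle in a self-contained argument --- is checking that these globular constraints satisfy the associativity and unit coherence axioms of a functor of bicategories together with the compatibilities with the monoidal structure and the braiding. These are exactly the coherences packaged into the functoriality result of Hansen and Shulman \cite{Shul3}, which establishes that $\mathbf{H}$ respects composition and identities and preserves the symmetric monoidal structure at the level of morphisms of double categories. Since $F\mathbb{C}\mathbf{sp}$ and $F'\mathbb{C}\mathbf{sp}$ meet the isofibrancy hypothesis and $\mathbb{H}$ is a symmetric monoidal double functor, applying their theorem discharges this obstacle and yields the desired symmetric monoidal functor $\mathbf{H}(\mathbb{H}) \colon \mathbf{H}(F\mathbb{C}\mathbf{sp}) \to \mathbf{H}(F'\mathbb{C}\mathbf{sp})$.
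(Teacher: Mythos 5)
Your proposal is correct and takes essentially the same route as the paper, which simply cites the functoriality result of Hansen and Shulman; the paper's proof is a one-line appeal to that result, and your additional verification of isofibrancy via Lemma \ref{DCFibrant} and your concrete description of how $\mathbb{H}$ restricts to globular $2$-morphisms are exactly the implicit checks the paper leaves to the reader.
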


\begin{proof}
This follows immediately from the work of Hansen and Shulman \cite{Shul3}.
\end{proof}

\subsection{Decorated cospans revisited}
We can then decategorify the symmetric monoidal bicategory $F\mathbf{Csp}$ to obtain a symmetric monoidal category similar to the monoidal one obtained using Fong's result, but symmetric and with larger isomorphism classes of morphisms:

\begin{corollary}
Given a symmetric lax monoidal pseudofunctor $F \colon \mathsf{A} \to \mathbf{Cat}$ where $\mathsf{A}$ is a category with finite colimits whose monoidal structure is given by binary coproducts, there exists a symmetric monoidal category $F\textnormal{Csp} \colon = D(F\mathbf{Csp})$ where:
\begin{enumerate}
\item{objects are those of $\mathsf{A}$ and}
\item{morphisms are isomorphism classes of $F$-decorated cospans of $\mathsf{A}$, where an $F$-decorated cospan is given by a pair:
\[
\begin{tikzpicture}[scale=1.5]
\node (A) at (0,0) {$a$};
\node (B) at (1,0) {$c$};
\node (C) at (2,0) {$b$};
\node (D) at (4,0) {$d \in F(c)$};
\path[->,font=\scriptsize,>=angle 90]
(A) edge node[above]{$i$} (B)
(C) edge node[above]{$o$} (B);
\end{tikzpicture}
\]
and given another $F$-decorated cospan:
\[
\begin{tikzpicture}[scale=1.5]
\node (A) at (0,0) {$a$};
\node (B) at (1,0) {$c^\prime$};
\node (C) at (2,0) {$b$};
\node (D) at (4,0) {$d^\prime \in F(c^\prime)$};
\path[->,font=\scriptsize,>=angle 90]
(A) edge node[above]{$i^\prime$} (B)
(C) edge node[above]{$o^\prime$} (B);
\end{tikzpicture}
\]
these two $F$-decorated cospans are in the same isomorphism class if there exists an isomorphism $f \colon c \to c^\prime$ such that following diagram commutes:
\[
\begin{tikzpicture}[scale=1.5]
\node (A) at (0,0) {$a$};
\node (B') at (1,0.5) {$c$};
\node (B) at (1,-0.5) {$c^\prime$};
\node (C) at (2,0) {$b$};
\path[->,font=\scriptsize,>=angle 90]
(A) edge node[below]{$i^\prime$} (B)
(C) edge node[below]{$o^\prime$} (B)
(A) edge node[above]{$i$} (B')
(C) edge node[above]{$o$} (B')
(B') edge node[left]{$f$} (B);
\end{tikzpicture}
\]
and there exists an isomorphism $\iota \colon F(f)(d) \to d^\prime$ in $F(c^\prime)$.}
\end{enumerate}
\end{corollary}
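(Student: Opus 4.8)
The plan is to realize $F\mathbf{Csp}$ as the horizontal bicategory of the symmetric monoidal double category $F\mathbb{C}\mathbf{sp}$ and then apply the standard decategorification procedure that turns a symmetric monoidal bicategory into a symmetric monoidal category by passing to isomorphism classes of $1$-morphisms. First I would assemble the ingredients already available: $F\mathbb{C}\mathbf{sp}$ is a symmetric monoidal double category by Theorem \ref{DC}, it is fibrant by Lemma \ref{DCFibrant}, and hence $F\mathbf{Csp} = \mathbf{H}(F\mathbb{C}\mathbf{sp})$ is a symmetric monoidal bicategory by Shulman's Theorem \ref{Shulman1} (this is exactly the content of the preceding corollary). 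Its objects are objects of $\mathsf{A}$, its $1$-morphisms are $F$-decorated cospans, and its $2$-morphisms are the globular $2$-morphisms of $F\mathbb{C}\mathbf{sp}$: a map of cospans of the form $(1,h,1)$ with $h \colon c \to c'$, together with a morphism $\iota \colon F(h)(d) \to d'$ in the fiber $F(c')$.

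Next I would define $D(F\mathbf{Csp})$ to have the same objects as $F\mathbf{Csp}$ and to have as morphisms the isomorphism classes of $1$-morphisms, two $1$-morphisms being identified precisely when there is an invertible $2$-morphism between them. Since ``isomorphic objects of a hom-category'' is an equivalence relation---reflexive via identity $2$-morphisms, symmetric via inverses, and transitive via vertical composition---this produces a well-defined collection of morphisms between any pair of objects. I would then unwind invertibility of a globular $2$-morphism $(h,\iota)$: because vertical and horizontal composition of globular $2$-morphisms is computed componentwise, in $\mathsf{A}$ and in the fibers, such a $2$-morphism is invertible exactly when $h \colon c \to c'$ is an isomorphism in $\mathsf{A}$ and $\iota \colon F(h)(d) \to d'$ is an isomorphism in $F(c')$. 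This matches verbatim the relation demanded in the statement, so the morphisms of $D(F\mathbf{Csp})$ are the claimed isomorphism classes of $F$-decorated cospans, with objects those of $\mathsf{A}$.

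It then remains to verify that horizontal composition, the tensor product, the braiding, and the symmetry descend to $D(F\mathbf{Csp})$. The key observation is that composition of $1$-cells in a bicategory is functorial on hom-categories, so it sends isomorphic $1$-cells to isomorphic $1$-cells; hence the horizontal composite of Theorem \ref{DCord} is well-defined on isomorphism classes. Once we pass to isomorphism classes, the associator and unitor $2$-isomorphisms of $F\mathbf{Csp}$ collapse to equalities, so composition in $D(F\mathbf{Csp})$ becomes strictly associative and unital, yielding a category. The identical mechanism applies to the monoidal product of Theorem \ref{DC}: being functorial in each argument it respects the isomorphism relation, and the monoidal coherence, braiding, and symmetry $2$-isomorphisms likewise trivialize, so $D(F\mathbf{Csp})$ inherits the structure of a symmetric monoidal category.

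The hard part will be confirming, rather than merely asserting, that invertible globular $2$-morphisms correspond exactly to the stated relation and that the various coherence $2$-isomorphisms genuinely become trivial after decategorification; both reduce to routine componentwise checks once the explicit description of $2$-morphisms in $F\mathbb{C}\mathbf{sp}$ from Theorem \ref{DCord} is in hand. Alternatively, one may cite the general fact that decategorification carries a symmetric monoidal bicategory to a symmetric monoidal category, in which case the only content specific to this corollary is the explicit identification of the isomorphism relation on $F$-decorated cospans carried out in the second step above.
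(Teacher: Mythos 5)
Your proposal is correct and follows exactly the route the paper takes: the corollary is obtained by decategorifying the symmetric monoidal bicategory $F\mathbf{Csp} = \mathbf{H}(F\mathbb{C}\mathbf{sp})$, which exists because $F\mathbb{C}\mathbf{sp}$ is a fibrant symmetric monoidal double category (Theorem \ref{DC}, Lemma \ref{DCFibrant}, and Shulman's Theorem \ref{Shulman1}). Your explicit identification of the invertible globular $2$-morphisms with pairs $(f,\iota)$ where $f$ is an isomorphism in $\mathsf{A}$ and $\iota$ is an isomorphism in $F(c')$ is precisely the content that the paper leaves implicit.
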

In this symmetric monoidal category, isomorphism classes are as they should morally be, and the instance of two graphs having different but isomorphic edge sets does not prevent them from being in the same isomorphism class.

\section{A biequivalence of compositional frameworks}
In Chapter \ref{Chapter4}, it is mentioned that given a symmetric monoidal pseudofunctor $F \colon (\mathsf{A},+,0) \to (\mathbf{Cat},\times,1)$ such that $F$ factors as an ordinary pseudofunctor $F \to \mathbf{Rex} \hookrightarrow \mathbf{Cat}$, where $\mathbf{Rex}$ is the 2-category of finitely cocomplete categories, finite coproduct preserving functors and natural transformations, we can obtain a fully faithful left adjoint $L \colon (\mathsf{A},+,0) \to (\mathsf{X},+,0)$ where $(\mathsf{X},+,0) := (\int{F},+,0)$. Furthermore, the right adjoint $R \colon \mathsf{X} \to \mathsf{A}$ preserves finite colimits. From the pseudofunctor $F \colon \mathsf{A} \to \mathbf{Cat}$, we can obtain a symmetric monoidal double category of decorated cospans by Theorem \ref{DC}. From the left adjoint $L \colon \mathsf{A} \to \mathsf{X}$, we can obtain a symmetric monoidal double category of structured cospans by Theorem \ref{SC}. By Theorem \ref{Equiv}, we have an equivalence of symmetric monoidal double categories $F \mathbb{C}\mathbf{sp} \simeq { _L \mathbb{C}\mathbf{sp}(\mathsf{X})}$. In the previous sections of the present chapter, we proved that each of these symmetric monoidal double categories are fibrant and give rise to underlying symmetric monoidal bicategories $F\mathbf{Csp}$ and $\mathbf{H}( {_L \mathbb{C}\mathbf{sp}(\mathsf{X})})$, respectively, by Theorem \ref{Shulman1} due to Shulman. We can use another result due to Shulman \cite{Shul2} to lift the double equivalence of double categories to a biequivalence of bicategories.

\begin{proposition}[{{\cite[Prop.\ B.3]{Shul2}}}]
An equivalence of fibrant double categories induces a biequivalence of horizontal bicategories.
\end{proposition}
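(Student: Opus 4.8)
The plan is to show that the pseudofunctor $\mathbf{H}(\mathbb{F})\colon \mathbf{H}(\mathbb{A})\to\mathbf{H}(\mathbb{X})$ induced by a double equivalence $\mathbb{F}\colon\mathbb{A}\to\mathbb{X}$ is a \emph{biequivalence} in the sense of being essentially surjective on objects (up to equivalence) and a local equivalence. First I would record the setup: since $\mathbb{F}$ is strong, its comparison and unit constraints are globular isomorphisms, and taking these as coherence data exhibits $\mathbf{H}(\mathbb{F})$ as a genuine pseudofunctor on the horizontal bicategories. Local full faithfulness is then immediate. Fixing objects $a,b$, the hom-category $\mathbf{H}(\mathbb{A})(a,b)$ has horizontal 1-cells as objects and globular 2-morphisms as arrows, and a globular 2-morphism is exactly an element of ${}_{1_a}\mathbb{A}_{1_b}(M,N)$. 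Because $\mathbb{F}_0$ is a functor it preserves identities, so the action of $\mathbf{H}(\mathbb{F})$ on 2-morphisms is precisely $\mathbb{F}_1\colon {}_{1_a}\mathbb{A}_{1_b}(M,N)\to {}_{1_{\mathbb{F}(a)}}\mathbb{X}_{1_{\mathbb{F}(b)}}(\mathbb{F}(M),\mathbb{F}(N))$, which is surjective because $\mathbb{F}$ is full and injective because $\mathbb{F}$ is faithful; hence each hom-functor is fully faithful.

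Next I would treat essential surjectivity on objects, which is the first place fibrancy is needed. Given $x\in\mathbf{H}(\mathbb{X})$, essential surjectivity of the double functor supplies an object $a$ and a vertical 1-isomorphism $\alpha_x\colon \mathbb{F}(a)\to x$. In a fibrant double category a vertical 1-isomorphism has a companion, and the companion of a vertical isomorphism is a horizontal adjoint equivalence, its conjoint furnishing the inverse. Thus the companion $\hat{\alpha}_x\colon \mathbb{F}(a)\to x$ is an equivalence in $\mathbf{H}(\mathbb{X})$, witnessing $\mathbb{F}(a)\simeq x$.

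The main obstacle is local essential surjectivity. Fixing $a,b$ and a horizontal 1-cell $N\colon \mathbb{F}(a)\to\mathbb{F}(b)$, I must produce $M\colon a\to b$ together with a globular 2-isomorphism $\mathbb{F}(M)\cong N$. Applying horizontal essential surjectivity of $\mathbb{F}$ to $N$ (whose feet are the objects $\mathbb{F}(a),\mathbb{F}(b)$) yields a horizontal 1-cell $M_0\colon a_0\to b_0$ and a 2-isomorphism $a_N$ whose vertical source and target are isomorphisms $\alpha\colon\mathbb{F}(a_0)\to\mathbb{F}(a)$ and $\beta\colon\mathbb{F}(b_0)\to\mathbb{F}(b)$. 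Since $\mathbb{F}_0$ is full and faithful it reflects isomorphisms and hits $\alpha,\beta$, so $\alpha=\mathbb{F}(u)$ and $\beta=\mathbb{F}(v)$ for vertical isomorphisms $u\colon a_0\to a$ and $v\colon b_0\to b$ of $\mathbb{A}$. Using the fibrant structure of $\mathbb{A}$ I form $M=\check{u}\odot M_0\odot\hat{v}\colon a\to b$ by conjugating $M_0$ with the conjoint $\check{u}$ and companion $\hat{v}$. Because a \emph{strong} double functor preserves companions and conjoints, $\mathbb{F}(M)$ is globularly isomorphic to $\check{\alpha}\odot\mathbb{F}(M_0)\odot\hat{\beta}$, while the standard bending bijection of the fibrant structure converts the square $a_N$ with invertible vertical boundaries $\alpha,\beta$ into a globular 2-isomorphism $\check{\alpha}\odot\mathbb{F}(M_0)\odot\hat{\beta}\cong N$. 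Composing gives the desired $\mathbb{F}(M)\cong N$.

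Assembling the three verifications, $\mathbf{H}(\mathbb{F})$ is essentially surjective on objects and locally an equivalence, hence a biequivalence. The delicate points, where I expect the real work to lie, are all in the third step: confirming that the companion of a vertical isomorphism is an adjoint equivalence, that strong double functors genuinely transport the companion/conjoint data, and that bending a square with invertible vertical sides to a globular 2-cell is natural rather than merely a set-level bijection. Since the statement is exactly Prop.\ B.3 of Shulman \cite{Shul2}, one may simply cite that reference, but the argument above is its conceptual content.
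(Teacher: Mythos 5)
The thesis offers no proof of this proposition; it is quoted from Shulman \cite{Shul2} and used as a black box, so the only thing to compare your argument against is the citation itself. Your reconstruction is correct and is essentially the standard argument: local full faithfulness comes from the bijections $\mathbb{F}_1 \colon {}_{1_a}\mathbb{A}_{1_b}(M,N) \to {}_{1_{\mathbb{F}(a)}}\mathbb{X}_{1_{\mathbb{F}(b)}}(\mathbb{F}(M),\mathbb{F}(N))$ (using that $\mathbb{F}_0$ preserves identities), biessential surjectivity on objects from the fact that the companion of a vertical isomorphism is a horizontal adjoint equivalence with the conjoint as pseudo-inverse, and local essential surjectivity by lifting the vertical boundary isomorphisms through the fully faithful $\mathbb{F}_0$, conjugating by companions and conjoints, and bending. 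The three delicate points you flag at the end are precisely the lemmas that must be checked, and all hold: strong double functors carry companions and conjoints to companions and conjoints up to canonical globular isomorphism, and when the vertical boundaries are invertible the bending bijection is given by composing with the unit and counit cells of these adjoint equivalences, hence restricts to a bijection on isomorphisms rather than being merely a set-level correspondence. The only step worth making explicit is the last one: having shown $\mathbf{H}(\mathbb{F})$ is locally an equivalence and biessentially surjective, you are invoking the bicategorical Whitehead theorem to produce an actual pseudo-inverse and conclude it is a biequivalence.
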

\begin{corollary}
The bicategories $F\mathbf{Csp}$ and $\mathbf{H}( {_L \mathbb{C}\mathbf{sp}(\mathsf{X})})$ are biequivalent.
\end{corollary}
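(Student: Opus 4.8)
The plan is to obtain this corollary as a purely formal consequence of the equivalence of double categories already proved in Theorem \ref{Equiv}, transported to the horizontal bicategories by the preceding proposition of Shulman \cite[Prop.\ B.3]{Shul2}. That proposition converts an equivalence between fibrant double categories into a biequivalence of their horizontal bicategories, so the whole argument reduces to assembling two ingredients that are already in hand: the fibrancy of each double category, and the fact that the double functor $\mathbb{E}$ of Theorem \ref{Equiv} is a bona fide equivalence of double categories. No new calculation is needed beyond citing these.

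First I would record the fibrancy hypotheses. The decorated cospan double category $F\mathbb{C}\mathbf{sp}$ is fibrant by Lemma \ref{DCFibrant}, where companions and conjoints for vertical $1$-morphisms are produced explicitly. On the structured side, since $F$ factors through $\mathbf{Rex}$ the Grothendieck category $\mathsf{X} = \int F$ has finite colimits (Corollary \ref{Rex}), so Lemma \ref{lemma:isofibrant} applies to $\mathbb{C}\mathbf{sp}(\mathsf{X})$, and the foot-replaced double category ${_L \mathbb{C}\mathbf{sp}(\mathsf{X})}$ inherits the same companion/conjoint data; this is the very fibrancy used in Theorem \ref{Shulman1} to produce the bicategory $\mathbf{H}({_L \mathbb{C}\mathbf{sp}(\mathsf{X})})$ in the first place. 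Thus both double categories are admissible inputs for \cite[Prop.\ B.3]{Shul2}.

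Next I would invoke Theorem \ref{Equiv} itself, whose proof exhibits $\mathbb{E} \colon {_L \mathbb{C}\mathbf{sp}(\mathsf{X})} \to F\mathbb{C}\mathbf{sp}$ as a strong double functor that is full, faithful and essentially surjective, i.e.\ an equivalence of double categories in Shulman's sense. Feeding this equivalence of fibrant double categories into \cite[Prop.\ B.3]{Shul2} yields a biequivalence between the horizontal bicategories $\mathbf{H}({_L \mathbb{C}\mathbf{sp}(\mathsf{X})})$ and $\mathbf{H}(F\mathbb{C}\mathbf{sp}) = F\mathbf{Csp}$, which is the assertion. Concretely, the induced functor $\mathbf{H}(\mathbb{E})$ acts as $\mathbb{E}$ on objects, horizontal $1$-cells and globular $2$-morphisms; the content of the cited proposition is that the pseudoinverse and the unit and counit of the double equivalence restrict to globular data and so descend to a genuine pseudoinverse for $\mathbf{H}(\mathbb{E})$.

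The only point I expect to require care is matching conventions across the two cited results: confirming that the notion of ``equivalence of double categories'' in \cite[Prop.\ B.3]{Shul2} is exactly the strong, full, faithful and essentially surjective notion established in Theorem \ref{Equiv}, and that the fibrancy the proposition demands is supplied by Lemmas \ref{DCFibrant} and \ref{lemma:isofibrant} (upgrading isofibrancy to the companion/conjoint data for all vertical $1$-morphisms where needed, which the construction in the proof of Lemma \ref{lemma:isofibrant} already provides, since that construction never uses invertibility of the vertical morphism). Once these identifications are made the corollary is immediate, and no further verification is necessary. If one wanted the stronger statement of a \emph{symmetric monoidal} biequivalence, one would additionally cite that $\mathbb{E}$ is symmetric monoidal (also from Theorem \ref{Equiv}) and apply the monoidal refinement of Shulman's transport result; but the plain biequivalence claimed here needs only the unadorned statement.
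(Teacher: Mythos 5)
Your proposal is correct and follows exactly the paper's route: the corollary is obtained by feeding the double equivalence $\mathbb{E}$ of Theorem \ref{Equiv} into Shulman's result that an equivalence of fibrant double categories induces a biequivalence of horizontal bicategories, with fibrancy supplied by Lemma \ref{DCFibrant} and the companion/conjoint constructions on the structured-cospan side. Your remark that the construction of companions never uses invertibility of the vertical morphism is a worthwhile clarification of the convention-matching the paper leaves implicit, but no new content is needed.
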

Both the double equivalence and biequivalence are in fact isomorphisms. See \cite{BCV} for more details.

$\textnormal{ }$
}

{\ssp
\chapter{Coarse-graining open Markov processes}\label{Chapter6}

\section{Introduction}
A `Markov process' is a stochastic model describing a sequence of transitions between states in which the probability of a transition depends only on the current state.   The only Markov processes we consider here are continuous-time Markov chains with a finite set of states.   Such a Markov process can be drawn as a labeled graph:
\[
\begin{tikzpicture}[->,>=stealth',shorten >=1pt,thick,scale=1]
  \node[main node] (1) at (0,2.2) {$\scriptstyle{a}$};
  \node[main node](2) at (0,-.2) {$\scriptstyle{b}$};
  \node[main node](3) at (2.83,1)  {$\scriptstyle{c}$};
  \node[main node](4) at (5.83,1) {$\scriptstyle{d}$};
  \path[every node/.style={font=\sffamily\small}, shorten >=1pt]
    (3) edge [bend left=12] node[above] {$4$} (4)
    (4) edge [bend left=12] node[below] {$2$} (3)
    (2) edge [bend left=12] node[above] {$2$} (3) 
    (3) edge [bend left=12] node[below] {$1$} (2)
    (1) edge [bend left=12] node[above] {$1/2$}(3);
\end{tikzpicture}
\]
In this example the set of states is $X = \{a,b,c,d\}$.   The numbers labeling edges
are transition rates, so the probability $\pi_i(t)$ of being in state $i \in X$ at time 
$t \in \R$ evolves according to a linear differential equation
\[    \frac{d}{dt}\, \pi_i(t) = \sum_{j \in X} H_{ij}\, \pi_j(t) \]
called the `master equation', where the matrix $H$ can be read off from the diagram:
\[ H=
\left[\begin{array}{rrrr}
    -1/2    & 0    & 0    & 0  \\
      0                   & -2   & 1   & 0 \\
     1/2   & 2     & -5  & 2 \\
      0                  & 0      & 4   & -2 \\
\end{array}\right].
\]
If there is an edge from a state $j$ to a distinct state $i$, the matrix entry $H_{ij}$ is 
the number labeling that edge, while if there is no such edge, $H_{ij} = 0$.  The diagonal
entries $H_{ii}$ are determined by the requirement that the sum of each column is zero.  This requirement says that the rate at which probability leaves a state equals the rate at which it
goes to other states.   As a consequence, the total probability is conserved:
\[    \frac{d}{dt} \sum_{i \in X} \pi_i(t) = 0 \]
and is typically set equal to $1$.

However, while this sum over all states is conserved, the same need not be true
for the sum of $\pi_i(t)$ over $i$ in a subset $Y \subset X$.   This poses 
a challenge to studying a Markov process as built from smaller parts: the parts are
not themselves Markov processes.  The solution is to describe them as `open' Markov 
processes.   These are a generalization in which probability 
can enter or leave from certain states designated as inputs and outputs:
\[
\begin{tikzpicture}[->,>=stealth',shorten >=1pt,thick,scale=1]
  \node[main node] (1) at (0,2.2) {$\scriptstyle{a}$};
  \node[main node](2) at (0,-.2) {$\scriptstyle{b}$};
  \node[main node](3) at (2.83,1)  {$\scriptstyle{c}$};
  \node[main node](4) at (5.83,1) {$\scriptstyle{d}$};
\node(input) [color=purple] at (-2,1) {\small{\textsf{inputs}}};
\node(output) [color=purple] at (7.83,1) {\small{\textsf{outputs}}};
  \path[every node/.style={font=\sffamily\small}, shorten >=1pt]
    (3) edge [bend left=12] node[above] {$4$} (4)
    (4) edge [bend left=12] node[below] {$2$} (3)
    (2) edge [bend left=12] node[above] {$2$} (3) 
    (3) edge [bend left=12] node[below] {$1$} (2)
    (1) edge [bend left=12] node[above] {$1/2$}(3);
    
\path[color=purple, very thick, shorten >=10pt, ->, >=stealth] (output) edge (4);
\path[color=purple, very thick, shorten >=10pt, ->, >=stealth, bend left] (input) edge (1);
\path[color=purple, very thick, shorten >=10pt, ->, >=stealth, bend right]
(input) edge (2);
\end{tikzpicture}
\]
In an open Markov process, probabilities change with time according to the `open master equation', a generalization of the master equation that includes inflows and outflows.  
In the above example, the open master equation is 
\[    \frac{d}{dt}\left[\begin{array}{r} \pi_a(t) \\ \pi_b(t) \\ \pi_c(t) \\ \pi_d(t) \end{array}\right]  \;\; = \; \; \left[\begin{array}{rrrr}
    -1/2    & 0    & 0    & 0  \\
      0                   & -2   & 1   & 0 \\
      1/2   & 2     & -5  & 2 \\
      0                  & 0      & 4   & -2 \\
\end{array}\right] 
\left[\begin{array}{r} \pi_a(t) \\ \pi_b(t) \\ \pi_c(t) \\ \pi_d(t) \end{array}\right]  \; + \;
\left[\begin{array}{c} I_a(t) \\ I_b(t) \\0 \\ 0 \end{array}\right] \; - \; 
\left[\begin{array}{c} 0 \\ 0 \\0 \\ O_d(t) \end{array}\right] .
\]
To the master equation we have added a term describing inflows at the states $a$ and $b$ and subtracted a term describing outflows at the state $d$.   The functions $I_a, I_b$ and $O_d$ are
not part of the data of the open Markov process.  Rather, they are arbitrary smooth real-valued functions of time.  We think of these as provided from outside---for example, though not necessarily, from the rest of a larger Markov process of which the given open Markov process is part.

Open Markov processes can be seen as morphisms in a category, since we can compose two open Markov processes by identifying the outputs of the first with the inputs of the second.   Composition lets us build a Markov process from smaller open parts---or conversely, analyze the behavior of a Markov process in terms of its parts. Categories of this sort have been studied in a number of papers \cite{BF,BFP,FongThesis,PollardThesis}, but here we go further and construct a double category to describe coarse-graining.

`Coarse-graining' is a widely used method of simplifying a Markov process by mapping its set of states $X$ onto some smaller set $X'$ in a manner that respects, or at least approximately respects, the dynamics \cite{And,Buchholz}.  Here we introduce coarse-graining for open Markov processes.   We show how to extend this notion to the case of maps $p \maps X \to X'$ that are not surjective, obtaining a general concept of morphism between open Markov processes.

Since open Markov processes are already morphisms in a category, it is natural to treat morphisms between them as 2-morphisms. To this end, we construct a double category $\MMark$ with:
\begin{enumerate}
\item finite sets as objects, 
\item functions as vertical 1-morphisms,
\item open Markov processes as horizontal 1-cells, 
\item morphisms between open Markov processes as 2-morphisms.
\end{enumerate}
Composition of open Markov processes is only weakly associative, so this double category is not strict. In fact, $\MMark$ is symmetric monoidal: this captures the fact that we can not only compose open Markov processes but also `tensor' them by setting them side by side.  For example, if we compose this open Markov process:
\[

\]

If we fix constant probabilities at the inputs and outputs, there typically exist solutions of the open master equation with these boundary conditions that are constant as a function of time.  These are called `steady states'.  Often these are \emph{nonequilibrium} steady states, meaning that there is a nonzero net flow of probabilities at the inputs and outputs.   For example, probability can flow through an open Markov process at a constant rate in a nonequilibrium steady state.

In previous work, Baez, Fong and Pollard studied the relation between probabilities and flows at the inputs and outputs that holds in steady state \cite{BFP,BP}.  They called the process of extracting this relation from an open Markov process `black-boxing', since it gives a way to forget the internal workings of an open system and remember only its externally observable behavior.   They proved that black-boxing is compatible with composition and tensoring.  This result can be summarized by saying that black-boxing is a symmetric monoidal functor. 

For the main result \cite{BC}, we show that black-boxing is compatible with morphisms between open Markov processes.  To make this idea precise, we prove that black-boxing gives a map from the double category $\MMark$ to another double category, called $\LLinRel$, which has:
\begin{enumerate}
\item finite-dimensional real vector spaces $U,V,W,\dots$ as objects,
\item linear maps $f \maps V \to W$ as vertical 1-morphisms from $V$ to $W$,
\item linear relations $R \subseteq V \oplus W$  as horizontal 1-cells from $V$ to $W$,
\item squares 
\[
\begin{tikzpicture}[scale=1.5]
\node (D) at (-4,0.5) {$V_1$};
\node (E) at (-2,0.5) {$V_2$};
\node (F) at (-4,-1) {$W_1$};
\node (A) at (-2,-1) {$W_2$};
\node (B) at (-3,-0.25) {};
\path[->,font=\scriptsize,>=angle 90]
(D) edge node [above]{$R \subseteq V_1 \oplus V_2$}(E)
(E) edge node [right]{$g$}(A)
(D) edge node [left]{$f$}(F)
(F) edge node [above]{$S \subseteq W_1 \oplus W_2$} (A);
\end{tikzpicture}
\]
obeying $(f \oplus g)R \subseteq S$ as 2-morphisms. 
\end{enumerate}
Here a `linear relation' from a vector space $V$ to a vector space $W$ is a linear subspace
$R \subseteq V \oplus W$.   Linear relations can be composed in the same way as relations \cite{BE}.  The double category $\LLinRel$ becomes symmetric monoidal using direct sum as the tensor product, but unlike $\MMark$ it is strict: that is, composition of linear relations is associative.

The main result, Theorem \ref{thm:main}, 
says that black-boxing gives a symmetric monoidal double functor
\[   \blacksquare \maps \MMark \to \LLinRel .\]
The hardest part is to show that black-boxing preserves composition of horizontal
1-cells: that is, black-boxing a composite of open Markov processes gives the composite 
of their black-boxings.  Luckily, for this we can adapt a previous argument \cite{BP} due to Baez and Pollard.  
Thus, the new content of this result concerns the vertical 1-morphisms and especially 
the 2-morphisms, which describe coarse-grainings.

An alternative approach to studying morphisms between open Markov processes would use
bicategories rather than double categories.  The symmetric monoidal double categories $\mathbb{M}\mathbf{ark}$ and $\mathbb{L}\mathbf{inrel}$ can be converted into symmetric monoidal bicategories using Shulman's technique \cite{Shul}. In \cite{BC}, Baez and the author conjectured that the black-boxing double functor would determine a functor between these symmetric monoidal bicategories, and Hansen and Shulman \cite{Shul3} consequently proved this conjecture: see Theorem \ref{blackbox_functor_bicategories}. However, double categories seem to be a simpler framework for coarse-graining open Markov processes.

It is worth comparing some related work.  Baez,  Fong and Pollard constructed a symmetric monoidal category where the morphisms are open Markov processes \cite{BFP,BP}. As in this chapter, they only consider Markov processes where time is continuous and the set of states is finite.  
However, they formalized such Markov processes in a slightly different way than is done here: they defined a Markov process to be a directed multigraph where each edge is assigned a positive number called its `rate constant'.   In other words, they defined it to be a diagram
\[ \xymatrix{ (0,\infty) & E \ar[l]_-r \ar[r]<-.5ex>_t  \ar[r] <.5ex>^s & X  }  \]
where $X$ is a finite set of vertices or `states', $E$ is a finite set of edges or `transitions' 
between states, the functions $s,t \maps E \to X$ give the source and target of each edge, and 
$r \maps E \to (0,\infty)$ gives the rate constant of each edge.   They explained how from this data
one can extract a matrix of real numbers $(H_{ij})_{i,j \in X}$ called the `Hamiltonian' of the Markov process, with two familiar properties:
\begin{enumerate}
\item $H_{ij} \geq 0$ if $i \neq j$, 
\item $\sum_{i \in X} H_{ij} = 0$ for all $j \in X$.
\end{enumerate}
A matrix with these properties is called `infinitesimal stochastic', since these conditions are equivalent to $\exp(tH)$ being stochastic for all $t \ge 0$.    

Here we skip the directed multigraphs and work directly with the Hamiltonians.  Thus, we define a Markov process to be a finite set $X$ together with an infinitesimal stochastic matrix $(H_{ij})_{i,j \in X}$.  This allows us to work more directly with the Hamiltonian and the all-important master equation
\[         \frac{d}{dt} \pi(t) = H \pi(t)  \]
which describes the evolution of a time-dependent probability distribution $\pi(t) \maps X \to \R$.

Clerc, Humphrey and Panangaden have constructed a bicategory \cite{Panan} with finite sets as objects, `open discrete labeled Markov processes' as morphisms, and `simulations' as 2-morphisms. In their framework, `open' has a similar meaning as it does in the works listed above.  These open discrete labeled Markov processes are also equipped with a set of `actions' which represent interactions between the Markov process and the environment, such as an outside entity acting on a stochastic system.   A `simulation' is then a function between the state spaces that map the inputs, outputs and set of actions of one open discrete labeled Markov process to the inputs, outputs and set of actions of another.   

Another compositional framework for Markov processes is given by de Francesco Albasini, Sabadini and Walters \cite{Francesco} in which they construct an algebra of `Markov automata'.  A Markov automaton is a family of matrices with nonnegative real coefficients that is indexed by elements of a binary product of sets, where one set represents a set of `signals on the left interface' of the Markov automata and the other set analogously for the right interface.

\section{Open Markov processes}
Before explaining open Markov processes we should recall a bit about Markov processes.  As mentioned in the Introduction, we use `Markov process' as a short term for `continuous-time Markov chain with a finite set of states', and we identify any such Markov process with the
infinitesimal stochastic matrix appearing in its master equation.  We make this precise with a bit 
of terminology that is useful throughout the chapter. 

Given a finite set $X$, we call a function $v \maps X \to \R$ a `vector' and call its values at points $x \in X$ its `components' $v_x$.   We define a `probability distribution' on $X$ to be a vector $\pi \maps X \to \R$ whose components are nonnegative and sum to $1$.  As usual, we use $\R^X$ to denote the vector space of functions $v \maps X \to \R$.    Given a linear operator $T \maps \R^X \to \R^Y$ we have $(T v)_i = \sum_{j \in X} T_{ij} v_j$ for some `matrix' $T \maps Y \times X \to \R$ with entries $T_{ij}$.   

\begin{definition}
Given a finite set $X$, a linear operator $H \maps \R^X \to \R^X$ is \define{infinitesimal stochastic} if
\begin{enumerate}
\item $H_{ij} \geq 0$ for $i \neq j$ and 
\item $\sum_{i \in X} H_{ij}=0$ for each $j \in X$.
\end{enumerate}
\end{definition}

The reason for being interested in such operators is that when exponentiated
they give stochastic operators.

\begin{definition} Given finite sets $X$ and $Y$, a linear operator $T \maps \R^X \to \R^Y$ is \define{stochastic} if for any probability distribution $\pi$ on $X$, $T\pi$ is a probability
distribution on $Y$.
\end{definition}

Equivalently, $T$ is stochastic if and only if 
\begin{enumerate}
\item $T_{ij} \ge 0$ for all $i \in Y$, $j \in X$ and
\item $\sum_{i \in Y} T_{ij} = 1$ for each $j \in X$.
\end{enumerate}
If we think of $T_{ij}$ as the probability for $j \in X$ to be mapped to $i \in Y$, these conditions
make intuitive sense.   Since stochastic operators are those that preserve probability distributions, the composite of stochastic operators is stochastic.  

In Lemma \ref{lem:exponentiation} we recall that a linear operator $H \maps \R^X \to \R^X$ 
is infinitesimal stochastic if and only if its exponential
\[     \exp(tH) = \sum_{n = 0}^\infty \frac{(tH)^n}{n!}  \]
is stochastic for all $t \ge 0$.  
Thus, given an infinitesimal stochastic operator $H$, for any time $t \ge 0$
we can apply the operator $\exp(tH) \maps \R^X \to \R^X$ to any probability 
distribution $\pi \in \R^X$ and get a probability distribution
\[      \pi(t) = \exp(tH) \pi. \]
These probability distributions $\pi(t)$ obey the \define{master equation}
\[       \frac{d}{dt}\pi(t) = H \pi(t) .\]
Moreover, any solution of the master equation arises this way.

All the material so far is standard \cite[Sec.\ 2.1]{Norris}.  We now turn to open Markov processes.  

\begin{definition}
We define a \define{Markov process} to be a pair $(X,H)$ where $X$ is a finite set and 
$H \maps \R^X \to \R^X$ is an infinitesimal stochastic operator.    We also call $H$ a 
Markov process \define{on} $X$. 
\end{definition}

\begin{definition}
\label{defn:open_markov_process}
We define an \define{open Markov process} to consist of finite sets $X$, $S$ and $T$ and
injections
\[
\begin{tikzpicture}[scale=1.2]
\node (D) at (-4,-0.5) {$S$};
\node (E) at (-3,.5) {$X$};
\node (F) at (-2,-0.5) {$T$};
\path[->,font=\scriptsize,>=angle 90]
(D) edge[>->] node[above,pos=0.3] {$i$}(E)
(F) edge[>->] node[above,pos=0.3] {$o$}(E);
\end{tikzpicture}
\]
together with a Markov process $(X,H)$.   We call $S$ the set of \define{inputs} and
$T$ the set of \define{outputs}.
\end{definition}

Thus, an open Markov process is a cospan in $\FinSet$ with injections as legs and a Markov process on its apex.  We do not require that the injections have disjoint images.  We often abbreviate an open Markov process as
\[
\begin{tikzpicture}[scale=1.2]
\node (D) at (-4,-0.5) {$S$};
\node (E) at (-3,.5) {$(X,H)$};
\node (F) at (-2,-0.5) {$T$};
\path[->,font=\scriptsize,>=angle 90]
(D) edge[>->] node[above,pos=0.3] {$i$}(E)
(F) edge[>->] node[above,pos=0.3] {$o$}(E);
\end{tikzpicture}
\]
or simply $S \stackrel{i}{\rightarrowtail} (X,H) \stackrel{o}{\leftarrowtail} T$.

Given an open Markov process we can write down an `open' version of the master equation, where probability can also flow in or out of the inputs and outputs.   To work with the open
master equation we need two well-known concepts:

\begin{definition}
\label{defn:push_and_pull}
Let $f \maps A \to B$ be a map between finite sets.  The linear map $f^* \maps \R^B \to
\R^A$ sends any vector $v \in \R^B$ to its \define{pullback} along $f$, given by
\[                       f^*(v) = v \circ f  . \]
The linear map $f_* \maps \R^A \to \R^B$ sends any vector $v \in \R^A$ to its \define{pushforward} along $f$, given by
\[                       (f_*(v))(b) = \sum_{ \{a : \; f(a) = b\} } v(a)  .\]
\end{definition}
\noindent If we write $f^*$ and $f_*$ as matrices with respect to the standard bases of
$\R^A$ and $\R^B$, they are simply transposes of one another.

Now, suppose we are given an open Markov process
\[
\begin{tikzpicture}[scale=1.2]
\node (D) at (-4,-0.5) {$S$};
\node (E) at (-3,.5) {$(X,H)$};
\node (F) at (-2,-0.5) {$T$};
\path[->,font=\scriptsize,>=angle 90]
(D) edge[>->] node[above,pos=0.3] {$i$}(E)
(F) edge[>->] node[above,pos=0.3] {$o$}(E);
\end{tikzpicture}
\]
together with \define{inflows} $I \maps \R \to \R^S$ and \define{outflows} $O \maps \R \to \R^T$, arbitrary smooth functions of time.  We write the value of the inflow at $s \in S$ at time $t$ as $I_s(t)$, and similarly for the outflows and other functions of time.  We say that a function $v \maps \R \to \R^X$ obeys the \define{open master equation} if
\[ \frac{dv(t)}{dt} = H v(t) + i_*(I(t)) - o_*(O(t)). \]
This says that for any state $j \in X$ the time derivative of $v_j(t)$ 
takes into account not only the usual term from the master equation, but also those of the inflows and outflows.

If the inflows and outflows are constant in time, a solution $v$ of the open master equation
that is also constant in time is called a \define{steady state}. More formally:

\begin{definition}
Given an open Markov process $S \stackrel{i}{\rightarrowtail} (X,H) \stackrel{o}{\leftarrowtail} T$ together with $I \in \R^S$ and $O \in \R^T$, a \define{steady state} with inflows $I$ and outflows $O$ is an element $v \in \R^X$ such that 
\[     H v + i_*(I) - o_*(O) = 0 .\]   
Given $v \in \R^X$, we call $i^*(v) \in \R^S$ and $o^*(v) \in \R^T$ the 
\define{input probabilities} and \define{output probabilities}, respectively.
\end{definition}

\begin{definition}
\label{defn:black-boxing}
Given an open Markov process $S \stackrel{i}{\rightarrowtail} (X,H) \stackrel{o}{\leftarrowtail} T$, we define its \define{black-boxing} to be the set
\[   \blacksquare\big(S \stackrel{i}{\rightarrowtail} (X,H) \stackrel{o}{\leftarrowtail} T\big) 
\subseteq \R^S \oplus \R^S \oplus \R^T \oplus \R^T \]
consisting of all 4-tuples $(i^*(v),I,o^*(v),O)$ where $v \in \R^X$ is some steady state
with inflows $I \in \R^S$ and outflows $O \in \R^T$.
\end{definition}

Thus, black-boxing records the relation between input probabilities, inflows, output probabilities and outflows that holds in steady state.  This is the `externally observable steady state behavior' of the open Markov process.   It has already been shown \cite{BFP,BP} that black-boxing can be seen as a functor between categories.    Here we go further and describe it as a double functor between double categories, in order to study the effect of black-boxing on morphisms between open Markov processes.

\section{Morphisms of open Markov processes}

There are various ways to approximate a Markov process by another Markov process on a smaller set, all of which can be considered forms of coarse-graining \cite{Buchholz}.  A common approach is to take a Markov process $H$ on a finite set $X$ and a surjection $p \maps X \to X'$ and create a Markov process on $X'$.   In general this requires a choice of `stochastic section' for $p$, defined as follows:

\begin{definition}
Given a function $p \maps X \to X'$ between finite sets, a \define{stochastic section} for
$p$ is a stochastic operator $s \maps \R^{X'} \to \R^X$ such that $p_* s = 1_{X'}$. 
\end{definition}

It is easy to check that a stochastic section for $p$ exists if and only if $p$ is a surjection.  In Lemma \ref{lem:new_markov_process} we show that given a Markov process $H$ on $X$ and a surjection $p \maps X \to X'$, any stochastic section $s \maps \R^{X'} \to \R^X$ gives a Markov process on $X'$, namely
\[            H' =  p_* H s. \]
Experts call the matrix corresponding to $p_*$ the \define{collector matrix}, and they call $s$ the \define{distributor matrix} \cite{Buchholz}.  The names help clarify what is going on.  The collector matrix, coming from the surjection $p \maps X \to X'$, typically maps many states of $X$ to each state of $X'$.  The distributor matrix, the stochastic section $s \maps \R^{X'} \to \R^X$, typically maps each state in $X'$ to a linear combination of many states in $X$.  Thus, $H' = p_* H s$ distributes each state of $X'$, applies $H$, and then collects the results.

In general $H'$ depends on the choice of $s$, but sometimes it does not:

\begin{definition} 
We say a Markov process $H$ on $X$ is \define{lumpable} with respect to 
a surjection $p \maps X \to X'$ if the operator $p_* H s$ is independent of the choice of stochastic section $s \maps \R^{X'} \to \R^X$.
\end{definition}

This concept is not new \cite{Buchholz}.   In Theorem\ \ref{thm:lumpability} we show that it is equivalent to another traditional formulation, and also to an even simpler one: $H$ is lumpable with respect to $p$ if and only if $p_* H = H' p_*$.    This equation has the advantage of making sense even when $p$ is not a surjection.   Thus, we can use it to define a more general concept of morphism between Markov processes:

\begin{definition}  
Given Markov processes $(X,H)$ and $(X',H')$, a \define{morphism of Markov
processes} $p \maps (X,H) \to (X',H')$ is a map $p \maps X \to X'$ such that
$p_* H = H' p_*$.  
\end{definition}

There is a category $\Mark$ with Markov processes as objects and the morphisms as defined above, where composition is the usual composition of functions.  But what is the meaning of such a morphism?   Using Lemma \ref{lem:exponentiation} one can check that for any Markov processes $(X,H)$ and $(X',H')$, and any map
$p \maps X \to X'$, we have
\[    p_* H = H' p_* \; \iff \;  p_* \exp(tH) = \exp(tH') p_* \textrm{ for all } t \ge 0.\]
Thus, $p$ is a morphism of Markov processes if evolving a probability distribution
on $X$ via $\exp(tH)$ and then pushing it forward along $p$ is the same as
pushing it forward and then evolving it via $\exp(tH')$.  

We can also define morphisms between open Markov processes:

\begin{definition}\label{defn:coarse-graining} 
A \define{morphism of open Markov processes} from the open Markov process $S \stackrel{i}{\rightarrowtail} (X,H) \stackrel{o}{\leftarrowtail} T$ to the open Markov process $S' \stackrel{i'}{\rightarrowtail} (X',H') \stackrel{o'}{\leftarrowtail} T'$ is a triple of functions $f \maps S \to S'$, $p \maps X \to X'$, $g \maps T \to T'$ such that the squares in this diagram are pullbacks:
\[
\right] .  \]
Here $I$ is an arbitrary smooth 
function of time describing the inflow at the one point of $S$, and $O$ is a similar function
describing the outflow at the one point of $T$.

Suppose we want to simplify this open Markov process by identifying the states $b_1$ and
$b_2$.    To do this we take $X' = \{a,b,c \}$ and define $p \maps X \to X'$ by
\[    p(a) = a, \quad p(b_1) = p(b_2) = b, \quad p(c) = c .\]
To construct the infinitesimal stochastic operator $H' \maps \R^{X'} \to \R^{X'}$ for the
simplified open Markov process we need to choose a stochastic section $s \maps \R^{X'} \to \R^X$ for $p$, for example
\[ s=
\left[
\right] .  \]

In Section \ref{sec:MMark} we construct a double category $\MMark$ with open Markov processes as horizontal 1-cells and morphisms between these as 2-morphisms.  This double category is our main object of study.   First, however, we should prove the results mentioned above.   For this it is helpful to recall a few standard concepts:

\begin{definition}
A \define{1-parameter semigroup of operators} is a collection of linear operators $U(t) \maps V \to V$ on a vector space $V$, one for each $t \in [0,\infty)$, such that
\begin{enumerate}
\item $U(0)=1$ and 
\item $U(s+t)=U(s)U(t)$ for all $s,t \in [0,\infty)$.
If $V$ is finite-dimensional we say the collection $U(t)$ is \define{continuous} if $t \mapsto U(t) v$ is continuous for each $v \in V$.
\end{enumerate}
\end{definition}

\begin{definition}
Let $X$ be a finite set. A \define{Markov semigroup} is a continuous 1-parameter semigroup
$U(t) \maps \R^X \to \R^X$ such that $U(t)$ is stochastic for each $t \in [0,\infty)$.
\end{definition}

\begin{lemma}
\label{lem:exponentiation}
Let $X$ be a finite set and $U(t) \maps \R^X \to \R^X$ a Markov semigroup. Then $U(t)= \exp(tH)$ for a unique infinitesimal stochastic operator $H \maps \R^X \to \R^X$, which is given by
\[  Hv = \left. \frac{d}{dt} U(t)v \right|_{t=0} \]
for all $v \in \R^X$. Conversely, given an infinitesimal stochastic operator $H$, then $\exp(tH)=U(t)$ is a Markov semigroup. 
\end{lemma}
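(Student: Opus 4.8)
The plan is to treat the two directions separately, handling the easier converse first and then the analytic heart of the forward direction.

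For the converse, suppose $H$ is infinitesimal stochastic. That $\exp(tH)$ is a continuous one-parameter semigroup is immediate from the standard properties of the matrix exponential: $\exp(0 \cdot H) = 1$, the series defines an analytic (hence continuous) function of $t$, and $\exp((s+t)H) = \exp(sH)\exp(tH)$ since $sH$ and $tH$ commute. The real content is showing $\exp(tH)$ is stochastic for $t \ge 0$, so I would verify the two defining conditions. For the column sums, writing $\mathbf{1}$ for the row vector of all ones, the condition $\sum_i H_{ij} = 0$ says $\mathbf{1}H = 0$, whence $\mathbf{1}H^n = 0$ for all $n \ge 1$ and therefore $\mathbf{1}\exp(tH) = \mathbf{1}$, i.e. each column of $\exp(tH)$ sums to $1$. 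For nonnegativity I would use the shift trick: choose $c > 0$ large enough that $M := cI + H$ has all entries nonnegative, which is possible because the off-diagonal entries of $H$ are already $\ge 0$ and only the diagonal entries need correcting. Then $\exp(tH) = e^{-ct}\exp(tM)$, and since $M$ has nonnegative entries every term of $\exp(tM) = \sum_n (tM)^n/n!$ has nonnegative entries for $t \ge 0$; multiplying by $e^{-ct} > 0$ preserves this. Together with the column-sum condition, $\exp(tH)$ is stochastic.

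For the forward direction, the key is that $\R^X$ is finite-dimensional, so a continuous one-parameter semigroup is automatically smooth and possesses a generator. I would make this precise via the standard integral argument: set $A(s) = \int_0^s U(t)\,dt$; since $\frac{1}{s}A(s) \to U(0) = 1$ as $s \to 0^+$, the operator $A(s)$ is invertible for small $s > 0$. The semigroup law gives $U(h)A(s) = A(s+h) - A(h)$, and dividing by $h$ and letting $h \to 0^+$ shows that $\frac{U(h)-1}{h}$ converges to $H := (U(s)-1)A(s)^{-1}$; in particular the right derivative $H = \frac{d}{dt}U(t)\big|_{t=0}$ exists. A further application of the semigroup law shows $U$ is differentiable for all $t$ with $U'(t) = HU(t)$, and solving this linear ODE with initial condition $U(0) = 1$ yields $U(t) = \exp(tH)$; uniqueness of $H$ is forced since $H = U'(0)$ is determined by $U$. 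Finally I would check $H$ is infinitesimal stochastic directly from $H = \lim_{t \to 0^+}(U(t)-1)/t$: for $i \ne j$, the inequalities $U(t)_{ij} \ge 0$ and $\delta_{ij} = 0$ give $H_{ij} \ge 0$, while $\sum_i U(t)_{ij} = 1$ for all $t$ gives $\sum_i H_{ij} = 0$.

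The main obstacle is the forward direction, specifically establishing the existence of the generator---that a merely continuous semigroup on $\R^X$ is differentiable and equals the exponential of $H = U'(0)$. This is where finite-dimensionality is essential and where the integral and invertibility argument above does the real work; once differentiability is in hand, the identification of $H$ and the verification of the infinitesimal stochastic conditions are routine. The only other point requiring a genuine idea is the nonnegativity step in the converse, handled by the $cI$-shift above.
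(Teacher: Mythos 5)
Your proposal is correct, and it does substantially more work than the paper does: the paper's ``proof'' of this lemma consists entirely of citations, deferring the existence and form of the generator to Engel and Nagel (Sec.\ I.2) and the equivalence between $U(t)$ stochastic and $H$ infinitesimal stochastic to Norris (Theorem 2.1.2). The arguments you supply are precisely the standard ones contained in those references: the integral trick $A(s) = \int_0^s U(t)\,dt$ with $\frac{1}{s}A(s) \to 1$ to manufacture the generator of a merely continuous semigroup in finite dimensions, and the shift $M = cI + H$ to get entrywise nonnegativity of $\exp(tH)$, combined with $\mathbf{1}H = 0$ for the column sums. Your verification that $H = \lim_{t\to 0^+}(U(t)-1)/t$ is infinitesimal stochastic is, in fact, exactly the content of the paper's separate Lemma on differentiating families of stochastic operators (the one immediately following this lemma in the text), so that part of your argument duplicates a result the paper states independently. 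The only point you gloss is the passage from right-differentiability of $U$ to full differentiability before solving the ODE $U'(t) = HU(t)$, but this is routine in finite dimensions and does not constitute a gap. In short: same mathematics, but you actually wrote it down.
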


\begin{proof}  This is well known.   For a proof that every continuous one-parameter semigroup of 
operators $U(t)$ on a finite-dimensional vector space $V$ is in fact differentiable and of the form $\exp(tH)$ where $Hv = \left. \frac{d}{dt} U(t)v \right|_{t=0}$, see Engel and Nagel \cite[Sec.\ I.2]{EngelNagel}.  For a proof that $U(t)$ is then a Markov semigroup if and only if $H$ is infinitesimal stochastic, see Norris \cite[Theorem\ 2.1.2]{Norris}. \end{proof}

\begin{lemma}
\label{lem:differentiation}
Let $U(t) \maps \R^X \to \R^X$ be a differentiable family of stochastic operators defined for $t \in [0,\infty)$ and having $U(0)=1$. Then $\left. \frac{d}{dt} U(t) \right|_{t=0}$ is infinitesimal stochastic.
\end{lemma}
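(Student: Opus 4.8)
The plan is to work entirely with matrix entries and difference quotients, exploiting the defining inequalities and equalities of stochastic operators together with the finiteness of $X$. Write $H = \left.\frac{d}{dt}U(t)\right|_{t=0}$, which exists by hypothesis (as a one-sided derivative at $t=0$, since $U$ is only defined on $[0,\infty)$). In terms of the standard basis of $\R^X$, the entries are
\[ H_{ij} = \lim_{t \to 0^+} \frac{U(t)_{ij} - U(0)_{ij}}{t} = \lim_{t \to 0^+} \frac{U(t)_{ij} - \delta_{ij}}{t}, \]
using that $U(0) = 1$ forces $U(0)_{ij} = \delta_{ij}$. Recall from the characterization just before this lemma that $U(t)$ stochastic means $U(t)_{ij} \ge 0$ for all $i,j$ and $\sum_{i \in X} U(t)_{ij} = 1$ for each $j$. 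The goal is to verify the two conditions in the definition of infinitesimal stochastic for $H$.

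First I would handle the off-diagonal sign condition. Fix $i \neq j$, so $\delta_{ij} = 0$ and the difference quotient is $U(t)_{ij}/t$. For every $t > 0$ the numerator $U(t)_{ij}$ is nonnegative and the denominator is positive, so each difference quotient is $\ge 0$. Since the (one-sided) limit of a family of nonnegative quantities is nonnegative, we conclude $H_{ij} \ge 0$ for $i \neq j$, which is condition (1).

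Next I would handle the column-sum condition. The key observation is that $t \mapsto \sum_{i \in X} U(t)_{ij}$ is the constant function $1$ for each fixed $j$. Because $X$ is finite, differentiation commutes with this finite sum, so
\[ 0 = \frac{d}{dt}\Bigl(\sum_{i \in X} U(t)_{ij}\Bigr) = \sum_{i \in X} \frac{d}{dt} U(t)_{ij}. \]
Evaluating at $t = 0$ gives $\sum_{i \in X} H_{ij} = 0$ for each $j$, which is condition (2). Together with the first part, this shows $H$ is infinitesimal stochastic.

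There is no serious obstacle here; the proof is a short direct computation. The only two points requiring a word of care are (i) that the derivative at $t=0$ is genuinely one-sided, so the sign argument in condition (1) must appeal to the preservation of nonnegativity under one-sided limits rather than to a two-sided derivative, and (ii) the interchange of the finite sum with the derivative in condition (2), which is justified precisely by the finiteness of $X$ built into the definition of a Markov process. I would flag both explicitly but not belabor them.
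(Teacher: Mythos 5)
Your proof is correct and follows essentially the same route as the paper: the paper observes that $(U(t)-1)/t$ is itself infinitesimal stochastic for each $t>0$ (nonnegative off-diagonal entries, zero column sums) and that these conditions pass to the one-sided limit, which is exactly the entrywise difference-quotient argument you give. Your explicit flagging of the one-sided limit and the finite-sum interchange is a harmless elaboration of the same computation.
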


\begin{proof}
Let $H=\left. \frac{d}{dt} U(t) \right|_{t=0}  = \lim_{t \to 0^+} (U(t)-1)/t$. As $U(t)$ is stochastic, its entries are nonnegative and the column sum of any particular column is 1. Then the column sum of any particular column of $U(t)-1$ will be 0 with the off-diagonal entries being nonnegative. Thus $U(t)-1$ is infinitesimal stochastic for all $t \geq 0$, as is $(U(t)-1)/t$, from which it follows that $\lim_{t\to 0^+} (U(t)-U(0))/t = H$ is infinitesimal stochastic.
\end{proof}

\begin{lemma}
\label{lem:new_markov_process}
Let $p \maps X \to X'$ be a function between finite sets with a stochastic section $s \maps \R^{X'} \to \R^X$, and let $H \maps \R^X \to \R^X$ be an infinitesimal stochastic operator. Then $H' = p_* H s \maps \R^{X'} \to \R^{X'}$ is also infinitesimal stochastic.
\end{lemma}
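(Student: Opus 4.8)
The plan is to avoid a direct entrywise computation and instead exploit the exponentiation machinery just established in Lemmas \ref{lem:exponentiation} and \ref{lem:differentiation}. The guiding observation is that $H'$ should be recovered as the infinitesimal generator of a differentiable family of stochastic operators obtained by conjugating the Markov semigroup of $H$ through $p_*$ and $s$.

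First I would record that $p_* \maps \R^X \to \R^{X'}$ is itself a stochastic operator: its matrix entries are $0$ or $1$, with $(p_*)_{b,a} = 1$ exactly when $p(a) = b$, so each column sums to $1$ and all entries are nonnegative. The section $s$ is stochastic by hypothesis, and by Lemma \ref{lem:exponentiation} the operator $\exp(tH)$ is stochastic for every $t \ge 0$ since $H$ is infinitesimal stochastic. Then I would form the family $U'(t) = p_* \exp(tH)\, s \maps \R^{X'} \to \R^{X'}$. Because a composite of stochastic operators is stochastic (stochastic operators preserve probability distributions), each $U'(t)$ is stochastic; the family is differentiable in $t$ because $\exp(tH)$ is and $p_*, s$ are constant linear maps; and $U'(0) = p_* s = 1_{X'}$ using the defining property of a stochastic section.

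With these facts in hand, Lemma \ref{lem:differentiation} applies directly to $U'$, giving that $\left.\frac{d}{dt} U'(t)\right|_{t=0}$ is infinitesimal stochastic. It then remains only to identify this derivative. Since $p_*$ and $s$ are constant linear operators, differentiation passes through them, so
\[ \left.\frac{d}{dt} U'(t)\right|_{t=0} = p_* \left(\left.\frac{d}{dt}\exp(tH)\right|_{t=0}\right) s = p_* H s = H', \]
where the middle equality is again Lemma \ref{lem:exponentiation}. This shows $H'$ is infinitesimal stochastic, which is exactly the claim.

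I do not expect a serious obstacle here; the argument is essentially bookkeeping once the family $U'(t)$ is identified, and the only points needing care are verifying that $p_*$ is stochastic and that $U'(0) = 1_{X'}$. As an alternative I could argue entrywise: for $i \ne j$ the stochastic-section identity $p_* s = 1_{X'}$ together with nonnegativity of $s$ forces $s_{aj} = 0$ whenever $p(a) = i$, which kills the only potentially negative (diagonal) contribution to $H'_{ij}$ and yields $H'_{ij} \ge 0$, while the column-sum condition follows from $\sum_{i} H'_{ij} = \sum_{a,b} H_{ab} s_{bj} = \sum_b \left(\sum_a H_{ab}\right) s_{bj} = 0$. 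The semigroup route is cleaner and reuses the preceding lemmas, so I would present that as the main proof.
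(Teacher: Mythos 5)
Your proof is correct and follows essentially the same route as the paper: both form the family $U(t) = p_*\exp(tH)\,s$, observe it is a differentiable family of stochastic operators with $U(0) = 1_{X'}$, and apply Lemma \ref{lem:differentiation} to identify $p_*Hs$ as its infinitesimal stochastic generator. Your explicit check that $U'(0) = p_*s = 1_{X'}$ is a hypothesis of Lemma \ref{lem:differentiation} that the paper leaves implicit, so that is a welcome addition.
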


\begin{proof}
Lemma \ref{lem:exponentiation} implies that $\exp(tH)$ is stochastic for all $t \ge 0$.    For any map $p \maps X \to X'$ the operator $p_* \maps \R^X \to \R^{X'}$ is easily seen to be stochastic, and $s$ is stochastic by assumption.  Thus, $U(t) = p_* \exp(tH) s$ is stochastic for all $t \ge 0$.    Differentiating, we conclude that 
\[   \left. \frac{d}{dt} U(t) \right|_{t=0}= \left. \frac{d}{dt} p_* \exp(tH) s \right|_{t=0} =
\left. p_* \exp(tH)H s \right|_{t=0} = p_* H s \] 
is infinitesimal stochastic by Lemma \ref{lem:differentiation}.
\end{proof}

We can now give some conditions equivalent to lumpability.   The third is widely found in the literature \cite{Buchholz} and the easiest to check in examples.   It makes use of  the standard basis vectors $e_j \in \R^X$ associated to the elements $j$ of any finite set $X$.    The surjection $p \maps X \to X'$ defines a partition on $X$ where two states $j,j' \in X$ lie in the same block of the partition if and only if $p(j) = p(j')$.   The elements of $X'$ correspond to these blocks.   The third condition for lumpability says that $p_* H$ has the same effect on two basis vectors $e_j$ and $e_{j'}$ when $j$ and $j'$ are in the same block.  As mentioned in the example above, this condition
says that if we sum the rows in each block of $H$, all the columns in any given block of the resulting matrix $p_* H$ are identical.

\begin{theorem}
\label{thm:lumpability}
Let $p \maps X \to X'$ be a surjection of finite sets and let $H$ be a Markov process on $X$.  Then the following conditions are equivalent:
\begin{enumerate}
\item $H$ is lumpable with respect to $p$.
\item There exists a linear operator $H' \maps \R^{X'} \to \R^{X'}$ such that $p_* H = H' p_*$.
\item $p_* H e_j = p_* H e_{j'}$ for all $j,j' \in X$ such that $p(j)=p(j')$.   
\end{enumerate}
When these conditions hold there is a unique operator $H' \maps \R^{X'} \to \R^{X'}$ 
such that $p_* H = H' p_*$, it is given by $H' = p_* H s$ for any stochastic section $s$ of $p$, and it is infinitesimal stochastic.
\end{theorem}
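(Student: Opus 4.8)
The plan is to prove the cycle of implications $(1) \Rightarrow (3) \Rightarrow (2) \Rightarrow (1)$, since the algebraic conditions $(2)$ and $(3)$ are easiest to relate directly, and then to read off the uniqueness of $H'$, the formula $H' = p_* H s$, and the infinitesimal stochastic property as byproducts. The one structural fact I would record at the outset is that, because $p$ is a surjection, the pushforward $p_* \maps \R^X \to \R^{X'}$ is surjective with $p_* e_j = e_{p(j)}$, and its kernel is spanned by the differences $e_j - e_{j'}$ with $p(j) = p(j')$ (that is, the vectors supported on a single fiber that sum to zero). This is the engine behind both the factoring argument and the uniqueness claim.

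For $(3) \Rightarrow (2)$, condition $(3)$ says precisely that $p_* H$ annihilates each generator $e_j - e_{j'}$ of $\ker p_*$, hence all of $\ker p_*$. Since $p_*$ is surjective it realizes $\R^{X'}$ as the quotient $\R^X / \ker p_*$, so any linear map out of $\R^X$ that vanishes on $\ker p_*$ factors uniquely through $p_*$; this yields a unique $H' \maps \R^{X'} \to \R^{X'}$ with $H' p_* = p_* H$, establishing $(2)$ and the uniqueness of $H'$ at the same time. (The converse $(2) \Rightarrow (3)$, not needed for the cycle, is immediate: if $p(j)=p(j')$ then $p_* e_j = p_* e_{j'}$, so $p_* H e_j = H' p_* e_j = H' p_* e_{j'} = p_* H e_{j'}$.) Then $(2) \Rightarrow (1)$ comes from composing $H' p_* = p_* H$ on the right with an arbitrary stochastic section $s$ and using $p_* s = 1_{X'}$: this gives $H' = H'(p_* s) = (H' p_*) s = p_* H s$, so $p_* H s$ equals the single operator $H'$ for every $s$, which is exactly lumpability, and which also proves the stated formula $H' = p_* H s$.

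The main obstacle is $(1) \Rightarrow (3)$, where I must extract a difference of behaviors from the mere independence of $p_* H s$ on $s$. Fix $j \neq j'$ with $p(j) = p(j') = x'$ (the case $j=j'$ is trivial). I would construct two stochastic sections $s_1, s_2$ that agree on every basis vector $e_{y'}$ with $y' \neq x'$—sending each such $e_{y'}$ to $e_{k_{y'}}$ for a fixed choice of element $k_{y'}$ in the fiber over $y'$, which exists by surjectivity—and that differ only at $e_{x'}$, with $s_1 e_{x'} = e_j$ and $s_2 e_{x'} = e_{j'}$. Every column of each $s_i$ is a standard basis vector, hence a probability distribution, so both are stochastic, and $p_* s_i = 1_{X'}$ because $p_*$ returns each chosen basis vector to its label; thus both are genuine stochastic sections. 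Lumpability forces $p_* H s_1 = p_* H s_2$ as operators, and evaluating both sides on $e_{x'}$ (where the two sections disagree, while agreeing elsewhere) gives $p_* H e_j = p_* H e_{j'}$, which is $(3)$.

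Finally, for the "when these conditions hold" clause, the uniqueness of $H'$ and the identity $H' = p_* H s$ for every stochastic section $s$ have already been produced in the steps above, and $H'$ is infinitesimal stochastic by Lemma \ref{lem:new_markov_process} applied to any stochastic section $s$, at least one of which exists since $p$ is a surjection. I expect the only genuinely delicate point to be the explicit section construction in $(1) \Rightarrow (3)$; everything else is quotient-space linear algebra plus the cited lemma.
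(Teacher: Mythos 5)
Your proof is correct and follows essentially the same route as the paper: the same cycle of implications $(1)\Rightarrow(3)\Rightarrow(2)\Rightarrow(1)$, the same construction of two stochastic sections differing only on the fiber of interest for $(1)\Rightarrow(3)$, and the same observation $H' = H'p_*s = p_*Hs$ for $(2)\Rightarrow(1)$, uniqueness, and the formula. Your $(3)\Rightarrow(2)$ step phrases the paper's explicit definition of $H'$ on basis vectors as a factoring through the quotient $\R^X/\ker p_*$, but this is the same linear algebra, and the appeal to Lemma \ref{lem:new_markov_process} for the infinitesimal stochastic claim matches the paper exactly.
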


\begin{proof}
$(i) \implies (iii)$.    Suppose that $H$ is lumpable with respect to $p$.  Thus, $p_* H s \maps \R^{X'} \to \R^{X'}$ is independent of the choice of stochastic section $s \maps \R^{X'} \to \R^X$.    Such a stochastic section is simply an arbitrary linear operator that maps each basis vector $e_i \in \R^{X'}$ to a probability distribution on $X$ supported on the set $\{j \in X: p(j) = i\}$.   Thus, for any $j,j' \in X$ with $p(j)=p(j')=i$, we can find stochastic sections $s,s' \maps \R^{X'} \to \R^X$ such that $s(e_i) = e_j$ and $s'(e_i)=e_{j'}$.   Since $p_* Hs = p_* Hs'$, we have
\[  p_* H e_j = p_* Hs(e_i)=p_* Hs'(e_i) = p_* H e_{j'}. \]

$(iii) \implies (ii)$.  Define $H' \maps \R^{X'} \to \R^{X'}$ on basis vectors
$e_i \in \R^{X'}$ by setting 
\[      H' e_i = p_* H e_j \]
for any $j$ with $p(j)=i$.   Note that $H'$ is well-defined: since $p$ is a surjection such $j$ exists, and since $H$ is lumpable, $H'$ is independent of the choice of such $j$.    Next, note that for any $j \in X$, if we let $p(j) = i$ we have $p_* H e_j = H' e_i = H' p_* e_j$.  Since the vectors $e_j$ form a basis for $\R^X$, it follows that $p_* H = H' p_*$.

$(ii) \implies (i)$.    Suppose there exists an operator $H' \maps \R^{X'} \to \R^{X'}$ such that $p_* H = H' p_*$.  Choose such an operator; then for any stochastic section $s$ for $p$ we have
\[    p_* Hs = H'p_* s = H' .\]
It follows that $p_* Hs$ is independent of the stochastic section $s$, so $H$ is lumpable with respect to $p$.

Suppose that any, hence all, of conditions $(i), (ii), (iii)$ hold.   Suppose that $H' \maps \R^{X'} \to \R^{X'}$ is an operator with $p_* H = H' p_*$.  Then the argument in the previous paragraph shows that $H' = p_* H s$ for any stochastic section $s$ of $p$.  Thus $H'$ is unique, and by Lemma \ref{lem:new_markov_process} it is infinitesimal stochastic.
\end{proof}

\section{A double category of open Markov processes} \label{sec:MMark}
One of the main results of a joint work with Baez \cite{BC} is the construction of a double category $\mathbb{M}\mathbf{ark}$ of open Markov processes,
The pieces of the double category $\mathbb{M}\mathbf{ark}$ work as follows: 
\begin{enumerate}
\item An object is a finite set. 
\item A vertical 1-morphism $f \maps S \to S'$ is a function.
\item A horizontal 1-cell is an open Markov process 
\[

\]
where 
\begin{equation}
\label{eq:odot} 
H \odot G = j_*  H  j^* + k_* G  k^* .
\end{equation}
Here we use both pullbacks and pushforwards along the maps $j$ and $k$, as defined in Definition \ref{defn:push_and_pull}.  To check that $H \odot G$ is a Markov process on $X +_T Y$ we need to check that $j_* H j^*$ and $k_* G k^*$, and thus their sum, are infinitesimal stochastic:

\begin{lemma} 
\label{lem:push-pull}
Suppose that $f \maps X \to Y$ is any map between finite sets.   If $H \maps \R^X \to \R^X$ is infinitesimal stochastic, then $f_* H f^* \maps \R^Y \to \R^Y$ is infinitesimal stochastic.
\end{lemma}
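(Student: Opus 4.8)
The plan is to verify the two defining conditions of an infinitesimal stochastic operator directly at the level of matrix entries. The clean semigroup argument used in Lemma \ref{lem:new_markov_process} is unavailable here, because the pullback $f^*$ is not stochastic: its $b$-th column sums to $|f^{-1}(b)|$ rather than to $1$, so $f_* \exp(tH) f^*$ need not be a Markov semigroup. A direct computation is therefore cleaner.

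First I would record the matrix forms of the two maps. With respect to the standard bases, the pullback $f^* \maps \R^Y \to \R^X$ has entries $(f^*)_{a,b} = \delta_{f(a),b}$ and the pushforward $f_* \maps \R^X \to \R^Y$ has entries $(f_*)_{b,a} = \delta_{b,f(a)}$; as noted in Definition \ref{defn:push_and_pull} these are transposes of one another. Multiplying the three matrices then gives, for $b', b \in Y$,
\[ (f_* H f^*)_{b',b} = \sum_{a' \in f^{-1}(b')} \; \sum_{a \in f^{-1}(b)} H_{a',a}. \]
Next I would check nonnegativity of the off-diagonal entries. The key observation is that if $b' \neq b$, then every index $a'$ in the fiber $f^{-1}(b')$ is distinct from every index $a$ in the fiber $f^{-1}(b)$, since their images under $f$ differ. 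Hence every term $H_{a',a}$ in the double sum is an off-diagonal entry of $H$, which is $\geq 0$ by condition (i) for $H$, and a sum of nonnegative numbers is nonnegative.

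Finally I would verify that each column of $f_* H f^*$ sums to zero. Fixing $b \in Y$ and summing over $b' \in Y$, the outer double sum $\sum_{b'} \sum_{a' \in f^{-1}(b')}$ ranges over all of $X$ because the fibers $f^{-1}(b')$ partition $X$, so
\[ \sum_{b' \in Y} (f_* H f^*)_{b',b} = \sum_{a \in f^{-1}(b)} \Big( \sum_{a' \in X} H_{a',a} \Big) = 0, \]
using condition (ii) for $H$, that each column of $H$ sums to zero. This establishes both conditions, so $f_* H f^*$ is infinitesimal stochastic; since both conditions are preserved under addition, this is also exactly what is needed to conclude that $H \odot G$ in \eqref{eq:odot} is infinitesimal stochastic.

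There is no serious obstacle here: the argument is elementary bookkeeping of fibers. The only point requiring a moment's care is the off-diagonal check, where one must notice that distinct target indices force the source indices appearing in the double sum to be genuinely off-diagonal, so that condition (i) for $H$ can be invoked without ever touching a (possibly negative) diagonal entry. I expect this is precisely why the lemma is phrased using $f^*$ and $f_*$ along a single map $f$: it guarantees that the blocks indexed by distinct fibers never overlap.
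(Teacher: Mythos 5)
Your proof is correct and follows essentially the same route as the paper's: both write out the matrix entries $(f_* H f^*)_{b',b} = \sum_{a' \in f^{-1}(b')} \sum_{a \in f^{-1}(b)} H_{a',a}$ and observe that nonnegativity of off-diagonal entries and vanishing column sums follow from the corresponding properties of $H$. You merely spell out the fiber bookkeeping that the paper leaves implicit, and your side remark about why the semigroup argument of Lemma \ref{lem:new_markov_process} is unavailable here is accurate.
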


\begin{proof}
Using Definition\ \ref{defn:push_and_pull}, we see that the matrix elements of $f^*$ and $f_*$ are
given by
\[               (f^*)_{ji} = (f_*)_{ij} = \left\{ \begin{array}{cc} 1 & f(j) = i \\  0 & \textrm{otherwise} 
                                   \end{array} \right.
\]
for all $i \in Y$, $j \in X$.  Thus, $f_* H f^*$ has matrix entries
\[               (f_* H f^*)_{ii'} = \sum_{ \{j,j': \;  f(j) = i, f(j') = i' \} } H_{jj'}  .\]
To show that $f_* H f^*$ is infinitesimal stochastic we need to show that its off-diagonal entries are nonnegative and its columns sum to zero.  By the above formula, these follow from the same facts for $H$.
\end{proof}

Another formula for horizontal composition is also useful.  Given the composable open Markov processes in Equation (\ref{eq:composable}) we can take the copairing of the maps $j \maps X \to X +_T Y$ and $k \maps Y \to X +_T Y$ and get a map $\ell \maps X + Y \to X +_T Y$.   Then 
\begin{equation}
\label{eq:odot2}
H \odot G = \ell_* (H \oplus G) \ell^*  
\end{equation}
where $H \oplus G \maps \R^{X + Y} \to \R^{X + Y}$ is the direct sum of the operators $H$ and $G$.   This is easy to check from the definitions.

Horizontal composition of 2-morphisms is even subtler:

\begin{lemma}
\label{lem:horizontal_composition}
Suppose that we have horizontally composable 2-morphisms as follows:
\[

\]
so to show $g_* f^* = k^* h_*$ it suffices to show that $f$ restricts to a bijection
\[      f \maps \{ a \in A :  g(a) = c \} \stackrel{\sim}{\longrightarrow} \{ b \in B: h(b) = k(c) \} .\] 
On the one hand, if $a \in A$ has $g(a) = c$ then $b = f(a)$ has
$h(b) = h(f(a)) = k(g(a)) = k(c)$, so the above map is well-defined.  On the other hand, 
if $b \in B$ has $h(b) = k(c)$, then by the definition of pullback there exists a unique $a \in A$
such that $f(a) = b$ and $g(a) = c$, so the above map is a bijection.
\end{proof}

\begin{theorem}
There exists a double category $\MMark$ as defined above.
\end{theorem}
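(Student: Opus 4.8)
The plan is to verify the axioms of a pseudo double category as laid out in Definition~\ref{defn:double_category}, by exhibiting the two constituent categories $\MMark_0$ and $\MMark_1$ together with the structure functors $S,T,U,\odot$ and the coherence isomorphisms. Most of the genuinely nontrivial content has already been isolated in the preceding lemmas, so the argument is largely a matter of assembling these pieces and checking that the remaining data is forced by universal properties. First I would take $\MMark_0$ to be $\mathsf{FinSet}$ and let $\MMark_1$ be the category whose objects are open Markov processes and whose morphisms are morphisms of open Markov processes. To see that $\MMark_1$ is a category, I would observe that the identity $2$-morphism on an open Markov process is the triple of identity functions --- the relevant squares are trivially pullbacks and the condition $1_* H = H 1_*$ holds --- while vertical composition is componentwise composition of functions. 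This is well defined because pasting two pullback squares yields a pullback square (the pullback pasting lemma), and because the Markov conditions compose: if $p_* H = H' p_*$ and $p'_* H' = H'' p'_*$, then $(p'p)_* H = p'_* p_* H = p'_* H' p_* = H'' p'_* p_* = H'' (p'p)_*$, using covariant functoriality of pushforward. Associativity and unit laws in $\MMark_1$ then follow from those in $\mathsf{FinSet}$.

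Next I would define the source and target functors $S,T \maps \MMark_1 \to \MMark_0$ picking out the input and output sets of a horizontal $1$-cell and the components $f,g$ of a $2$-morphism; these are functorial since vertical composition is componentwise. The identity-assigning functor $U \maps \MMark_0 \to \MMark_1$ sends a finite set $S$ to the open Markov process $S \xrightarrow{1} (S,0) \xleftarrow{1} S$ --- the zero operator is vacuously infinitesimal stochastic --- and a function to the $2$-morphism all three of whose components equal it; one checks $SU = TU = 1_{\MMark_0}$. The composition functor $\odot$ is defined on horizontal $1$-cells by the pushout construction and Equation~\eqref{eq:odot}, which produces a bona fide open Markov process by Lemma~\ref{lem:push-pull}, and on $2$-morphisms by Lemma~\ref{lem:horizontal_composition}. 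Functoriality of $\odot$ amounts to the interchange law, which follows from functoriality of the pushout together with functoriality of pushforward and pullback along the canonical maps $j,k$ into the pushout.

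Finally, since a pushout is determined only up to canonical isomorphism, horizontal composition is only weakly associative, so $\MMark$ is genuinely a pseudo (not strict) double category. For three composable horizontal $1$-cells the two bracketings give pushouts of the same diagram, and the induced universal isomorphism of apices is a globular $2$-morphism serving as the associator; the unitors arise similarly from the isomorphisms $X +_T T \cong X \cong S +_S X$ built from $U$. The hard part --- and the step on which I would spend the most care --- is confirming that these globular isomorphisms are compatible with the decorating operators: that the universal isomorphism intertwines $(H \odot G) \odot K$ with $H \odot (G \odot K)$, and likewise for the unitors. Using the single-operator formula of Equation~\eqref{eq:odot2}, this reduces to the facts that pushforward and pullback along a bijection are mutually inverse and commute with the coproduct injections, after which the pentagon and triangle identities hold automatically, since every isomorphism involved is a universal map between colimits of one fixed diagram. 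Assembling all of this establishes $\MMark$ as a symmetric-monoidal-to-be double category, completing the proof of the theorem.
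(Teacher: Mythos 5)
Your proposal is correct and follows essentially the same route as the paper: assemble $\MMark_0$ and $\MMark_1$, define $U,S,T,\odot$ using the preceding lemmas (pullback pasting for vertical composition, Lemma \ref{lem:push-pull} and Lemma \ref{lem:horizontal_composition} for horizontal composition), verify the interchange law, and obtain the associator and unitors from the underlying double category of cospans of finite sets. The one point where you go slightly beyond the paper --- explicitly checking via Equation \eqref{eq:odot2} that the universal isomorphisms of pushout apices intertwine $(H \odot G)\odot K$ with $H \odot (G \odot K)$, so that the associator really is a $2$-morphism of $\MMark$ --- is a detail the paper leaves implicit, and it is right to flag it.
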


\begin{proof}
Let $\MMark_0$, the category of objects, consist of finite sets and functions.  Let 
$\MMark_1$, the category of arrows, consist of open Markov processes and morphisms between these:
\[

\]
The only apparent difference between the two results is the map in the middle: one has $(p' +_{g'}q') \circ (p+_g q)$ while the other has $(p' \circ p) +_{(g' \circ g)} (q' \circ q)$.  But these are in fact the same map, so the interchange law holds. 

The functors $u, s, t$ and $\circ$ obey the necessary relations
\[      s u = 1 = t  u  \]
and the relations saying that the source and target of a composite behave as they should.
Lastly, we have three natural isomorphisms: the associator, left unitor, and right unitor,
which arise from the corresponding natural isomorphisms for the double category of finite sets, functions, cospans of finite sets, and maps of cospans.   The triangle and pentagon equations 
hold in $\MMark$ because they do in this simpler double category \cite{Cour}.
\end{proof}

Next we give $\MMark$ a symmetric monoidal structure.  We call the tensor product `addition'.   Given objects $S,S' \in \MMark_0$ we define their sum $S+S'$ using a chosen coproduct in $\FinSet$.   The unit for this tensor product in $\MMark_0$ is the empty set.  We can similarly define the sum of morphisms in $\MMark_0$, since given maps $f \maps S \to T$ and $f' \maps S' \to T'$ there is a natural map $f + f' \maps S + S' \to T + T'$.   Given two objects in $\MMark_1$:
\[

\]
We complete the description of $\MMark$ as a symmetric monoidal double category
in the proof of this theorem:

\begin{theorem}
\label{thm:MMark_symmetric_monoidal}
The double category $\MMark$ can be given a symmetric monoidal structure with the above properties.
\end{theorem}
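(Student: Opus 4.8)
The plan is to verify directly that the data assembled before the theorem satisfy the axioms of a symmetric monoidal double category as laid out in Definitions \ref{defn:monoidal_double_category} and \ref{defn:symmetric_monoidal_double_category}. Concretely, I would organize the proof around the five-item checklist used in the proof of Theorem \ref{trick2}: (1) exhibit symmetric monoidal structures on the category of objects $\MMark_0$ and the category of arrows $\MMark_1$; (2) check that the identity-assigning functor $u$ preserves the monoidal unit; (3) check that the source and target functors $S,T \maps \MMark_1 \to \MMark_0$ are strict symmetric monoidal; (4) construct the globular comparison isomorphisms $\chi$ and $\mu$ relating the composition functor $\odot$ and the unit functor $u$ to the tensor product, and verify they make the three required diagrams commute; and (5) check that the associator, unitors and braiding are transformations of double categories. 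A useful organizing remark is that the underlying double category of $\MMark$ is exactly the symmetric monoidal double category of finite sets, functions, cospans of finite sets and maps of cospans (as in the $\lCsp(\FinSet)$ construction), so the bulk of the coherence data on the underlying cospans is inherited; what must genuinely be checked is that the Markov-process decorations $H$, composed via $H \odot G = j_* H j^* + k_* G k^*$ and tensored via the direct sum $H_1 \oplus H_2$, are compatible with this inherited structure.

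For item (1), the symmetric monoidal structure on $\MMark_0 = \FinSet$ is the standard one given by disjoint union. For $\MMark_1$, I would first confirm that the operation defined before the theorem is well defined: given open Markov processes with Hamiltonians $H_1$ and $H_2$, the direct sum $H_1 \oplus H_2 \maps \R^{X_1+X_2} \to \R^{X_1+X_2}$ is again infinitesimal stochastic, which is immediate from its block-diagonal form (off-diagonal entries are nonnegative and each column sums to zero blockwise). Functoriality of $\otimes$ on $\MMark_1$ follows because disjoint unions of pullback squares are again pullbacks and because $(H_1 \oplus H_2) (p_1 \oplus p_2)_* = (p_1 \oplus p_2)_* \ldots$ reduces, block by block, to the conditions $H_i' (p_i)_* = (p_i)_* H_i$ already assumed for the two factors. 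The associator, unitors and braiding on $\MMark_1$ are then built from the canonical isomorphisms of $\FinSet$ together with the obvious block-permutation isomorphisms of direct sums, and the pentagon and triangle identities follow from those in $\FinSet$.

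Items (2) and (3) are essentially bookkeeping. For (2), $u$ sends a finite set $S$ to the zero Markov process $0_S$ on $S$, and since $0_{\emptyset}$ is the unit of $\MMark_1$ and $0_{S+S'} = 0_S \oplus 0_{S'}$, the functor $u$ carries the monoidal unit of $\MMark_0$ to that of $\MMark_1$. For (3), the equations $S(M_1 \otimes M_2) = S(M_1) + S(M_2)$ and likewise for $T$ hold on the nose from the definition of the sum of horizontal 1-cells, so $S$ and $T$ are strict symmetric monoidal. The real work is item (4): the globular isomorphism $\mu \maps u(A+B) \to u(A) \otimes u(B)$ is a genuine identity because both sides carry the zero Hamiltonian $0_{A+B} = 0_A \oplus 0_B$, but the interchanger $\chi \maps (M_1 \otimes N_1) \odot (M_2 \otimes N_2) \xrightarrow{\sim} (M_1 \odot M_2) \otimes (N_1 \odot N_2)$ requires genuine content: its underlying map of cospans is the canonical comparison isomorphism between the two ways of forming the relevant colimit of finite sets, and one must check that the two composite Hamiltonians agree under this isomorphism. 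This is the step I expect to be the main obstacle, and the plan is to reduce it to the push-pull calculus developed earlier. Using the copairing formula $H \odot G = \ell_*(H \oplus G)\ell^*$ together with Lemma \ref{lem:push-pull} and the Beck--Chevalley-type Lemma \ref{lem:beck-chevalley}, both composites should be identified with a single operator of the form $m_*\big((H_1 \oplus H_2) \oplus (N_1 \oplus N_2)\big)m^*$ along the comparison isomorphism, exactly as the interchange law in the proof that $\MMark$ is a double category was handled. Finally, for item (5) the coherence diagrams expressing that the associator, unitors and braiding are transformations of double categories will each reduce, on underlying cospans, to the corresponding commuting diagrams in $\lCsp(\FinSet)$, and on decorations to equalities of block-diagonal operators that follow from naturality of $\oplus$ and the invertibility of the maps built from Lemma \ref{lem:beck-chevalley}.
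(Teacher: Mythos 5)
Your proposal is correct and follows essentially the same route as the paper: the only step with real content is the interchanger $\chi$, and the paper verifies Equation (\ref{eq:chi}) exactly as you propose, by using the copairing formula $H\odot G=\ell_*(H\oplus G)\ell^*$ to write both composite Hamiltonians as $q_*\bigl((H_1\oplus H_2)\oplus(G_1\oplus G_2)\bigr)q^*$ and $q'_*\bigl(\cdots\bigr)q'^*$ with $q'=\hat\chi q$, then cancelling since $\hat\chi^*=(\hat\chi_*)^{-1}$. The remaining coherence is delegated to the underlying cospan double category, just as in your item (5).
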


\begin{proof}
First we complete the description of $\MMark_0$ and $\MMark_1$ as symmetric monoidal categories. The symmetric monoidal category $\MMark_0$ is just the category of finite sets with a chosen  coproduct of each pair of finite sets providing the symmetric monoidal structure.   We have described the tensor product in $\MMark_1$, which we call `addition', so now we need to introduce the associator, unitors, and braiding, and check that they make $\MMark_1$ into a symmetric monoidal category. 

Given three objects in $\MMark_1$
\[

\] 
where $b$ is the braiding in $(\FinSet,+)$.   It is easy to check that the braiding in $\MMark_1$ is its own inverse and obeys the hexagon identity, making $\MMark_1$ into a symmetric monoidal category.

The source and target functors $s,t \maps \MMark_1 \to \MMark_0$ are strict symmetric monoidal functors, as required.  To make $\MMark$ into a symmetric monoidal double category we must also give it two other pieces of structure.  One, called $\chi$, says how the composition of horizontal 1-cells interacts with the tensor product in the category of arrows.  The other, called $\mu$, says how the identity-assigning functor $u$ relates the tensor product in the category of objects to the tensor product in the category of arrows. We now define these two isomorphisms.

Given horizontal 1-cells 
\[

\end{center}
in two different ways.  We call $\chi$ `globular' because its source and target 1-morphisms
are identities.   We need to check that $\chi$ indeed defines a 2-isomorphism in 
$\MMark$.

To do this, we need to show that
\begin{equation}
\label{eq:chi}
((H_1 \oplus H_2) \odot (G_1 \oplus G_2))\, \hat{\chi}_*  =  \hat{\chi}_* \, ((H_1 \odot G_1) \oplus (H_2 \odot G_2)) .
\end{equation}
To simplify notation, let $K =  (X_1 +_{T_1} Y_1) + (X_2 +_{T_2} Y_2)$ and 
$K'=(X_1 + X_2) +_{T_1 + T_2} (Y_1 + Y_2)$ so that $\hat{\chi} \colon K \stackrel{\sim}{\to} K'$.
Let 
\[   q \maps X_1 + X_2 + Y_1 + Y_2 \to K , \quad 
      q' \maps X_1 + X_2 + Y_1 + Y_2 \to K'  \]
be the canonical maps coming from the definitions of $K$ and $K'$ as colimits, and note that
\[  q' = \hat{\chi} q \]
by the universal property of the colimit.   A calculation using Equation (\ref{eq:odot2}) implies that
\[    (H_1 \odot G_1) \oplus (H_2 \odot G_2) =
q_* \, ((H_1 \oplus H_2) \oplus (G_1 \oplus G_2)) \, q^* \]
and similarly 
\[ (H_1 \oplus H_2) \odot (G_1 \oplus G_2) 
= q'_* ((H_1 \oplus H_2) \oplus (G_1 \oplus G_2)) q'^*. \]
Together these facts give
\[  \begin{array}{ccl}  (H_1 \oplus H_2) \odot (G_1 \oplus G_2) 
&=& \hat{\chi}_* q_* \, ((H_1 \oplus H_2) \oplus (G_1 \oplus G_2)) \, q^* \hat{\chi}^* \\
&=&  \hat{\chi}_* \, ((H_1 \odot G_1) \oplus (H_2 \odot G_2)\, \hat{\chi}^*  .
\end{array}  \]
and since $\hat{\chi}$ is a bijection, $\hat{\chi}^*$ is the inverse of $\hat{\chi}_*$, 
so Equation (\ref{eq:chi}) follows.

For the other globular 2-isomorphism, if $S$ and $T$ are finite sets, then $u(S+T)$ is given by
\[
\begin{tikzpicture}[scale=1.5]
\node (D) at (-5,-0.5) {$S+T$};
\node (E) at (-3,-0.5) {$(S+T,0_{S+T})$};
\node (F) at (-1,-0.5) {$S+T$};
\path[->,font=\scriptsize,>=angle 90]
(D) edge[>->] node[above] {$1_{S+T}$}(E)
(F) edge[>->] node[above] {$1_{S+T}$}(E);
\end{tikzpicture}
\]
while $u(S) \oplus u(T)$ is given by
\[
\begin{tikzpicture}[scale=1.5]
\node (D) at (-5,-0.5) {$S+T$};
\node (E) at (-3,-0.5) {$(S+T,0_S \oplus 0_T)$};
\node (F) at (-1,-0.5) {$S+T$};
\path[->,font=\scriptsize,>=angle 90]
(D) edge[>->] node[above] {$1_S + 1_T$}(E)
(F) edge[>->] node[above] {$1_S + 1_T$}(E);
\end{tikzpicture}
\]
so there is a globular 2-isomorphism $\mu$ between these, namely the identity 2-morphism.
All the commutative diagrams in the definition of symmetric monoidal double category \cite{Shul} can be checked in a straightforward way.  
\end{proof}

\subsection{A bicategory of open Markov processes}
If one prefers to work with bicategories as opposed to double categories, then one can lift the above symmetric monoidal double category $\MMark$ to a symmetric monoidal bicategory $\mathbf{Mark}$ using a result of Shulman. This bicategory $\mathbf{Mark}$ will have:
\begin{enumerate}
\item finite sets as objects,
\item open Markov processes as morphisms,
\item morphisms of open Markov processes as 2-morphisms.
\end{enumerate}
To do this, we need to check that the symmetric monoidal double category $\MMark$ is isofibrant---meaning fibrant on vertical 1-morphisms which happen to be isomorphisms. See the \ref{Appendix}ppendix for details.

\begin{definition}
Let $\lD$ be a double category. Then the $\define{horizontal} \textnormal{ } \define{bicategory}$ of $\lD$, which we denote as $\mathbf{H}(\lD)$, is the bicategory with
\begin{enumerate}
\item objects of $\lD$ as objects,
\item  horizontal 1-cells of $\lD$ as 1-morphisms, 
\item globular 2-morphisms of $\lD$ (i.e., 2-morphisms with identities as their source and target)  as 2-morphisms,
\end{enumerate}
and vertical and horizontal composition, identities, associators and unitors arising from those in 
$\lD$.
\end{definition}

\begin{lemma}
\label{lem:isofibrant}
The symmetric monoidal double category $\MMark$ is isofibrant.
\end{lemma}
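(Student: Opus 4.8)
The plan is to verify directly that $\MMark$ satisfies the definition of isofibrant, which (per Definition \ref{def:isofibrant}) requires that every vertical 1-isomorphism admits a \emph{companion} satisfying Equation (\ref{eq:CompanionEq}). The proof will closely mirror the argument given for Lemma \ref{lemma:isofibrant}, since an object of $\MMark$ is a finite set, a vertical 1-isomorphism is a bijection $f \maps S \to S'$, and the underlying cospan structure of an open Markov process is exactly a cospan in $\FinSet$. The extra data to track beyond the $\lCsp(\X)$ case is the infinitesimal stochastic operator decorating each apex.

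First I would take a vertical 1-isomorphism in $\MMark$, namely a bijection $f \maps S \to S'$, and construct its companion $\hat{f}$ as the open Markov process
\[
\begin{tikzpicture}[scale=1.5]
\node (A) at (0,0) {$S$};
\node (E) at (1,0) {$(S',0_{S'})$};
\node (C) at (2,0) {$S'$};
\path[->,font=\scriptsize,>=angle 90]
(A) edge[>->] node[above]{$f$} (E)
(C) edge[>->] node[above]{$1_{S'}$} (E);
\end{tikzpicture}
\]
where the apex $S'$ carries the zero operator $0_{S'}$, which is trivially infinitesimal stochastic. Since $f$ is a bijection, both legs are injections, so this is a legitimate horizontal 1-cell. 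Next I would exhibit the two accompanying globular 2-morphisms relating $\hat{f}$ to the unit horizontal 1-cells $U_S$ and $U_{S'}$ (which carry zero operators $0_S$ and $0_{S'}$ on their apices), in exact analogy with the two 2-morphisms written down in the proof of Lemma \ref{lemma:isofibrant}. The key point here is that because the decorating operators are all zero operators, the compatibility condition $H' p_* = p_* H$ required of every 2-morphism in $\MMark$ reduces to $0_{S'} p_* = p_* 0_S$ (and similar), which holds trivially; thus the maps of cospans underlying these 2-morphisms automatically lift to genuine 2-morphisms of open Markov processes.

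The remaining step is to verify that these 2-morphisms satisfy the companion equation, Equation (\ref{eq:CompanionEq}), namely that the appropriate horizontal and vertical composites of the two globular 2-morphisms equal the identity 2-morphism $1_{\hat f}$ and the unit 2-morphism $U_f$. This is the same ``easy calculation'' cited at the end of the proof of Lemma \ref{lemma:isofibrant}: the underlying computation takes place entirely among cospans and maps of cospans in $\FinSet$, and the operator components contribute nothing since they are all zero. I would simply observe that the cospan-level verification is identical to the $\lCsp(\FinSet)$ case already handled, and that the operator data is carried along compatibly. I do not anticipate a genuine obstacle here; the only point requiring a word of care is confirming that the pullback-square condition built into the definition of a 2-morphism in $\MMark$ is met by the companion squares, which follows because the squares involved have an identity or isomorphism on one pair of parallel edges and injections on the other, making them pullbacks automatically. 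Hence $\MMark$ is isofibrant.
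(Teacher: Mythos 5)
Your construction is exactly the paper's: the companion of a bijection $f \maps S \to S'$ is the open Markov process $S \stackrel{f}{\rightarrowtail} (S',0_{S'}) \stackrel{1}{\leftarrowtail} S'$ with the zero operator on the apex, the two accompanying 2-morphisms are the evident ones relating it to $U_S$ and $U_{S'}$, the condition $p_*H = H'p_*$ is vacuous because every operator in sight is zero, and the pullback requirement on the squares holds because (since $f$ is a bijection) one pair of parallel edges in each square consists of isomorphisms, which already forces a commuting square to be a pullback — the injectivity of the other pair is not what does the work there. The verification of Equation (\ref{eq:CompanionEq}) then reduces to the cospan-level computation from Lemma \ref{lemma:isofibrant}, as you say.

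The one omission: Definition \ref{def:isofibrant} requires every vertical 1-isomorphism to have both a companion \emph{and a conjoint}, and your opening sentence drops the conjoint entirely. The paper closes this in one line by taking the conjoint to be the reversed cospan $S' \stackrel{1}{\rightarrowtail} (S',0_{S'}) \stackrel{f}{\leftarrowtail} S$ with the analogous pair of 2-morphisms. Add that sentence and your argument is complete.
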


\begin{proof}
In what follows, all unlabeled arrows are identities. To show that $\MMark$ is isofibrant, we need to show that every vertical 1-isomorphism has both a companion and a conjoint \cite{Shul}. Given a vertical 1-isomorphism $f \maps S \to S'$, meaning a bijection between finite sets, then a companion of $f$ is given by the horizontal 1-cell:
\[

\]
that satisfy equations analogous to the two above.
\end{proof}

\begin{theorem} \label{markbicat}
The bicategory $\bold{Mark}$ is a symmetric monoidal bicategory.
\end{theorem}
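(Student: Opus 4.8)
The plan is to invoke the machinery already established in this chapter rather than building anything from scratch. By Theorem \ref{thm:MMark_symmetric_monoidal}, $\MMark$ is a symmetric monoidal double category, and by Lemma \ref{lem:isofibrant} it is isofibrant. These two facts are precisely the hypotheses of Shulman's Theorem \ref{Shulman1}, which states that the horizontal bicategory $\mathbf{H}(\mathbb{X})$ of an isofibrant symmetric monoidal double category $\mathbb{X}$ is itself symmetric monoidal. First I would identify $\mathbf{Mark}$ with $\mathbf{H}(\MMark)$, noting that the objects, morphisms and 2-morphisms of $\mathbf{Mark}$ as described (finite sets, open Markov processes, and morphisms of open Markov processes) are exactly the objects, horizontal 1-cells, and globular 2-morphisms of $\MMark$, matching the definition of the horizontal bicategory.

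The proof then reduces to a single application of Theorem \ref{Shulman1}. The substantive work has already been done: the symmetric monoidal structure of $\MMark_0$ and $\MMark_1$, the strict symmetric monoidal source and target functors $s, t \maps \MMark_1 \to \MMark_0$, the globular isomorphisms $\chi$ and $\mu$ together with their coherence, and the identity-assigning functor $u$ are all verified in the proof of Theorem \ref{thm:MMark_symmetric_monoidal}. The isofibrancy, which supplies companions and conjoints for every vertical 1-isomorphism, is verified in Lemma \ref{lem:isofibrant}, and this is exactly what allows the associators, unitors and braidings living in the category of objects to be transported to the horizontal bicategory. Thus I would simply write that $\mathbf{Mark} = \mathbf{H}(\MMark)$ is symmetric monoidal by Theorem \ref{Shulman1}, applied to the isofibrant symmetric monoidal double category $\MMark$.

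There is essentially no main obstacle here, since the theorem is a direct corollary of results already proved; the only thing to be careful about is confirming that the data of $\mathbf{Mark}$ listed in the statement genuinely coincides with the horizontal bicategory construction. In particular, the 2-morphisms of $\mathbf{Mark}$ must be the \emph{globular} 2-morphisms of $\MMark$, that is, morphisms of open Markov processes whose underlying vertical source and target maps $f$ and $g$ are identities; one checks from Definition \ref{defn:coarse-graining} that such a globular morphism is a single map $p \maps X \to X'$ intertwining the Hamiltonians and compatible with the inputs and outputs, which is the expected notion of 2-morphism between open Markov processes with fixed boundary. I would state this identification explicitly and then conclude by citing Theorem \ref{Shulman1}. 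A brief remark could note that the tensor product of $\mathbf{Mark}$ is inherited from the `addition' tensor product on $\MMark$ and that direct sum of Hamiltonians provides the monoidal structure on 1-morphisms, so that the symmetric monoidal bicategory $\mathbf{Mark}$ captures the fact that open Markov processes can be both composed and tensored.
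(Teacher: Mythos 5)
Your proposal is correct and is essentially identical to the paper's own proof, which likewise obtains $\mathbf{Mark}$ as the horizontal bicategory of the isofibrant symmetric monoidal double category $\MMark$ via a single application of Shulman's Theorem \ref{Shulman1}. Your extra remark that the 2-morphisms must be the \emph{globular} 2-morphisms of $\MMark$ (morphisms of open Markov processes whose vertical source and target are identities) is a correct and worthwhile clarification, consistent with how the paper later uses these 2-morphisms in Theorem \ref{blackbox_functor_bicategories}.
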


\begin{proof}
This follows immediately from Theorem \ref{Shulman1} of Shulman: $\MMark$ is an isofibrant symmetric monoidal double category, so we obtain the symmetric monoidal bicategory $\bold{Mark}$ as the horizontal bicategory of $\MMark$.
\end{proof}

\section{A double category of linear relations}
The general idea of `black-boxing', as mentioned in Chapter \ref{Chapter2}, is to take a system and forget everything except the relation
between its inputs and outputs, as if we had placed it in a black box and were unable to see its inner workings.  Previous work of Baez and Pollard \cite{BP} constructed a black-boxing functor $\blacksquare \maps \Dynam \to \SemiAlgRel$ where $\Dynam$ is a category of finite sets and `open dynamical systems' and $\SemiAlgRel$ is a category of finite-dimensional real vector spaces and relations defined by polynomials and inequalities.   When we black-box such an open dynamical system, we obtain the relation between inputs and outputs that holds in steady state. 

A special case of an open dynamical system is an open Markov process as defined in this chapter.  Thus, we could restrict the black-boxing functor $\blacksquare \maps \Dynam \to \SemiAlgRel$ to a category $\mathsf{Mark}$ with finite sets as objects and open Markov processes as morphisms.  Since the steady state behavior of a Markov process is \emph{linear}, we would get a functor $\blacksquare \maps \Mark \to \LinRel$ where $\LinRel$ is the category of finite-dimensional real vector spaces and \emph{linear} relations \cite{BE}.   However, we will go further and define black-boxing on the double category $\MMark$.   This will exhibit the relation between black-boxing and morphisms between open Markov processes.

The symmetric monoidal double category $\mathbb{L}\mathbf{inRel}$ of linear relations introduced in this section will serve as the codomain of a symmetric monoidal black-box double functor in Section \ref{Black}. This double category $\mathbb{L}\mathbf{inRel}$ will have:
\begin{enumerate}
\item finite-dimensional real vector spaces $U,V,W,\dots$ as objects,
\item linear maps $f \maps V \to W$ as vertical 1-morphisms from $V$ to $W$,
\item linear relations $R \subseteq V \oplus W$  as horizontal 1-cells from $V$ to $W$,
\item squares 
\[
\begin{tikzpicture}[scale=1.5]
\node (D) at (-4,0.5) {$V_1$};
\node (E) at (-2,0.5) {$V_2$};
\node (F) at (-4,-1) {$W_1$};
\node (A) at (-2,-1) {$W_2$};
\node (B) at (-3,-0.25) {};
\path[->,font=\scriptsize,>=angle 90]
(D) edge node [above]{$R \subseteq V_1 \oplus V_2$}(E)
(E) edge node [right]{$g$}(A)
(D) edge node [left]{$f$}(F)
(F) edge node [above]{$S \subseteq W_1 \oplus W_2$} (A);
\end{tikzpicture}
\]
obeying $(f \oplus g)R \subseteq S$ as 2-morphisms. 
\end{enumerate}
The last item deserves some explanation.  A preorder is a category such that for any 
pair of objects $x,y$ there exists at most one morphism $\alpha \maps x \to y$.  When such
a morphism exists we usually write $x \le y$.  Similarly there is a kind of double category for which  given any `frame'
\[
\begin{tikzpicture}[scale=1]
\node (D) at (-4,0.5) {$A$};
\node (E) at (-2,0.5) {$B$};
\node (F) at (-4,-1) {$C$};
\node (A) at (-2,-1) {$D$};
\node (B) at (-3,-0.25) {};
\path[->,font=\scriptsize,>=angle 90]
(D) edge node [above]{$M$}(E)
(E) edge node [right]{$g$}(A)
(D) edge node [left]{$f$}(F)
(F) edge node [above]{$N$} (A);
\end{tikzpicture}
\]
there exists at most one 2-morphism 
\[
\begin{tikzpicture}[scale=1]
\node (D) at (-4,0.5) {$A$};
\node (E) at (-2,0.5) {$B$};
\node (F) at (-4,-1) {$C$};
\node (A) at (-2,-1) {$D$};
\node (B) at (-3,-0.25) {$\Downarrow \alpha$};
\path[->,font=\scriptsize,>=angle 90]
(D) edge node [above]{$M$}(E)
(E) edge node [right]{$g$}(A)
(D) edge node [left]{$f$}(F)
(F) edge node [above]{$N$} (A);
\end{tikzpicture}
\]
filling this frame. For lack of a better term let us call this a \define{degenerate} double category.  
Item (4) implies that $\LLinRel$ will be degenerate in this sense.

In $\LLinRel$, composition of vertical 1-morphisms is the usual composition of linear maps, while  
composition of horizontal 1-cells is the usual composition of linear relations.  Since composition of
linear relations obeys the associative and unit laws strictly, $\LLinRel$ will be a \emph{strict} double category.   Since $\LLinRel$ is degenerate, there is at most one way to define the vertical composite of 2-morphisms 
\[

\]
The composition functor $\odot \maps \LLinRel_1 \times_{\LLinRel_0} \LLinRel_1 \to \LLinRel_1$
acts on objects by the usual composition of linear relations, and it acts on 2-morphisms by horizontal composition as described above.  These functors can be shown to obey all the axioms of a double category.   In particular, because $\LLinRel$ is degenerate, all the required equations between 2-morphisms, such as the interchange law, hold automatically.
\end{proof}

Next we make $\LLinRel$ into a symmetric monoidal double category.  To do this, we first give $\LLinRel_0$ the structure of a symmetric monoidal category.  We do this using a specific choice of direct sum for each pair of finite-dimensional real vector spaces as the tensor product, and a specific 0-dimensional vector space as the unit object.   Then we give $\LLinRel_1$ a symmetric monoidal structure as follows.  Given linear relations $R_1 \subseteq V_1 \oplus W_1$ and $R_2 \subseteq V_2 \oplus W_2$, we define their direct sum by
\[  R_1 \oplus R_2 = \{(v_1,v_2,w_1,w_2) : \; (v_1,w_1) \in R_1, (v_2,w_2) \in R_2 \} \subseteq V_1 \oplus V_2 \oplus W_1 \oplus W_2. \]
Given two 2-morphisms in $\LLinRel_1$:
\[
\begin{tikzpicture}[scale=1.5]
\node (D) at (-4,0.5) {$V_1$};
\node (E) at (-2,0.5) {$V_2$};
\node (F) at (-4,-1) {$W_1$};
\node (A) at (-2,-1) {$W_2$};
\node (D') at (-0.5,0.5) {$V'_1$};
\node (E') at (1.5,0.5) {$V'_2$};
\node (F') at (-0.5,-1) {$W'_1$};
\node (A') at (1.5,-1) {$W'_2$};
\node (B') at (0.5,-0.25) {$\Downarrow \alpha'$};
\node (B) at (-3,-0.25) {$\Downarrow \alpha$};
\path[->,font=\scriptsize,>=angle 90]
(D) edge node [above]{$R \subseteq V_1 \oplus V_2$}(E)
(E) edge node [right]{$g$}(A)
(D) edge node [left]{$f$}(F)
(F) edge node [above]{$S \subseteq W_1 \oplus W_2$} (A)
(D') edge node [above]{$R' \subseteq V'_1 \oplus V'_2$}(E')
(E') edge node [right]{$g'$}(A')
(D') edge node [left]{$f'$}(F')
(F') edge node [above]{$S' \subseteq W'_1 \oplus W'_2$} (A');
\end{tikzpicture}
\]
there is at most one way to define their direct sum
\[
\begin{tikzpicture}[scale=1.5]
\node (D) at (-3.5,0.5) {$V_1 \oplus V'_1$};
\node (E) at (.5,0.5) {$V_2 \oplus V'_2$};
\node (F) at (-3.5,-1) {$W_1 \oplus W'_1$};
\node (A) at (.5,-1) {$W_2 \oplus W'_2$};
\node (B') at (-1.5,-0.25) {$\Downarrow \alpha \oplus \alpha'$};
\path[->,font=\scriptsize,>=angle 90]
(D) edge node [above]{$R \oplus R' \subseteq V_1 \oplus V'_1 \oplus V_2 \oplus V'_2$}(E)
(E) edge node [right]{$g \oplus g'$}(A)
(D) edge node [left]{$f \oplus f'$}(F)
(F) edge node [above]{$S \oplus S' \subseteq W_1 \oplus W'_1 \oplus W_2 \oplus W'_2$} (A);
\end{tikzpicture}
\]
because $\LLinRel$ is degenerate.   To show that $\alpha \oplus \alpha'$ exists, we need
merely note that
\[    (f \oplus g) R \subseteq S, \; (f' \oplus g')R' \subseteq S' \; \implies \; 
(f \oplus f' \oplus g \oplus g') (R \oplus R') \subseteq S \oplus S'.  \]

\begin{theorem}
\label{thm:LLinRel2}
The double category $\LLinRel$ can be given the structure of a symmetric monoidal double category with the above properties.
\end{theorem}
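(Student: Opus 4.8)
The plan is to follow the same template as the proof of Theorem \ref{thm:MMark_symmetric_monoidal}, verifying in turn each clause of the definition of a symmetric monoidal double category (Definitions \ref{defn:monoidal_double_category} and \ref{defn:symmetric_monoidal_double_category}), but exploiting at every stage the fact established in Theorem \ref{thm:LLinRel} that $\LLinRel$ is \emph{degenerate}: for any frame there is at most one $2$-morphism filling it. This collapses essentially all of the verification to the existence of certain fillers, since every equation between $2$-morphisms then holds automatically.

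First I would make $\LLinRel_0$ into a symmetric monoidal category. This is just the category of finite-dimensional real vector spaces under a chosen direct sum $\oplus$, with a chosen $0$-dimensional space as unit object and with the canonical linear isomorphisms serving as associator, unitors and braiding; this is standard. Next I would equip $\LLinRel_1$ with the tensor product $R_1 \oplus R_2$ already defined above, together with the direct sum of $2$-morphisms $\alpha \oplus \alpha'$, whose existence was noted to follow from $(f \oplus f' \oplus g \oplus g')(R \oplus R') \subseteq S \oplus S'$. The associator, left and right unitors and braiding for $\LLinRel_1$ are then defined to be the unique $2$-morphisms whose vertical legs are the corresponding structure maps of $\LLinRel_0$; to see these fillers exist one checks the single containment in each case, e.g. that the associator $a$ of $\LLinRel_0$ satisfies $(a \oplus a)((R_1 \oplus R_2) \oplus R_3) \subseteq R_1 \oplus (R_2 \oplus R_3)$, which is immediate since $a$ simply reassociates coordinates. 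All of the coherence axioms for $\LLinRel_1$ as a symmetric monoidal category—pentagon, triangle, hexagon, and the involutivity of the braiding—then hold automatically by degeneracy.

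Then I would check that the source and target functors $s,t \maps \LLinRel_1 \to \LLinRel_0$ are strict symmetric monoidal, which is immediate from the definitions: $s(R_1 \oplus R_2) = V_1 \oplus V_2 = s(R_1) \oplus s(R_2)$, the unit is sent to the unit, and $s$ (likewise $t$) carries the associator, unitors and braiding of $\LLinRel_1$ to those of $\LLinRel_0$ on the nose, since the former were defined using the latter. Finally I would produce the two globular $2$-isomorphisms $\chi$ and $\mu$. For $\chi$ the key computation is that direct sum distributes over relational composition, so that $(R_1 \oplus R_1') \odot (R_2 \oplus R_2')$ and $(R_1 \odot R_2) \oplus (R_1' \odot R_2')$ agree as subspaces up to the canonical shuffle isomorphism of $\LLinRel_0$; for $\mu$ one notes that the identity relation $1_{V \oplus W}$ and $1_V \oplus 1_W$ likewise coincide. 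In each case degeneracy supplies a unique filler, and its inverse is the filler of the reversed frame, so both are invertible. Every remaining coherence diagram—the three diagrams constraining $\chi$ and $\mu$, the six expressing that the associator and unitors are transformations of double categories, and the two braiding diagrams—commutes automatically because $\LLinRel$ is degenerate.

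The main obstacle, such as it is, lies entirely in the $\chi$ clause: one must verify that relational composition and direct sum interact as claimed, i.e. that $(R_1 \oplus R_1') \odot (R_2 \oplus R_2') = (R_1 \odot R_2) \oplus (R_1' \odot R_2')$ after reindexing. This is a direct unwinding of the definition of composition of linear relations and carries the only genuine content of the proof; everything else is forced by degeneracy and the already-established monoidal structure on $\LLinRel_0$.
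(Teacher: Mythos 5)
Your proposal is correct and follows essentially the same route as the paper: equip $\LLinRel_0$ and $\LLinRel_1$ with direct-sum monoidal structures, observe that the source and target functors are strict symmetric monoidal, define $\chi$ and $\mu$ via the (identity, up to reindexing) comparison between $(R_1\oplus S_1)$, $(R_2\oplus S_2)$ composed versus composed-then-summed, and let degeneracy dispose of every coherence diagram of 2-morphisms. The only difference is one of emphasis — you make explicit the coordinate-shuffle verification that the paper compresses into declaring $\chi$ and $\mu$ to be identity 2-morphisms — but the substance is identical.
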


\begin{proof}
We have described $\LLinRel_0$ and $\LLinRel_1$ as symmetric monoidal categories.
The source and target functors $s,t \maps \LLinRel_1 \to \LLinRel_0$ are strict symmetric
monoidal functors.  The required globular 2-isomorphisms $\chi$ and $\mu$ are defined as
follows.   Given four horizontal 1-cells 
\[   R_1 \subseteq U_1 \oplus V_1, \quad R_2 \subseteq V_1 \oplus W_1, \]
\[ S_1 \subseteq U_2 \oplus V_2, \quad S_2 \subseteq V_2 \oplus W_2, \]
the globular 2-isomorphism $\chi \maps (R_2 \oplus S_2)(R_1 \oplus S_1) \Rightarrow (R_2 R_1) \oplus (S_2 S_1)$ is the identity 2-morphism
\[
\begin{tikzpicture}[scale=1.5]
\node (D) at (-4,0.5) {$U_1 \oplus U_2$};
\node (E) at (-1,0.5) {$W_1 \oplus W_2$};
\node (F) at (-4,-1) {$U_1 \oplus U_2$};
\node (A) at (-1,-1) {$W_1 \oplus W_2$.};
\node (B') at (-2.5,-0.25) {$$};
\path[->,font=\scriptsize,>=angle 90]
(D) edge node [above]{$(R_2 \oplus S_2)(R_1 \oplus S_1)$}(E)
(E) edge node [right]{$1$}(A)
(D) edge node [left]{$1$}(F)
(F) edge node [above]{$(R_2 R_1) \oplus (S_2 S_1)$} (A);
\end{tikzpicture}
\]
The globular 2-isomorphism $\mu \maps u(V \oplus W) \Rightarrow u(V) \oplus u(W)$ is
the identity 2-morphism
\[
\begin{tikzpicture}[scale=1.5]
\node (D) at (-4,0.5) {$V \oplus W$};
\node (E) at (-2,0.5) {$V \oplus W$};
\node (F) at (-4,-1) {$V \oplus W$};
\node (A) at (-2,-1) {$V \oplus W$.};
\node (B') at (-3,-0.25) {$$};
\path[->,font=\scriptsize,>=angle 90]
(D) edge node [above]{$1_{V \oplus W} $}(E)
(E) edge node [right]{$1$}(A)
(D) edge node [left]{$1$}(F)
(F) edge node [above]{$1_V \oplus 1_W$} (A);
\end{tikzpicture}
\]
All the commutative diagrams in the definition of symmetric monoidal double category \cite{Shul} can be checked straightforwardly.   In particular, all diagrams of
2-morphisms commute automatically because $\LLinRel$ is degenerate.  
\end{proof}

\subsection{A bicategory of linear relations}
We can also promote the symmetric monoidal double category $\mathbb{L}\mathbf{inRel}$ of linear relations from the previous section to a symmetric monoidal bicategory $\mathbf{LinRel}$ of linear relations due to Shulman's Theorem \ref{Shulman1} by showing $\mathbb{L}\mathbf{inRel}$ is isofibrant.

\begin{lemma}
The symmetric monoidal double category $\LLinRel$ is isofibrant.
\end{lemma}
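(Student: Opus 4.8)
The plan is to mirror the proof of Lemma \ref{lem:isofibrant}, where $\MMark$ was shown to be isofibrant: I would verify that every vertical 1-isomorphism in $\LLinRel$ admits both a companion and a conjoint, which by the criterion used there (following Shulman) is exactly what isofibrancy demands. A vertical 1-isomorphism in $\LLinRel$ is a linear isomorphism $f \maps V \to W$ between finite-dimensional real vector spaces, so I have its inverse $f^{-1} \maps W \to V$ available throughout.

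The companion of $f$ I would take to be the graph of $f$, i.e.\ the linear relation
$$\hat{f} = \{(v, f(v)) : v \in V\} \subseteq V \oplus W,$$
regarded as a horizontal 1-cell from $V$ to $W$. Recalling that the unit horizontal 1-cell $U_V$ is the identity relation $1_V = \{(v,v) : v \in V\} \subseteq V \oplus V$, the two 2-morphisms exhibiting $\hat{f}$ as a companion are the frames with boundary data $(f, 1_W)$ on $\hat{f}$ over $U_W$ and $(1_V, f)$ on $U_V$ over $\hat{f}$; these exist precisely because $(f \oplus 1_W)\hat{f} \subseteq 1_W$ and $(1_V \oplus f) 1_V \subseteq \hat{f}$, both of which are immediate since $(f \oplus 1_W)(v, f(v)) = (f(v), f(v))$ and $(1_V \oplus f)(v,v) = (v, f(v))$. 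Dually, the conjoint of $f$ I would take to be the cograph
$$\check{f} = \{(f(v), v) : v \in V\} = \{(w, f^{-1}(w)) : w \in W\} \subseteq W \oplus V,$$
viewed as a horizontal 1-cell from $W$ to $V$, with two accompanying 2-morphisms built from the analogous inclusions.

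The crucial simplification, and the reason this lemma costs so little, is that $\LLinRel$ is \emph{degenerate}: given any frame there is at most one 2-morphism filling it. Consequently the defining equations for companions and conjoints --- the requirement that one vertical composite equal $U_f$ and one horizontal composite equal $1_{\hat{f}}$, together with their conjoint analogues --- hold automatically, since both sides of each equation are 2-morphisms inhabiting a common frame. Thus I expect no genuine obstacle: the entire content is the recognition that the graph and cograph of $f$ furnish the companion and conjoint, after which degeneracy discharges every coherence condition. The one place invertibility of $f$ is used is in identifying $\check{f}$ with the graph of $f^{-1}$, which confirms that the conjoint is a legitimate linear relation and that the required filling inclusions go through in both directions.
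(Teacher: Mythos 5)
Your proposal is correct and follows essentially the same route as the paper: the companion is the graph of $f$ (the linear relation determined by the isomorphism $f$), the conjoint is its reverse (equivalently the graph of $f^{-1}$), the two required 2-morphisms are the unique fillers of the evident frames, and the companion/conjoint equations hold automatically because $\LLinRel$ is degenerate. Your explicit verification of the inclusions $(f \oplus 1_W)\hat{f} \subseteq 1_W$ and $(1_V \oplus f)1_V \subseteq \hat{f}$ is a welcome detail the paper leaves implicit.
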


\begin{proof}
Let $f \maps X \to Y$ be a linear isomorphism between finite-dimensional real vector spaces. Define $\hat{f}$ to be the linear relation given by the linear isomorphism $f$ and define 2-morphisms in $\LLinRel$
\[
\begin{tikzpicture}[scale=1.5]
\node (D) at (-4,0.5) {$X$};
\node (F) at (-2,0.5) {$Y$};
\node (A) at (-4,-1) {$Y$};
\node (B) at (-2,-1) {$Y$};
\node (D') at (0,0.5) {$X$};
\node (F') at (2,0.5) {$X$};
\node (A') at (0,-1) {$X$};
\node (B') at (2,-1) {$Y$};
\node (C) at (-3,-0.25) {$\alpha_{f} \Downarrow$};
\node (C') at (1,-0.25) {${}_{f}\alpha \Downarrow$};
\path[->,font=\scriptsize,>=angle 90]
(D) edge node [above]{$\hat{f}$}(F)
(D) edge node [left]{$f$}(A)
(F) edge node [right]{$1$}(B)
(A) edge node [above]{$1$} (B)
(D') edge node [above]{$1$}(F')
(D') edge node [left]{$1$}(A')
(F') edge node [right]{$f$}(B')
(A') edge node [above]{$\hat{f}$} (B');
\end{tikzpicture}
\]
where $\alpha_{f}$ and ${}_{f} \alpha$, the unique fillers of their frames, are identities. These two 2-morphisms and $\hat{f}$ satisfy the required equations, and the conjoint of $f$ is given by reversing the direction of $\hat{f}$, which is just $f^{-1} \maps Y \to X$. It follows that $\LLinRel$ is isofibrant.
\end{proof}

\begin{theorem}
There exists a symmetric monoidal bicategory $\bold{LinRel}$ with
\begin{enumerate}
\item finite-dimensional real vector spaces as objects,
\item linear relations $R \subseteq V \oplus W$ as morphisms from $V$ to $W$,
\item inclusions $R \subseteq S$ between linear relations $R,S \subseteq V \oplus W$ as 2-morphisms.
\end{enumerate}
\end{theorem}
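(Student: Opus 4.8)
The plan is to obtain $\mathbf{LinRel}$ as the horizontal bicategory of the symmetric monoidal double category $\LLinRel$ constructed in Theorems \ref{thm:LLinRel} and \ref{thm:LLinRel2}, exactly as we did for $\mathbf{Mark}$ in Theorem \ref{markbicat}. Having just shown in the preceding lemma that $\LLinRel$ is isofibrant, and knowing from Theorems \ref{thm:LLinRel} and \ref{thm:LLinRel2} that it is a symmetric monoidal double category, I would simply invoke Shulman's Theorem \ref{Shulman1} to conclude that $\mathbf{H}(\LLinRel)$ is a symmetric monoidal bicategory. We then set $\mathbf{LinRel} := \mathbf{H}(\LLinRel)$, so that the isofibrancy established above is precisely what allows the portion of the monoidal structure living in $\LLinRel_0$ to be inherited by the bicategory.

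The only point requiring verification is that the three pieces of data listed in the statement agree with the objects, horizontal 1-cells, and globular 2-morphisms of $\LLinRel$. The objects of $\mathbf{H}(\LLinRel)$ are by definition the objects of $\LLinRel$, namely finite-dimensional real vector spaces, and the morphisms are the horizontal 1-cells, namely linear relations $R \subseteq V \oplus W$; these match items (1) and (2) immediately. For item (3), recall that a globular 2-morphism is a 2-morphism of $\LLinRel$ whose vertical source and target 1-morphisms are identities. A 2-morphism of $\LLinRel$ is a square with vertical legs $f$ and $g$ satisfying $(f \oplus g)R \subseteq S$; setting $f = 1_V$ and $g = 1_W$ forces the horizontal source and target to be relations $R, S \subseteq V \oplus W$ on the same pair of spaces, and the condition collapses to $R \subseteq S$. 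Thus the globular 2-morphisms are precisely the inclusions of linear relations, as claimed.

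I do not anticipate any real obstacle: since $\LLinRel$ is degenerate, in the sense that there is at most one 2-morphism filling any frame, all of the coherence data for the bicategory structure is uniquely determined and the relevant axioms hold automatically, just as in the proof of Theorem \ref{markbicat}. The entire content of the argument is the observation that isofibrancy, already established in the preceding lemma, is exactly what Shulman's theorem requires in order to lift the symmetric monoidal structure from the double category $\LLinRel$ to its horizontal bicategory $\mathbf{H}(\LLinRel)$.
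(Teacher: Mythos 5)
Your proposal is correct and takes exactly the same route as the paper: apply Shulman's Theorem \ref{Shulman1} to the isofibrant symmetric monoidal double category $\LLinRel$ and take its horizontal bicategory. Your extra check that the globular 2-morphisms reduce to inclusions $R \subseteq S$ is a welcome detail the paper leaves implicit, but the argument is the same.
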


\begin{proof}
Apply Shulman's result, Theorem\ \ref{Shulman1}, to the isofibrant symmetric monoidal double category $\LLinRel$ to obtain the symmetric monoidal bicategory $\bold{LinRel}$ as the horizontal edge bicategory of $\LLinRel$.
\end{proof}

\section{Black-boxing for open Markov processes}\label{Black}

In this section we present the main result of the chapter which is a symmetric monoidal double functor $\blacksquare \maps \MMark \to \LLinRel$. We proceed as follows:
\begin{enumerate}
\item On objects: for a finite set $S$, we define $\blacksquare(S)$ to be the vector space $\R^S \oplus \R^S$.
\item On horizontal 1-cells: for an open Markov process $S \stackrel{i}{\rightarrowtail} (X,H) \stackrel{o}{\leftarrowtail} T$, we define its black-boxing as in Definition \ref{defn:black-boxing}:
\[ \blacksquare(S \stackrel{i}{\rightarrowtail} (X,H) \stackrel{o}{\leftarrowtail} T) = \]
\[ \{ (i^*(v),I,o^*(v),O) : \; v \in \R^X,  I \in \R^S, O \in \R^T \, 
\textrm{ and } \, H(v) + i_*(I) - o_*(O) = 0\}. \]
\item On vertical 1-morphisms: for a map $f \maps S\to S'$, we define $\blacksquare(f) \maps \R^S \oplus \R^S \to \R^{S'} \oplus \R^{S'}$ to be the linear map $f_* \oplus f_*$.
\end{enumerate}

What remains to be done is define how $\blacksquare$ acts on 2-morphisms of $\MMark$.   This describes the relation between steady state input and output concentrations and flows of a coarse-grained open Markov process in terms of the corresponding relation for the original process:

\begin{lemma}
\label{lem:black-boxing_2-morphisms}
Given a 2-morphism 
\[
\begin{tikzpicture}[scale=1.5]
\node (D) at (-4.5,0.5) {$S$};
\node (E) at (-3,0.5) {$(X,H)$};
\node (F) at (-1.5,0.5) {$T$};
\node (G) at (-3,-1) {$(X',H')$};
\node (A) at (-4.5,-1) {$S'$};
\node (B) at (-1.5,-1) {$T'$,};
\path[->,font=\scriptsize,>=angle 90]
(D) edge node [left]{$f$}(A)
(F) edge node [right]{$g$}(B)
(D) edge[>->] node [above] {$i$}(E)
(F) edge[>->] node [above] {$o$}(E)
(A) edge[>->] node[left] [above] {$i'$} (G)
(B) edge[>->] node[left] [above] {$o'$} (G)
(E) edge node[left] {$p$}(G);
\end{tikzpicture}
\]
in $\MMark$, there exists a (unique) 2-morphism
\[
\begin{tikzpicture}[scale=1.5]
\node (D) at (-4.5,0.5) {$\blacksquare(S)$};
\node (E) at (-1.5,0.5) {$\blacksquare(T)$};
\node (F) at (-4.5,-1) {$\blacksquare(S')$};
\node (A) at (-1.5,-1) {$\blacksquare(T')$};
\path[->,font=\scriptsize,>=angle 90]
(D) edge node [above]{$\blacksquare(S \stackrel{i}{\rightarrowtail} (X,H) \stackrel{o}{\leftarrowtail} T)$}(E)
(E) edge node [right]{$\blacksquare(g)$}(A)
(D) edge node [left]{$\blacksquare(f)$}(F)
(F) edge node [above]{$ \blacksquare(S' \stackrel{i'}{\rightarrowtail} (X',H') \stackrel{o'}{\leftarrowtail} T')$} (A);
\end{tikzpicture}
\]
\vskip -0.5em \noindent 
in $\LLinRel$.
\end{lemma}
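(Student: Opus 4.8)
The plan is to exploit the fact, noted just before the statement, that $\LLinRel$ is degenerate: any frame admits at most one filling 2-morphism. Hence uniqueness is free, and the entire content of the lemma is the \emph{existence} of the filler, which by the definition of a 2-morphism in $\LLinRel$ amounts to the single containment of linear relations
\[ (\blacksquare(f) \oplus \blacksquare(g))\, \blacksquare\bigl(S \xrightarrow{i} (X,H) \xleftarrow{o} T\bigr) \;\subseteq\; \blacksquare\bigl(S' \xrightarrow{i'} (X',H') \xleftarrow{o'} T'\bigr). \]
Here the left vertical leg $\blacksquare(f) = f_* \oplus f_*$ and the right vertical leg $\blacksquare(g) = g_* \oplus g_*$ act coordinatewise on the source relation $R \subseteq \R^S \oplus \R^S \oplus \R^T \oplus \R^T$, sending a tuple $(i^*(v), I, o^*(v), O)$ to $(f_* i^*(v), f_* I, g_* o^*(v), g_* O)$.

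First I would fix an arbitrary element of the source relation: a vector $v \in \R^X$ together with $I \in \R^S$ and $O \in \R^T$ satisfying the steady-state equation $Hv + i_*(I) - o_*(O) = 0$, as in Definition \ref{defn:black-boxing}. The key step is to propose an explicit steady state for the coarse-grained process witnessing membership of the image tuple in the target relation, namely $v' = p_*(v)$, $I' = f_*(I)$, and $O' = g_*(O)$. It then remains to check three things: that $i'^*(v') = f_* i^*(v)$, that $o'^*(v') = g_* o^*(v)$, and that $(X',H')$ is in steady state for these data, i.e.\ $H'v' + i'_*(I') - o'_*(O') = 0$.

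For the first two, I would invoke the Beck--Chevalley identity of Lemma \ref{lem:beck-chevalley} applied to the two pullback squares furnished by Definition \ref{defn:coarse-graining}: the input square yields $f_* i^* = i'^* p_*$ and the output square yields $g_* o^* = o'^* p_*$, so that $i'^*(p_* v) = f_* i^*(v)$ and $o'^*(p_* v) = g_* o^*(v)$ exactly. For the steady-state equation I would use functoriality of pushforward, which turns the commuting triangles $p i = i' f$ and $p o = o' g$ into $p_* i_* = i'_* f_*$ and $p_* o_* = o'_* g_*$, together with the morphism condition $p_* H = H' p_*$ built into every 2-morphism of $\MMark$. Combining these gives
\[ H' p_*(v) + i'_*(f_* I) - o'_*(g_* O) = p_* H v + p_* i_*(I) - p_* o_*(O) = p_*\bigl(Hv + i_*(I) - o_*(O)\bigr) = 0. \]
This establishes the containment, and degeneracy then delivers the unique 2-morphism.

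I expect no serious obstacle here; the argument is essentially a bookkeeping exercise once the correct steady state $v' = p_*(v)$ is guessed. The one subtlety worth flagging is that the matching of input and output probabilities is not automatic---it genuinely requires the squares in Definition \ref{defn:coarse-graining} to be \emph{pullbacks}, which is precisely what makes Lemma \ref{lem:beck-chevalley} applicable; a merely commuting square would not suffice. This is also the reason the pullback condition, rather than plain commutativity, is imposed in the definition of a morphism of open Markov processes.
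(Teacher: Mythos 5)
Your proposal is correct and follows essentially the same route as the paper's proof: uniqueness from degeneracy of $\LLinRel$, the Beck--Chevalley identities $f_* i^* = i'^* p_*$ and $g_* o^* = o'^* p_*$ from the pullback squares to match the boundary probabilities, and the computation $H' p_*(v) + i'_* f_*(I) - o'_* g_*(O) = p_*(Hv + i_*(I) - o_*(O)) = 0$ using commutativity and $p_* H = H' p_*$. Your explicit choice of witness $v' = p_*(v)$ and your remark on why pullbacks (not mere commutativity) are needed are both exactly the points the paper makes.
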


\begin{proof}  
Since $\LLinRel$ is degenerate, if there exists a 2-morphism of the claimed kind it is
automatically unique.  To prove that such a 2-morphism exists, it suffices to prove
\[    (i^*(v),I,o^*(v),O) \in V \; \implies \;  (f_* i^*(v),f_*(I), g_* o^*(v),g_*(O)) \in W \]
where
\[  V = \blacksquare(S \stackrel{i}{\rightarrowtail} (X,H) \stackrel{o}{\leftarrowtail} T) = \]
\[  \{ (i^*(v),I,o^*(v),O) : \; v \in \R^X,  I \in \R^S, O \in \R^T \, 
\textrm{ and } \, H(v) + i_*(I) - o_*(O) = 0\} \]
and
\[ W = \blacksquare(S' \stackrel{i'}{\rightarrowtail} (X',H') \stackrel{o'}{\leftarrowtail} T') = \]
\[ \{ (i'^*(v'),I',o'^*(v'),O') : \; v' \in \R^{X'},  I' \in \R^{S'}, O' \in \R^{T'} \, 
\textrm{ and } \, H'(v') + i'_*(I') - o'_*(O') = 0\} .\]

To do this, assume $  (i^*(v),I,o^*(v),O) \in V$, which implies that
\begin{equation}
  H(v) + i_*(I) - o_*(O) = 0 .    \label{eq:master_1}
\end{equation}
Since the commuting squares in $\alpha$ are pullbacks,  Lemma \ref{lem:beck-chevalley} implies that
\[
     f_* i^* = i'^* p_*    , \qquad
     g_* o^* = o'^* p_*  . 
\]
Thus
\[     (f_* i^*(v),f_*(I), g_* o^*(v),g_*(O)) = (i'^* p_*(v), f_*(I), o'^* p_*(v),g_*(O) )  \]
and this is an element of $W$ as desired if
\begin{equation}   
    H' p_*(v) + i'_* f_*(I) - o'_* g_*(O) = 0 .   \label{eq:master_2} 
\end{equation}
To prove Equation \eqref{eq:master_2}, note that
\[
\begin{array}{ccl}
   H' p_*(v) + i'_* f_*(I) - o'_* g_*(O) &=& p_*H (v) + p_* i_*(I) - p_* o_* (O)  \\ 
                                                          &=& p_*(H(v) + i_*(I) - o_*(O)) 
\end{array}
\]
where in the first step we use the fact that the squares in $\alpha$ commute, together with the fact that $H' p_* = p_* H$.   Thus, Equation \eqref{eq:master_1} implies Equation \eqref{eq:master_2}. 
\end{proof}

The following result is a special case of a result by Pollard and Baez on black-boxing open dynamical systems \cite{BP}.   To make this chapter self-contained we adapt the proof
to the case at hand:

\begin{lemma}
\label{lem:black-boxing_symmoncat}
The black-boxing of a composite of two open Markov processes equals the composite of
their black-boxings.
\end{lemma}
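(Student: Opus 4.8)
The plan is to show that for two composable open Markov processes $S \stackrel{i_1}{\rightarrowtail} (X,H) \stackrel{o_1}{\leftarrowtail} T$ and $T \stackrel{i_2}{\rightarrowtail} (Y,G) \stackrel{o_2}{\leftarrowtail} U$, the two linear relations
\[
\blacksquare\big((X,H) \odot (Y,G)\big) \quad\text{and}\quad \blacksquare\big((Y,G)\big) \circ \blacksquare\big((X,H)\big)
\]
from $\R^S \oplus \R^S$ to $\R^U \oplus \R^U$ coincide as subspaces of $\R^S \oplus \R^S \oplus \R^U \oplus \R^U$. First I would unwind both definitions explicitly. The left-hand side involves steady states $w \in \R^{X +_T Y}$ for the composite Hamiltonian $H \odot G = j_* H j^* + k_* G k^*$ together with inflows $I \in \R^S$ and outflows $O \in \R^U$ satisfying $(H \odot G)(w) + (j i_1)_*(I) - (k o_2)_*(O) = 0$, with the externally observed data being $((ji_1)^*(w), I, (ko_2)^*(w), O)$. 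The right-hand side is a composite of linear relations, so an element is a tuple for which there exist a steady state $v \in \R^X$ of the first process and a steady state $v' \in \R^Y$ of the second, with matching \emph{intermediate} data at $T$: the output probabilities $o_1^*(v)$ must equal the input probabilities $i_2^*(v')$, and the outflow $O_T$ of the first must equal the inflow $I_T$ of the second.

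The key step is to establish a correspondence between steady states $w$ of the composite on $X +_T Y$ and compatible pairs $(v, v')$ of steady states of the factors. Using the pushout structure and the maps $j \maps X \to X +_T Y$, $k \maps Y \to X +_T Y$, I would set $v = j^*(w)$ and $v' = k^*(w)$; conversely, given a compatible pair matching on $T$, one reconstructs $w$ using the copairing $\ell \maps X + Y \to X +_T Y$. The crucial algebraic input is the alternative formula for horizontal composition, Equation (\ref{eq:odot2}), namely $H \odot G = \ell_*(H \oplus G)\ell^*$, which lets me translate the single master equation $(H \odot G)(w) + (ji_1)_*(I) - (ko_2)_*(O) = 0$ into a statement about $H \oplus G$ on $\R^{X+Y}$ and then decompose it into the two separate master equations for $H$ and $G$. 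Applying $j^*$ and $k^*$ (equivalently, projecting onto the $X$ and $Y$ summands) to the composite master equation, and using the pushout/Beck--Chevalley relations between $j, k, j^*, k^*$ and the inclusion of $T$, should recover exactly the two factor master equations plus the matching conditions at the shared boundary $T$, with the intermediate flow appearing as both the outflow of the first process and the inflow of the second.

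The main obstacle I expect is the bookkeeping at the interface $T$: one must verify precisely that the probability at the internal states identified by the pushout is consistent (so that $o_1^*(v) = i_2^*(v')$ emerges as a genuine equation rather than being assumed) and that the net flow through $T$ is correctly accounted as the shared intermediate inflow/outflow that gets quantified away in the composite of relations. This amounts to checking that eliminating the internal variable—the value of the steady state on the glued part of $T$—matches exactly the existential quantifier hidden in composition of linear relations. Since this is precisely the content of the earlier Baez--Pollard black-boxing argument for open dynamical systems, I would adapt that computation to the present linear, infinitesimal-stochastic setting, invoking Lemma \ref{lem:push-pull} and Lemma \ref{lem:beck-chevalley} to handle the push-pull identities, and carefully tracking that both directions of the set-theoretic containment hold so that the two linear relations are equal and not merely one contained in the other.
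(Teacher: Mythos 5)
Your proposal is correct and its overall architecture is the same as the paper's: both directions of the containment are proved, the easy direction by gluing a compatible pair $(v,v')$ into a steady state $w$ on the pushout via its universal property and summing the pushed-forward master equations, and the hard direction by setting $v = j^*(w)$, $v' = k^*(w)$ and then producing the intermediate flow $O = I' \in \R^T$ that the composite of relations existentially quantifies over. The one place you diverge is in how that intermediate flow is extracted. The paper rewrites the composite master equation as $j_*(H(v)+i_*(I)) + k_*(G(v') - o'_*(O')) = 0$, observes that applying the free-vector-space functor to the pushout square yields a coequalizer, and concludes that $\ker[j_*,k_*] = \mathrm{im}\bigl((o_*,0)-(0,i'_*)\bigr)$, which hands you $O = I'$ directly. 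You instead propose pulling back along $j^*$ and $k^*$; this works, but only because the pushout square of injections in $\FinSet$ is also a pullback (adhesivity), so that Lemma \ref{lem:beck-chevalley} gives $j^*k_* = o_* i'^*$ and $k^*j_* = i'_*o^*$, after which one can \emph{define} $O := -i'^*(G(v')-o'_*(O'))$ and $I' := o^*(H(v)+i_*(I))$ and check $O = I'$ using $o^*o_* = 1$. So your phrase ``applying $j^*$ and $k^*$ should recover exactly the two factor master equations'' is slightly optimistic as stated—the factor equations only appear after you construct the boundary flow—but the construction is forced by your Beck--Chevalley identities, so the argument closes. The paper's coequalizer route is marginally cleaner in that it never needs the pushout-is-a-pullback fact at this point; your route has the advantage of producing the intermediate flow by an explicit formula rather than an exactness argument.
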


\begin{proof}
 Consider composable open Markov processes  
\[         S \stackrel{i}\longrightarrow (X,H) \stackrel{o}\longleftarrow T, \qquad
           T \stackrel{i'}\longrightarrow (Y,G) \stackrel{o'}\longleftarrow U .\]
To compose these, we first form the pushout
\[
    \xymatrix{
      && X +_T Y \\
      & X \ar[ur]^{j} && Y \ar[ul]_{k} \\
      \quad S\quad \ar[ur]^{i} && T \ar[ul]_{o} \ar[ur]^{i'} &&\quad U \quad \ar[ul]_{o'}
    }
\]
Then their composite is
\[ S \stackrel{j i}{\longrightarrow} (X +_T Y , H \odot G) \stackrel{k o'}{\longleftarrow} U \]
where 
\[    H \odot G = j_* H j^* + k_* G  k^*  .\]

To prove that $\blacksquare$ preserves composition, we first show that 
\[ \blacksquare(Y,G) \; \blacksquare(X,H) \subseteq \blacksquare(X+_T Y,H \odot G) .\]  
Thus, given
\[     (i^*(v),I,o^*(v),O) \in \blacksquare(X,H), \qquad  ({i'}^*(v'),I',{o'}^*(v'),O') \in \blacksquare(Y,G) \]
with 
\[   o^*(v) = {i'}^*(v'), \qquad O = I' \]
we need to prove that 
\[     (i^*(v),I,{o'}^*(v'),O') \in \blacksquare(X +_T Y, H \odot G). \]
To do this, it suffices to find $w \in \R^{X +_T Y}$ such that 
\[   (i^*(v),I,{o'}^*(v'),O') = ((ji)^*(w), I, {(ko')}^*(w), O') \]
and $w$ is a steady state of $(X+_T Y,H \odot G)$ with inflows $I$ and outflows $O'$.

Since  $o^*(v) = {i'}^*(v'),$ this diagram commutes:
\[
    \xymatrix{
      && \R \\
      & X \ar[ur]^{v} && Y \ar[ul]_{v'} \\
       && T \ar[ul]^{o} \ar[ur]_{i'} &&
    }
\]
so by the universal property of the pushout there is a unique map $w \maps X +_T Y \to \R$ such that this commutes:
\begin{equation}\label{eq:pushout}
  \begin{tikzpicture}[scale=1.2]
\node (D) at (-4.25,-0.5) {$X$};
\node (E) at (-3,.5) {$X+_T Y$};
\node (F) at (-1.75,-0.5) {$Y$};
\node (A) at (-3,-1.5) {$T$};
\node (B) at (-3,1.75) {$\R$};
\path[->,font=\scriptsize,>=angle 90]
(E) edge node [left] {$w$} (B)
(A) edge node [below] {$i'$} (F)
(A) edge node [below] {$o$} (D)
(F) edge [bend right] node[right] {$v'$} (B)
(D) edge [bend left] node[left]{$v$} (B)
(D) edge node[above,pos=0.3] {$j$}(E)
(F) edge node[above,pos=0.3] {$k$}(E);
\end{tikzpicture}
\end{equation}
This simply says that because the functions $v$ and $v'$ agree on the `overlap' of our two
open Markov processes, we can find a function $w$ that restricts
to $v$ on $X$ and $v'$ on $Y$.

We now prove that $w$ is a steady state of the composite open Markov process with
inflows $I$ and outflows $O'$:
\begin{equation}
   \label{eq:steady_state_3}
   (H \odot G)(w) + (ji)_*(I) - (ko')_*(O') = 0.
\end{equation}
To do this we use the fact that $v$ is a steady state of $S \stackrel{i}\rightarrowtail (X,H) \stackrel{o}\leftarrowtail T$ with inflows $I$ and outflows $O$:
\begin{equation}\label{eq:steady_state_1}
   H(v) + i_*(I) - o_*(O) = 0
\end{equation}
and $v'$ is a steady state of $T \stackrel{i'}\rightarrowtail (Y,G) \stackrel{o'}\leftarrowtail U$ with inflows $I'$ and outflows $O'$:
\begin{equation}\label{eq:steady_state_2}
   G(v') + i'_*(I') - o'_*(O') = 0.
\end{equation}
We push Equation \eqref{eq:steady_state_1} forward along $j$, push Equation \eqref{eq:steady_state_2} forward along $k$, and sum them:
\[   j_*(H(v))  + (ji)_*(I) - (jo)_*(O) + k_*(G(v')) + (ki')_*(I') - (ko')_*(O') = 0. \]
Since $O = I'$ and $jo = ki'$, two terms cancel, leaving us with
\[     j_*(H(v))  + (ji)_*(I) + k_*(G(v')) - (ko')_*(O') = 0. \]
Next we combine the terms involving the infinitesimal stochastic operators $H$ and $G$, with the help of Equation \eqref{eq:pushout} and the definition of $H \odot G$:
\begin{equation}
\label{eq:u}
   \begin{array}{ccl}
  j_*(H(v)) + k_*(G(v')) &=& (j_* H j^* + k_* G k^*)(w) \\
                                    &=& (H \odot G)(w)  .
\end{array}
\end{equation}
This leaves us with
\[         (H \odot G)(w) +  (ji)_*(I) - (ko')_*(O') = 0 \]
which is Equation \eqref{eq:steady_state_3}, precisely what we needed to show.

To finish showing that $\blacksquare$ is a functor, we need to show that 
\[   \blacksquare(X+_T Y,H \odot G) \subseteq \blacksquare(Y,G) \; \blacksquare(X,H)  .\] 
So, suppose we have 
\[    ((ji)^*(w), I, {(ko')}^*(w), O') \in \blacksquare(X+_T Y,H \odot G) .\]
We need to show
\begin{equation}
\label{eq:composite}
  ((ji)^*(w), I, {(ko')}^*(w), O') = (i^*(v),I,o'^*(v'),O')
\end{equation}
where 
\[     (i^*(v),I,o^*(v),O) \in \blacksquare(X,H), \qquad  (i'^*(v'),I',o'^*(v'),O') \in \blacksquare(Y,G) \]
and
\[   o^*(v) = i'^*(v'), \qquad O = I' .\]

To do this, we begin by choosing
\[   v = j^*(w), \qquad v' = k^*(w) .\]
This ensures that Equation \eqref{eq:composite} holds, and since $jo = ki'$, it also ensures that 
\[  o^*(v) = (jo)^*(w) = (ki')^*(w) = {i'}^*(v')  .\]
To finish the job, we need to find an element $O = I' \in \R^T$ such that $v$ is a steady state of $(X,H)$ with inflows $I$ and outflows $O$ and $v'$ is a steady state of $(Y,G)$ with inflows $I'$ and outflows $O'$.  Of course, we are given the fact that $w$ is a steady state of $(X+_T Y,H \odot G)$ with inflows $I$ and outflows $O'$.   

In short, we are given Equation \eqref{eq:steady_state_3}, and we seek $O = I'$ such that Equations \eqref{eq:steady_state_1} and \eqref{eq:steady_state_2} hold.  Thanks to our choices of $v$ and $v'$,  we can use Equation \eqref{eq:u} and rewrite Equation \eqref{eq:steady_state_3} as
\begin{equation}
\label{eq:steady_state_3'}
  j_*(H(v) + i_*(I)) \; + \; k_*(G(v') - o'_*(O')) = 0 .  
\end{equation}
Equations \eqref{eq:steady_state_1} and \eqref{eq:steady_state_2} say that
\begin{equation}
\label{eq:steady_state_1'2'}
\begin{array}{lcl}
   H(v) + i_*(I) - o_*(O) &=& 0 \\ 
   G(v') + i'_*(I') - o'_*(O') &=& 0.
\end{array}
\end{equation}

Now we use the fact that 
\[
    \xymatrix{
      & X +_T Y \\
       X \ar[ur]^{j} && Y \ar[ul]_{k} \\
       & T \ar[ul]^{o} \ar[ur]_{i'} &
    }
\]
is a pushout.  Applying the `free vector space on a finite set' functor, which preserves colimits, this implies that
\[
    \xymatrix{
      & \R^{X +_T Y} \\
       \R^X \ar[ur]^{j_*} && \R^{Y} \ar[ul]_{k_*} \\
       & \R^T \ar[ul]^{o_*} \ar[ur]_{i'_*} &
    }
\]
is a pushout in the category of vector spaces.   Since a pushout is formed by taking first a coproduct and then a coequalizer, this implies that 
\[
     \xymatrix{
      \R^T \ar@<-.5ex>[rr]_-{(0,i'_*)} \ar@<.5ex>[rr]^-{(o_*,0)} && \R^X \oplus \R^{Y} \ar[rr]^{[j_*,k_*]}
   && \R^{X +_T Y}
}
\]
is a coequalizer.  Thus, the kernel of $[j_*,k_*]$ is the image of $(o_*,0) - (0,i'_*)$.   Equation \eqref{eq:steady_state_3'} says precisely that 
\[    (H(v) + i_*(I), G(v') - o'_*(O')) \in \ker([j_*,k_*])  .\]
Thus, it is in the image of $(o_*,0) - (0, i'_*)$.  In other words, there exists some element $O = I' \in \R^T$
such that 
\[   (H(v) + i_*(I), G(v') - o'_*(O')) = (o_*(O), -i'_*(I')).\]
This says that Equations \eqref{eq:steady_state_1} and \eqref{eq:steady_state_2} hold, as desired.
\end{proof}

This is the main result of the paper on coarse-graining open Markov processes \cite{BC}:

\begin{theorem}
\label{thm:main}
There exists a symmetric monoidal double functor $\blacksquare \maps \MMark \to \LLinRel$ with the following behavior:

\begin{enumerate}
\item Objects: $\blacksquare$ sends any finite set $S$ to the vector space $\R^{S} \oplus \R^{S}$.
\item Vertical 1-morphisms: $\blacksquare$ sends any map $f \maps S \to S'$ to the linear
map \hfill \break  $f_* \oplus f_* \maps \R^{S} \oplus \R^S \to \R^{S'} \oplus \R^{S'}$.
\item Horizontal 1-cells: $\blacksquare$ sends any open Markov process $S \stackrel{i}{\rightarrowtail} (X,H) \stackrel{o}{\leftarrowtail} T$ to the linear relation given in Definition \ref{defn:black-boxing}:
\[ \blacksquare(S \stackrel{i}{\rightarrowtail} (X,H) \stackrel{o}{\leftarrowtail} T) = \]
\[ \{ (i^*(v),I,o^*(v),O) : \; H(v) + i_*(I) - o_*(O) = 0 \textrm{ for some } I \in \R^S, v \in \R^X, O \in \R^T \}. \]
\item 2-Morphisms: $\blacksquare$ sends any morphism of open Markov processes
\[
\begin{tikzpicture}[scale=1.5]
\node (D) at (-4.5,0.5) {$S$};
\node (E) at (-3,0.5) {$(X,H)$};
\node (F) at (-1.5,0.5) {$T$};
\node (G) at (-3,-1) {$(X',H')$};
\node (A) at (-4.5,-1) {$S'$};
\node (B) at (-1.5,-1) {$T'$};
\path[->,font=\scriptsize,>=angle 90]
(D) edge node [left]{$f$}(A)
(F) edge node [right]{$g$}(B)
(D) edge[>->] node [above] {$i$}(E)
(F) edge[>->] node [above] {$o$}(E)
(A) edge[>->] node[above] {$i'$} (G)
(B) edge[>->] node[above] {$o'$} (G)
(E) edge node[left] {$p$}(G);
\end{tikzpicture}
\]
to the 2-morphism in $\LLinRel$ given in Lemma \ref{lem:black-boxing_2-morphisms}:
\[
\begin{tikzpicture}[scale=1.5]
\node (D) at (-4.5,0.5) {$\blacksquare(S)$};
\node (E) at (-1.5,0.5) {$\blacksquare(T)$};
\node (F) at (-4.5,-1) {$\blacksquare(S')$};
\node (A) at (-1.5,-1) {$\blacksquare(T')$.};
\path[->,font=\scriptsize,>=angle 90]
(D) edge node [above]{$\blacksquare(S \stackrel{i}{\rightarrowtail} (X,H) \stackrel{o}{\leftarrowtail} T)$}(E)
(E) edge node [right]{$\blacksquare(g)$}(A)
(D) edge node [left]{$\blacksquare(f)$}(F)
(F) edge node [above]{$ \blacksquare(S' \stackrel{i'}{\rightarrowtail} (X',H') \stackrel{o'}{\leftarrowtail} T')$} (A);
\end{tikzpicture}
\]
\end{enumerate}
\end{theorem}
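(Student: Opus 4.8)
The plan is to establish that $\blacksquare \maps \MMark \to \LLinRel$ is a well-defined symmetric monoidal double functor by assembling the pieces already constructed in this chapter. Since a double functor consists of a functor $\blacksquare_0 \maps \MMark_0 \to \LLinRel_0$ on the category of objects and a functor $\blacksquare_1 \maps \MMark_1 \to \LLinRel_1$ on the category of arrows, together with comparison 2-cells for horizontal composition and horizontal units satisfying coherence axioms, I would organize the proof around verifying each of these in turn, drawing on the lemmas already in hand.

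First I would check that $\blacksquare_0$ and $\blacksquare_1$ are genuine functors. On the object category, $\blacksquare_0$ sends a finite set $S$ to $\R^S \oplus \R^S$ and a map $f \maps S \to S'$ to $f_* \oplus f_*$; functoriality is immediate from the functoriality of the pushforward $(-)_*$, namely $(gf)_* = g_* f_*$ and $(1_S)_* = 1$. On the arrow category, the object part of $\blacksquare_1$ is black-boxing of open Markov processes as in Definition \ref{defn:black-boxing}, and the morphism part is the assignment of Lemma \ref{lem:black-boxing_2-morphisms}. Here the crucial point is that $\LLinRel$ is \emph{degenerate}: given a frame, there is at most one 2-morphism filling it, so functoriality of $\blacksquare_1$ on 2-morphisms (preservation of vertical composites and identities) holds automatically once we know the relevant inclusions of linear relations exist, which Lemma \ref{lem:black-boxing_2-morphisms} guarantees. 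I would also record that $\blacksquare$ commutes strictly with the source and target functors, i.e.\ $s \blacksquare_1 = \blacksquare_0 s$ and $t \blacksquare_1 = \blacksquare_0 t$, which is clear from the definitions since $\blacksquare(S) = \R^S \oplus \R^S$ matches the source/target bookkeeping.

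Next I would supply the comparison constraints making $\blacksquare$ a (pseudo, in fact strong) double functor. The globular 2-isomorphism $\blacksquare_{M,N} \maps \blacksquare(M) \odot \blacksquare(N) \xrightarrow{\sim} \blacksquare(M \odot N)$ for composable horizontal 1-cells is exactly the statement that black-boxing preserves horizontal composition, which is Lemma \ref{lem:black-boxing_symmoncat}; again, because $\LLinRel$ is degenerate, the two inclusions proved there ($\blacksquare(Y,G)\,\blacksquare(X,H) \subseteq \blacksquare(X +_T Y, H \odot G)$ and the reverse) together give the required equality, and the comparison is the unique filling 2-cell. The unit comparison $\blacksquare_S \maps \hat{U}_{\blacksquare(S)} \Rightarrow \blacksquare(U_S)$ amounts to checking that black-boxing the identity open Markov process $S \stackrel{1}{\rightarrowtail} (S, 0_S) \stackrel{1}{\leftarrowtail} S$ yields the diagonal linear relation $\{(x,I,x,I) : x,I \in \R^S\}$, which is the identity horizontal 1-cell on $\R^S \oplus \R^S$ in $\LLinRel$; this is a direct computation from Definition \ref{defn:black-boxing} using $H = 0$, $i = o = 1_S$. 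The associativity and unit coherence axioms for these constraints hold automatically by degeneracy of $\LLinRel$.

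Finally I would promote this to a \emph{symmetric monoidal} double functor by exhibiting the monoidal comparison 2-cells and checking compatibility with the $\chi$, $\mu$, braiding, associator, and unitor data of $\MMark$ (Theorem \ref{thm:MMark_symmetric_monoidal}) and $\LLinRel$ (Theorem \ref{thm:LLinRel2}). The key computation is that black-boxing sends direct sums of open Markov processes to direct sums of linear relations, i.e.\ $\blacksquare(M_1 \oplus M_2) = \blacksquare(M_1) \oplus \blacksquare(M_2)$; this follows because the steady-state condition $ (H_1 \oplus H_2)(v) + \cdots = 0$ on $\R^{X_1 + X_2}$ decouples into the two separate steady-state conditions, so the defining subspace of the black-boxed sum is literally the direct sum of the two black-boxed subspaces. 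The main obstacle is the one genuinely substantive step, preservation of horizontal composition, but that has already been done for us in Lemma \ref{lem:black-boxing_symmoncat} (adapted from Baez--Pollard \cite{BP}); the heart of that argument is the pushout/coequalizer description of $\R^{X +_T Y}$ and the use of Lemma \ref{lem:beck-chevalley}. Everything concerning 2-morphisms and the monoidal coherence then collapses to triviality because $\LLinRel$ is degenerate, so the only real work beyond invoking the prior lemmas is the bookkeeping verification that all diagrams in the definition of symmetric monoidal double functor \cite{Shul} commute, which I would note proceeds straightforwardly given the direct-sum and composition computations above.
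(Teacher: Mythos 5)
Your proposal is correct and follows essentially the same route as the paper's proof: functoriality of $\blacksquare_0$ and $\blacksquare_1$ from Lemma \ref{lem:black-boxing_2-morphisms}, the composition comparison from Lemma \ref{lem:black-boxing_symmoncat}, and the observation that degeneracy of $\LLinRel$ trivializes all coherence conditions on 2-morphisms, with the monoidal structure handled by the evident direct-sum isomorphisms. You in fact spell out two details the paper leaves implicit (that the black-boxing of $S \stackrel{1}{\rightarrowtail} (S,0_S) \stackrel{1}{\leftarrowtail} S$ is the diagonal relation, and that the steady-state condition for $H_1 \oplus H_2$ decouples), and both computations are right.
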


\begin{proof}
First we must define functors $\blacksquare_{0} \maps \MMark_0 \to \LLinRel_0$ and $\blacksquare_1 \maps \MMark_{1} \to \LLinRel_{1}$. The functor $\blacksquare_0$ is defined on finite sets and maps between these as described in (i) and (ii) of the theorem statement, while $\blacksquare_1$ is defined on open Markov processes and morphisms between these as described in (iii) and (iv).  Lemma \ref{lem:black-boxing_2-morphisms} shows that $\blacksquare_1$ is well-defined on morphisms between open Markov processes; given this is it easy to check that $\blacksquare_1$ is a functor.  One can verify that $\blacksquare_0$ and $\blacksquare_1$ combine to define a double functor $\blacksquare \maps \MMark \to \LLinRel$: the hard part is checking that horizontal composition of open Markov processes is preserved, but this was shown in Lemma \ref{lem:black-boxing_symmoncat}.  Horizontal composition of 2-morphisms is automatically preserved because $\LLinRel$ is degenerate.

To make $\blacksquare$ into a symmetric monoidal double functor we need to make
$\blacksquare_0$ and $\blacksquare_1$ into symmetric monoidal functors, which we do using these extra structures:
\begin{itemize}
\item an isomorphism in $\LLinRel_0$ between $\{0\}$ and $\blacksquare(\emptyset)$,
\item a natural isomorphism between $\blacksquare(S)  \oplus \blacksquare(S')$ and 
 $\blacksquare(S + S')$ for any two objects $S,S' \in \MMark_0$,
\item an isomorphism in $\LLinRel_1$ between the unique linear relation $\{0\} \to \{0\}$ and
$\blacksquare(\emptyset \rightarrowtail (\emptyset, 0_\emptyset) \leftarrowtail \emptyset)$, and
\item a natural isomorphism between 
\[
\blacksquare((S \rightarrowtail (X,H) \leftarrowtail T) \; \oplus \; \blacksquare(S' \rightarrowtail (X',H') \leftarrowtail T')  \]
and
\[  \blacksquare(S + S' \rightarrowtail (X+X',H \oplus H') \leftarrowtail T + T') \]
for any two objects $S \rightarrowtail (X,H) \leftarrowtail T$, $S' \rightarrowtail (X',H') \leftarrowtail T'$ of $\MMark_1$.
\end{itemize}
There is an evident choice for each of these extra structures, and it is straightforward to check
that they not only  make $\blacksquare_0$ and $\blacksquare_1$ into symmetric monoidal functors but also meet the extra requirements for a symmetric monoidal double functor listed in Hansen and Shulman's
paper \cite{Shul3}, which may also be found in Definition \ref{defn:monoidal_double_functor}.  In particular, all diagrams of 2-morphisms commute automatically because $\LLinRel$ is degenerate.
\end{proof}

\subsection{A corresponding functor of bicategories}
We have symmetric monoidal bicategories $\bold{Mark}$ and $\bold{LinRel}$, both of which come from discarding the vertical 1-morphisms of the symmetric monoidal double categories $\MMark$ and $\LLinRel$, respectively. Morally, we should be able to do something similar to the symmetric monoidal double functor $\blacksquare \maps \MMark \to \LLinRel$ to obtain a symmetric monoidal functor of bicategories $\blacksquare \maps \bold{Mark} \to \bold{LinRel}$, and indeed we can by a result of Hansen and Shulman \cite{Shul3}.

\begin{theorem}[{{\cite[Thm.\ 6.17]{Shul3}}}]\label{blackbox_functor_bicategories}
There exists a symmetric monoidal functor $\blacksquare \maps \bold{Mark} \to \bold{LinRel}$ that maps:
\begin{enumerate}
\item any finite set $S$ to the finite-dimensional real vector space $\blacksquare(S)=\R^S \oplus \R^S$,
\item any open Markov process $S \stackrel{i}{\rightarrowtail} (X,H) \stackrel{o}{\leftarrowtail} T$ to the linear relation from $\blacksquare(S)$ to $\blacksquare(T)$ given by the linear subspace 
\[ \blacksquare(S \stackrel{i}{\rightarrowtail} (X,H) \stackrel{o}{\leftarrowtail} T) = \]
\[  \{ (i^*(v),I,o^*(v),O) : \; H(v) + i_*(I) - o_*(O) = 0 \} \subseteq \R^S \oplus \R^S \oplus \R^T \oplus \R^T ,\]
\item any morphism of open Markov processes 
\[
\begin{tikzpicture}[scale=1.5]
\node (D) at (-4.2,-0.5) {$S$};
\node (A) at (-4.2,-2) {$S$};
\node (B) at (-1.8,-2) {$T$};
\node (E) at (-3,-0.5) {$(X,H)$};
\node (F) at (-1.8,-0.5) {$T$};
\node (G) at (-3,-2) {$(X',H')$};
\path[->,font=\scriptsize,>=angle 90]
(D) edge[>->] node[above] {$i_1$}(E)
(A) edge[>->] node[above] {$i'_1$} (G)
(B) edge[>->] node [above]{$o'_1$} (G)
(F) edge[>->] node [above]{$o_1$}(E)
(D) edge node [left]{$1_S$}(A)
(F) edge node [right]{$1_T$}(B)
(E) edge node[left] {$p$}(G);
\end{tikzpicture}
\]
to the inclusion
\[   \blacksquare(X,H) \subseteq \blacksquare(X',H') .\]
\end{enumerate}
\end{theorem}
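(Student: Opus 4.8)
The plan is to obtain $\blacksquare \maps \bold{Mark} \to \bold{LinRel}$ not by building a functor of bicategories from scratch, but by applying the horizontal-bicategory construction $\mathbf{H}(-)$ to the symmetric monoidal double functor $\blacksquare \maps \MMark \to \LLinRel$ already constructed in Theorem \ref{thm:main}, and then invoking the general lifting result of Hansen and Shulman \cite{Shul3}. First I would recall the pieces that are already in place: by Lemma \ref{lem:isofibrant} and its analogue for $\LLinRel$, both $\MMark$ and $\LLinRel$ are isofibrant symmetric monoidal double categories, so Shulman's Theorem \ref{Shulman1} yields the symmetric monoidal bicategories $\bold{Mark} = \mathbf{H}(\MMark)$ and $\bold{LinRel} = \mathbf{H}(\LLinRel)$. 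Theorem \ref{thm:main} supplies the symmetric monoidal double functor $\blacksquare$ itself.

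Next I would invoke the Hansen--Shulman theorem that a symmetric monoidal double functor between isofibrant symmetric monoidal double categories induces a symmetric monoidal functor between their horizontal bicategories. Applying this to the double functor of Theorem \ref{thm:main} produces $\mathbf{H}(\blacksquare) \maps \mathbf{H}(\MMark) \to \mathbf{H}(\LLinRel)$, which by the identifications above is exactly the desired $\blacksquare \maps \bold{Mark} \to \bold{LinRel}$. Since this is precisely the content of \cite[Thm.\ 6.17]{Shul3}, the existence and symmetric monoidal structure of the functor are immediate once the hypotheses are verified.

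The only remaining work is to read off the behavior of $\mathbf{H}(\blacksquare)$ on the cells of $\bold{Mark}$ and match it to clauses (i)--(iii). On objects and on $1$-morphisms (the horizontal $1$-cells of $\MMark$), the functor $\mathbf{H}(\blacksquare)$ agrees verbatim with $\blacksquare$, giving (i) and (ii). For (iii), I would use that the $2$-morphisms of $\bold{Mark}$ are by definition the globular $2$-morphisms of $\MMark$, i.e.\ those morphisms of open Markov processes whose vertical legs $f$ and $g$ are identities $1_S$ and $1_T$. For such a $2$-morphism, the action of $\blacksquare$ described in Lemma \ref{lem:black-boxing_2-morphisms} produces a square in $\LLinRel$ with vertical legs $\blacksquare(1_S) = 1$ and $\blacksquare(1_T) = 1$; because $\LLinRel$ is degenerate, such a square exists exactly when $(1 \oplus 1)\blacksquare(X,H) \subseteq \blacksquare(X',H')$, that is, precisely the inclusion $\blacksquare(X,H) \subseteq \blacksquare(X',H')$ asserted in (iii).

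The one substantive check, and hence the main obstacle, is confirming that the hypotheses of the Hansen--Shulman lifting theorem genuinely apply to $\blacksquare$: isofibrancy of both double categories and the full symmetric-monoidal-double-functor structure. These are already furnished by the earlier isofibrancy lemmas and by Theorem \ref{thm:main}, so the argument reduces to assembling them and then carefully verifying the globular-$2$-morphism clause, where one must confirm that restricting to identity legs turns $\blacksquare$'s $2$-morphism assignment into the inclusion of linear relations. This last verification is the point at which I would be most careful, since it is the only place where the passage from the double-categorical to the bicategorical setting does more than merely forget data.
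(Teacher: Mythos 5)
Your proposal is correct and follows essentially the same route as the paper: the paper's proof simply cites Hansen and Shulman's lifting theorem applied to the strong symmetric monoidal double functor $\blacksquare \maps \MMark \to \LLinRel$ of Theorem \ref{thm:main}. Your additional unpacking of how $\mathbf{H}(\blacksquare)$ acts on globular 2-morphisms (reducing to the inclusion of linear relations via degeneracy of $\LLinRel$) is a correct elaboration of detail the paper leaves implicit.
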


\begin{proof}
This was proved by Hansen and Shulman \cite[Theorem 6.17]{Shul3}, by applying a more general result \cite[Theorem 5.11]{Shul3} to the strong symmetric monoidal double functor $\blacksquare \colon \mathbb{M}\mathbf{ark} \to \mathbb{L}\mathbf{inRel}$ of Theorem \ref{thm:main}.
\end{proof}
}

{\ssp
\chapter{Possible future work}

In this final chapter before the \ref{Appendix}ppendix, I will touch on a few possible avenues in which the work in this thesis can be improved. The three main results are the contents of Chapter \ref{Chapter3}, Chapter \ref{Chapter4} and Chapter \ref{Chapter6}.

Chapter \ref{Chapter3} presents the results regarding the foot-replaced double categories formalism. 
We showed how to build a symmetric monoidal double category $_L \mathbb{C}\define{sp}(X)$ from an
adjoint functor $L: A \to X$ between categories with finite colimits. One possible generalization would be to let
$L$ be a `2-adjoint' between two 2-categories $\mathbf{A}$ and $\mathbf{X}$ with finite `2-colimits'. In the conjectured symmetric monoidal double category $_L \mathbb{C} \define{sp}(\mathbf{X})$ obtained from this 2-adjoint $L$, composing two horizontal 1-cells---two cospans in $\mathbf{X}$---would involve taking `2-pushouts', which involve the typical pushout square commuting not on the nose but only up to isomorphism.

We can also generalize foot-replaced double categories. The idea of replacing the category of objects of a double category $\mathbb{X}$ with some other category $\mathsf{A}$ is easily transferable to even higher level categorifications. For example, if $\mathbb{X}$ is a `triple category', which would involve a category $\mathbb{X}_0$ of objects, a category $\mathbb{X}_1$ of arrows and a category $\mathbb{X}_2$ of `faces', we could replace the category of objects $\mathbb{X}_0$ with some other category $\mathsf{A}$, or even both the category of objects $\mathbb{X}_0$ and category of arrows $\mathbb{X}_1$ with some \emph{double category} $\mathbb{A}$ in the event that the pair $(\mathbb{X}_0,\mathbb{X}_1)$ form a double category. One version of a triple category due to Grandis and Par\`e \cite{GP3} is an 'intercategory'  which is, roughly speaking, a pair of double categories sharing a common `side category'. 

Chapter \ref{Chapter4} explores improvements to Fong's original conception of decorated cospans \cite{Fong}. Here, the main insight was to not consider a \emph{set} of decorations but a \emph{category} of decorations. Even further generalizations could be made here by replacing the finitely cocartesian category $\mathsf{A}$ with a finitely 2-cocartesian 2-category $\mathbf{A}$ and viewing $\mathbf{Cat}$ as a 3-category and defining an appropriate functor $F \colon \mathbf{A} \to \mathbf{Cat}$. In this framework, we could then decorate objects with `higher level \emph{stuff}' \cite{BS}, such as a decoration that makes a 2-category $\mathbf{C}$ into a monoidal 2-category $(\mathbf{C},\otimes,1)$.

Above are only some possible improvements to the \emph{frameworks} themselves, but each framework is suitable to applications not mentioned in this thesis. Biological sciences, economics and even social sciences are bound to have situations which can be modeled by either of the above frameworks. Anytime a concept or an idea can be thought of as a set equipped with some extra structure, decorated cospans are lurking in the background, and very often a trivial form of this structure is captured by a left adjoint.

Chapter \ref{Chapter6} applies double categories to coarse-graining open Markov processes. Here, the Markov processes we consider are really finite state Markov \emph{chains}, but more general Markov processes can be considered. Moreover, more general forms of coarse-graining outside of lumpability can also be considered, but would require a different definition of 2-morphism in the resulting double category. In a `triple category' of coarse-grainings, 3-morphisms would then represent maps between two different ways of applying a coarse-graining to a Markov process. This idea would not be well suited for the double category of coarse-grainings presented here, as the category of arrows $\MMark_1$ is locally posetal, meaning that there is at most one coarse-graining as we have defined it \cite{BC} between two open Markov processes.

$\textnormal{ }$
}

{\ssp
\appendix 
\chapter{Definitions}\label{Appendix}

\section{Everyday categories}\label{AppendixA}
This is a thesis largely about applications of double categories in network theory. The most obvious place to start is with the following question: What is a category?

\begin{definition}
A \define{category} $\mathsf{C}$ consists of a collection of \define{objects} denoted Ob$(\mathsf{C})$ and a collection of \define{morphisms} denoted Mor$(\mathsf{C})$ such that: 
\begin{enumerate}
\item{every morphism $f \in \textnormal{Mor}(\mathsf{C})$ has a source object $s(f) \in \textnormal{Ob}(\mathsf{C})$ and a target object $t(f) \in \textnormal{Ob}(\mathsf{C})$. A morphism $f$ with source $x$ and target $y$ we denote as $f \colon x \to y$, and we denote the collection of all morphisms with source $x$ and target $y$ by $\hom(x,y)$ or $\hom_\mathsf{C}(x,y)$.}
\item{Given a morphism $f \colon x \to y$ and a morphism $g \colon y \to z$, there exists a composite morphism $gf \colon x \to z$. In other words, for any triple of objects $x,y,z \in \textnormal{Ob}(\mathsf{C})$, there is a well-defined map
\[
\circ \colon \hom(x,y) \times \hom(y,z) \to \hom(x,z)
\]
called \define{composition}.}
\item{Composition of morphisms is associative, meaning that given three composable morphisms $f,g,h \in \textnormal{Mor}(\mathsf{C})$ we have $h(gf)=(hg)f$.}
\item{Every object $x \in \textnormal{Ob}(\mathsf{C})$ has an identity morphism $1_x \colon x \to x$ such that for any morphism $f \colon x \to y$, we have
\[
f 1_x = f = 1_y f.
\]}
\end{enumerate}
\end{definition}

If both $\textnormal{Ob}(\mathsf{C})$ and $\textnormal{Mor}(\mathsf{C})$ are sets, we say that $\mathsf{C}$ is a \define{small category}. If for every pair of objects $x,y \in \textnormal{Ob}(\mathsf{C})$ we have that $\hom(x,y)$ is a set, we say that $\mathsf{C}$ is a \define{locally small category}. Here are some examples:
\begin{enumerate}
\item{The primordial example of a category is $\mathsf{Set}$ of sets and functions.}
\item{The category $\mathsf{Grp}$ of groups and group homomorphisms.}
\item{The category $\mathsf{Top}$ of topological spaces and continuous maps.}
\item{The category $\mathsf{Mat}(k)$ of natural numbers and $n \times m$ matrices with entries in a field (or more generally, a ring or rig) $k$ with composition given by matrix multiplication.}
\item{Every monoid is a locally small category with a single object whose morphisms are given by the elements of the monoid.}
\item{The category $\mathsf{Cat}$ of categories and functors.}
\item{The category $\mathsf{Vect}$ of vector spaces and linear maps.}
\item{The category $\mathsf{Diff}$ of smooth manifolds and smooth maps.}
\item{The category $\mathsf{Rel}$ of sets and relations.}
\item{The category $\mathsf{PreOrd}$ of preordered sets and monotone functions.}
\item{The category $\mathsf{Graph}$ of (directed) graphs and graph morphisms, which are pairs of functions preserving the source and target of each edge.}
\item{Any set $S$ gives rise to a category $\mathsf{S}$ whose objects are the elements of the set $S$ containing only identity morphisms.}
\item{There is a category $1$ with only one object $\star$ and only an identity morphism $1_\star$.}
\end{enumerate}

Even though a category is usually named after its objects, it is the morphisms of a category that are the real stars of the show. In fact, we can `do away' with all the objects as the collection of all identity morphisms tell us precisely what the objects of a category are.

Any sort of mathematical gizmo is boring and pointless to study unless that mathematical gizmo can `talk' to other similar mathematical gizmos via maps between the two. So, how do categories talk to each other?

\begin{definition}
Given categories $\mathsf{C}$ and $\mathsf{D}$, a \define{functor} $F \colon \mathsf{C} \to \mathsf{D}$ consists of a map $\textnormal{Ob}(F) \colon \textnormal{Ob}(\mathsf{C}) \to \textnormal{Ob}(\mathsf{D})$ and a map $\textnormal{Mor}(F) \colon \textnormal{Mor}(\mathsf{C}) \to \textnormal{Mor}(\mathsf{D})$ respecting source and target, meaning that $s(F(f))=F(s(f))$ and $t(F(f))=F(t(f))$, such that:
\begin{enumerate}
\item{For any two composable morphisms $f \colon x \to y$ and $g \colon y \to z$ in $\mathsf{C}$, we have $F(f)F(g)=F(fg)$, and}
\item{For any object $x \in \mathsf{C}$, we have $F(1_x) = 1_{F(x)}$.}
\end{enumerate}
We usually denote the maps $\textnormal{Ob}(F)$ and $\textnormal{Mor}(F)$ simply as $F$.
\end{definition}
Here are some examples:
\begin{enumerate}
\item{For any category $\mathsf{C}$, there is an identity functor $1_\mathsf{C} \colon \mathsf{C} \to \mathsf{C}$ that maps every object and morphism of $\mathsf{C}$ to itself.}
\item{There is a forgetful functor $R \colon \mathsf{Grp} \to \mathsf{Set}$, which we call $R$ as it is a right adjoint, that maps any group $G$ to its underlying set $U(G)$ and any group homomorphism $f \colon G \to G^\prime$ to its underlying function $U(f) \colon U(G) \to U(G^\prime)$.}
\item{For any category $\mathsf{C}$, there is a functor $! \colon \mathsf{C} \to 1$ which maps every object of $\mathsf{C}$ to the one object $\star$ of $1$ and any morphism in $\mathsf{C}$ to the only morphism $1_\star$ of $1$.}
\item{There is a functor $F \colon \mathsf{Set} \to \mathsf{Cat}$ which maps any set $S$ to the discrete category on $S$ whose objects are given by elements of $S$ and whose only morphisms are identity morphisms.}
\item{Given categories $\mathsf{C}$ and $\mathsf{D}$ and an object $d \in \mathsf{D}$, there is a functor $F_d \colon \mathsf{C} \to \mathsf{D}$ called the \emph{constant functor at} $d$ which maps every object $\mathsf{C}$ to the object $d$ in $\mathsf{D}$ and every morphism of $\mathsf{C}$ to the morphism $1_d$.}
\end{enumerate}

Functors may look a little similar to functions in that they are maps between objects that we are interested in. However, in the same way that the morphisms are the real stars of the show in a category, one could make the same argument that it is \emph{functors} that are the real stars of category theory: after all, a category $\mathsf{C}$ is ultimately determined by the identity functor $1_\mathsf{C}$ on that category. But we will not go down that road. The real fun of category theory starts when we start to consider \emph{maps between maps}. Our first examples of a map between maps, which are also one of the main reasons that Eilenberg and Mac Lane invented category theory in the 1940's, are natural transformations.

\begin{definition}
Let $F \colon \mathsf{C} \to \mathsf{D}$ and $G \colon \mathsf{C} \to \mathsf{D}$ be functors. Then a \define{natural transformation} $\alpha \colon F \Rightarrow G$ consists of a family of morphisms $\alpha_x \colon F(x) \to G(x)$ indexed by the objects of $\mathsf{C}$ such that for any morphism $f \colon x \to y$ in $\mathsf{C}$, the following \define{naturality square} commutes.
\[
\begin{tikzpicture}[scale=1.5]
\node (A) at (0,0) {$F(x)$};
\node (B) at (1.5,0) {$F(y)$};
\node (C) at (0,-1.5) {$G(x)$};
\node (D) at (1.5,-1.5) {$G(y)$};
\path[->,font=\scriptsize,>=angle 90]
(A) edge node [above] {$F(f)$}(B)
(C) edge node [above] {$G(f)$}(D)
(A) edge node [left] {$\alpha_x$}(C)
(B) edge node [right] {$\alpha_y$}(D);
\end{tikzpicture}
\]
We call $\alpha_x$ the component of $\alpha$ at $x$. If each map $\alpha_x$ is an isomorphism, then we say that $\alpha \colon F \Rightarrow G$ is a \define{natural isomorphism}.
\end{definition}
Here are some examples of natural transformations:
\begin{enumerate}
\item{For any functor $F \colon \mathsf{C} \to \mathsf{D}$, there is an identity natural transformation $1 \colon F \Rightarrow F$ in which the component at each object $x$ is the identity $1_{F(x)}$. This is a natural isomorphism.}

\item{Given a functor $F_d \colon \mathsf{C} \to \mathsf{D}$ which is constant at some object $d \in \mathsf{D}$ and another functor $G \colon \mathsf{C} \to \mathsf{D}$, a natural transformation $\alpha \colon F_d \Rightarrow G$ is a \define{cone over} $\mathsf{D}$, which consists of a family of morphisms $\alpha_x \colon d \to G(x)$ which make a cone-like commutative diagram in which all the top triangular faces commute.
\[
\begin{tikzpicture}[scale=1.5]
\node (A) at (0,0) {$d$};
\node (B) at (-1.5,-1.5) {$G(x)$};
\node (C) at (1,-1) {$G(y)$};
\node (D) at (0.5,-1.75) {$G(z)$};
\path[->,font=\scriptsize,>=angle 90]
(A) edge node [above] {$\alpha_x$}(B)
(A) edge node [above] {$\alpha_y$}(C)
(A) edge node [left] {$\alpha_z$}(D)
(B) edge[dashed] node [right] {$$}(C)
(B) edge node [right] {$$}(D)
(C) edge node [right] {$$}(D);
\end{tikzpicture}
\]
}

\item{Let $\mathsf{Grp}$ denote the category of groups and group homomorphisms, $\mathsf{AbGrp}$ the category of abelian groups and group homomorphisms and $\mathsf{Ab} \colon \mathsf{Grp} \to \mathsf{AbGrp}$ the functor sending each group to its abelianization, namely $\mathsf{Ab}(G) := G/[G,G]$ where $[G,G]$ is the commutator subgroup of $G$. Then there is a natural transformation $\pi \colon 1_\mathsf{Grp} \Rightarrow \mathsf{Ab}$ where the component at each group is given by $\pi_G \colon G \to \mathsf{Ab}(G)$. For any group homomorphism $f \colon G \to H$, the following square commutes.
\[
\begin{tikzpicture}[scale=1.5]
\node (A) at (0,0) {$G$};
\node (B) at (1.5,0) {$H$};
\node (C) at (0,-1.5) {$\mathsf{Ab}(G)$};
\node (D) at (1.5,-1.5) {$\mathsf{Ab}(H)$};
\path[->,font=\scriptsize,>=angle 90]
(A) edge node [above] {$f$}(B)
(C) edge node [above] {$\mathsf{Ab}(f)$}(D)
(A) edge node [left] {$\pi_G$}(C)
(B) edge node [right] {$\pi_H$}(D);
\end{tikzpicture}
\]
This is not a natural isomorphism.}

\item{Given a field $k$ and a finite dimensional vector space $V$ over $k$, there is a canonical isomorphism $\alpha_V \colon V \to V^{**}$ from the vector space $V$ to its double dual. This gives a natural transformation $\alpha \colon 1_{\mathsf{FinVect}_k} \Rightarrow {}^{**}$ where ${}^{**} \colon \mathsf{FinVect}_k \to \mathsf{FinVect}_k$ is the functor sending each finite dimensional vector space $V$ to its double dual $V^{**}$. The following square then commutes for every linear map $L \colon V \to W$ of finite dimensional $k$-vector spaces.
\[
\begin{tikzpicture}[scale=1.5]
\node (A) at (0,0) {$V$};
\node (B) at (1.5,0) {$W$};
\node (C) at (0,-1.5) {$V^{**}$};
\node (D) at (1.5,-1.5) {$W^{**}$};
\path[->,font=\scriptsize,>=angle 90]
(A) edge node [above] {$L$}(B)
(C) edge node [above] {$L^{**}$}(D)
(A) edge node [left] {$\alpha_V$}(C)
(B) edge node [right] {$\alpha_W$}(D);
\end{tikzpicture}
\]
This is a natural isomorphism if all the vector spaces are finite dimensional. If we allow for infinite dimensional vector spaces, we still have a natural transformation, but each map $\alpha_V \colon V \to V^{**}$ is no longer an isomorphism.}

\item{Given commutative rings $R$ and $S$ and a ring homomorphism $f \colon R \to S$, the ring homomorphism $f \colon R \to S$ restricts to a group homomorphism $f^\times \colon R^\times \to  S^\times$ where $R^\times$ denotes the group of units of the commutative ring $R$. This defines a functor ${}^\times \colon \mathsf{CommRing} \to \mathsf{AbGrp}$. There are also well-known groups of linear transformations $GL_n(R)$ and $GL_n(S)$, and every ring homomorphism $f \colon R \to S$ induces a map $GL_n(f) \colon GL_n(R) \to GL_n(S)$ given by application of $f$ to every entry of $H \in GL_n(R)$. This defines another functor $GL_n \colon \mathsf{CommRing} \to \mathsf{AbGrp}$.  There is then a natural transformation $\textnormal{det} \colon GL_n \Rightarrow {}^\times$ where given $H \in GL_n(R)$, $\textnormal{det}_R(H)$ is the determinant of $H$. The following square commutes for every ring homomorphism $f \colon R \to S$.
\[
\begin{tikzpicture}[scale=1.5]
\node (A) at (0,0) {$GL_n(R)$};
\node (B) at (1.5,0) {$GL_n(S)$};
\node (C) at (0,-1.5) {$R^\times$};
\node (D) at (1.5,-1.5) {$S^\times$};
\path[->,font=\scriptsize,>=angle 90]
(A) edge node [above] {$GL_n(f)$}(B)
(C) edge node [above] {$f^\times$}(D)
(A) edge node [left] {$\det_R$}(C)
(B) edge node [right] {$\det_S$}(D);
\end{tikzpicture}
\]
}
\end{enumerate}

\begin{definition}
Given a two categories $\mathsf{A}$ and $\mathsf{X}$ and two functors going in opposite directions between the two:
\[
\begin{tikzpicture}[scale=1.5]
\node (D) at (3,-0.5) {$\mathsf{A}$};
\node (E) at (4,-0.5) {$\mathsf{X}$};
\path[->,font=\scriptsize,>=angle 90]
(D) edge [bend left] node [above] {$L$} (E)
(E) edge [bend left] node [below] {$R$} (D);
\end{tikzpicture}
\]
we say that $L$ and $R$ are \define{adjoint}, with $L$ the \define{left adjoint} and $R$ the \define{right adjoint}, if for every $a \in \mathsf{A}$ and $x \in \mathsf{X}$ there is a natural isomorphism 
\[
\hom_\mathsf{X} (L(a),x) \cong \hom_\mathsf{A} (a,R(x)).
\]
\end{definition}

\subsection{Monoidal categories and monoidal functors}

Next we introduce `monoidal' categories, which are largely the kinds of categories that this thesis is about. Roughly speaking, a monoidal category is a category with a binary operation in which we can multiply or `tensor' two objects in the category much like we can multiply two objects in a monoid.

\begin{definition}
A \define{monoidal category} is a category $\mathsf{C}$ equipped with the extra structure of:
\begin{enumerate}
\item{a functor $\otimes \colon \mathsf{C} \times \mathsf{C} \to \mathsf{C}$ called the \define{tensor product} of $\mathsf{C}$,}
\item{an object $I \in \mathsf{C}$ called the \define{(monoidal) unit} of $\mathsf{C}$,}
\item{for any three objects $a,b,c \in \mathsf{C}$, a natural isomorphism called the \define{associator} $$\alpha \colon ((-) \otimes (-)) \otimes (-) \xrightarrow{\sim} (-) \otimes ((-) \otimes (-))$$ whose components are of the form $$\alpha_{a,b,c} \colon (a \otimes b) \otimes c \xrightarrow{\sim} a \otimes (b \otimes c)$$}
\item{for any object $c$, a natural isomorphism called the \define{left unitor} $$\lambda \colon (I \otimes (-)) \xrightarrow{\sim} (-)$$ whose components are of the form $$\lambda_c \colon I \otimes c \xrightarrow{\sim} c$$}
\item{for any object $c$, a natural isomorphism called the \define{right unitor} $$\rho \colon ((-) \otimes I) \xrightarrow{\sim} (-)$$ whose components are of the form $$\rho_c \colon c \otimes I \xrightarrow{\sim} c$$}
\end{enumerate}
such that the following two diagrams commute, giving equations called the \define{pentagon identity}:
\[
\begin{tikzpicture}[scale=1.5]
\node (A) at (-1.25,0) {$((a \otimes b) \otimes c) \otimes d$};
\node (B) at (3,1.55) {$(a \otimes b) \otimes (c \otimes d)$};
\node (C) at (7.25,0) {$a \otimes (b \otimes (c \otimes d))$};
\node (D) at (1.5,-1.55) {$(a \otimes (b \otimes c)) \otimes d$};
\node (E) at (4.5,-1.55) {$a \otimes ((b \otimes c) \otimes d)$};
\path[->,font=\scriptsize,>=angle 90]
(A) edge node [above] {$\alpha_{a \otimes b,c,d}$}(B)
(B) edge node [above] {$\alpha_{a,b,c \otimes d}$}(C)
(A) edge node [above,right] {$\alpha_{a,b,c} \otimes 1_d$}(D)
(D) edge node [above] {$\alpha_{a,b \otimes c,d}$}(E)
(E) edge node [above,left] {$1_a \otimes \alpha_{b,c,d}$}(C);
\end{tikzpicture}
\]
and the \define{triangle identity}:
\[
\begin{tikzpicture}[scale=1.5]
\node (A) at (0.75,0) {$(a \otimes I) \otimes b$};
\node (B) at (3,1.55) {$a \otimes b$};
\node (C) at (5.25,0) {$a \otimes (I \otimes b)$};
\path[->,font=\scriptsize,>=angle 90]
(A) edge node [left] {$\rho_a \otimes 1_b$}(B)
(C) edge node [right] {$1_a \otimes \lambda_b$}(B)
(A) edge node [above] {$\alpha_{a,1_\mathsf{C},b}$}(C);
\end{tikzpicture}
\]
\end{definition}
Sometimes we abbreviate a monoidal category $\mathsf{C}$ with tensor product $\otimes$ and monoidal unit $1_\mathsf{C}$ as $(\mathsf{C},\otimes,1_\mathsf{C})$. Some examples of monoidal categories which are relevant in this thesis are the following:
\begin{enumerate}
\item{The category $\mathsf{Set}$ together with the tensor product given by cartesian product and monoidal unit given by a singleton $\{ \star \}$.}
\item{If $\mathsf{C}$ is a category with finite colimits, then $\mathsf{C}$ is monoidal with the tensor product given by binary coproducts and monoidal unit given by an initial object $0$.}
\item{The large category $\mathsf{Cat}$ together with the tensor product given by the product of two categories and monoidal unit given by a terminal category $1$.}
\end{enumerate}

Sometimes there is a relationship between the two tensor products $a \otimes b$ and $b \otimes a$ for two objects $a$ and $b$ in a monoidal category $(\mathsf{C},\otimes,I)$.

\begin{definition}
A \define{braided monoidal category} is a monoidal category $(\mathsf{C},\otimes,I)$ equipped with a natural isomorphism $$\beta_{a,b} \colon a \otimes b \xrightarrow{\sim} b \otimes a$$ called the \define{braiding} such that the following hexagons commute.
\[
\begin{tikzpicture}[scale=1.5]
\node (A) at (0,0.5) {$(a \otimes b) \otimes c$};
\node (A') at (4.5,0.5) {$a \otimes (b \otimes c)$};
\node (B) at (0,-0.25) {$(b \otimes a) \otimes c$};
\node (C) at (4.5,-0.25) {$(b \otimes c) \otimes a$};
\node (C') at (0,-1) {$b \otimes (a \otimes c)$};
\node (D) at (4.5,-1) {$b \otimes (c \otimes a)$};
\path[->,font=\scriptsize,>=angle 90]
(A) edge node[above]{$\alpha_{a,b,c}$} (A')
(A) edge node[left]{$\beta_{a,b} \otimes 1_c$} (B)
(A') edge node[right]{$\beta_{a,b \otimes c}$} (C)
(B) edge node[left]{$\alpha_{b,a,c}$} (C')
(C) edge node [right] {$\alpha_{b,c,a}$} (D)
(C') edge node [above] {$1_b \otimes \beta_{a,c}$} (D);
\end{tikzpicture}
\]
\[
\begin{tikzpicture}[scale=1.5]
\node (A) at (0,0.5) {$a \otimes (b \otimes c)$};
\node (A') at (4.5,0.5) {$(a \otimes b) \otimes c$};
\node (B) at (0,-0.25) {$a \otimes (c \otimes b)$};
\node (C) at (4.5,-0.25) {$c \otimes (a \otimes b)$};
\node (C') at (0,-1) {$(a \otimes c) \otimes b$};
\node (D) at (4.5,-1) {$(c \otimes a) \otimes b$};
\path[->,font=\scriptsize,>=angle 90]
(A) edge node[above]{$\alpha_{a,b,c}^{-1}$} (A')
(A) edge node[left]{$1_a \otimes \beta_{b,c}$} (B)
(A') edge node[right]{$\beta_{a \otimes b,c}$} (C)
(B) edge node[left]{$\alpha_{a,c,b}^{-1}$} (C')
(C) edge node [right] {$\alpha_{c,a,b}^{-1}$} (D)
(C') edge node [above] {$\beta_{a,c} \otimes 1_b$} (D);
\end{tikzpicture}
\]
\end{definition}
All of the above examples of monoidal categories are in fact braided monoidal categories. Sometimes the braiding $\beta$ is its own inverse, which finally brings us to:
\begin{definition}
A \define{symmetric monoidal category} is a braided monoidal category $(\mathsf{C},\otimes,I)$ such that for any two objects $a$ and $b$ of $\mathsf{C}$, the braiding $\beta$ is its own inverse, meaning that $$\beta_{b,a} \beta_{a,b} = 1_{a \otimes b}.$$
\end{definition}
All of the above examples are in fact symmetric monoidal categories. What about maps between various such categories?
\begin{definition}\label{defn:monoidal_functor}
Let $(\mathsf{C},\otimes,I_\mathsf{C})$ and $(\mathsf{D},\otimes,I_\mathsf{D})$ be monoidal categories. A \define{(lax) monoidal functor} is a functor $F \colon \mathsf{C} \to \mathsf{D}$ such that:
\begin{enumerate}
\item{there exists an morphism $\mu \colon I_\mathsf{D} \to F(I_\mathsf{C})$ and}
\item{for every pair of objects $a$ and $b$ of $\mathsf{C}$, there exists a natural transformation $$\mu_{a,b} \colon F(a) \otimes F(b) \to F(a \otimes b)$$}
\end{enumerate}
which make the following diagrams commute:
\[
\begin{tikzpicture}[scale=1.5]
\node (A) at (0,0.5) {$(F(a) \otimes F(b)) \otimes F(c)$};
\node (A') at (4.5,0.5) {$F(a) \otimes (F(b) \otimes F(c))$};
\node (B) at (0,-0.25) {$F(a \otimes b) \otimes F(c)$};
\node (C) at (4.5,-0.25) {$F(a) \otimes F(b \otimes c)$};
\node (C') at (0,-1) {$F((a \otimes b) \otimes c)$};
\node (D) at (4.5,-1) {$F(a \otimes (b \otimes c))$};
\path[->,font=\scriptsize,>=angle 90]
(A) edge node[above]{$a'$} (A')
(A) edge node[left]{$\mu_{a,b} \otimes 1_{F(c)}$} (B)
(A') edge node[right]{$1_{F(a)} \otimes \mu_{b,c}$} (C)
(B) edge node[left]{$\mu_{a \otimes b,c}$} (C')
(C) edge node [right] {$\mu_{a,b \otimes c}$} (D)
(C') edge node [above] {$F(a)$} (D);
\end{tikzpicture}
\]
\[
\begin{tikzpicture}[scale=1.5]
\node (A) at (1,1) {$F(a) \otimes I_{\mathsf{D}}$};
\node (C) at (3,1) {$F(a)$};
\node (A') at (1,-1) {$F(a) \otimes F(I_{\mathsf{C}})$};
\node (C') at (3,-1) {$F(a \otimes I_{\mathsf{C}})$};
\node (B) at (5,1) {$I_{\mathsf{D}} \otimes F(a)$};
\node (B') at (5,-1) {$F(I_{\mathsf{C}}) \otimes F(a)$};
\node (D) at (7,1) {$F(a)$};
\node (D') at (7,-1) {$F(I_{\mathsf{C}} \otimes a)$};
\path[->,font=\scriptsize,>=angle 90]
(A) edge node[left]{$1_{F(a)} \otimes \mu$} (A')
(C') edge node[right]{$F(\rho_a)$} (C)
(A) edge node[above]{$\rho_{F(a)}'$} (C)
(A') edge node[above]{$\mu_{a,I_{\mathsf{C}}}$} (C')
(B) edge node[left]{$\mu \otimes 1_{F(a)}$} (B')
(B') edge node[above]{$\mu_{I_{\mathsf{C},a}}$} (D')
(B) edge node[above]{$\lambda_{F(a)}'$} (D)
(D') edge node[right]{$F(\lambda_a)$} (D);
\end{tikzpicture}
\]
The monoidal functor $F$ is called \define{strong} if the morphism $\mu$ and natural transformation $\mu_{-,-}$ are an isomorphism and natural isomorphism, respectively, and the monoidal functor $F$ is called \define{oplax} or \define{colax} if $F \colon \mathsf{C}^\textnormal{op} \to \mathsf{D}^\textnormal{op}$ is a lax monoidal functor.
\end{definition}

\begin{definition}
A (possibly lax or oplax) monoidal functor $F \colon \mathsf{C} \to \mathsf{D}$ is a \define{braided monoidal functor} if $\mathsf{C}$ and $\mathsf{D}$ are braided monoidal categories and the following diagram commutes.
\[
\begin{tikzpicture}[scale=1.5]
\node (A) at (1,1) {$F(a) \otimes F(b)$};
\node (C) at (3,1) {$F(b) \otimes F(a)$};
\node (A') at (1,-1) {$F(a \otimes b)$};
\node (C') at (3,-1) {$F(b \otimes a)$};
\path[->,font=\scriptsize,>=angle 90]
(A) edge node[left]{$\mu_{a,b}$} (A')
(C) edge node[right]{$\mu_{b,a}$} (C')
(A) edge node[above]{$\beta^\prime$} (C)
(A') edge node[above]{$F(\beta)$} (C');
\end{tikzpicture}
\]
\end{definition}

\begin{definition}
A (possibly lax or oplax) \define{symmetric monoidal functor} is a braided monoidal functor $F \colon \mathsf{C} \to \mathsf{D}$ between symmetric monoidal categories.
\end{definition}
\begin{definition}\label{defn:monoidal_transformation}
Given monoidal functors $F \colon (\mathsf{C},\otimes,1_\mathsf{C}) \to (\mathsf{D},\otimes,1_\mathsf{D})$ and $G \colon (\mathsf{C},\otimes,1_\mathsf{C}) \to (\mathsf{D},\otimes,1_\mathsf{D})$, a \define{monoidal natural transformation} $\alpha \colon F \Rightarrow G$ is a transformation $\alpha \colon F \Rightarrow G$ such that the following diagrams commute.
\[
\begin{tikzpicture}[scale=1.5]
\node (A) at (0,0) {$F(x) \otimes F(y)$};
\node (B) at (3,0) {$G(x) \otimes G(y)$};
\node (C) at (0,-1.5) {$F(x \otimes y)$};
\node (D) at (3,-1.5) {$G(x \otimes y)$};
\node (E) at (5,0) {$I_\mathsf{D}$};
\node (F) at (4,-1.5) {$F(I_\mathsf{C})$};
\node (G) at (6,-1.5) {$G(I_\mathsf{C})$};
\path[->,font=\scriptsize,>=angle 90]
(E) edge node [left] {$\mu$} (F)
(F) edge node [above] {$\alpha_{I_\mathsf{C}}$} (G)
(E) edge node [right] {$\mu^\prime$} (G)
(A) edge node [above] {$\alpha_x \otimes \alpha_y$}(B)
(C) edge node [above] {$\alpha_{x \otimes y}$}(D)
(A) edge node [left] {$\mu_{x,y}$}(C)
(B) edge node [right] {$\mu_{x,y}'$}(D);
\end{tikzpicture}
\]
A monoidal transformation $\alpha$ is \define{braided monoidal} or \define{symmetric monoidal} if the functors $F$ and $G$ are braided monoidal or symmetric monoidal, respectively. 
\end{definition}

\subsection{Colimits}

\begin{definition}
Given an arbitrary category $\mathsf{C}$, a \define{diagram} in the category $\mathsf{C}$ is given by a functor $F \colon \mathsf{D} \to \mathsf{C}$.
\end{definition}
Here, the category $\mathsf{D}$ serves as the `shape' of the diagram in the category $\mathsf{C}$. 

\begin{definition}
Given a diagram $F \colon \mathsf{D} \to \mathsf{C}$ in $\mathsf{C}$, a \define{limit} of the diagram $F$, denoted $\lim F$, is given by an object which we also denote by $\lim F$, together with with a family of morphisms $\phi_i \colon \lim F \to F(d_i)$ for every $i \in \mathsf{D}$ such that for any morphism $f \colon d_i \to d_j$ in $\mathsf{D}$, we have that $F(f) \phi_i = \phi_j$. Moreover, the object $\lim F$ together with the family of morphisms $\{ \phi_i \colon i \in \mathsf{D} \}$ are \define{universal} among such, meaning that given another object $c$ together with a family of morphisms $\psi_i \colon c \to F(d_i)$ such that $F(f) \psi_i = \psi_j$, there exists a unique morphism $\theta \colon c \to \lim F$ such that $\psi_i =  \phi_i \theta$ for every $i \in \mathsf{D}$. A limit is \define{finite} if the category $\mathsf{D}$ is finite. Then, a \define{colimit} is just a limit in the opposite category, meaning that given a functor $F \colon \mathsf{D} \to \mathsf{C}$, a colimit of $F$, denoted by $\colim F$, is given by a limit of $F^\textnormal{op} \colon \mathsf{D}^\textnormal{op} \to \mathsf{C}^\textnormal{op}$.
\end{definition}
Limits and colimits are only unique up to a unique isomorphism, hence the usage of the indefinite articles `a' and `an' rather than the definite article `the'.

We largely work with finite colimits in this thesis, and so the examples presented next will be of such. The most famous examples of finite colimits are easily the following:
\begin{enumerate}
\item{initial objects}
\item{binary coproducts}
\item{coequalizers}
\item{pushouts}
\end{enumerate}
In fact, a category $\mathsf{C}$ has finite colimits iff $\mathsf{C}$ has an initial object and pushouts iff $\mathsf{C}$ has binary coproducts and coequalizers. We discuss pushouts in the next section, but let us briefly introduce the other three famous finite colimits.

\begin{definition}
An \define{initial object} $0$ is a colimit of the empty functor $F \colon \emptyset \to \mathsf{C}$.
\end{definition} 
Unraveling what this means, it means that an initial object is an object $0$ in $\mathsf{C}$ together with an empty family of morphisms satisfying no properties such that for any other object $c$ together with an empty family of morphisms satisfying no properties, there exists a unique  morphism $!_c \colon 0 \to c$ which satisfies no properties. In other words, it is just an object $0$ of $\mathsf{C}$ with a unique morphism to any other object of $\mathsf{C}$. If $\mathsf{C}=\mathsf{Set}$, then $0=\emptyset$, and surely there is a unique function $!_S \colon \emptyset \to S$ for any set $S$.

\begin{definition}
A \define{binary coproduct} is a colimit of a functor $F \colon \{ \star \textnormal{ } \star \} \to \mathsf{C}$ where $\{ \star \textnormal{ } \star \}$ denotes the category with two objects and only identity morphisms. 
\end{definition}
Unraveling what this means, given two objects $c_1$ and $c_2$ in $\mathsf{C}$, a binary coproduct of $c_1$ and $c_2$ is an object which we denote as $c_1+c_2$ together with two morphisms $\phi_{c_1} \colon c_1 \to c_1+c_2$ and $\phi_{c_2} \colon c_2 \to c_1+c_2$ such that for any other object $c$ also with morphisms $\psi_1 \colon c_1 \to c$ and $\psi_2 \colon c_2 \to c$, there exists a unique morphism $\theta \colon c_1 + c_2 \to c$ such that $\psi_i = \theta \phi_i$ for $i=1,2$.
\[
\begin{tikzpicture}[scale=1.5]
\node (B) at (-1,-1) {$c_1$};
\node (C) at (1,-1) {$c_2$};
\node (D) at (0,-2) {$c_1 + c_2$};
\node (E) at (0,-3) {$c$};
\path[->,font=\scriptsize,>=angle 90]
(B) edge node [above] {$\phi_1$}(D)
(C) edge node [above] {$\phi_2$}(D)
(D) edge[dashed] node [near start,left] {$\exists ! \theta$} (E)
(B) edge[bend right] node [left] {$\psi_1$}(E)
(C) edge[bend left] node [right] {$\psi_2$}(E);
\end{tikzpicture}
\]
In other words, such an object $c_1+c_2$ and morphisms $(\phi_1,\phi_2)$ are initial among such. A typical example of a binary coproduct is the disjoint union of two sets together with the natural injection maps of each set into the disjoint union, or the direct sum of two vector spaces $V_1$ and $V_2$ together with the maps $((1_{V_1},0),(0,1_{V_2}))$ into the direct sum.

\begin{definition}
A \define{coequalizer} is a colimit of a functor $F \colon \{ \star \rightrightarrows \star \} \to \mathsf{C}$ where $\{ \star \rightrightarrows \star \}$ denotes the category with two objects, two morphisms from one object to the other, and two identity morphisms. 
\end{definition}
Unraveling what this means, given two morphisms $f,g \colon c \to c^\prime$ in $\mathsf{C}$, a coequalizer of $f$ and $g$ is an object $\textnormal{coeq}(f,g)$ together with a morphism $\phi \colon c^\prime \to \textnormal{coeq}(f,g)$ such that $\phi f= \phi g.$ Such an object and morphism are universal among such, meaning that given another object $\hat{c}$ and morphism $\psi \colon c^\prime \to \hat{c}$ such that $\psi f = \psi g$, there exists a unique morphism $\theta \colon \textnormal{coeq}(f,g) \to \hat{c}$ such that $\theta \phi = \psi$.
\[
\begin{tikzpicture}[scale=1.5]
\node (A) at (0,0) {$c$};
\node (B) at (2,0) {$c'$};
\node (C) at (4,0) {$\textnormal{coeq}(f,g)$};
\node (D) at (3,-1) {$\hat{c}$};
\path[->,font=\scriptsize,>=angle 90]
(A) edge[bend left] node [above] {$f$}(B)
(A) edge[bend right] node [above] {$g$}(B)
(B) edge node [above] {$\phi$}(C)
(B) edge node [left] {$\psi$}(D)
(C) edge node [right] {$\exists ! \theta$}(D);
\end{tikzpicture}
\]
In other words, such an object $\textnormal{coeq}(f,g)$ and morphism $\theta$ are initial among such. An example of a coequalizer is in the category $\mathsf{Grp}$: given any group homomorphism $f \colon G \to H$, there is always a unique group homomorphism $0 \colon G \to H$ which sends every element of $G$ to the identity element of $H$, in which case $\textnormal{coeq}(f,0)=\ker(f)$.

\begin{definition}
A \define{span} in any category $\mathsf{C}$ is a diagram of the form:
\[
\begin{tikzpicture}[scale=1.5]
\node (A) at (0,0) {$b$};
\node (B) at (-1,-1) {$a_1$};
\node (C) at (1,-1) {$a_2$};
\path[->,font=\scriptsize,>=angle 90]
(A) edge node [above] {$i$}(B)
(A) edge node [above] {$o$}(C);
\end{tikzpicture}
\]
A \define{pushout} is a colimit of a span, or equivalently, a colimit of a functor $F \colon \{ \star \leftarrow \star \rightarrow \star \} \to \mathsf{C}$ where $\{ \star \leftarrow \star \rightarrow \star \}$ denotes the category with three objects and two non-identity morphisms with a common source and distinct targets.
\end{definition}
Unraveling what this means, a pushout of the above span is an object $a_1 +_b a_2$ together with a pair of maps $j \colon a_1 \to a_1 +_b a_2$ and $k \colon a_2 \to a_1 +_b a_2$ making the induced square commute, meaning that $ji=ko$. Such an object and pair of maps are universal among such, meaning that given another object $q$ and maps $j' \colon a_1 \to q$ and $k' \colon a_2 \to q$ such that $j'i=k'o$, there exists a unique $\psi \colon a_1 +_b a_2 \to q$ such that $j'= \psi j$ and $k' = \psi k$.
\[

\]

\section{Double categories} \label{DoubleCatAppendix}
\begin{definition}
Given a category $\mathsf{A}$ with finite limits, a \define{category internal to} $\mathsf{A}$ consists of:
\begin{enumerate}
\item{an object of objects $\mathsf{a}_0 \in \mathsf{A}$}
\item{an object of morphisms $\mathsf{a}_1 \in \mathsf{A}$}
\item{source and target assigning morphisms $s,t \colon \mathsf{a}_1 \to \mathsf{a}_0$}
\item{an identity assigning morphism $i \colon \mathsf{a}_0 \to \mathsf{a}_1$}
\item{a composition assigning morphism $c \colon \mathsf{a}_1 \times_{\mathsf{a}_0} \mathsf{a}_1 \to \mathsf{a}_1$}
\end{enumerate}
such that the following square is a pullback
\[

\]
which says how the left and right unit laws are compatible with composition.
\end{definition}
In the previous and following definitions, we do not really need all finite limits; it is enough for the stated pullbacks to exist.

\begin{definition}\label{pseudocategory}
Any 2-category (see Definition \ref{2-cat_definition}) has an underlying category with the same objects and morphisms, and we say that a 2-category \define{has finite limits} if its underlying category does. 
Given a 2-category $\mathbf{A}$ with finite limits, a \define{pseudocategory object} in $\mathbf{A}$ consists of the same data as a category object internal to the underlying category of $\mathbf{A}$, except that the following diagrams commute up to isomorphism.
\[
\begin{tikzpicture}[scale=1.5]
\node (A) at (0,0) {$\mathsf{a}_1 \times_{\mathsf{a}_0} \mathsf{a}_1 \times_{\mathsf{a}_0} \mathsf{a}_1$};
\node (C) at (2.5,0) {$\mathsf{a}_1 \times_{\mathsf{a}_0} \mathsf{a}_1$};
\node (A') at (0,-1) {$\mathsf{a}_1 \times_{\mathsf{a}_0} \mathsf{a}_1$};
\node (C') at (2.5,-1) {$\mathsf{a}_1$};
\node (B) at (1.25,-0.5) {$\alpha \NEarrow$};
\path[->,font=\scriptsize,>=angle 90]
(A) edge node[above]{$1 \times c$} (C)
(A) edge node [left]{$c \times 1$} (A')
(C)edge node[right]{$c$}(C')
(A')edge node [above] {$c$}(C');
\end{tikzpicture}
\]
\[
\begin{tikzpicture}[scale=1.5]
\node (A) at (0,0) {$\mathsf{a}_0 \times_{\mathsf{a}_0} \mathsf{a}_1$};
\node (C) at (2,0) {$\mathsf{a}_1 \times_{\mathsf{a}_0} \mathsf{a}_1$};
\node (A') at (4,0) {$\mathsf{a}_1 \times_{\mathsf{a}_0} \mathsf{a}_0$};
\node (C') at (2,-1) {$\mathsf{a}_1$};
\node (B) at (1.5,-0.35) {$\lambda \SWarrow$};
\node (B') at (2.5,-0.35) {$\rho \SEarrow$};
\path[->,font=\scriptsize,>=angle 90]
(A) edge node[above]{$i \times_{\mathsf{a}_0} 1$} (C)
(A') edge node [above]{$1 \times_{\mathsf{a}_0} i$} (C)
(C)edge node[left]{$c$}(C')
(A)edge node [below] {$p_2$}(C')
(A')edge node [below] {$p_1$}(C');
\end{tikzpicture}
\]
The isomorphisms $\alpha, \lambda$ and $\rho$ satisfy the pentagon and triangle identities of a monoidal category.
\end{definition}

\begin{definition}
A \define{strict double category} is a category object internal to $\mathsf{Cat}$ (which is a category with finite limits).
\end{definition}

\begin{definition}
A \define{(pseudo) double category} is a pseudocategory object internal to $\mathbf{Cat}$ (which is a 2-category with finite limits).
\end{definition}

In a nutshell, a strict double category is a category \emph{internal} to the category $\mathsf{Cat}$ of categories and functors, similar to how an ordinary small category is a category internal to the category $\mathsf{Set}$ of sets and functions. What this means is that instead of having a set of objects and a set of morphisms, we will instead have a \emph{category} of objects and a \emph{category} or morphisms. There are various kinds of double categories one can consider depending on how strict we are with the internalizations; whereas $\mathsf{Set}$ is a mere category, $\mathsf{Cat}$ is a 2-category which allows us to consider a triple composite of morphisms up to a 2-morphism. Internalizing a category object in the ordinary category $\mathsf{Cat}$ leads to what are typically known as \emph{strict} double categories, whereas internalizing a category object in $\mathsf{Cat}$ viewed as a 2-category, also known as a pseudocategory object, leads to \emph{pseudo} double categories, where the left and right unitors and associators no longer hold on-the-nose but only up to isomorphism. These latter pseudo double categories are the ones that we are primarily interested in. 

It is helpful to have the following picture in mind. A double category has 2-morphisms shaped like this:
\[
\begin{tikzpicture}[scale=1]
\node (D) at (-4,0.5) {$A$};
\node (E) at (-2,0.5) {$B$};
\node (F) at (-4,-1) {$C$};
\node (A) at (-2,-1) {$D$};
\node (B) at (-3,-0.25) {$\Downarrow a$};
\path[->,font=\scriptsize,>=angle 90]
(D) edge node [above]{$M$}(E)
(E) edge node [right]{$g$}(A)
(D) edge node [left]{$f$}(F)
(F) edge node [above]{$N$} (A);
\end{tikzpicture}
\]
We call $A, B, C$ and $D$ \define{objects} or \define{0-cells}, $f$ and $g$ \define{vertical 1-morphisms}, $M$ and $N$ \define{horizontal 1-cells} and $a$ a \define{2-morphism}. Note that a vertical 1-morphism is a morphism between 0-cells and a 2-morphism is a morphism between horizontal 1-cells. We denote both vertical 1-morphisms and horizontal 1-cells using single arrows, namely `$\to$'. We follow the notation of Shulman \cite{Shul} with the following definitions.

\begin{definition}\label{defn:double_category}
A \define{pseudo double category} $\mathbb{D}$, or $\textbf{double category}$ for short, consists of a category of objects $\mathbb{D}_0$ and a category of arrows $\mathbb{D}_1$ with the following functors
\begin{center}
$U\colon \mathbb{D}_0 \to \mathbb{D}_1$\\
$S,T \colon \mathbb{D}_1 \rightrightarrows \mathbb{D}_0$\\
$\odot \colon \mathbb{D}_1 \times_{\mathbb{D}_0} \mathbb{D}_1 \to \mathbb{D}_1$ (where the pullback is taken over $\mathbb{D}_1 \xrightarrow[]{T} \mathbb{D}_0 \xleftarrow[]{S} \mathbb{D}_1$) \\
\end{center}
 such that \\
\begin{center}
$S(U_{A})=A=T(U_{A})$\\
$S(M \odot N)=SN$\\
$T(M \odot N)=TM$\\
\end{center}
equipped with natural isomorphisms
\begin{center}

$\alpha \colon (M \odot N) \odot P \xrightarrow{\sim} M \odot (N \odot P)$\\
$\lambda \colon U_{B} \odot M \xrightarrow{\sim} M$\\
$\rho \colon M \odot U_{A} \xrightarrow{\sim} M$

\end{center}
such that $S(\alpha), S(\lambda), S(\rho), T(\alpha), T(\lambda)$ and $T(\rho)$ are all identities and that the coherence axioms of a monoidal category are satisfied. Following the notation of Shulman, objects of $\mathbb{D}_0$ are called $\textbf{0-cells}$ or \define{objects} and morphisms of $\mathbb{D}_0$ are called $\textbf{vertical 1-morphisms}$. Objects of $\mathbb{D}_1$ are called $\textbf{horizontal 1-cells}$ and morphisms of $\mathbb{D}_1$ are called $\textbf{2-morphisms}$. The morphisms of $\mathbb{D}_0$, which are vertical 1-morphisms, will be denoted $f \colon A \to C$ and we denote a horizontal 1-cell $M$ with $S(M)=A, T(M)=B$ by $M \colon A \to B$. Then a 2-morphism $a \colon M \to N$ of $\mathbb{D}_1$ with $S(a)=f,T(a)=g$ would look like:
\[
\begin{tikzpicture}[scale=1]
\node (D) at (-4,0.5) {$A$};
\node (E) at (-2,0.5) {$B$};
\node (F) at (-4,-1) {$C$};
\node (A) at (-2,-1) {$D$};
\node (B) at (-3,-0.25) {$\Downarrow a$};
\path[->,font=\scriptsize,>=angle 90]
(D) edge node [above]{$M$}(E)
(E) edge node [right]{$g$}(A)
(D) edge node [left]{$f$}(F)
(F) edge node [above]{$N$} (A);
\end{tikzpicture}
\]
The horizontal and vertical composition of 2-morphisms together obey a `middle-four' interchange law, or simply, interchange law, expressing the compatibility of horizontal and vertical composition with each other. Specifically, given four 2-morphisms as such:
\[
\begin{tikzpicture}[scale=1]
\node (D) at (-4,0.5) {$A$};
\node (E) at (-2,0.5) {$B$};
\node (F) at (-4,-1) {$C$};
\node (A) at (-2,-1) {$D$};
\node (D') at (0,0.5) {$B$};
\node (E') at (2,0.5) {$E$};
\node (F') at (0,-1) {$D$};
\node (A') at (2,-1) {$F$};
\node (B) at (-3,-0.25) {$\Downarrow a$};
\node (B') at (1,-0.25) {$\Downarrow b$};
\node (D'') at (-4,-2.5) {$C$};
\node (E'') at (-2,-2.5) {$D$};
\node (F'') at (-4,-4) {$G$};
\node (A'') at (-2,-4) {$H$};
\node (D''') at (0,-2.5) {$D$};
\node (E''') at (2,-2.5) {$F$};
\node (F''') at (0,-4) {$H$};
\node (A''') at (2,-4) {$I$};
\node (B'') at (-3,-3.25) {$\Downarrow a'$};
\node (B''') at (1,-3.25) {$\Downarrow b'$};
\path[->,font=\scriptsize,>=angle 90]
(D) edge node [above]{$M$}(E)
(E) edge node [right]{$g$}(A)
(D) edge node [left]{$f$}(F)
(F) edge node [above]{$N$} (A)
(D') edge node [above]{$O$}(E')
(E') edge node [right]{$h$}(A')
(D') edge node [left]{$g$}(F')
(F') edge node [above]{$P$} (A')
(D'') edge node [above]{$N$}(E'')
(E'') edge node [right]{$g'$}(A'')
(D'') edge node [left]{$f'$}(F'')
(F'') edge node [above]{$Q$} (A'')
(D''') edge node [above]{$P$}(E''')
(E''') edge node [right]{$h'$}(A''')
(D''') edge node [left]{$g'$}(F''')
(F''') edge node [above]{$R$} (A''');
\end{tikzpicture}
\]
the following equality holds, where $\odot$ denotes horizontal composition and juxtaposition denotes vertical composition. $$(a' \odot b')(a \odot b) = (a'a) \odot (b'b)$$
\end{definition}

The key difference between a strict double category and a pseudo double category is that in a pseudo double category, horizontal composition is associative and unital only up to natural isomorphism. The natural isomorphisms $\alpha, \lambda$ and $\rho$ are identities in a strict double category. Let us look at a few examples.

If $\mathsf{C}$ is any category, there exists a strict double category Sq$(\mathsf{C})$, where `Sq' denotes `square', which has:
\begin{enumerate}
\item{objects given by those of $\mathsf{C}$,}
\item{vertical 1-morphisms given by morphisms of $\mathsf{C}$,}
\item{horizontal 1-cells also given by morphisms of $\mathsf{C}$, and}
\item{2-morphisms as commutative squares in $\mathsf{C}$.}
\end{enumerate}
Composition of horizontal 1-cells coincides with composition of morphisms in $\mathsf{C}$ and both the horizontal and vertical composite of 2-morphisms is given by composing the edges of the commutative squares.

If $\mathsf{C}$ is a category with pushouts, then an example of a \emph{pseudo} double category, and probably the most important example of a double category in this thesis, is given by $\mathbb{C}\mathbf{sp}(\mathsf{C})$, where ``$\mathbb{C}\mathbf{sp}$" denotes ``cospan", which has:
\begin{enumerate}
\item{objects as those of $\mathsf{C}$,}
\item{vertical 1-morphisms as morphisms of $\mathsf{C}$,}
\item{horizontal 1-cells as cospans in $\mathsf{C}$, and}
\item{2-morphisms as maps of cospans in $\mathsf{C}$ which are given by commutative diagrams of the form:
\[

\]
where $\psi$ is the natural map into a coproduct and $J$ is the natural map from a coproduct to a pushout, for example, $\psi \colon b \to b+c$ and $J \colon b+c \to b+_{a_2} c$. More about this double category and others similar to it may be found in the work of Niefield \cite{Nie}.

The pseudo double categories that we are interested in all share a certain `lifting' property between the vertical 1-morphisms and horizontal 1-cells.
\begin{definition}
  Let $\mathbb{D}$ be a double category and $f\maps A\to B$ a vertical
  1-morphism.  A \define{companion} of $f$ is a horizontal 1-cell
  $\hat{f}\maps A\to B$ together with 2-morphisms
	\[
	\raisebox{-0.5\height}{

	}
	\end{equation}
  A \define{conjoint} of $f$, denoted $\check{f} \maps B\to A$, is a
  companion of $f$ in the double category $\mathbb{D}^{h\cdot\mathrm{op}}$
  obtained by reversing the horizontal 1-cells, but not the vertical
  1-morphisms, of $\mathbb{D}$.
\end{definition}
\begin{definition}\label{def:isofibrant}
  We say that a double category is \define{fibrant} if every vertical
  1-morphism has both a companion and a conjoint and \define{isofibrant} if every vertical 1-isomorphism has both a companion and a conjoint.
\end{definition}
The property of isofibrancy in a double category is key as we are primarily interested in \emph{symmetric monoidal} double categories and bicategories, and it is precisely the property of isofibrancy that allows us to lift the portion of the monoidal structure of a symmetric monoidal double category that resides in the category of objects, such as the unitors, associators and braidings, to obtain a symmetric monoidal bicategory using a result of Shulman \cite{Shul}.

Next, we define the kinds of maps between double categories.
\begin{definition}
Let $\mathbb{A}$ and $\mathbb{B}$ be pseudo double categories. A \define{lax double functor} is a functor $\mathbb{F} \colon\mathbb{A}\to \mathbb{B}$ that takes items of $\mathbb{A}$ to items of $\mathbb{B}$ of the corresponding type, respecting vertical composition in the strict sense and the horizontal composition up to an assigned comparison $\phi$. This means that we have functors $\mathbb{F}_0 \colon \mathbb{A}_0 \to \mathbb{B}_0$ and $\mathbb{F}_1 \colon \mathbb{A}_1 \to \mathbb{B}_1$ such that the following equations are satisfied: $$S \circ \mathbb{F}_1 = \mathbb{F}_0 \circ S$$ $$T \circ \mathbb{F}_1 = \mathbb{F}_0 \circ T$$ Sometimes for brevity, we will omit the subscripts and simply say $\mathbb{F}$; as to whether we mean $\mathbb{F}_0$ or $\mathbb{F}_1$ will be clear from context. Furthermore, every object $a$ is equipped with a special globular 2-morphism $\phi_{a} \colon 1_{\mathbb{F}(a)} \to \mathbb{F}(1_{a})$ (the \define{identity comparison}), and every composable pair of horizontal 1-cells $N_{1} \odot N_{2}$ is equipped with a special globular 2-morphism $\phi(N_{1},N_{2}) \colon \mathbb{F}(N_{1}) \odot \mathbb{F}(N_{2}) \to \mathbb{F}(N_{1} \odot N_{2})$ (the \define{composition comparison}), in a coherent way. This means that the following diagrams commute.

\begin{enumerate}

\item For a horizontal composite, $\beta \odot \alpha$,

\begin{equation}\label{eq:square}
  \xymatrix@-.5pc{
    \mathbb{F}(A) \ar[r]|{|}^{\mathbb{F}(N_{2})}  \ar[d] \ar@{}[dr]|{\mathbb{F}(\alpha)}&
    \mathbb{F}(B) \ar[d] \ar[r]|{|}^{\mathbb{F}(N_{1})} \ar@{}[dr]|{\mathbb{F}(\beta)}&
    \mathbb{F}(C) \ar[d] &
     &
    \mathbb{F}(A) \ar[r]|{|}^{\mathbb{F}(N_{2})} \ar@{}[drr]|{\phi(N_{1},N_{2})} \ar[d]_{1} &
    \mathbb{F}(B) \ar[r]|{|}^{\mathbb{F}(N_{1})} &
    \mathbb{F}(C) \ar[d]^{1} \\
    \mathbb{F}(A') \ar[r]|{|}_{\mathbb{F}(N_{4})} \ar@{}[drr]|{\phi(N_{3},N_{4})} \ar[d]_{1}&
    \mathbb{F}(B') \ar[r]|{|}_{\mathbb{F}(N_{3})} &
    \mathbb{F}(C') \ar[d]^{1}&
    = &
    \mathbb{F}(A) \ar[rr]|{|}^{\mathbb{F}(N_{1} \odot N_{2})} \ar[d] \ar@{}[drr]|{\mathbb{F}(\beta \odot \alpha)}&
     &
    \mathbb{F}(C) \ar[d] \\
    \mathbb{F}(A') \ar[rr]|{|}_{\mathbb{F}(N_{3} \odot N_{4})} & 
     & 
    \mathbb{F}(C') &
     &
    \mathbb{F}(A') \ar[rr]|{|}_{\mathbb{F}(N_{3} \odot N_{4})} &
     &
    \mathbb{F}(C') \\
  }.
\end{equation}

\item For a horizontal 1-cell $N \colon A \to B$, the following diagrams are commutative (under horizontal composition).

\[
\begin{tikzpicture}[scale=1.5]
\node (A) at (1,1) {$\mathbb{F}(N) \odot 1_{\mathbb{F}(A)}$};
\node (C) at (3,1) {$\mathbb{F}(N)$};
\node (A') at (1,-1) {$\mathbb{F}(N) \odot \mathbb{F}(1_{A})$};
\node (C') at (3,-1) {$\mathbb{F}(N \odot 1_{A})$};
\node (B) at (5,1) {$1_{\mathbb{F}(B)} \odot \mathbb{F}(N)$};
\node (B') at (5,-1) {$\mathbb{F}(1_{B}) \odot \mathbb{F}(N)$};
\node (D) at (7,1) {$\mathbb{F}(N)$};
\node (D') at (7,-1) {$\mathbb{F}(1_{B} \odot N)$};
\path[->,font=\scriptsize,>=angle 90]
(A) edge node[left]{$1 \odot \phi_{A}$} (A')
(C') edge node[right]{$\mathbb{F} \rho$} (C)
(A) edge node[above]{$\rho_{\mathbb{F}(N)}$} (C)
(A') edge node[above]{$\phi(N,1_{A})$} (C')
(B) edge node[left]{$\phi_{B} \odot 1$} (B')
(B') edge node[above]{$\phi(1_{B},N)$} (D')
(B) edge node[above]{$\lambda_{\mathbb{F}(N)}$} (D)
(D') edge node[right]{$F \lambda$} (D);
\end{tikzpicture}
\]

\item For consecutive horizontal 1-cells $N_{1},N_{2}$ and $N_{3}$, the following diagram is commutative.

 \[\xymatrix{
    (\mathbb{F}(N_{1}) \odot \mathbb{F}(N_{2})) \odot \mathbb{F}(N_{3}) \ar[r]^{a'}\ar[d]_{\phi(N_{1},N_{2}) \odot 1}
    & \mathbb{F}(N_{1}) \odot (\mathbb{F}(N_{2}) \odot \mathbb{F}(N_{3})) \ar[d]^{1 \odot \phi(N_{2},N_{3})}\\
    \mathbb{F}(N_{1} \odot N_{2}) \odot \mathbb{F}(N_{3}) \ar[d]_{\phi(N_{1} \odot N_{2},N_{3})} &
    \mathbb{F}(N_{1}) \odot \mathbb{F}(N_{2} \odot N_{3}) \ar[d]^{\phi(N_{1},N_{2} \odot N_{3})}\\
    \mathbb{F}((N_{1} \odot N_{2}) \odot N_{3})\ar[r]^{\mathbb{F}a} &
    \mathbb{F}(N_{1} \odot (N_{2} \odot N_{3}))}\]
\end{enumerate}
We say the double functor $\mathbb{F}$ is \define{strict} if the comparison constraints $\phi_a$ and $\phi_{N_1,N_2}$ are identities, \define{strong} if the comparison constrains are globular isomorphisms, \define{pseudo} if the comparison constraints are isomorphisms, and \define{oplax} if the comparison constraints go in the opposite direction.
\end{definition}

We can also consider maps between maps of double functors, also known as double transformations. These are only used in Section \ref{SCMaps} of this thesis.

\begin{definition}\label{double_transformation}
A \define{double transformation} $\alpha \maps \mathbb{F} \Rightarrow \mathbb{G}$ between two double functors $\mathbb{F} \maps \mathbb{A} \to \mathbb{B}$ and $\mathbb{G} \maps \mathbb{A} \to \mathbb{B}$ consists of two natural transformations $\alpha_0 \maps \mathbb{F}_0 \Rightarrow \mathbb{G}_0$ and $\alpha_1 \maps \mathbb{F}_1 \Rightarrow \mathbb{G}_1$ such that for all horizontal 1-cells $M$ we have that $S({\alpha_1}_{M})={\alpha_0}_{S(M)}$ and $T({\alpha_1}_{M})={\alpha_0}_{T(M)}$ and for composable horizontal 1-cells $M$ and $N$, we have that
\[

\]
We call $\alpha_0$ the \define{object component} and $\alpha_1$ the \define{arrow component} of the double transformation $\alpha$.
\end{definition}

\subsection{Monoidal double categories}
Let $\mathbf{Dbl}$ denote the 2-category of double categories, double functors and double transformations. One can check that $\mathbf{Dbl}$ has finite products, and in any 2-category with finite products we can define a `pseudomonoid' or a `weak' monoid, which is a categorified analogue of a monoid in which the left and right unitors and associators are not identities but natural isomorphisms. It is the 2-categorical structure of $\mathbf{Dbl}$, or more generally, any 2-category with finite limits, that enables us to do this. For example, a pseudomonoid in $\mathbf{Cat}$ is a monoidal category. We are primarily concerned with the (weak) monoidal double categories in which the associators and left and right unitors are natural isomorphisms.

\begin{definition}
Let $(\mathsf{C},\otimes,I)$ be a monoidal category. A \define{monoid internal to} $\mathsf{C}$ consists of an object $M \in \mathsf{C}$ together with a morphism $m \colon M \otimes M \to M$ for multiplication and a morphism $i \colon I \to M$ for the multiplicative identity satisfying the associative law:
\[

\]
Furthermore, the 2-isomorphisms $A, L$ and $R$ are required to satisfy two equations which can be found in the work of Day and Street \cite{DayStreet}.
\end{definition}

\begin{definition}
A \define{braided pseudomonoid} is a pseudomonoid $M$ equipped with the extra structure of a braiding isomorphism $\beta \colon \otimes \cong \otimes \circ t$ where $t$ is the `twist' isomorphism $$t \colon M \otimes M \to M \otimes M$$ that together with the associators make the usual hexagons of a braided monoidal category commute. A \define{symmetric pseudomonoid} is a braided pseudomonoid such that the braiding isomorphism $\beta \colon \otimes \cong \otimes \circ t$ is self-inverse.
\end{definition}

\begin{definition}\label{defn:monoidal_double_category}
A \define{monoidal double category} is a pseudomonoid in the monoidal 2-category $\mathbf{Dbl}$. 

Explicitly, a monoidal double category is a double category equipped with double functors $\otimes \maps \mathbb{D} \times \mathbb{D} \to \mathbb{D}$ and $I \maps * \to \mathbb{D}$ where $*$ is the terminal double category, along with invertible double transformations called the \define{associator}:
\[  A \maps \otimes \, \circ \; (1_{\mathbb{D}} \times \otimes ) \Rightarrow \otimes \; \circ \; (\otimes \times 1_{\mathbb{D}}) ,\]
\define{left unitor}:
\[ L\maps \otimes \, \circ \; (1_{\mathbb{D}} \times I) \Rightarrow 1_{\mathbb{D}} ,\]
and \define{right unitor}:
\[ R \maps \otimes \,\circ\; (I \times 1_{\mathbb{D}}) \Rightarrow 1_{\mathbb{D}} \]
satisfying the pentagon axiom and triangle axioms of a monoidal category.
\end{definition}

This is a very nice and compact definition which encapsulates the structure of a monoidal double category. Unraveling this a bit, this means that:
\begin{enumerate}
\item $\mathbb{D}_0$ and $\mathbb{D}_1$ are both monoidal categories.
\item If $I$ is the monoidal unit of $\mathbb{D}_0$, then $U_I$ is (coherently isomorphic to) the
  monoidal unit of $\mathbb{D}_1$.
\item The functors $S$ and $T$ are strict monoidal, meaning that  $$S(M\ten N)
  = SM\ten SN$$ and $$T(M\ten N)=TM\ten TN$$ and $S$ and $T$ also
  preserve the associativity and unit constraints.
\item We have globular isomorphisms 
  \[\chi \maps (M_1\ten N_1)\odot (M_2\ten N_2)\xrightarrow{\sim} (M_1\odot M_2)\ten (N_1\odot N_2)\]
  and
  \[\mu\maps U_{A\ten B} \xrightarrow{\sim} (U_A \ten U_B)\] which arise from weakly-commuting squares:

\[

		\]
		\newcounter{mondbl}
		\setcounter{mondbl}{\value{enumi}}
	\end{enumerate}
Thus we define a monoidal double category to be a pseudomonoid object weakly internal to the 2-category $\mathbf{Dbl}$ of double categories, double functors and double transformations. In other words, a monoidal double category is a pseudomonoid internal to categories weakly internal to $\mathbf{Cat}$. 
But beware: this is \emph{not} the same as a category weakly internal to the 2-category $\mathbf{MonCat}$ of monoidal categories, strong monoidal functors and monoidal natural transformations. In a monoidal double category, the functors $S$ and $T$ are strict monoidal.  In a category weakly internal to $\mathbf{MonCat}$, they would only need to be strong monoidal.


\begin{definition}
	\label{defn:symmetric_monoidal_double_category}
A \define{braided monoidal double category} is a braided pseudomonoid internal to $\mathbf{Dbl}$. 
\end{definition}
This means that a braided monoidal double category is a monoidal double category
category equipped with an invertible double transformation
\[ \beta \maps \otimes \Rightarrow \otimes \circ \tau \]
called the \define{braiding}, where $\tau \maps \mathbb{D} \times \mathbb{D} \to \mathbb{D} \times \mathbb{D}$ is the twist double functor sending pairs in the object and arrow categories to the same pairs in the opposite order. The braiding is required to satisfy the usual two hexagon identities \cite[Sec.\ XI.1]{ML}.  If the braiding is self-inverse we say that $\mathbb{D}$ is a symmetric pseudomonoid internal to $\mathbf{Dbl}$ and that $\mathbb{D}$ is a \define{symmetric monoidal double category}.

Unraveling this a bit, we get that a braided monoidal double category 
	is a monoidal double category 
	such that:
	\begin{enumerate}
		\setcounter{enumi}{\value{mondbl}}
		\item $\mathbb{D}_{0}$ and $\mathbb{D}_{1}$ are braided monoidal categories.
		\item The functors $S$ and $T$ are strict braided monoidal functors.
		\item The following diagrams commute expressing that the braiding is a transformation of double categories.
		\[
		\begin{tikzpicture}
			\node (A) at (0,1.5) {\footnotesize{$(M_1 \odot M_2) \otimes (N_1 \odot N_2)$}};
			\node (A') at (0,0) {\footnotesize{$(M_1\otimes N_1) \odot (M_2\otimes N_2)$}};
			\node (B) at (5,1.5) {\footnotesize{$(N_1\odot N_2) \otimes (M_1 \odot M_2)$}};
			\node (B') at (5,0) {\footnotesize{$(N_1 \otimes M_1) \odot (N_2 \otimes M_2)$}};
			\path[->,font=\scriptsize]
				(A) edge node[left]{$\chi$} (A')
				(A) edge node[above]{$\beta$} (B)
				(B) edge node[right]{$\chi$} (B')
				(A') edge node[above]{$\beta \odot \beta$} (B');
		\end{tikzpicture}
		\quad
		\begin{tikzpicture}
			\node (A) at (0,1.5) {\footnotesize{$U_A \otimes U_B$}};
			\node (A') at (0,0) {\footnotesize{$U_B\otimes U_A$}};
			\node (B) at (2,1.5) {\footnotesize{$U_{A\otimes B} $}};
			\node (B') at (2,0) {\footnotesize{$U_{B\otimes A}$}};
			\path[->,font=\scriptsize]
				(A) edge node[left]{$\beta$} (A')
				(B) edge node[above]{$\mu$} (A)
				(B) edge node[right]{$U_\beta$} (B')
				(B') edge node[above]{$\mu$} (A');
		\end{tikzpicture}
		\]
		\setcounter{mondbl}{\value{enumi}}
	\end{enumerate}
	Finally, a symmetric monoidal double category 
	is a braided monoidal double category $\mathbb{D}$ such that:
	\begin{enumerate}
		\setcounter{enumi}{\value{mondbl}}
		\item $\mathbb{D}_{0}$ and $\mathbb{D}_{1}$ are symmetric monoidal categories.
	\end{enumerate}

\subsection{Monoidal double functors and transformations}
We also have maps between symmetric monoidal double categories, which, just as maps between ordinary symmetric monoidal categories, can come in three flavors according to direction of the comparison maps $\phi_{ (-,-) }$.

\begin{definition}
\label{defn:monoidal_double_functor}
A \define{(strong) monoidal lax double functor} $\mathbb{F} \colon \mathbb{C} \to \mathbb{D}$ between monoidal double categories $\mathbb{C}$ and $\mathbb{D}$ is a lax double functor $\mathbb{F} \maps \mathbb{C} \to \mathbb{D}$ such that
	\begin{itemize}
		\item $\mathbb{F}_0$ and $\mathbb{F}_1$ are (strong) monoidal functors, meaning that there exists

\begin{enumerate}
\item{an isomorphism $\epsilon \colon 1_{\lD} \to \mathbb{F}(1_{\lC})$}
\item{a natural isomorphism $\theta_{A,B} \colon \mathbb{F}(A) \otimes \mathbb{F}(B) \to \mathbb{F}(A \otimes B)$ for all objects $A$ and $B$ of $\lC$}
\item{an isomorphism $\delta \colon U_{1_{\lD}} \to \mathbb{F}(U_{1_{\lC}})$}
\item{a natural isomorphism $\nu_{M,N} \colon \mathbb{F}(M) \otimes \mathbb{F}(N) \to \mathbb{F}(M \otimes N)$ for all horizontal 1-cells $N$ and $M$ of $\lC$}
\end{enumerate}
such that the following diagrams commute: for objects $A,B$ and $C$ of $\lC$,
 \[\xymatrix{
    (\mathbb{F}(A) \otimes \mathbb{F}(B)) \otimes \mathbb{F}(C) \ar[r]^{\alpha^\prime}\ar[d]_{\theta_{A,B} \otimes 1}
    & \mathbb{F}(A) \otimes (\mathbb{F}(B) \otimes \mathbb{F}(C)) \ar[d]^{1 \otimes \theta_{B,C}}\\
    \mathbb{F}(A \otimes B) \otimes \mathbb{F}(C) \ar[d]_{\theta_{A \otimes B,C}} &
    \mathbb{F}(A) \otimes \mathbb{F}(B \otimes C) \ar[d]^{\theta_{A,B \otimes C}}\\
    \mathbb{F}((A \otimes B) \otimes C)\ar[r]^{\mathbb{F}\alpha} &
    \mathbb{F}(A \otimes (B \otimes C))}\]
\[
\begin{tikzpicture}[scale=1.5]
\node (A) at (1,1) {$\mathbb{F}(A) \otimes 1_{\lD}$};
\node (C) at (3,1) {$\mathbb{F}(A)$};
\node (A') at (1,-1) {$\mathbb{F}(A) \otimes \mathbb{F}(1_{\lC})$};
\node (C') at (3,-1) {$\mathbb{F}(A \otimes 1_{\lC})$};
\node (B) at (5,1) {$1_{\lD} \otimes \mathbb{F}(A)$};
\node (B') at (5,-1) {$\mathbb{F}(1_{\lC}) \otimes \mathbb{F}(A)$};
\node (D) at (7,1) {$\mathbb{F}(A)$};
\node (D') at (7,-1) {$\mathbb{F}(1_{\lC} \otimes A)$};
\path[->,font=\scriptsize,>=angle 90]
(A) edge node[left]{$1 \otimes \epsilon$} (A')
(C') edge node[right]{$\mathbb{F}(r_{A})$} (C)
(A) edge node[above]{$r_{\mathbb{F}(A)}$} (C)
(A') edge node[above]{$\theta_{A,1_{\lC}}$} (C')
(B) edge node[left]{$\epsilon \otimes 1$} (B')
(B') edge node[above]{$\theta_{1_{\lC},A}$} (D')
(B) edge node[above]{$\ell_{\mathbb{F}(A)}$} (D)
(D') edge node[right]{$\mathbb{F}(\ell_{A})$} (D);
\end{tikzpicture}
\]
and for horizontal 1-cells $N_{1},N_{2}$ and $N_{3}$ of $\lC$,
 \[\xymatrix{
    (\mathbb{F}(N_{1}) \otimes \mathbb{F}(N_{2})) \otimes \mathbb{F}(N_{3}) \ar[r]^{\alpha^\prime}\ar[d]_{\nu_{N_{1},N_{2}} \otimes 1}
    & \mathbb{F}(N_{1}) \otimes (\mathbb{F}(N_{2}) \otimes \mathbb{F}(N_{3})) \ar[d]^{1 \otimes \nu_{N_{2},N_{3}}}\\
    \mathbb{F}(N_{1} \otimes N_{2}) \otimes \mathbb{F}(N_{3}) \ar[d]_{\nu_{N_{1} \otimes N_{2},N_{3}}} &
    \mathbb{F}(N_{1}) \otimes \mathbb{F}(N_{2} \otimes N_{3}) \ar[d]^{\nu_{N_{1},N_{2} \otimes N_{3}}}\\
    \mathbb{F}((N_{1} \otimes N_{2}) \otimes N_{3})\ar[r]^{\mathbb{F}\alpha} &
    \mathbb{F}(N_{1} \otimes (N_{2} \otimes N_{3}))}\]
\[

		\]
}
	\end{itemize}
The monoidal lax double functor is \define{braided} if $\mathbb{F}_0$ and $\mathbb{F}_1$ are braided monoidal functors and \define{symmetric} if they are symmetric monoidal functors, and $\define{lax} \textnormal{ } \define{monoidal}$ or $\define{oplax} \textnormal{ } \define{monoidal}$ if instead of the
isomorphisms and families of natural isomorphisms in items (1)-(4), we merely
have morphisms and natural transformations going in the appropriate directions.
If the double functor $\mathbb{F} \colon \mathbb{C} \to \mathbb{D}$ is a double functor between isofibrant symmetric monoidal double categories, also known as `symmetric monoidal framed bicategories' \cite{Shul2}, instead of $\theta$ and $\nu$ together constituting a transformation of double categories, it suffices that the comparison and unit constraints $\mathbb{F}_{M,N}$ and $\mathbb{F}_c$ be monoidal natural transformations.
\end{definition}
\begin{definition}\label{monoidal_double_transformation}
Given monoidal double functors $(\mathbb{F},\phi),(\mathbb{G},\psi) \colon \mathbb{C} \to \mathbb{D}$, a \define{monoidal double transformation} $\alpha \colon \mathbb{F} \Rightarrow \mathbb{G}$ is a double transformation $\alpha$ such that both the object component $\alpha_0 \colon \mathbb{F}_0 \Rightarrow \mathbb{G}_0$ and arrow component $\alpha_1 \colon \mathbb{F}_1 \Rightarrow \mathbb{G}_1$ are monoidal natural transformations. This means that the following equations hold:
\[

\]
\end{definition}

\section{Bicategories and 2-categories}\label{bicat_definitions}
\begin{definition}
A \define{bicategory} $\mathbf{C}$ is a double category (see Definition \ref{defn:double_category}) $\mathbf{C}=(\mathbf{C}_0,\mathbf{C}_1)$ such that the category of objects $\mathbf{C}_0$ is discrete, meaning that $\mathbf{C}_0$ contains only identity morphisms. In a bicategory, we refer to the objects of $\mathbf{C}_1$, which are horizontal 1-cells, as \define{morphisms}.
\end{definition}

Unraveling this a bit, a \define{bicategory} $\mathbf{C}$ consists of:
\begin{enumerate}
\item{a collection of objects $a,b,c,d,\ldots$,}
\item{for every pair of objects $a$ and $b$, a category $\hom_{\mathbf{C}}(a,b)$, called the \define{hom category of $a$ and $b$}, where objects are called $\define{morphisms}$ from $a$ to $b$ and whose morphisms are called $\define{2}$-$\define{morphisms}$,}
\item{for every object $a$, a functor $1_a \colon 1 \to \hom_\mathbf{C}(a,a)$ which picks out the identity morphism for the object $a$ and for every triple of objects $a,b$ and $c$, a functor $\circ \colon \hom_\mathbf{C}(a,b) \times \hom_\mathbf{C}(b,c) \to \hom_\mathbf{C}(a,c)$ for composition,}
\item{for every pair of objects $a$ and $b$ and morphism $f \in \hom_{\mathbf{C}}(a,b)$, a natural isomorphism $$\lambda \colon 1_b f \Rightarrow f$$called the \define{left unitor} and a natural isomorphism $$\rho \colon f 1_a \Rightarrow f$$called the \define{right unitor},}
\item{for every quadruple of objects $a,b,c$ and $d$, a natural isomorphism $$\alpha \colon \circ (1 \times \circ) \Rightarrow \circ (\circ \times 1)$$where$$\circ(1 \times \circ),\circ (\circ \times 1) \colon \hom_{\mathbf{C}}(a,b) \times \hom_{\mathbf{C}}(b,c) \times \hom_{\mathbf{C}}(c,d) \to \hom_{\mathbf{C}}(a,d)$$}
\end{enumerate}
such that the left and right unitors satisfy the triangle identity and the associator satisfies the pentagon identity.

\begin{definition}\label{2-cat_definition}
A \define{2-category} $\mathbf{C}$ is a bicategory $\mathbf{C}$ in which the left and right unitors and associators are identity 2-morphisms.
\end{definition}
Equivalently, a 2-category is a \emph{strict} double category in which the category of objects is discrete.

The primordial example of a 2-category is $\mathbf{Cat}$, the 2-category of categories, functors and natural transformations: natural transformations make up the morphisms in each hom category $\hom_\mathbf{C}(a,b)$.
A 2-category is sometimes called a `strict' 2-category and a bicategory a `weak' 2-category.
Strict 2-categories along with double categories were first discovered by Ehresmann \cite{Ehresmann63, Ehresmann65}, and bicategories are due to B\'enabou \cite{Be}.

\begin{definition}\label{definition:whiskering}
Given a 2-morphism $\alpha \colon f \Rightarrow g \colon c \to d$ and a morphism $h \colon b \to c$ in a 2-category $\mathbf{C}$:
\[
\begin{tikzpicture}[scale=1]
\node (B) at (0,0) {$b$};
\node (C) at (2,0) {$c$};
\node (D) at (4,0) {$d$};
\node (I) at (3,0) {$\Downarrow \alpha$};
\path[->,font=\scriptsize,>=angle 90]
(B) edge node [above]{$h$}(C)
(C) edge[bend left] node [above]{$f$}(D)
(C) edge[bend right] node [below]{$g$}(D);
\end{tikzpicture}
\]
the \define{left whiskering of $\alpha$ by $h$}, denoted by $1_h \odot \alpha$, is given by the horizontal composite of the 2-morphism $\alpha$ with the identity 2-morphism of $h$:
\[
\begin{tikzpicture}[scale=1]
\node (F) at (8,0) {$b$};
\node (G) at (10,0) {$c$};
\node (H) at (12,0) {$d$};
\node (J) at (9,0) {$\Downarrow 1_h$};
\node (K) at (11,0) {$\Downarrow \alpha$};
\path[->,font=\scriptsize,>=angle 90]
(F) edge[bend left] node [above]{$h$}(G)
(F) edge[bend right] node [below]{$h$} (G)
(G) edge[bend left] node [above]{$f$} (H)
(G) edge[bend right] node [below] {$g$} (H);
\end{tikzpicture}
\]
\define{Right whiskering} is defined analogously.
\end{definition}

\subsection{Pseudofunctors and pseudonatural transformations}
\begin{definition}\label{pseudofunctor_definition}
Given bicategories $\mathbf{C}$ and $\mathbf{D}$, a \define{pseudofunctor} $F \colon \mathbf{C} \to \mathbf{D}$ consists of:
\begin{enumerate}
\item for each object $c \in \mathbf{C}$, an object $F(c) \in \mathbf{D}$,
\item for each category $\mathbf{C}(c,c')$, a functor $F \colon \mathbf{C}(c,c') \to \mathbf{D}(F(c),F(c'))$,
\item for each object $c \in \mathbf{C}$, a 2-isomorphism $F_c \colon 1_{F(c)} \Rightarrow F(1_c)$
\item for every triple of objects $a,b,c \in \mathbf{C}$ and pair of composable morphisms $f \colon a \to b$ and $g \colon b \to c$ in $\mathbf{C}$, a 2-isomorphism $F_{f,g} \colon F(f)F(g) \Rightarrow F(fg)$ natural in $f$ and $g$
\end{enumerate}
such that the following diagrams commute:
\[
\begin{tikzpicture}[scale=1.5]
\node (A) at (0,0.5) {$(F(f)F(g))F(h)$};
\node (A') at (4.5,0.5) {$F(f)(F(g)F(h))$};
\node (B) at (0,-0.25) {$F(fg)F(h)$};
\node (C) at (4.5,-0.25) {$F(f)F(gh)$};
\node (C') at (0,-1) {$F((fg)h)$};
\node (D) at (4.5,-1) {$F(f(gh))$};
\path[->,font=\scriptsize,>=angle 90]
(A) edge node[above]{$a'$} (A')
(A) edge node[left]{$F_{f,g} \odot 1_{F(h)}$} (B)
(A') edge node[right]{$1_{F(f)} \odot F_{g,h}$} (C)
(B) edge node[left]{$F_{fg,h}$} (C')
(C) edge node [right] {$F_{f,gh}$} (D)
(C') edge node [above] {$F(a)$} (D);
\end{tikzpicture}
\]
\[
\begin{tikzpicture}[scale=1.5]
\node (A) at (1,1) {$F(f) 1_{F(a)}$};
\node (C) at (3,1) {$F(f)$};
\node (A') at (1,-1) {$F(f) F(1_a)$};
\node (C') at (3,-1) {$F(f 1_a)$};
\node (B) at (5,1) {$1_{F(b)}  F(f)$};
\node (B') at (5,-1) {$F(1_b) F(f)$};
\node (D) at (7,1) {$F(f)$};
\node (D') at (7,-1) {$F( 1_b f)$};
\path[->,font=\scriptsize,>=angle 90]
(A) edge node[left]{$1_{F(f)} \odot F_a$} (A')
(C') edge node[right]{$F(r_f)$} (C)
(A) edge node[above]{$r_{F(f)}'$} (C)
(A') edge node[above]{$F_{f,1_a}$} (C')
(B) edge node[left]{$F_b \odot  1_{F(f)}$} (B')
(B') edge node[above]{$F_{1_b,f}$} (D')
(B) edge node[above]{$\ell_{F(f)}'$} (D)
(D') edge node[right]{$F(\ell_f)$} (D);
\end{tikzpicture}
\]
Here, all of the arrows in the diagrams are given by 2-morphisms in $\mathbf{D}$, $a,\ell,r$ denote the associator, left and right unitors for morphism composition in $\mathbf{C}$, similarly $a', \ell^\prime,r^\prime$ denote the associator, left and right unitors for morphism composition in $\mathbf{D}$, juxtaposition is used to denote morphism composition in both $\mathbf{C}$ and $\mathbf{D}$ and $\odot$ denotes whiskering in $\mathbf{D}$ (see Definition \ref{definition:whiskering}).
\end{definition}

\begin{definition}
Given two pseudofunctors $F,G \colon \mathbf{A} \to \mathbf{B}$, a \define{pseudonatural transformation} $\sigma$ consists of:
\begin{enumerate}
\item{for each object $a \in \mathbf{A}$, a morphism $\sigma_a \colon F(a) \to G(a)$ in $\mathbf{B}$ and}
\item{for each morphism $f \colon a \to b$ in $\mathbf{A}$, an invertible natural 2-morphism $\sigma_f \colon G(f) \sigma_a \xrightarrow{\sim} \sigma_b F(f)$ in $\mathbf{B}$ which is compatible with composition and identities.}
\end{enumerate}
\end{definition}

Let $[\mathsf{A},\mathbf{Cat}]_\textnormal{ps}$ denote the 2-category of pseudofunctors, pseudonatural transformations and `modifications' from an ordinary category $\mathsf{A}$ viewed as a 2-category with trivial 2-morphisms. We call $[\mathsf{A},\mathbf{Cat}]_\textnormal{ps}$ the \define{2-category of opindexed categories}, as an indexed category is a contravariant pseudofunctor into $\mathbf{Cat}$. A \define{lax monoidal pseudofunctor} $F \colon \mathbf{A} \to \mathbf{B}$ between monoidal bicategories \cite{Stay} is then a pseudofunctor equipped with pseudonatural transformations with components $$\mu_{a,b} \colon F(a) \otimes F(b) \xrightarrow{\sim} F(a \otimes b)$$and$$\mu_0 \colon 1_\mathbf{B} \to F(1_\mathbf{A})$$together with coherent invertible modifications for associativity and unitality. This is also known as a \emph{weak} monoidal pseudofunctor. A \textbf{symmetric lax monoidal pseudofunctor} is then a lax monoidal pseudofunctor between symmetric monoidal bicategories together with invertible modifications $F(\beta) \mu_{a,b} \xrightarrow{\sim} \mu_{b,a} \beta^\prime$.

$\textnormal{ }$
}

{\ssp

}
\end{document}